\newcommand{\nc}{\newcommand}
\nc{\browntext}[1]{\textcolor{brown}{#1}}
\nc{\greentext}[1]{\textcolor{green}{#1}}
\nc{\redtext}[1]{\textcolor{red}{#1}}
\nc{\bluetext}[1]{\textcolor{blue}{#1}}
\nc{\brown}[1]{\browntext{ #1}}
\nc{\green}[1]{\greentext{ #1}}
\nc{\red}[1]{\redtext{ #1}}
\nc{\blue}[1]{\bluetext{ #1}}
\nc{\zb}[1]{\redtext{From zb: #1}}
\newtheorem{theorem}{Theorem}  [section]
\newtheorem{lemma}[theorem]{Lemma}
\newtheorem{proposition}[theorem]{Proposition}
\newtheorem{definition}[theorem]{Definition}
\theoremstyle{remark}
\newtheorem{remark}[theorem]{Remark}
\numberwithin{equation}{section}
\newcommand{\bB}{{\mathbf B }}
\newcommand{\bDel}{\boldsymbol{\Delta}}
\newcommand{\bBKH}{\acute{\mathbf H}}
\newcommand{\BKH}{\acute{H}}
\def \dB{\Theta}
\def \cR{{\mathcal R}}
\def \gr{\text{gr}}
\def \tH{{\widehat{H}}}
\def \tTH{\widehat{\Theta}}
\newcommand{\Aut}{\operatorname{Aut}\nolimits}
\newcommand{\Sym}{\operatorname{Sym}\nolimits}
\def \y{{B}}
\def \bTH { \boldsymbol{\Theta}}
\def \bDel{ \boldsymbol{\Delta}}
\newcommand{\mbf}{\mathbf}
\newcommand{\de}{\delta}
\newcommand{\hgt}{\text{ht}}
\newcommand{\N}{\mathbb N}
\newcommand{\ov}{\overline}
\newcommand{\qbinom}[2]{\begin{bmatrix} #1\\#2 \end{bmatrix} }
\newcommand{\Q}{\mathbb Q}
\newcommand{\sll}{\mathfrak{sl}}
\newcommand{\T}{\texttt{\rm T}}
\renewcommand{\t}{\boldsymbol{\omega}}  %{\mathbf{t}}
\newcommand{\U}{\mbf U}
\newcommand{\K}{\mathbb K}
\newcommand{\arxiv}[1]{\href{http://arxiv.org/abs/#1}{\tt arXiv:\nolinkurl{#1}}}
\newcommand{\Ui}{{\mbf U}^\imath}
\newcommand{\vs}{\varsigma}
\newcommand{\Y}{\bB}
\newcommand{\Z}{\mathbb Z}
\newcommand{\TT}{\texttt{\rm T}} %{\mathbf T}
\def \fg{\mathfrak{g}}
\def \I{\mathbb{I}}
\def \II{\mathbb{I}_0}
\newcommand{\tUi}{\widetilde{{\mathbf U}}^\imath}
\newcommand{\tUigr}{{\text{gr}}\tUi}
\newcommand{\tUiD}{{}^{\text{Dr}}\tUi}
\newcommand{\tUiDgr}{{\text{gr}}\tUiD}
\newcommand{\tU}{\widetilde{\mathbf U}}
\newcommand{\wt}{\text{wt}}
\def \K{\mathbb{K}}
\def \R{\mathbb{R}}
\def \SS{\mathbb{S}}
\def \bvs{{\boldsymbol{\varsigma}}}
\begin{document}

%%%%%
\title[ A Drinfeld type presentation of affine $\imath$quantum groups, I]
{A Drinfeld type presentation of affine $\imath$quantum groups I: split ADE type}

\author[Ming Lu]{Ming Lu}
\address{Department of Mathematics, Sichuan University, Chengdu 610064, P.R.China}
\email{luming@scu.edu.cn}

\author[Weiqiang Wang]{Weiqiang Wang}
\address{Department of Mathematics, University of Virginia, Charlottesville, VA 22904, USA}
\email{ww9c@virginia.edu}

\subjclass[2010]{Primary 17B37, 17B67.}

\keywords{Affine quantum groups, Drinfeld presentation, $\imath$Quantum groups, Quantum symmetric pairs, $q$-Onsager algebra}

\begin{abstract}
We establish a Drinfeld type new presentation for the $\imath$quantum groups arising from quantum symmetric pairs of split affine ADE type, which includes the $q$-Onsager algebra as the rank 1 case. This presentation takes a form which can be viewed as a deformation of half an affine quantum group in the Drinfeld presentation.
\end{abstract}

\maketitle
\setcounter{tocdepth}{1}
 \tableofcontents

%%%%%%%
%%%%%%%
\section{Introduction}

\subsection{Background}

Affine Lie algebras distinguish themselves among Kac-Moody Lie algebras, largely due to 2 distinct constructions: a Serre presentation (as definition) and a loop algebra realization. Such a dual nature of affine Lie algebras has led to numerous applications to string theory, modular forms, algebraic combinatorics, and so on.

Remarkably, these constructions repeat themselves at the quantum level for Drinfeld-Jimbo quantum groups: besides the Serre presentation (as definition) \cite{Dr87, Jim85}, an affine quantum group admits a current realization, also known as Drinfeld's new presentation \cite{Dr88}.
An explicit isomorphism between these 2 presentations with proof was supplied by Beck \cite{Be94} in the untwisted affine type. Damiani \cite{Da12, Da15} consolidated part of Beck's arguments and established interconnections among different relations in the current realization. %, and in addition, she covered the twisted affine type.

Drinfeld's current presentation of affine quantum groups has played a fundamental role in numerous subsequent algebraic, geometric, and categorical developments. It has been instrumental in the active area of (finite-dimensional) representation theory of affine quantum groups $\U$, in algebraic or combinatorial approach, by numerous authors including Chari, Kashiwara,  and their collaborators and others; see the survey paper \cite{CH10} for partial references; there have been connections with cluster algebras and monoidal categorification  \cite{HL15}. A powerful geometric approach to representation theory of $\U$ was developed by Ginzburg, Vasserot, and Nakajima (see \cite{V98, Nak00}).
Moreover, the Drinfeld presentation arises categorically in Hall algebras of coherent sheaves over (weighted) projective lines as initiated by Kapranov; see \cite{Ka97, BKa01, Sch04, DJX12}.

According to the $\imath$program as outlined in \cite{BW18}, various algebraic, geometric, and categorical constructions of quantum groups should be generalizable to $\imath$quantum groups arising from quantum symmetric pairs. As an $\imath$-analogue of Drinfeld double quantum groups $\tU$, the universal $\imath$quantum groups $\tUi$ were introduced by the authors in \cite{LW19a} (also see \cite{LW20}) as they arise naturally from the $\imath$Hall algebra constructions of $\imath$quivers; moreover, there is a braid group action on $\tUi$ which is realized by reflection functors in $\imath$Hall algebras \cite{LW19b}. A central reduction of $\tUi$ recovers the $\imath$quantum groups $\Ui =\Ui_\bvs$ with parameters  $\bvs \in \Q(v)^{\times,\I}$ introduced earlier by G.~Letzter and generalized by Kolb \cite{Let99, Let02, Ko14}.

We view $\imath$quantum groups as a vast generalization of quantum groups, as quantum groups can be regarded as $\imath$quantum groups of diagonal type. The $\imath$quantum groups of finite type are classified by Satake diagrams (or the real forms of complex simple Lie algebras) of roughly 2 dozens types, and there is an even richer family of affine $\imath$quantum groups. %; each complex simple Lie algebra admits a unique split real form.
  The split and more generally quasi-split $\imath$quantum groups make sense in Kac-Moody generality and form a distinguished class of $\imath$quantum groups.  For some recent progress in representation theory of $\imath$quantum groups, see \cite{Wat19}.

\subsection{Goal}

The goal of this paper is to initiate a current realization of affine $\imath$quantum groups. We shall restrict ourselves to the split affine ADE type in this paper, where most of the new features are already present; the Drinfeld type presentation for affine $\imath$quantum group of rank one (known as $q$-Onsager algebra) is already new.
We expect that the current realizations of affine $\imath$quantum groups will open up algebraic, categorical, and geometric developments as did Drinfeld's presentation for the affine quantum group $\U=\U(\widehat{\fg})$.

Throughout the paper, we shall work with the universal $\imath$quantum group $\tUi$, which contains central generators $\K_i$, for $i\in \I$.  The corresponding results for $\Ui$ (with arbitrary parameters) can be obtained from $\tUi$ readily by a central reduction which specializes $\K_i$ to scalars.

\subsection{Rank one} %: $q$-Onsager algebra}

The split affine $\imath$quantum group $\Ui =\Ui_\bvs (\widehat{\sll}_2)$, with parameters $\bvs =\{\vs_0, \vs_1\} \in (\Q(v)^\times)^2$, is known as $q$-Onsager algebra in the literature. %, cf., e.g., \cite{T93, BK20} and references therein.
There has been some attempts with mixed success by Baseilhac and collaborators toward a Drinfeld type presentation for $\Ui$, cf. \cite{BS10, BK20} and references therein.

Making an ansatz with the constructions for affine quantum group $\U(\widehat{\sll}_2)$ by Damiani \cite{Da93}, Baseilhac and Kolb \cite{BK20} defined the $q$-root vectors in the $q$-Onsager algebra $\Ui$ (where both parameters were set to be equal: $\vs_0=\vs_1$, for technical reasons), and established a PBW basis for $\Ui$. Along the way, an affine braid group action on $\Ui$ (when $\vs_0=\vs_1$) is given. Various relations among the $q$-root vectors were computed, but clearly they do not resemble the relations in Drinfeld's current presentation for $\U(\widehat{\sll}_2)$.

In this paper, we first upgrade the main results of \cite{BK20} for $\Ui$ to $\tUi$, such as the constructions of $v$-root vectors (denoted by $B_{1,k}, \acute{\Theta}_m$, for $k\in \Z, m\in \Z_{\ge 1}$) and their relations, with the help of a braid group action coming from the $\imath$Hall algebra realization of $\tUi$. By a central reduction this in turn allows us to obtain the $v$-root vectors and PBW basis for $\Ui$ with 2 arbitrary parameters $\{\vs_0, \vs_1\}$, somewhat improving \cite{BK20}.

As a point of departure, we introduce new $v$-imaginary root vectors $\Theta_m$ and especially $H_m$, for $m\ge 1$, and work with generating functions. This allows us to greatly simplify some of the BK relations to be Drinfeld type relations. The normalization from $\acute{\Theta}_m$ to $\Theta_m$ in rank 1 is motivated by and in turn helps to facilitate the realization of $q$-Onsager algebra via $\imath$Hall algebra of the projective line in a subsequent work joint with Ruan \cite{LRW20}. We also obtain new Drinfeld type relations among real $v$-root vectors. In this way, we formulate a Drinfeld type presentation for $\tUi$ as an algebra isomorphism $\tUiD \cong \tUi$; see Definition~\ref{def:DrOnsa} and Theorem~\ref{thm:Dr1}. 

Via generating functions in a variable $z$,
\[
{\bTH} (z)= 1+(v-v^{-1}) \sum_{m\ge 1} \Theta_{m} z^m, \qquad 
\bB(z)=\sum_{r\in \Z} B_{1,r} z^r, \qquad
\bDel(z) = \sum_{k\in\Z}  C^k z^k, 
\]
where $C=\K_\de$, the Drinfeld presentation for the $q$-Onsager algebra $\tUi$ is reformulated as:
\begin{align}
 {\bTH} (z)  \bTH(w)
 &= {\bTH} (w)  \bTH(z),
 \label{gf1} \\
 {\bTH} (z)  \bB(w)
& =  \frac{(1 -v^{-2}zw^{-1}) (1 -v^{2} zw C)}{(1 -v^{2}zw^{-1})(1 -v^{-2}zw C)}
  \bB(w) {\bTH} (z),
  \\
 (v^2z-w) \Y(z) \Y(w) & +(v^{2}w-z) \Y(w) \Y(z)
 \label{gf3} \\
& =\frac{v^{-2} \K_1}{v-v^{-1}} \bDel(zw) \big( (v^2z-w)\bTH(w) +(v^2w-z)\bTH(z) \big).
\notag
\end{align}

\subsection{Higher rank} %: split ADE}

Now let $\tUi =\tUi_{\bvs} (\widehat{\fg})$ be the universal affine $\imath$quantum group of split ADE type, where $\fg$ is the Lie algebra of type ADE with root datum $\II$. (The corresponding $\imath$quntum groups $\Ui$ were introduced in  \cite{BB10} and referred to as generalized $q$-Onsager algebras; cf. \cite{Ko14}.) By definition, $\tUi$ is generated by $B_i, \K_i, \K_i^{-1}$, for $i\in \I =\II \cup \{0\}$, where $\K_i$ are central and the $B_i$ satisfy some inhomogenous Serre relations; see \eqref{eq:S2}--\eqref{eq:S3}. The algebra $\tUi$ can be realized via an $\imath$Hall algebra construction associated to $\imath$quivers \cite{LW19a, LW20}; moreover, there are automorphisms $\T_i$ ($i\in \I$) of $\tUi$ which are realized as reflection functors on $\imath$Hall algebras, giving rise to an affine braid group action on $\tUi$ \cite{LW21b}. It is worth pointing out that such a braid group action is not a restriction of the braid group action of Lusztig on quantum groups \cite{Lus90, Lus94}.

In a way similar to \cite{Be94}, with the help of the braid group action and the rank one construction above, we construct real $v$-root vectors $B_{i,k}$ and imaginary $v$-root vectors $\acute{\Theta}_{i,m}$ (or $\Theta_{i,m}$, $H_{i,m}$), for $i\in \II, k\in \Z, m\in \Z_{\ge 1}$.
Then the formulation of relations among these elements are used to define a new Drinfeld type algebra $\tUiD$; see Definition~\ref{def:iDR}. Our main result (see Theorem~\ref{thm:ADE}) asserts that $\tUiD \cong \tUi$, providing a new presentation for the affine $\imath$quantum group $\tUi$.

The defining relations~\eqref{iDR1}--\eqref{iDR5} for $\tUiD$ turn out to be strikingly neat and similar to Drinfeld's current relations for $\U(\widehat{\fg})$. Actually, one can regard half the Drinfeld's realization of $\U(\widehat{\fg})$  as the associated graded algebra with respect to a filtration of $\tUiD$ over its central subalgebra. In other words, the relations \eqref{iDR2}, \eqref{iDR3b} and \eqref{iDR5} for $\tUiD$ look like Drinfeld's relations for $\U(\widehat{\fg})$ plus lower terms (involving powers of $C=\K_\de$). Yet another view of the relations \eqref{iDR2}, \eqref{iDR3b} and \eqref{iDR5} for $\tUiD$ is that they exhibit a hybrid phenomenon mixing relations in the current negative half with relations between current positive and negative generators for $\U(\widehat{\fg})$.

One new relation which was not present in the rank one case is the Serre type relation \eqref{iDR5}. As the Serre relations among $B_i =B_{i,0}$ \eqref{eq:S2} are inhomogeneous, it is understandable that the general Serre type relations \eqref{iDR5} among $B_{i,k}$ for $\tUiD$ are much more challenging to formulate than its counterpart for $\U(\widehat{\fg})$, where the RHS of \eqref{iDR5} is simply set to 0 as in a standard Serre relation. What is perhaps surprising to us is that such a relation can be formulated concretely after all.

Damiani \cite{Da12} made a careful analysis of the relations in Drinfeld's current realization  of $\U(\widehat{\fg})$, and showed that they can be derived from a few distinguished relations. To that end, the triangular decomposition of $\U(\widehat{\fg})$ was very helpful.
In contrast, $\tUi$ does not admit a triangular decomposition. Because of this, the verifications of the new relations for $\tUi$, in particular \eqref{iDR3b}--\eqref{iDR5}, require a very different strategy from \cite{Be94, Da12}, though our overall plan is somewhat similar by showing that all the new relations for $\tUi$ can be derived from a few simpler ones. %(Our methods could be applicable to $\U(\widehat{\fg})$ too.)

By verifying all the new relations in $\tUi$ we obtain a homomorphism $\Phi: \tUiD \rightarrow \tUi$, and it remains to show that $\Phi$ is an isomorphism. Our argument of the surjectivity of $\Phi$ is adapted from the proof of Damiani \cite[Theorem~12.11]{Da12}.
The injectivity of $\Phi$ follows by applying some filtered algebra argument to reduce to the corresponding injectivity in the affine quantum group setting.

We obtain several natural variants of the current presentation of $\tUi$, including one in the generating function form similar to \eqref{gf1}--\eqref{gf3} for $q$-Onsager algebra; see Theorem~\ref{thm:ADEgf}.
A surprising bonus of working with $\tUi$ (instead of $\Ui$) is that the canonical central element $C$ in affine quantum group naturally appear as $\K_\delta =\prod_{i\in \I} \K_i^{a_i}$ associated to the basic imaginary root $\de =\sum_{i\in \I} a_i \alpha_i$. This is especially clear in a symmetrized variant of the current presentation of $\tUi$ (see Definition~\ref{def:i-DR-ref} and Proposition~\ref{prop:symm}).
This phenomenon is even more remarkable, as the classical ($v\mapsto 1$) limit of $\tUi$ or $\Ui$ does not contain the canonical central element of the affine Lie algebra $\widehat \fg$; see \S\ref{subsec:classical}.

\subsection{Applications} %$\imath$Hall algebras}

While the $\imath$Hall algebras are not used in this paper in any explicit manner, they have played a fundamental role in guiding our work. Since we have realized the universal $\imath$quantum group in its Serre presentation via $\imath$Hall algebra of $\imath$quivers \cite{LW19a, LW20}, it is natural to expect that $\imath$Hall algebras of coherent sheaves over (weighted) projective lines should provide a realization of $\tUi$ in a new {\em current} presentation (keeping in mind the classic works \cite{Ka97, BKa01, Sch04, DJX12} relating (weighted) projective lines to the current realization of affine quantum groups). Indeed, such a realization of the Drinfeld type presentation of universal $q$-Onsager algebra is given in \cite{LRW20}; the higher rank cases will be treated in \cite{LR21}. 
Preliminary computations at earlier stages of \cite{LRW20, LR21} on $\imath$Hall algebras of (weighted) projective lines have been very helpful in pinning down and confirming some new relations for $\tUi$. %The current presentation of $\tUi$ in this paper will play an essential role in the $\imath$Hall algebra realization therein. 

It is our hope that this work can be of interest to people with diverse algebraic, categorical, geometric backgrounds. With this in mind, we have tried to make the presentation in this paper to be self-contained and come up with proofs independent of $\imath$Hall algebras. (As a result, a multiple of proofs for various relations in $\Ui$ are known to us.) %This will in turn streamline the expositions in the forthcoming works \cite{LRW20, LR21} on $\imath$Hall algebras.

In sequels to this paper, we shall further generalize this work to obtain Drinfeld type presentations for affine $\imath$quantum groups beyond split ADE type; the split BCFG type is developed by Weinan Zhang in \cite{Zh21}. The new current presentations should play a basic role in developing an algebraic approach toward the representation theory of affine $\imath$quantum groups (cf., e.g., \cite{CH10}), to which we hope to return elsewhere. They may also help to shed new lights on the open problem of finding Drinfeld type presentations for twisted Yangians and variants (cf. \cite{M07}). 

In yet another direction, this work makes it possible to develop the geometric realization of affine $\imath$quantum groups via equivariant K-theory \cite{SuW21}, building on the works of Y.~Li and collaborators \cite{BKLW, Li19} and generalizing earlier works of Vasserot and Nakajima \cite{V98, Nak00}.

\subsection{Organization}

In Section~\ref{sec:Onsager}, we define the $v$-root vectors and give a Drinfeld type presentation for the $q$-Onsager algebra $\tUi$.

In Section~\ref{sec:main}, we define the $v$-root vectors for $\tUi$ of split affine ADE type, and formulate a Drinfeld type presentation for $\tUi$ in the form of isomorphism $\Phi: \tUiD \rightarrow \tUi$. We show $\Phi$ is a bijection under the assumption that $\Phi$ is an algebra homomorphism, while postponing the verification of the new relations in $\tUi$ to Section~\ref{sec:relation1}. %We obtain a PBW basis for $\tUi$, similar to the one for $\U$ in \cite{Be94, BCP99}.
We verify the new relations in $\tUi$ one-by-one in Section~\ref{sec:relation1}, with the most challenging ones being \eqref{iDR5} and \eqref{iDR2} for $c_{ij}=-1$.

In Section~\ref{sec:variants}, we offer some variants of the new presentation for $\tUi$, one in generating function format (Theorem~\ref{thm:ADEgf}), and in a symmetrized form which resembles Drinfeld's realization for $\U$ better (Definition~\ref{def:i-DR-ref} and Proposition~\ref{prop:symm}), and yet another one in terms of different $v$-imaginary root vectors (Theorem~\ref{thm:ADE1}). We also formulate a Drinfeld type presentation for $\Ui_\bvs$ (Theorem~\ref{thm:ADE2}).

\vspace{2mm}
\noindent {\bf Acknowledgement.}
We thank Shiquan Ruan for his collaboration on related Hall algebra projects, and thank Weinan Zhang who inspired us to simplify much our earlier proofs in \S\ref{subsec:iDR2=>iDR5}--\ref{subsec:iDR5=>iDR2}. ML thanks University of Virginia for hospitality and support. WW is partially supported by the NSF grant DMS-1702254 and DMS-2001351.

%%%%%%%
%%%%%%%
\section{The $q$-Onsager algebra and its Drinfeld type presentation}
  \label{sec:Onsager}

In this section, we derive Drinfeld type new relations among the generators of the universal $q$-Onsager algebra, and recast them in the form of generating functions. This is built on a reformulation and enhancement of the results in \cite{BK20} for $q$-Onsager algebra.
We introduce new imaginary root vectors in the universal $q$-Onsager algebra (with motivation coming from the $\imath$Hall algebra of the projective line), and establish a Drinfeld type presentation.

\subsection{Root vectors}

For $n\in \Z, r\in \N$, denote by
\[
[n] =\frac{v^n -v^{-n}}{v-v^{-1}},\qquad
\qbinom{n}{r} =\frac{[n][n-1]\ldots [n-r+1]}{[r]!}.
\]
For $A, B$ in a $\Q(v)$-algebra, we shall denote $[A,B]_{v^a} =AB -v^aBA$, and $[A,B] =AB - BA$.

\begin{definition}
  \label{def:Onsager}
The {\em (universal) $q$-Onsager algebra $\tUi =\tUi(\widehat{\mathfrak{sl}}_2)$} is the $\Q(v)$-algebra generated by $B_0,B_1$, $\K_0,\K_1$, subject to the following relations: $\K_0,\K_1$ are central, and
\begin{align}
\sum_{r=0}^3 (-1)^r \qbinom{3}{r} B_i^{3-r} B_j B_i^{r}&= -v^{-1} [2]^2 (B_iB_j-B_jB_i) \K_i, %(-v^2\tk_i),
\quad \text{ for } i\neq j.
\label{relation:s3}
\end{align}
(This algebra is also known as the {\rm universal $\imath$quantum group of split type $A_1^{(1)}$}.)
\end{definition}

\begin{remark}  \label{rem:Ui}
The generator $\K_i$ here is related to $\widetilde{k}_i$ used in \cite{LW19a} by $\K_i=-v^2 \widetilde{k}_i$; see Remark~\ref{rem:Kk}. The $q$-Onsager algebra $\Ui_\bvs$, for $\bvs =(\vs_0, \vs_1) \in (\Q(v)^\times)^2$, is obtained from $\tUi$ by a central reduction $\Ui_\bvs = \tUi /( \K_i +v^2 \vs_i | i=0,1)$, where $(-)$ denotes an ideal. The 1-parameter specialization $\Ui_\bvs$ by taking $\vs_0=\vs_1=-c$  recovers the $q$-Onsager algebra $\mathcal B_c$ studied in \cite{BK20}.
\end{remark}

Let $\{\alpha_0, \alpha_1\}$ be the simple roots of the affine Lie algebra $\widehat{\mathfrak{sl}}_2$, and $\de =\alpha_0 +\alpha_1$ be the basic imaginary root. The root system for $\widehat{\mathfrak{sl}}_2$ is
 $\cR =\{\pm (\alpha_1 + k \delta), m\delta \mid k,m \in \Z, m\neq 0\}.$
For $\mu, \nu  \in \Z \alpha_0 \oplus \Z \alpha_1$ and $i=0,1$,  set
\begin{align}
\K_{\alpha_i} =\K_i,  \quad
\K_{\alpha_i}^{-1} =\K_i^{-1}, \quad
\K_\delta =\K_0 \K_1,
\quad \K_{\mu +\nu} =\K_{\mu} \K_{\nu}.
\end{align}
%{Note that the algebra $\tUi$ is $(\N \alpha_0 + \N\alpha_1)$-graded by letting $\deg B_i=\alpha_i, \deg \K_i =2\alpha_i$.}

Let $\dag$ be the involution of the $\Q(v)$-algebra $\tUi$ such that
\begin{align}
\dag:B_0\leftrightarrow B_1, \qquad \K_0\leftrightarrow \K_1.
\end{align}
We have the following two automorphisms $\TT_0,\TT_1$, which has an interpretation in $\imath$Hall algebras, see \cite{LW19b, LW21b}; this shows some conceptual advantage of $\tUi$ over $\Ui_\bvs$ (compare \cite{BK20}, where the parameters $\vs_i$ are set to be equal):
\begin{align}
\TT_1 (\K_1) &=\K_1^{-1},\qquad \TT_1(\K_0)= \K_{\delta} \K_1,
\\
\TT_1(B_1)&=  \K_1^{-1} B_1,
\\
\TT_1(B_0)&=  [2]^{-1} \big(B_0B_1^{2} -v[2] B_1 B_0B_1 +v^2 B_1^{2} B_0 \big) + B_0\K_1,
\label{T1B0}
\\
\TT_1^{-1}(B_0)&=  [2]^{-1} \big( B_1^{2}B_0-v[2] B_1B_0B_1 +v^2 B_0B_1^{2} \big) +B_0\K_1.
 \label{T1B0-2}
\end{align}
%
%\begin{align}
%\TT_0 (\K_0) &=\K_0^{-1},\qquad \TT_0(\K_1)= \K_{\delta} \K_0,  \\
%\TT_0(B_0)&=  \K_0^{-1} B_0,    \\
%\TT_0(B_1)&=  [2]^{-1} \big(B_1B_0^{2} -v[2] B_0 B_1B_0 +v^2 B_0^{2} B_1 \big) + B_1\K_0,  \\
%\TT_0^{-1}(B_1)&=  [2]^{-1} \big( B_0^{2}B_1-v[2] B_0B_1B_0+v^2 B_1B_0^{2} \big) +B_1\K_0,
%\end{align}
%
The action of $\TT_0$ is obtained from the above formulas by switching indices $0,1$, that is,
\begin{align}
\TT_0=\dag \circ \TT_1 \circ \dag.
\end{align}

For $n\in\Z$, following \cite{BK20}, we define the real $v$-root vectors
\begin{align}
 %B_{0,n}:=(\TT_0\dag)^n(B_0), \qquad
B_{1,n} &=(\dag \TT_1)^{-n}(B_1).
  \label{eq:B1n}
\end{align}
Slightly modifying \cite{BK20}, we further define, for $m\ge 1$,
\begin{align}
\acute{\Theta}_{m} &= -B_{1,m-1} B_0+v^{2} B_0B_{1,m-1} + (v^{2}-1)\sum_{p=0}^{m-2} B_{1,p} B_{1,m-p-2} \K_0.
  \label{eq:dB1}
\end{align}
Note that $B_{1,0}=B_1$ by definition. (Our $- v^{-2} \acute{\Theta}_{m}$ corresponds to $ B_{m\delta}$ in \cite[(3.11)]{BK20}.)
%Note that \begin{align*}
%(\TT_0\dag)^{-1}(B_0)=\dag \TT_0^{-1}(B_0)= B_1\K_{1}^{-1}.
%\end{align*}
%Then
%\begin{align}
%B_{1,n}=B_{0,-(n+1)}\K_{n\delta+\alpha_1}  \text{ for all }n\in\Z.
%\end{align}
In particular, we have
$%\begin{align*}
%B_{1,1} &= [2]^{-1} (B_1^{2}B_0-v [2] B_1 B_0B_1 +v^2 B_0 B_1^{2} ) + B_0 \K_1,
%\\
\acute{\Theta}_1 = -B_1B_0+v^{2} B_0B_1.
%B_\delta:=&-B_0B_1+v^{-2} B_1B_0;
%\widetilde{B}_\delta:=& -B_1B_0+v^{-2} B_0B_1;
$ %\end{align*}
We also set
\begin{align}
\acute{\Theta}_{m} :=\begin{cases}
\frac{1}{v-v^{-1}} & \text{ if }m=0,
\\
0& \text{ if }m<0.
\end{cases}
\end{align}

From \eqref{eq:B1n}, we have
\begin{align*}
B_{1,-1}=(\dag \TT_1)(B_1)=B_0 \K_0^{-1},
\qquad
B_0=B_{1,-1}\K_0.
\end{align*}
So \eqref{eq:dB1} can be rewritten as
\begin{align}
\label{eq:reformTheta}
\acute{\Theta}_{m} &= \big(-B_{1,m-1} B_{1,-1}+v^{2} B_{1,-1}B_{1,m-1} + (v^{2}-1)\sum_{p=0}^{m-2} B_{1,p} B_{1,m-p-2} \big)\K_0
\\\notag
&=-\sum_{p=0}^{m-1} [B_{1,p},B_{1,m-2-p}]_{v^2} \K_0.
\end{align}
We note that \cite[Corollary~ 5.12]{BK20} in our setting of $\tUi$ reads
\begin{align}   \label{eq:Tm}
\dag \TT_1 (\acute{\Theta}_{m})=\acute{\Theta}_{m}.
\end{align}
Applying $(\dag\TT_1)^{-1}$ to \eqref{eq:reformTheta},
we have
$%\begin{align}
\acute{\Theta}_{m} = -\sum_{p=1}^{m} [B_{1,p},B_{1,m-p}]_{v^2} \K_1^{-1}.
$%\end{align}

\subsection{Relations \`a la Baseilhac-Kolb} %\cite{BK20}}

The following relations in $\tUi$ are the counterparts of the main relations for $\mathcal B_c$ established in \cite{BK20}; see Remark~\ref{rem:Ui}. They are obtained by literally repeating the arguments {\em loc. cit.}, and hence we shall skip the proofs altogether in this subsection. Our formulation in turn strengthens somewhat the results in \cite{BK20}, as the central reduction in Remark~\ref{rem:Ui} provides relations and then a presentation for $\Ui$ with {\em arbitrary} 2 parameters $\vs_i$ $(i=0, 1)$.

\begin{proposition}
[\text{\cite[Corollary 5.11]{BK20}}]
 \label{prop:ThTh1}
We have $[\acute{\Theta}_{n},\acute{\Theta}_{m}]=0$ holds in $\tUi$, for $n,m\ge 1$.
\end{proposition}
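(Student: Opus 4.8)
The plan is to prove commutativity by induction, exploiting the shift symmetry furnished by the automorphism $\dag\TT_1$. Since $B_{1,n}=(\dag\TT_1)^{-n}(B_1)$ by \eqref{eq:B1n}, the automorphism $\dag\TT_1$ acts as the index shift $B_{1,k}\mapsto B_{1,k-1}$ on the real $v$-root vectors, while by \eqref{eq:Tm} it fixes every $\acute{\Theta}_m$. Consequently $\dag\TT_1$ fixes the bracket $[\acute{\Theta}_n,\acute{\Theta}_m]$, and this invariance is what lets me normalize indices and, more importantly, propagate a single base computation along the whole $\Z$-family of root vectors rather than treating each pair $(n,m)$ separately.

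The key intermediate step I would establish first is an explicit formula for the mixed commutator $[\acute{\Theta}_m,B_{1,k}]$, expressing it as a $\Q(v)$-combination of real $v$-root vectors $B_{1,j}$ (carrying central $\K$-factors). This is the natural analogue of the $\Theta$--$B$ relation in Drinfeld's presentation, and it can itself be proven by induction on $m$, using the defining relation \eqref{relation:s3} together with the recursive form \eqref{eq:reformTheta} of $\acute{\Theta}_m$; the shift symmetry again cuts down the range of $k$ that must be handled by hand.

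With the mixed commutator in hand, I would expand one of the two imaginary root vectors via \eqref{eq:reformTheta}, writing $\acute{\Theta}_n=-\sum_{p=0}^{n-1}[B_{1,p},B_{1,n-2-p}]_{v^2}\K_0$, and compute $[\acute{\Theta}_m,\acute{\Theta}_n]$ by moving $\acute{\Theta}_m$ past each factor $B_{1,p}$ with the mixed commutator. Since $\K_0$ is central, the bookkeeping of central factors is harmless, and the substance is that the resulting sum of real-root-vector terms must cancel. Here the second expression $\acute{\Theta}_n=-\sum_{p=1}^{n}[B_{1,p},B_{1,n-p}]_{v^2}\K_1^{-1}$ is useful for symmetrizing the computation and for matching terms across the two index ranges.

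The main obstacle I anticipate is precisely this final cancellation: after substituting the mixed commutator, one obtains a double sum of bracketed real $v$-root vectors, and verifying that it telescopes to zero requires a careful reindexing and grouping, presumably arranged so that the shift symmetry $\dag\TT_1$ renders each cancellation transparent. Controlling the signs and the powers $v^{\pm 2}$ coming through the $[\,\cdot\,,\,\cdot\,]_{v^2}$ brackets is the delicate part; an induction on $\min(m,n)$, with the mixed commutator relation feeding the inductive step, seems the most robust way to force the cancellation and close the argument.
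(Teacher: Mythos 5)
First, be aware that the paper does not actually prove this proposition: it is imported verbatim from \cite[Corollary 5.11]{BK20}, with the explicit statement that the relations in that subsection ``are obtained by literally repeating the arguments'' of \cite{BK20} and that the proofs are skipped altogether. So your proposal has to stand on its own, and as written it has a genuine gap. Your argument stops exactly where the proof would have to happen: after substituting the mixed commutator into the expansion \eqref{eq:reformTheta} of $\acute{\Theta}_{n}$, you assert that the resulting double sum ``must cancel'' and that a careful reindexing ``seems the most robust way to force the cancellation'', but you never exhibit the cancellation. For an identity of this kind that cancellation \emph{is} the proof; everything before it is setup. What you have is a plausible plan (broadly in the spirit of the Damiani/Baseilhac--Kolb computations), not a proof.

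Second, the key intermediate step you rely on is misstated in a way that would derail the bookkeeping. The commutator $[\acute{\Theta}_{m},B_{1,k}]$ is \emph{not} a $\Q(v)$-combination of real $v$-root vectors with central $\K$-factors: by \eqref{eq:im-real2} it contains, in addition to the pure $B_{1,j}\K_{h\de}$ terms, a whole sum $\sum_{a=1}^{m-1}(\cdots)\,\acute{\Theta}_{m-a}$ of products of real root vectors with \emph{lower imaginary root vectors}. (The clean ``real root vectors only'' form \eqref{eq:hB} holds for the logarithmic generators $\BKH_{m}$, not for $\acute{\Theta}_{m}$.) Consequently, commuting $\acute{\Theta}_{m}$ through $\acute{\Theta}_{n}=-\sum_{p}[B_{1,p},B_{1,n-2-p}]_{v^{2}}\K_0$ leaves residual $\acute{\Theta}_{m-a}$ factors that must themselves be moved past the remaining $B$'s, so the induction must be organized to absorb these secondary terms, and the sign and $v^{\pm2}$ tracking is substantially heavier than your sketch acknowledges. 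One further caution: the invariance \eqref{eq:Tm} of $\acute{\Theta}_{m}$ under $\dag\TT_1$ is itself quoted from \cite[Corollary 5.12]{BK20}, which in that reference sits \emph{after} the commutativity statement you are trying to prove; if you lean on it you should verify independently that no circularity is introduced.
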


For $m\in\N$, define
\begin{align}
\label{eq:coeff}
a_p^m:=\left\{ \begin{array}{ll} v^{2(p-1)}(1+v^{2}), & \text{ if }p=1,2,\dots,\lfloor \frac{m-1}{2}\rfloor,
\\
v^{m-2}, & \text{ if }2 | m \text{ and }  p=\frac{m}{2}.  \end{array} \right.
\end{align}

\begin{proposition}
[\text{\cite[Proposition 5.5, Corollary 5.13]{BK20}}]
The following relation holds in $\tUi$, for $m\in\N$ and $r\in\Z$:
\begin{align}
\label{eq:BK581}
[ B_{1,r+m+1},&B_{1,r}]_{v^{2}}= -\acute{\Theta}_{m+1} \K_{r\delta+\alpha_1}-(v^{2}-1) \sum_{p=1}^{\lfloor \frac{m}{2}\rfloor} v^{2(p-1)} \acute{\Theta}_{m-2p+1}\K_{(p+r)\delta+\alpha_1}
\\
&\qquad \qquad +(v^{2}-1) \sum_{p=1}^{\lfloor \frac{m+1}{2}\rfloor} a_p^{m+1} B_{1,r+p}B_{1,m+r-p+1}. \notag
\end{align}
\end{proposition}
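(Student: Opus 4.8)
The plan is to follow the computation of \cite{BK20} but to organize it around the braid symmetry and a single strong induction. \textbf{Step 1 (reduction to $r=0$).} First I would show that it suffices to prove \eqref{eq:BK581} for $r=0$. Since $\dag\TT_1$ is an algebra automorphism with $(\dag\TT_1)^{-1}(B_{1,n})=B_{1,n+1}$ by \eqref{eq:B1n}, which fixes each $\acute{\Theta}_m$ by \eqref{eq:Tm}, and which satisfies $(\dag\TT_1)^{-1}(\K_{k\delta+\alpha_1})=\K_{(k+1)\delta+\alpha_1}$ (a direct check from $\TT_1(\K_1)=\K_1^{-1}$, $\TT_1(\K_0)=\K_\delta\K_1$ and $\dag$, noting $\K_\delta$ is fixed), applying $(\dag\TT_1)^{-1}$ to the identity \eqref{eq:BK581} at index $r$ produces exactly the identity at index $r+1$. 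Hence the $r=0$ case propagates to all $r\in\Z$ by applying $(\dag\TT_1)^{\mp1}$, and I may assume $r=0$.

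\textbf{Step 2 (induction set-up and base cases).} I would argue by strong induction on $m$, the hypothesis being that \eqref{eq:BK581} holds for all $r\in\Z$ and all $m'<m$. The base cases $m=0,1$ are finite computations: $B_{1,0}=B_1$ and $B_{1,1}=\TT_1^{-1}(B_0)$ is given explicitly by \eqref{T1B0-2}, so $[B_{1,1},B_{1,0}]_{v^2}$ and $[B_{1,2},B_{1,0}]_{v^2}$ can be reduced, using the Serre relation \eqref{relation:s3} together with its left/right multiples by $B_1$, to the asserted combinations of $\acute{\Theta}_1=-B_1B_0+v^2B_0B_1$, $\acute{\Theta}_2$, and the product $B_{1,1}B_{1,1}$.

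\textbf{Step 3 (inductive step).} For the step at $(m,r=0)$ I would start from the second expression for the imaginary root vector obtained in \eqref{eq:reformTheta}, namely $\acute{\Theta}_{m+1}=-\sum_{p=1}^{m+1}[B_{1,p},B_{1,m+1-p}]_{v^2}\K_1^{-1}$, and isolate the extreme term to write
\[
[B_{1,m+1},B_{1,0}]_{v^2}=-\acute{\Theta}_{m+1}\K_1-\sum_{p=1}^{m}[B_{1,p},B_{1,m+1-p}]_{v^2}.
\]
Each remaining commutator has index-difference strictly smaller than $m+1$. Those with $p>m+1-p$ are reduced directly by the inductive hypothesis; those with $p<m+1-p$ are first put into descending order by the elementary identity $[B_{1,a},B_{1,b}]_{v^2}=-v^2[B_{1,b},B_{1,a}]_{v^2}+(1-v^4)B_{1,a}B_{1,b}$ (which is exactly what generates the quadratic terms $B_{1,p}B_{1,m-p+1}$) and then reduced; when $m$ is odd the self-paired middle term contributes $(1-v^2)B_{1,(m+1)/2}^2$ directly. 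Collecting the imaginary contributions, reordering them freely by Proposition~\ref{prop:ThTh1} and absorbing the central $\K_{k\delta}$ factors, together with the quadratic contributions, should reproduce the two sums on the right-hand side of \eqref{eq:BK581}.

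\textbf{Main obstacle.} The genuine work is the coefficient bookkeeping in Step 3: one must verify that the telescoping of the many inductive contributions collapses precisely to the single power $v^{2(p-1)}$ in front of $\acute{\Theta}_{m-2p+1}$ and to the piecewise coefficients $a_p^{m+1}$ of \eqref{eq:coeff}, including the distinct value $v^{m-2}$ at the self-paired index when $m+1$ is even. This is the delicate combinatorial core of \cite[Prop.~5.5, Cor.~5.13]{BK20}; the inhomogeneous central corrections $\acute{\Theta}_{m-2p+1}\K_{(p+r)\delta+\alpha_1}$, which have no analogue in the quantum-group setting, are what make the accounting nontrivial, and I would keep the powers of $v$ and of $C=\K_\delta$ under control by packaging the recursion through the generating functions $\bB(z)$ and $\bTH(z)$ (cf.\ \eqref{gf3}).
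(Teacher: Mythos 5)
The paper contains no proof of this proposition to compare against: \S2.2 states explicitly that the relations of that subsection ``are obtained by literally repeating the arguments'' of \cite{BK20} and that the proofs are ``skipped altogether.'' Your reconstruction is essentially the argument the paper is pointing to, and its skeleton is sound. Step 1 is correct: $(\dag\TT_1)^{-1}$ is an algebra automorphism sending $B_{1,n}\mapsto B_{1,n+1}$ and $\K_{k\delta+\alpha_1}\mapsto \K_{(k+1)\delta+\alpha_1}$, and it fixes $\acute{\Theta}_m$ by \eqref{eq:Tm}, so it carries the identity at $r$ to the identity at $r+1$. The deferred bookkeeping in Step 3 does close: starting from $[B_{1,m+1},B_{1,0}]_{v^2}=-\acute{\Theta}_{m+1}\K_1-\sum_{p=1}^{m}[B_{1,p},B_{1,m+1-p}]_{v^2}$, pairing $p$ with $m+1-p$ and applying the inductive hypothesis at $(m'=m-2q,\,r'=q)$, the coefficient of $\acute{\Theta}_{m-2k+1}\K_{k\delta+\alpha_1}$ telescopes as $(v^2-1)+(v^2-1)^2\sum_{p=1}^{k-1}v^{2(p-1)}=(v^2-1)v^{2(k-1)}$, while the coefficient of $B_{1,a}B_{1,m+1-a}$ telescopes to $-(v^2-1)(1+v^2)v^{2(a-1)}$ for $a\le\lfloor m/2\rfloor$ and to $-(v^2-1)v^{m-1}$ at the self-paired index, exactly reproducing \eqref{eq:coeff}. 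So the ``main obstacle'' you identify is real work but it does come out right.

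The one point you must address is logical rather than computational. Both your Step 1 and the identity $\acute{\Theta}_{m+1}=-\sum_{p=1}^{m+1}[B_{1,p},B_{1,m+1-p}]_{v^2}\K_1^{-1}$ rest on \eqref{eq:Tm}, i.e.\ on \cite[Cor.~5.12]{BK20}; moreover your induction invokes the hypothesis at shifted indices $r'=q\ge 1$, which again passes through Step 1. In \cite{BK20} the invariance statement Cor.~5.12 is derived \emph{after} (and using) their Prop.~5.5, which is precisely the $r=0$ case of the identity you are proving. So as a self-contained argument your proof is circular unless \eqref{eq:Tm} is first established independently; within this paper's organization, where \eqref{eq:Tm} is quoted as an available fact before the proposition, your derivation is legitimate. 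You should either state explicitly that you take \eqref{eq:Tm} as an input (as the paper does), or reorganize the induction around the unshifted definition \eqref{eq:dB1}/\eqref{eq:reformTheta} so that the $r=0$ case is obtained without appealing to the $\dag\TT_1$-invariance of $\acute{\Theta}_m$.
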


\begin{proposition}
[\text{\cite[Proposition 5.8, Corollary 5.13]{BK20}}]
The following relation holds in $\tUi$, for $m\geq 1$ and $r\in\Z$:
%where we put $\acute{\Theta}_{m-l}$ on the right as in \cite{BK20}:
\begin{align}
\label{eq:im-real2}
&[\acute{\Theta}_{m}, B_{1,r}]
\\
&= [2]  \Big(
v^{2(m-1)} B_{1,r+m}
 - (v^2 -v^{-2}) \sum_{h=1}^{m-1} v^{2(m-2h)} B_{1,r+m-2h} \K_{h\de}
 - v^{2(1-m)} B_{1,r-m} \K_{m \de}
 \Big)
 \notag
\\
& +  (v^2 -v^{-2}) \times
\notag
\\
& \sum_{a=1}^{m-1}
\Big(
  v^{2(a-1)} B_{1,r+a}
- (v^2 -v^{-2}) \sum_{h=1}^{a-1} v^{2(a-2h)} B_{1,r+a-2h}  \K_{h\de}
- v^{2(1-a)} B_{1,r-a} \K_{a \de}
 \Big)
\acute{\Theta}_{m-a}.
\notag
\end{align}
\end{proposition}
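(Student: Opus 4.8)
The plan is to prove the commutator formula \eqref{eq:im-real2} by induction on $m$, exploiting the recursive structure built into the definition \eqref{eq:dB1}--\eqref{eq:reformTheta} of $\acute{\Theta}_m$ together with the real root vector commutator \eqref{eq:BK581} already at our disposal. Since this proposition is attributed to \cite{BK20} and the excerpt explicitly states that the relations ``are obtained by literally repeating the arguments {\em loc. cit.}'', the intended route is to transcribe the Baseilhac--Kolb derivation into the universal setting of $\tUi$, keeping track of the central generators $\K_i$ rather than the scalar parameters $\vs_i$. Thus the first thing I would do is set up the base case $m=1$, where $\acute{\Theta}_1 = -B_1 B_0 + v^2 B_0 B_1$, and check directly that $[\acute{\Theta}_1, B_{1,r}]$ reduces to $[2](B_{1,r+1} - v^{-2} B_{1,r-1}\K_\delta)$, which is exactly \eqref{eq:im-real2} specialized to $m=1$ (the two auxiliary sums being empty).

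For the inductive step I would write $\acute{\Theta}_m$ via the reformulation \eqref{eq:reformTheta}, namely $\acute{\Theta}_m = -\sum_{p=0}^{m-1}[B_{1,p},B_{1,m-2-p}]_{v^2}\K_0$, so that computing $[\acute{\Theta}_m, B_{1,r}]$ becomes a matter of commuting $B_{1,r}$ past a sum of $v^2$-bracketed products of real root vectors. I would use the Leibniz rule for commutators, $[XY,Z]=X[Y,Z]+[X,Z]Y$, to reduce everything to the elementary brackets $[B_{1,p},B_{1,s}]$ and $[B_{1,p},B_{1,s}]_{v^2}$; these are governed by \eqref{eq:BK581}, which re-expresses a $v^2$-bracket $[B_{1,r+m+1},B_{1,r}]_{v^2}$ in terms of lower imaginary root vectors $\acute{\Theta}_{j}$ (times central $\K$'s) and quadratic terms in the $B_{1,\bullet}$. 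The strategy is then to substitute \eqref{eq:BK581} to convert the quadratic-in-$B$ contributions back into a combination of $\acute{\Theta}_{j}B_{1,\bullet}$ terms and purely real $B_{1,\bullet}$ terms, and to use the induction hypothesis on $[\acute{\Theta}_{m-a},B_{1,r}]$ for $a\ge 1$ to handle the cross terms that appear.

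The main obstacle, as usual in these Drinfeld-type computations, will be the bookkeeping: matching the precise powers of $v$ and the exact central $\K_{h\delta}$ factors, and verifying that after all cancellations the coefficients organize themselves into the two telescoping-like sums displayed in \eqref{eq:im-real2} --- the ``leading'' sum with coefficient $[2]$ and the ``correction'' double sum with coefficient $(v^2-v^{-2})$ that repackages lower $\acute{\Theta}_{m-a}$ contributions. I expect the coefficients $a_p^m$ from \eqref{eq:coeff} and the $v^{2(p-1)}(1+v^2)$ pattern to enter precisely through the quadratic terms of \eqref{eq:BK581}, and the delicate point is confirming that the parity-dependent middle term (the $2\mid m$ case) does not spoil the uniform shape of the answer. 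A cleaner alternative, which I would keep in reserve, is to apply the automorphism identity $\dag\TT_1(\acute{\Theta}_m)=\acute{\Theta}_m$ from \eqref{eq:Tm} together with $B_{1,r}=(\dag\TT_1)^{-r}(B_1)$ to reduce the general $r$ case to $r=0$, thereby cutting the number of free indices and simplifying the induction.

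Since this is explicitly flagged as a verbatim adaptation of \cite[Proposition 5.8, Corollary 5.13]{BK20}, I would not reproduce the full grind; rather, I would record the base case, indicate the inductive reduction through \eqref{eq:BK581} and \eqref{eq:reformTheta}, and cite the loc.\ cit.\ calculation for the coefficient matching, noting only that the scalar parameters there are uniformly replaced by the central elements $\K_i$.
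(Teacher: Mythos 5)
Your proposal matches the paper's own treatment: the paper gives no proof of this proposition at all, stating that the relations in this subsection ``are obtained by literally repeating the arguments'' of \cite{BK20}, and your plan of transcribing the Baseilhac--Kolb induction (via \eqref{eq:reformTheta} and \eqref{eq:BK581}, with the scalars $\vs_i$ promoted to the central $\K_i$) is exactly that intended adaptation. One small slip in your base case: specializing \eqref{eq:im-real2} to $m=1$ gives $[\acute{\Theta}_1,B_{1,r}]=[2]\big(B_{1,r+1}-B_{1,r-1}\K_{\de}\big)$ (both displayed sums are empty and $v^{2(m-1)}=v^{2(1-m)}=1$), not $[2]\big(B_{1,r+1}-v^{-2}B_{1,r-1}\K_{\de}\big)$ as you wrote; the correct form is the one consistent with \eqref{eq:hB} at $m=1$, since $\acute{H}_1=\acute{\Theta}_1$.
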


\subsection{Drinfeld type relations in rank 1}

We shall introduce a new imagnary $v$-root vectors $\BKH_m$ and
formulate several Drinfeld type relations in $\tUi$ among the $v$-root vectors.

\subsubsection{} We start with the relations among real $v$-root vectors $B_{1,k}$.

\begin{proposition}   \label{prop:iDr}
%The relations \eqref{eq:BK581} are equivalent to t
The following relation holds in $\tUi$, for $r, s \in \Z$:
\begin{align}
  \label{rel:iDr}
[B_{1,r}, B_{1,s+1}]&_{v^{-2}}  -v^{-2}[B_{1,r+1}, B_{1,s}]_{v^{2}} = v^{-2}\acute{\Theta}_{r-s+1}\K_{s\de+\alpha_1} -v^{-2}\acute{\Theta}_{r-s-1}\K_{(s+1)\de+\alpha_1} \\
&+v^{-2}\acute{\Theta}_{s-r+1}\K_{r\de+\alpha_1} -v^{-2}\acute{\Theta}_{s-r-1}\K_{(r+1)\de+\alpha_1}.\notag
\end{align}
\end{proposition}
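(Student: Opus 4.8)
The plan is to deduce \eqref{rel:iDr} from the Baseilhac--Kolb relation \eqref{eq:BK581}, after using symmetry to reduce the number of cases. First I would check that both sides of \eqref{rel:iDr} are invariant under the exchange $r\leftrightarrow s$. Expanding the left-hand side as $B_{1,r}B_{1,s+1}-v^{-2}B_{1,s+1}B_{1,r}-v^{-2}B_{1,r+1}B_{1,s}+B_{1,s}B_{1,r+1}$ shows that swapping $r$ and $s$ only permutes the four monomials, while on the right-hand side the first pair of terms is exchanged with the second pair. Hence it suffices to treat $r\ge s$, and I set $m:=r-s\ge 0$. I would also record the $\dag\TT_1$-equivariance of the whole identity: since $\dag\TT_1(B_{1,n})=B_{1,n-1}$ by \eqref{eq:B1n}, $\dag\TT_1(\acute{\Theta}_m)=\acute{\Theta}_m$ by \eqref{eq:Tm}, and a short computation gives $\dag\TT_1(\K_{j\de+\alpha_1})=\K_{(j-1)\de+\alpha_1}$, applying $\dag\TT_1$ sends the identity for $(r,s)$ to the one for $(r-1,s-1)$; this may be used to normalize $s=0$ if convenient.

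For the generic range $m\ge 2$, the two reverse imaginary terms on the right-hand side drop out, since $\acute{\Theta}_{s-r+1}=\acute{\Theta}_{1-m}$ and $\acute{\Theta}_{s-r-1}=\acute{\Theta}_{-m-1}$ have negative index and hence vanish; the target collapses to $v^{-2}\acute{\Theta}_{m+1}\K_{s\de+\alpha_1}-v^{-2}\acute{\Theta}_{m-1}\K_{(s+1)\de+\alpha_1}$. I would then convert the $v^{-2}$-bracket using $[A,B]_{v^{-2}}=[A,B]_{v^{2}}+(v^{2}-v^{-2})BA$, and apply \eqref{eq:BK581} to the two resulting $v^2$-brackets: to $[B_{1,r+1},B_{1,s}]_{v^2}$ (with BK-parameter $m$ and base $s$) and to $[B_{1,r},B_{1,s+1}]_{v^2}$ (with BK-parameter $m-2$ and base $s+1$, legitimate precisely because $m\ge 2$). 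Among the resulting terms one reads off the two imaginary contributions $\acute{\Theta}_{m+1}\K_{s\de+\alpha_1}$ and $\acute{\Theta}_{m-1}\K_{(s+1)\de+\alpha_1}$ that appear in the target.

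The hard part will be to show that everything else cancels. The two BK expansions, together with the leftover monomial $(v^2-v^{-2})B_{1,s+1}B_{1,r}$ from the bracket conversion, contribute a tail of lower imaginary terms $\acute{\Theta}_{m-2p+1}\K_{\bullet}$ and a family of ordered products $B_{1,a}B_{1,b}$, all with index sum $a+b=2s+m+1$. I expect the products to be the real obstacle: they must be reassembled into $v^2$-brackets and then rewritten as imaginary root vectors via \eqref{eq:reformTheta}, after which one checks that the imaginary terms so produced precisely annihilate the tail, including reconciling the coefficient of $\acute{\Theta}_{m-1}\K_{(s+1)\de+\alpha_1}$. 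This is exactly where the explicit form of the coefficients $a_p^m$ in \eqref{eq:coeff} is needed, and I would control the bookkeeping either by pairing each product $B_{1,a}B_{1,b}$ with $B_{1,b}B_{1,a}$, or by inducting on $m$ and feeding the relation for smaller index-difference back through \eqref{eq:BK581}. Finally, the low cases $m=0$ and $m=1$ lie outside the range where $[B_{1,r},B_{1,s+1}]_{v^2}$ is BK-reducible and are verified by hand: $m=0$ is immediate from \eqref{eq:BK581} at $m=0$, which reads $[B_{1,r+1},B_{1,r}]_{v^2}=-\acute{\Theta}_1\K_{r\de+\alpha_1}$, and $m=1$ follows from a short direct manipulation.
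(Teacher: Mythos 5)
Your proposal is correct and follows essentially the same route as the paper's proof: reduce by the $r\leftrightarrow s$ symmetry to a single sign of $m$, combine two instances of \eqref{eq:BK581} (with BK-parameters $m$ and $m-2$) for $m\ge 2$ after converting one $v^{-2}$-bracket to a $v^{2}$-bracket, and verify $m=0,1$ directly from \eqref{eq:BK581}. The one remark worth making is that the cancellation you flag as the ``real obstacle'' is in fact a direct telescoping requiring no conversion of products back into imaginary root vectors via \eqref{eq:reformTheta}: the imaginary tails of the two BK expansions cancel pairwise after reindexing, leaving exactly the needed $(1-v^{-2})\acute{\Theta}_{m-1}\K_{(s+1)\de+\alpha_1}$, while the ordered products cancel among themselves (together with the leftover monomial from the bracket conversion) using the identity $a_p^{m+1}=v^{2}a_{p-1}^{m-1}$ from \eqref{eq:coeff}.
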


\begin{proof}
Note that $-v^{-2}[B_{1,r+1}, B_{1,s}]_{v^{2}} = [B_{1,s}, B_{1,r+1}]_{v^{-2}}$, and hence \eqref{rel:iDr} is invariant under $r\leftrightarrow s$. So we can assume  $m=s-r\geq0$.

%First assume \eqref{eq:BK581} holds for all $r \in \Z, m\in \N$.
By combining 2 relations in \eqref{eq:BK581} for $[B_{1,r+m+1},B_{1,r}]_{v^2}$ and $[B_{1,r+m},B_{1,r+1}]_{v^2}$ for $m\ge 2$, respectively, we obtain
\begin{align}
\label{rel:iDr1}
&[B_{1,r+m+1},B_{1,r}]_{v^2} - v^2 [B_{1,r+m},B_{1,r+1}]_{v^2}
\\
=&-\acute{\Theta}_{m+1} \K_{r\delta+\alpha_1} + \acute{\Theta}_{m-1}\K_{(r+1)\delta+\alpha_1} +(v^4-1) B_{1,r+1} B_{1,m+r}.\notag
\end{align}
One then rewrites \eqref{rel:iDr1} equivalently as \eqref{rel:iDr}, for $m\ge 2$.

Recalling $\acute{\Theta}_{0}=\frac{1}{v-v^{-1}}$, one observes that \eqref{rel:iDr} for $m=s-r=1$ is equivalent to the relation
$
[B_{1,r}, B_{1,r+2}]_{v^{-2}} = v^{-2}\acute{\Theta}_{2} \K_{r\delta+\alpha_1} + (v^{-2} -1) B_{1,r+1}^2,$
and then equivalent to the following relation in \eqref{eq:BK581}:
\[
[B_{1,r+2},B_{1,r}]_{v^{2}} = -\acute{\Theta}_{2} \K_{r\delta+\alpha_1} + (v^2 -1) B_{1,r+1}^2.
\]

The relation \eqref{rel:iDr} for $m=s-r=0$ is equivalent to $[B_{1,r}, B_{1,r+1}]_{v^{-2}} = v^{-2}\acute{\Theta}_{1}\K_{r\delta+\alpha_1}$, another relation in \eqref{eq:BK581}.
The proof of  \eqref{rel:iDr} is completed.
\end{proof}

\begin{remark}
The relation \eqref{rel:iDr} was derived from \eqref{eq:BK581} above; they are actually equivalent, as the converse can be shown by induction on $m$.
%Conversely, assume the relations \eqref{rel:iDr} holds for $m\ge 0$. we can first write down the relations \eqref{rel:iDr} for $m=0, 1$, which are 2 initial relations in \eqref{eq:BK581}. The remaining relations  in \eqref{eq:BK581} can be proved by an induction on $m$ using the relations \eqref{rel:iDr}.
\end{remark}

\subsubsection{}

Define elements $\BKH_m$ in $\tUi$, for $m\ge 1$, by the following equation:
\begin{align}
\label{eq:exp}
1+ \sum_{m\geq 1} (v-v^{-1})\acute{\Theta}_{m} z^m  = \exp\Big( (v-v^{-1}) \sum_{m\ge 1}  \BKH_m z^m \Big).
\end{align}
Introduce the following generating functions in a variable $z$:
\[
\bBKH(z) =\sum_{m\ge 1} \BKH_m z^m,
\quad
\bB(z) =\sum_{r\in \Z} B_{1,r} z^r,
\quad
\acute\bTH (z) = 1+ (v-v^{-1})\sum_{m\ge 1}  \acute{\Theta}_{m} z^m.
\]
Then we have
\begin{align}  \label{Theta:z}
\acute\bTH (z) = \exp\big( (v-v^{-1}) \bBKH (z) \big).
\end{align}

The relations between imaginary and real $v$-root vectors can now be formulated as follows; here the imaginary $v$-root vectors refer to $\BKH_m$.

\begin{proposition}
   \label{prop:equiv4}
The following identities hold, for $m \ge 1, l\in \Z$:
\begin{align}
\label{eq:hB}
[\BKH_m, B_{1,l}] &=\frac{[2m]}{m} B_{1,l+m}-\frac{[2m]}{m} B_{1,l-m}\K_{m\delta},
\\
 \label{eq:eHBe}
\acute\bTH (z)  \bB(w)
& =   \frac{(1 -v^{-2}zw^{-1}) (1 -v^{2} zw \K_\de)}{(1 -v^{2}zw^{-1})(1 -v^{-2}zw \K_\de)}
  \bB(w) \acute\bTH (z), %e^{(v-v^{-1}) \bBKH(z)},
\\
\label{eq:hB1}
[\acute{\Theta}_{m},B_{1,l}]+[\acute{\Theta}_{m-2},B_{1,l}]\K_\de
& =v^{2}[\acute{\Theta}_{m-1},B_{1,l+1}]_{v^{-4}}+v^{-2}[\acute{\Theta}_{m-1},B_{1,l-1}]_{v^{4}}\K_\de.
\end{align}
Indeed, the identities \eqref{eq:im-real2}, \eqref{eq:hB}, \eqref{eq:eHBe} and \eqref{eq:hB1} are all equivalent.
\end{proposition}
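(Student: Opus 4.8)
The plan is to prove the fourfold equivalence \eqref{eq:im-real2} $\Leftrightarrow$ \eqref{eq:hB} $\Leftrightarrow$ \eqref{eq:eHBe} $\Leftrightarrow$ \eqref{eq:hB1} by establishing a cycle of implications, exploiting the generating-function dictionary between the $\acute{\Theta}_m$ and the $\BKH_m$ given by \eqref{eq:exp}--\eqref{Theta:z}. The conceptual heart is that \eqref{eq:hB} is the ``logarithmic'' or primitive form of the multiplicative relation \eqref{eq:eHBe}: since $\acute\bTH(z) = \exp\big((v-v^{-1})\bBKH(z)\big)$, conjugating $\bB(w)$ by the exponential is controlled by the commutators $[\BKH_m, B_{1,l}]$, and the scalar rational function on the right of \eqref{eq:eHBe} should be exactly the exponential of the generating series built from the coefficients $\tfrac{[2m]}{m}$ appearing in \eqref{eq:hB}.

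First I would show \eqref{eq:hB} $\Leftrightarrow$ \eqref{eq:eHBe}. Rewrite \eqref{eq:hB} in generating-function form: summing $[\BKH_m, B_{1,l}]\, z^m w^l$ over $m\ge 1$ and $l\in\Z$ gives
\begin{align*}
[\bBKH(z), \bB(w)] = \Big(\sum_{m\ge 1} \tfrac{[2m]}{m} z^m\Big) \bB(z^{-1}w)\Big|_{\text{shift}}
\end{align*}
— more precisely the right side of \eqref{eq:hB} says $[\bBKH(z), \bB(w)] = f(zw^{-1})\bB(w) - f(zw\K_\de)\bB(w)\K_{\cdot}$ where $f(x)=\sum_{m\ge1}\tfrac{[2m]}{m}x^m$. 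The key computation is the closed form $\exp\big((v-v^{-1})f(x)\big) = \frac{1-v^{-2}x}{1-v^{2}x}$, which one checks by noting $(v-v^{-1})\tfrac{[2m]}{m} = \tfrac{v^{2m}-v^{-2m}}{m}$ and recognizing $\sum_{m\ge1}\tfrac{v^{2m}-v^{-2m}}{m}x^m = -\log(1-v^2x)+\log(1-v^{-2}x)$. Then, since $\bBKH(z)$ acts on $\bB(w)$ by a scalar-valued commutator (the right-hand side of \eqref{eq:hB} is $\bB$ times scalars), the adjoint action exponentiates: $\acute\bTH(z)\bB(w)\acute\bTH(z)^{-1} = \exp\big(\mathrm{ad}\,(v-v^{-1})\bBKH(z)\big)\bB(w)$, and the two scalar factors $(1-v^{-2}zw^{-1})/(1-v^{2}zw^{-1})$ and $(1-v^{2}zw\K_\de)/(1-v^{-2}zw\K_\de)$ emerge from evaluating $\exp((v-v^{-1})f)$ at $x=zw^{-1}$ and $x = zw\K_\de$ respectively. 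This step is reversible by taking logarithmic derivatives (extracting the $z^m$ coefficient), giving the equivalence.

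Next I would connect to the $\acute{\Theta}$-level relations. For \eqref{eq:hB} $\Leftrightarrow$ \eqref{eq:hB1}: the recursion \eqref{eq:hB1} is precisely what one obtains by translating the exponential relation \eqref{Theta:z} into a recursion among the $\acute{\Theta}_m$ via \eqref{eq:exp}; equating $z^m$-coefficients in the generating-function identity $\acute\bTH(z)\bB(w) = (\cdots)\bB(w)\acute\bTH(z)$ after clearing the denominators $(1-v^2zw^{-1})(1-v^{-2}zw\K_\de)$ yields exactly the three-term relation \eqref{eq:hB1} linking $\acute{\Theta}_m, \acute{\Theta}_{m-1}, \acute{\Theta}_{m-2}$ with shifted $B_{1,l\pm1}$. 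Finally, for \eqref{eq:im-real2} $\Leftrightarrow$ \eqref{eq:hB1}: I would use the Baseilhac--Kolb relation \eqref{eq:im-real2} as the known input and show that its generating-function packaging is equivalent to \eqref{eq:eHBe}; alternatively one derives \eqref{eq:hB1} directly from \eqref{eq:im-real2} by forming the combination $[\acute{\Theta}_m,B_{1,l}] + [\acute{\Theta}_{m-2},B_{1,l}]\K_\de$ and observing that the alternating sums telescope. This closes the cycle.

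The main obstacle I anticipate is the bookkeeping in the step \eqref{eq:im-real2} $\Leftrightarrow$ \eqref{eq:hB1}, where the explicit alternating sums $\sum_h v^{2(m-2h)}B_{1,r+m-2h}\K_{h\de}$ in \eqref{eq:im-real2} must be shown to collapse under the combination appearing on the left of \eqref{eq:hB1}; verifying that the telescoping leaves precisely the four-term expression requires careful index tracking, and one must handle the parity cases and boundary terms ($m=1,2$, and the cases involving $\acute{\Theta}_0 = \tfrac{1}{v-v^{-1}}$) separately. The exponential/logarithm identity in the \eqref{eq:hB} $\Leftrightarrow$ \eqref{eq:eHBe} step is clean once the closed form of $f(x)$ is in hand, but one should be careful that the adjoint action genuinely exponentiates — this relies on the fact that $[\bBKH(z),\bB(w)]$ is again a multiple of $\bB$ (with scalar and $\K_\de$ coefficients that commute with everything relevant), so that iterated commutators stay within the span of the $B_{1,\bullet}$ and the series converges formally.
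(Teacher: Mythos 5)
Your proposal is correct and follows essentially the same route as the paper: the paper likewise packages \eqref{eq:hB} as a logarithmic commutator identity $(v-v^{-1})[\bBKH(z),\bB(w)]=\ln\bigl(\tfrac{1-v^{-2}zw^{-1}}{1-v^{2}zw^{-1}}\cdot\tfrac{1-v^{2}zw\K_\de}{1-v^{-2}zw\K_\de}\bigr)\bB(w)$ and exponentiates the adjoint action to get \eqref{eq:eHBe}, expands the rational factor as a power series and compares coefficients to get \eqref{eq:im-real2}, and clears denominators and compares $z^mw^l$-coefficients (treating $m=1,2$ separately) to get \eqref{eq:hB1}. The only cosmetic difference is that the paper routes \eqref{eq:im-real2} through \eqref{eq:eHBe} rather than through \eqref{eq:hB1}, which is one of the two options you mention.
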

The proof of Proposition~\ref{prop:equiv4} will be given in \S\ref{subsec:proof} below.

\subsubsection{}

One also has the following variant \eqref{eq:im-real} of \eqref{eq:im-real2}, which we will not use. We skip a similar proof.  %, which is equivalent to \eqref{eq:eHBe}.
\begin{align}
\label{eq:im-real}
&[\acute{\Theta}_{m}, B_{1,l}]
\\
& = [2]   \Big(
v^{2(1-m)} B_{1,l+m}
+ (v^2 -v^{-2}) \sum_{h=1}^{m-1} v^{2(2h-m)} B_{1,l+m-2h}  \K_{h\de}
- v^{2(m-1)} B_{1,l-m} \K_{m \de}
 \Big)
 \notag
\\
& +  (v^2 -v^{-2}) \times
\notag
\\
& \sum_{a=1}^{m-1}  \acute{\Theta}_{m-a}
 \Big(
 v^{2(1-a)} B_{1,l+a}
 + (v^2 -v^{-2}) \sum_{h=1}^{a-1} v^{2(2h-a)} B_{1,l+a-2h}  \K_{h\de}
 - v^{2(a-1)} B_{1,l-a}  \K_{a \de}
 \Big). \notag
\end{align}

\subsection{Proof of Proposition~\ref{prop:equiv4} }
 \label{subsec:proof}

We shall establish the equivalences among identities \eqref{eq:im-real2}, \eqref{eq:hB}, \eqref{eq:eHBe} and \eqref{eq:hB1}.
\subsubsection{Proof of equivalences of \eqref{eq:im-real2}, \eqref{eq:hB} and \eqref{eq:eHBe} }

The identity \eqref{eq:hB} can be equivalently reformulated via generating functions as
\begin{align*}
(v-v^{-1}) [\bBKH(z), \bB(w)]
& =\sum_{k\ge 1,m\in\Z} B_{1,m+k} w^{m+k} \left( \frac{(v^{2} z w^{-1})^k}{k} - \frac{(v^{-2} z w^{-1})^k}{k}
\right)
\\
&\quad -\sum_{k\ge 1,m\in\Z} B_{1,m-k} w^{m-k} \left( \frac{ (v^{2} zw\K_\de )^k}{k}  - \frac{ (v^{-2} zw\K_\de )^k}{k} \right)
\\
&=  \ln \left(
  \frac{1-v^{-2}zw^{-1}}{1-v^{2}zw^{-1}} \cdot \frac{1-v^{2} zw \K_\de}{1-v^{-2}zw \K_\de}
  \right)
  \bB(w).
\end{align*}
Via integration this is then equivalent to
\begin{align}  \label{eq:eHBe1}
e^{(v-v^{-1}) \bBKH(z)} \bB(w) e^{ - (v-v^{-1}) \bBKH(z)}
& = \bB(w)
  \frac{(1 -v^{-2}zw^{-1}) (1 -v^{2} zw \K_\de)}{(1 -v^{2}zw^{-1})(1 -v^{-2}zw \K_\de)},
\end{align}
which can be reformulated as \eqref{eq:eHBe}.

We have the following identities:
\begin{align*}
\frac{1 -v^{-2}zw^{-1}}{1 -v^{2}zw^{-1}}
&= 1 +(v^2 -v^{-2}) \sum_{h\ge 1} v^{2(h-1)} z^h w^{-h},
\\
\frac{1 -v^{2} zw \K_\de}{1 -v^{-2}zw \K_\de}
&= 1 - (v^2 -v^{-2}) \sum_{h\ge 1} v^{2(1-h)} z^h w^{h} \K_{h\de},
\end{align*}
and hence
\begin{align}
&\frac{(1 -v^{-2}zw^{-1}) (1 -v^{2} zw \K_\de)}{(1 -v^{2}zw^{-1})(1 -v^{-2}zw \K_\de)}
 \label{eq:zwzw} \\
&= 1 + \sum_{a \ge 1} z^a (v^2 -v^{-2})
 \Big(
  v^{2(a-1)} w^{-a}
 - (v^2 -v^{-2}) \sum_{h=1}^{a-1} v^{2(a-2h)} w^{2h-a} \K_{h\de} -v^{2(1-a)} w^a \K_{a \de}
 \Big).
 \notag
\end{align}

It follows by \eqref{eq:zwzw} that the identity \eqref{eq:eHBe} is equivalent to the following identity:
\begin{align*}
& \Big(1+ \sum_{m \geq 1} (v-v^{-1})\acute{\Theta}_{m } z^m \Big) \bB(w)
=
\bB(w) \Big(1+ \sum_{m \geq 1} (v-v^{-1})\acute{\Theta}_{m} z^m \Big) +
\\
& \quad  \sum_{a \ge 1} z^a (v^2 -v^{-2})
 \left(
  v^{2(a-1)} w^{-a}
  - (v^2 -v^{-2}) \sum_{h=1}^{a-1} v^{2(a-2h)} w^{2h-a} \K_{h\de}
  -v^{2(1-a)} w^a \K_{a \de}
 \right) \times
 \\
 &\qquad\qquad
\bB(w) \Big(1+ \sum_{n \geq 1} (v-v^{-1})\acute{\Theta}_{n} z^n \Big).
\end{align*}
Equating the coefficients of $z^m w^l$ on both sides, for $m\ge 1$, we obtain
\begin{align*}
& (v-v^{-1}) [\acute{\Theta}_{m}, B_{1,l} ]
\\
& = (v^2 -v^{-2}) \times
\\
& \Big(
v^{2(m-1)} B_{1,l+m}
 - (v^2 -v^{-2}) \sum_{h=1}^{m-1} v^{2(m-2h)} B_{1,l+m-2h} \K_{h\de}
 - v^{2(1-m)} B_{1,l-m} \K_{m \de}
 \Big)
\\
& +  (v -v^{-1}) (v^2 -v^{-2}) \times
\\
& \sum_{a=1}^{m-1}
\Big(
  v^{2(a-1)} B_{1,l+a}
- (v^2 -v^{-2}) \sum_{h=1}^{a-1} v^{2(a-2h)} B_{1,l+a-2h}  \K_{h\de}
- v^{2(1-a)} B_{1,l-a} \K_{a \de}
 \Big)
\acute{\Theta}_{m-a},
\end{align*}
which is equivalent to the identity \eqref{eq:im-real2}.

%It is easy to reverse the arguments above  to show \eqref{eq:im-real2} implies \eqref{eq:hB}.

%%
\subsubsection{Equivalence of \eqref{eq:eHBe} and \eqref{eq:hB1} }

The identity \eqref{eq:eHBe} can be rephrased as
\begin{align*}
(1-v^2zw^{-1})(1-v^{-2}zw\K_\de) \acute{\bTH}(z) \bB(w)
=(1-v^{-2}zw^{-1})(1-v^2zw\K_\de)\bB(w) \acute{\bTH}(z).
\end{align*}
This can be rewritten as
\begin{align}  \label{eq:TBBT}
&(1-v^2zw^{-1})(1-v^{-2}zw\K_\de)\big(1+\sum_{m\geq1} (v-v^{-1})\acute{\Theta}_{m}z^m \big) \sum_{r\in\Z}B_{1,r}w^r
\\
=&(1-v^{-2}zw^{-1})(1-v^2zw\K_\de) \sum_{r\in\Z}B_{1,r}w^r\big(1+\sum_{m\geq1} (v-v^{-1})\acute{\Theta}_{m}z^m \big).
\notag
\end{align}
Equating the coefficients of $z^m w^l$ on both sides of \eqref{eq:TBBT}, for $m\ge 1$, we obtain the following.

If $m\geq3$, then
\begin{align*}
 \acute{\Theta}_m & B_{1,l}-v^2\acute{\Theta}_{m-1}B_{1,l+1} -v^{-2}\acute{\Theta}_{m-1}B_{1,l-1}\K_\de+\acute{\Theta}_{m-2}B_{1,l}\K_\de
\\
=&B_{1,l}\acute{\Theta}_m-v^{-2}B_{1,l+1}\acute{\Theta}_{m-1}-v^2B_{1,l-1}\acute{\Theta}_{m-1}\K_\de+B_{1,l}\acute{\Theta}_{m-2}\K_\de,
\end{align*}
which can be transformed into \eqref{eq:hB1}.

If $m=2$, then
\begin{align*}
&\acute{\Theta}_2B_{1,l}-v^2\acute{\Theta}_1 B_{1,l+1} -v^{-2}\acute{\Theta}_1 B_{1,l-1}\K_\de
=B_{1,l}\acute{\Theta}_2-v^{-2}B_{1,l+1}\acute{\Theta}_1-v^{2}B_{1,l-1}\acute{\Theta}_1\K_\de.
\end{align*}
If $m=1$, then
\begin{align*}
(v-v^{-1}) & \acute{\Theta}_1B_{1,l}-v^2B_{1,l+1} -v^{-2} B_{1,l-1}\K_\de
\\
=&(v-v^{-1})B_{1,l}\acute{\Theta}_1-v^{-2}B_{1,l+1}-v^{2}B_{1,l-1}\K_\de.
\end{align*}
The above identities in both cases for $m=1,2$ coincide with \eqref{eq:hB1}. Hence, the equivalence between  \eqref{eq:eHBe} and \eqref{eq:hB1} in all cases are established.

%%%%%%%
%%%%%%%\section{A Drinfeld type presentation in rank 1} \label{sec:rank1}

%
%
\subsection{New imaginary root vectors}

With application to $\imath$Hall algebra of the projective line \cite{LRW20} in mind, we shall introduce a somewhat modified versions of the elements $\acute{\Theta}_{m}$ and $\BKH_m$, denoted by $\Theta_m$ and $H_m$, respectively. In the $\imath$Hall algebra realization of the $q$-Onsager algebra \cite{LRW20}, $\Theta_m$ ($m\ge 1$) admit a natural interpretation in terms of torsion sheaves on the projective line. 

For $m \geq 1$, define
\begin{align}
\label{eq:dBB}
\dB_{m}=\acute{\Theta}_{m} - \sum\limits_{a=1}^{\lfloor\frac{m-1}{2}\rfloor}(v^2-1) v^{-2a} \acute{\Theta}_{m-2a}\K_{a\de} -\de_{m,ev}v^{1-m} \K_{\frac{m}{2}\de},
\end{align}
where
\begin{align*}
\de_{m,ev}= \begin{cases}
1, & \text{ for $m$ even}, \\
0, & \text{ for $m$ odd}.
\end{cases}
\end{align*}
%where $\de_{m,ev}=1$, for $m$ even, and $\de_{m,ev}=0$, for $m$ odd.
Note that $\dB_{1}=\acute{\Theta}_{1}$.
We also set $\dB_{0}=\frac{1}{v-v^{-1}}$, and $\dB_{m}=0$ for $m<0$.
Let
\begin{align}   \label{Theta:z2}
\bTH (z) = 1+(v-v^{-1}) \sum_{m\ge 1} \Theta_{m} z^m.
\end{align}
We define the new {\em imaginary $v$-root vectors} $H_m$ by letting
\begin{align}  \label{eq:HTh}
1+ \sum_{m\geq 1} (v-v^{-1})\dB_{m} u^m  =  \exp\Big( (v-v^{-1}) \sum_{m\ge 1} H_m u^m \Big).
\end{align}

\begin{lemma}
The identity \eqref{eq:dBB} can be reformulated as a generating function identity:
\begin{align}
\label{eq:Theta2}
\bTH(z) =  \frac{1- \K_\delta z^2}{1-v^{-2}\K_\delta  z^2}  \acute{\bTH} (z).
\end{align}
\end{lemma}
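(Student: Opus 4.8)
The plan is to prove the generating function identity \eqref{eq:Theta2} by converting the defining recursion \eqref{eq:dBB} into a statement about the two power series $\bTH(z)$ and $\acute{\bTH}(z)$, and then recognizing the ratio of their coefficients as a geometric-type series in $z^2$. First I would write out $\bTH(z) = 1 + (v-v^{-1})\sum_{m\ge 1}\Theta_m z^m$ and substitute the definition \eqref{eq:dBB} of $\Theta_m$ into this sum. The key observation is that \eqref{eq:dBB} expresses each $\Theta_m$ as $\acute{\Theta}_m$ minus a sum of lower-index $\acute{\Theta}_{m-2a}$'s (shifted by two in the index) weighted by $(v^2-1)v^{-2a}\K_{a\de}$, together with a purely central correction term $\de_{m,ev}v^{1-m}\K_{\frac{m}{2}\de}$ occurring only in even degree.

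The main computational step is to separate the contribution into three series and identify each. Substituting \eqref{eq:dBB}, the sum $(v-v^{-1})\sum_{m\ge 1}\Theta_m z^m$ splits as
\begin{align*}
(v-v^{-1})\sum_{m\ge 1}\acute{\Theta}_m z^m
- (v^2-1)\sum_{a\ge 1} v^{-2a}\K_{a\de} z^{2a} \cdot (v-v^{-1})\sum_{j\ge 0}\acute{\Theta}_j z^j
- (v-v^{-1})\sum_{a\ge 1} v^{1-2a}\K_{a\de} z^{2a},
\end{align*}
where in the middle term I have reindexed $m = 2a + j$ with $j = m-2a\ge 0$, using that $a$ ranges up to $\lfloor\frac{m-1}{2}\rfloor$ is equivalent to $j\ge 1$ while the $j=0$ contribution is exactly supplied by absorbing the $\de_{m,ev}$ correction term into the geometric series. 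This is the delicate bookkeeping point: I would check that the even-degree central term in \eqref{eq:dBB}, namely $-\de_{m,ev}v^{1-m}\K_{\frac{m}{2}\de}$, precisely accounts for the otherwise-missing $j=0$ summand $-(v^2-1)v^{-2a}\K_{a\de}\cdot\acute{\Theta}_0 z^{2a}$ together with the constant $\frac{1}{v-v^{-1}}$ value of $\acute{\Theta}_0$. After this alignment, the middle and third series combine so that the bracket $\big(1 + (v-v^{-1})\sum_{j\ge 0}\acute{\Theta}_j z^j\big)$ is recognized as $\acute{\bTH}(z)$, and the prefactor becomes the geometric series $\sum_{a\ge 0}(v^{-2}\K_\de z^2)^a = (1-v^{-2}\K_\de z^2)^{-1}$ times the factor $(1-\K_\de z^2)$.

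Assembling these pieces, I expect to obtain directly
\begin{align*}
\bTH(z) = \Big(1 - (v^2-1)v^{-2}\K_\de z^2 \sum_{a\ge 0}(v^{-2}\K_\de z^2)^a\Big)\acute{\bTH}(z)
= \frac{1-\K_\de z^2}{1-v^{-2}\K_\de z^2}\,\acute{\bTH}(z),
\end{align*}
where the final simplification uses $1 - (v^2-1)v^{-2}\K_\de z^2/(1-v^{-2}\K_\de z^2) = (1-\K_\de z^2)/(1-v^{-2}\K_\de z^2)$. The main obstacle is purely organizational rather than conceptual: getting the reindexing of the double sum exactly right and confirming that the even/odd split encoded by $\de_{m,ev}$ matches the $j=0$ boundary term of the geometric series, since the centrality of $\K_\de$ is what allows all these factors to be pulled out freely and commute past $\acute{\bTH}(z)$. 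Once the index alignment is verified, the identity follows by summing the geometric series and simplifying the rational prefactor.
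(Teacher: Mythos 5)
Your proposal is correct and follows essentially the same route as the paper's proof: substitute \eqref{eq:dBB} into $\bTH(z)$, reindex the double sum as a geometric series in $v^{-2}\K_\de z^2$ multiplying $\acute{\bTH}(z)$, note that the even-degree central correction supplies exactly the $j=0$ term (since $(v-v^{-1})v^{1-2a}=(v^2-1)v^{-2a}$ and $(v-v^{-1})\acute{\Theta}_0=1$), and simplify the rational prefactor. Two cosmetic slips only: your displayed three-term split double-counts the $j=0$ contribution if the middle sum is read literally as starting at $j=0$ while the third series is also retained, and the bracket you recognize as $\acute{\bTH}(z)$ should be $(v-v^{-1})\sum_{j\ge 0}\acute{\Theta}_j z^j$ without the extra leading $1$.
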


\begin{proof}
By \eqref{eq:dBB}, we have (with a change of variables $k=m-2a$ in the double summand below)
\begin{align*}
\bTH (z) &=
1 + (v-v^{-1}) \sum_{m\ge 1} \acute{\Theta}_{m} z^m
- (v-v^{-1}) \sum_{m\ge 1} \sum_{m/2> a\ge 1} (v^2 -1) v^{-2a} \acute{\Theta}_{m-2a} \K_{a \delta} z^m
\\
&\quad - (v-v^{-1}) \sum_{n\ge 1} v^{1-2n} \K_{n \delta} z^{2n}
\\
&= \acute{\bTH}(z) - (v^2-1) \Big(\sum_{a\ge 1} v^{-2a} \K_{a \delta} z^{2a} \Big) \cdot \Big( \sum_{k\ge 1} (v -v^{-1}) \acute{\Theta}_{k} z^{k}\Big)
%(v-v^{-1}) \sum_{k\ge 1} \sum_{a\ge 1} (v^2 -1) v^{-2a} B_{k\de} \K_\delta^a z^{k+2a}
- \frac{(v-v^{-1}) v^{-1}\K_\delta z^2}{1-v^{-2}\K_\delta z^2}
\\
&= \acute{\bTH}(z) - (v^2-1)  \frac{v^{-2}\K_\delta z^2}{1-v^{-2}\K_\delta z^2} \big(\acute{\bTH}(z) -1 \big)
 - \frac{(v-v^{-1}) v^{-1}\K_\delta z^2}{1-v^{-2}\K_\delta z^2}
\\
&= \frac{1 -\K_\delta z^2}{1-v^{-2}\K_\delta z^2} \acute{\bTH}(z).
\end{align*}

The lemma is proved.
\end{proof}

%%%%%%%%%%%%%%

\begin{lemma}
We have, for $m\in\Z$,
\begin{align}
\label{eqn:real1}
\dB_{m+1}-v^{-2}\dB_{m-1} \K_{\de}=\acute{\Theta}_{m+1}-\acute{\Theta}_{m-1} \K_{\de}.
\end{align}
\end{lemma}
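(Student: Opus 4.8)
The plan is to prove the identity \eqref{eqn:real1} by extracting it as the coefficient of $z^{m+1}$ from the generating function identity \eqref{eq:Theta2} established in the preceding lemma. The virtue of this approach is that \eqref{eq:Theta2} already packages the definition \eqref{eq:dBB} into a single relation between $\bTH(z)$ and $\acute{\bTH}(z)$, and the target relation \eqref{eqn:real1} is precisely a two-term recursion that should fall out of clearing denominators.

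First I would rewrite \eqref{eq:Theta2} by multiplying both sides by the denominator $1-v^{-2}\K_\delta z^2$, obtaining
\begin{align*}
(1-v^{-2}\K_\delta z^2)\,\bTH(z) = (1-\K_\delta z^2)\,\acute{\bTH}(z).
\end{align*}
Next I would substitute the definitions $\bTH(z)=1+(v-v^{-1})\sum_{m\ge1}\dB_m z^m$ from \eqref{Theta:z2} and $\acute{\bTH}(z)=1+(v-v^{-1})\sum_{m\ge1}\acute{\Theta}_m z^m$, and then compare coefficients of $z^{m+1}$ on both sides. On the left, the coefficient receives a contribution $(v-v^{-1})\dB_{m+1}$ from the factor $1$ times $\bTH(z)$, and a contribution $-v^{-2}\K_\delta\cdot(v-v^{-1})\dB_{m-1}$ from the $-v^{-2}\K_\delta z^2$ term times the $z^{m-1}$ coefficient of $\bTH(z)$. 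Symmetrically, the right-hand side yields $(v-v^{-1})\acute{\Theta}_{m+1}-\K_\delta(v-v^{-1})\acute{\Theta}_{m-1}$. Dividing through by the common factor $(v-v^{-1})$ gives exactly \eqref{eqn:real1}, using the conventions $\dB_0=\acute{\Theta}_0=\frac{1}{v-v^{-1}}$ and $\dB_m=\acute{\Theta}_m=0$ for $m<0$ to cover the boundary cases $m=0,1$ uniformly.

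The computation is essentially routine bookkeeping once \eqref{eq:Theta2} is in hand, so I do not anticipate a genuine obstacle; the only point requiring mild care is the treatment of the low-degree terms $m=0$ and $m=1$, where the constant term $1$ in each generating function and the conventions $\dB_0=\acute{\Theta}_0=\frac{1}{v-v^{-1}}$ must be accounted for so that the coefficient extraction remains valid. An alternative, should one prefer to avoid generating functions entirely, would be to substitute the explicit definition \eqref{eq:dBB} directly into both sides of \eqref{eqn:real1} and check that the telescoping sums over $a$ cancel; but the generating-function route is cleaner and reuses the work already done in the proof of \eqref{eq:Theta2}.
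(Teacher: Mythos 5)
Your proposal is correct and follows exactly the paper's own argument: clear the denominator in \eqref{eq:Theta2} to get $(1-v^{-2}\K_\delta z^2)\bTH(z)=(1-\K_\delta z^2)\acute{\bTH}(z)$ and compare coefficients of $z^{m+1}$. The extra care you take with the conventions $\dB_0=\acute{\Theta}_0=\frac{1}{v-v^{-1}}$ and the vanishing for negative indices is exactly what makes the boundary cases work, and the paper leaves this implicit.
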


\begin{proof}
By \eqref{eq:Theta2}, we have $(1-v^{-2}\K_\delta z^2) \bTH(z) =  (1- \K_\delta z^2)  \acute{\bTH} (z).$ The lemma follows by comparing the coefficients of $z^{m+1}$ on both sides of this identity.
%
%By definition, we only need to prove for $m\geq1$.
%
%For $m>1$, we have
%\begin{align*}
%&\dB_{m+1}-v^{-2}\dB_{m-1} \K_{\de}
%\\
%=&\acute{\Theta}_{m+1} - \sum\limits_{i=1}^{\lfloor\frac{m}{2}\rfloor}(v^2-1) v^{-2i} B_{(m+1-2i)\de}\K_{i\de} -\de_{m,odd}v^{-m} \K_{\frac{m+1}{2}\de}
%\\
%&-v^{-2}\acute{\Theta}_{m-1}\K_\de + \sum\limits_{i=1}^{\lfloor\frac{m-2}{2}\rfloor}(v^2-1) v^{-2i-2} B_{(m-1-2i)\de}\K_{(i+1)\de} +\de_{m,odd}v^{-m-2} \K_{\frac{m+1}{2}\de}
%\\
%=&\acute{\Theta}_{m+1}-v^{-2}\acute{\Theta}_{m-1}\K_\de -(v^2-1)v^{-2}\acute{\Theta}_{m-1}\K_\de
%\\
%=&\acute{\Theta}_{m+1}-\acute{\Theta}_{m-1} \K_{\de}.
%\end{align*}
%
%For $m=1$,
%\begin{align*}
%\dB_{2\de}-v^{-2}\dB_{0\de} \K_{\de}=& B_{2\de}-v^{-1}\K_\de-\frac{v^{-2}}{v-v^{-1}}\K_\de
%\\
%=& B_{2\de}-\frac{1}{v-v^{-1}} \K_\de
%\\
%=&B_{2\de}-B_{0\de}\K_\de.
%\end{align*}
\end{proof}

\begin{proposition}
\label{prop:HH1}
We have $[H_m, H_n] =0 = [\dB_m, \dB_n]$, for $m,n\ge 1$.
\end{proposition}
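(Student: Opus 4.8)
The plan is to derive $[\dB_m,\dB_n]=0$ directly from the commutativity of the $\acute{\Theta}$'s (Proposition~\ref{prop:ThTh1}), and then deduce $[H_m,H_n]=0$ from the former. The cleanest route is via generating functions. Recall from Proposition~\ref{prop:ThTh1} that $[\acute{\Theta}_n,\acute{\Theta}_m]=0$ for all $m,n\ge 1$, which is equivalent to the generating function identity $\acute{\bTH}(z)\acute{\bTH}(w)=\acute{\bTH}(w)\acute{\bTH}(z)$. Since each $\K_i$ is central in $\tUi$, so is $\K_\de$, and hence the scalar rational factor $\frac{1-\K_\delta z^2}{1-v^{-2}\K_\delta z^2}$ appearing in \eqref{eq:Theta2} commutes with everything in sight.

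First I would establish $[\dB_m,\dB_n]=0$. Using \eqref{eq:Theta2}, write $\bTH(z)=f(z)\,\acute{\bTH}(z)$ and $\bTH(w)=f(w)\,\acute{\bTH}(w)$, where $f(z)=\frac{1-\K_\delta z^2}{1-v^{-2}\K_\delta z^2}$ is a central power series. Then
\begin{align*}
\bTH(z)\bTH(w)
&= f(z)\acute{\bTH}(z) f(w)\acute{\bTH}(w)
= f(z)f(w)\,\acute{\bTH}(z)\acute{\bTH}(w)
\\
&= f(w)f(z)\,\acute{\bTH}(w)\acute{\bTH}(z)
= f(w)\acute{\bTH}(w) f(z)\acute{\bTH}(z)
= \bTH(w)\bTH(z),
\end{align*}
where the middle equality uses both the centrality of $f$ and $\acute{\bTH}(z)\acute{\bTH}(w)=\acute{\bTH}(w)\acute{\bTH}(z)$. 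Comparing coefficients of $z^m w^n$ in $\bTH(z)\bTH(w)=\bTH(w)\bTH(z)$ and using the definition \eqref{Theta:z2} of $\bTH(z)$ yields $[\dB_m,\dB_n]=0$ for all $m,n\ge 1$.

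Next I would pass from the $\dB_m$ to the $H_m$. By the defining relation \eqref{eq:HTh}, $\bTH(z)=\exp\big((v-v^{-1})\bH(z)\big)$ where $\bH(z):=\sum_{m\ge 1}H_m z^m$; equivalently $\bH(z)=(v-v^{-1})^{-1}\log\bTH(z)$ as a formal power series with coefficients in $\tUi$. Since $\bTH(z)$ and $\bTH(w)$ commute (as just shown), and since the logarithm of a power series is a (noncommutative) formal power series in the coefficients of that series, the series $\bH(z)$ and $\bH(w)$ are each expressible as convergent formal sums of products of the $\dB_m$'s; as all the $\dB_m$'s mutually commute, the elements $H_m$ are polynomials in the mutually commuting $\dB_m$'s (via the standard Newton-type recursion converting the exponential/logarithm between the two families), hence $[H_m,H_n]=0$. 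I expect the main technical point to verify carefully is this last step: one must confirm that $\log\bTH(z)$ has its coefficients lying in the commutative subalgebra generated by $\{\dB_m\}$, so that $\bH(z)$ and $\bH(w)$ commute coefficientwise; this follows formally because the power-series logarithm is built only from sums and products of the coefficients of $\bTH$, all of which commute, but it is the only place where one must be attentive to noncommutativity. Comparing coefficients of $z^m w^n$ in $\bH(z)\bH(w)=\bH(w)\bH(z)$ then gives $[H_m,H_n]=0$, completing the proof.
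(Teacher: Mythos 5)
Your proof is correct and follows essentially the same route as the paper, which simply notes that the claim ``follows from Proposition~\ref{prop:ThTh1} by using \eqref{eq:Theta2} and noting $\K_\de$ is central''; your write-up just makes explicit the two steps (commutativity of the $\Theta_m$ via the central factor $f(z)$, then commutativity of the $H_m$ as formal polynomials in the commuting $\Theta_m$) that the paper leaves implicit.
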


\begin{proof}
Follows from Proposition~\ref{prop:ThTh1} by using \eqref{eq:Theta2} and noting $\K_\de$ is central. % \eqref{eqn:real1}.
\end{proof}

The following is a counterpart via $H_m$ and $\dB_m$ of the identities \eqref{eq:hB} and  \eqref{eq:hB1}, and they look formally the same.
\begin{proposition}
\label{prop:HB1}
The following identity holds in $\tUi$, for $m\ge 1$ and $r\in\Z$:
\begin{align}
[H_m, B_{1,l}] &=\frac{[2m]}{m} B_{1,l+m}-\frac{[2m]}{m} B_{1,l-m}\K_{m\delta},
\label{eq:hB3}
\\
[\dB_{m},B_{1,r}]+[\dB_{m-2},B_{1,r}]\K_\de& =v^{2}[\dB_{m-1},B_{1,r+1}]_{v^{-4}}+v^{-2}[\dB_{m-1},B_{1,r-1}]_{v^{4}}\K_\de.
\label{eq:hB2}
\end{align}
\end{proposition}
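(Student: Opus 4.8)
The plan is to deduce Proposition~\ref{prop:HB1} from the already-established rank-one relations for the $\acute\Theta_m$ and $\BKH_m$ (Proposition~\ref{prop:equiv4}) by transporting them across the normalization \eqref{eq:Theta2}. The essential structural input is that the normalizing factor relating $\bTH(z)$ to $\acute\bTH(z)$, namely $\frac{1-\K_\de z^2}{1-v^{-2}\K_\de z^2}$, involves only the central element $\K_\de$, so it commutes with everything and can be shuffled freely through any identity. I would treat the two displayed identities separately, since \eqref{eq:hB3} is an identity among the $H_m$ (the logarithmic/exponential generators) while \eqref{eq:hB2} is an identity among the $\dB_m$ themselves.

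First I would prove \eqref{eq:hB2}. Observe that \eqref{eqn:real1} already tells us $\dB_{m+1}-v^{-2}\dB_{m-1}\K_\de = \acute\Theta_{m+1}-\acute\Theta_{m-1}\K_\de$; the plan is to check that each side of \eqref{eq:hB2} can be written purely in terms of such ``telescoping'' combinations, so that the corresponding identity \eqref{eq:hB1} for the $\acute\Theta$'s transfers verbatim. Concretely, I would take the commutator of \eqref{eqn:real1} (or the appropriate $v$-twisted commutator) with $B_{1,r}$, $B_{1,r+1}$, $B_{1,r-1}$ as needed, and reorganize. A cleaner route is to work with generating functions: rewrite \eqref{eq:hB2} as a single generating-function identity in $\bTH(z)$ and $\bB(w)$ exactly parallel to \eqref{eq:eHBe}, namely
\begin{align*}
(1-v^2zw^{-1})(1-v^{-2}zw\K_\de)\,\bTH(z)\bB(w)
=(1-v^{-2}zw^{-1})(1-v^2zw\K_\de)\,\bB(w)\bTH(z).
\end{align*}
Substituting \eqref{eq:Theta2} to replace $\bTH(z)$ by $\frac{1-\K_\de z^2}{1-v^{-2}\K_\de z^2}\acute\bTH(z)$ on both sides, the scalar factor $\frac{1-\K_\de z^2}{1-v^{-2}\K_\de z^2}$ is central and cancels, reducing the displayed identity to \eqref{eq:eHBe}, which is already proved. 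Extracting the $z^m w^l$ coefficient then yields \eqref{eq:hB2}, by the same coefficient-comparison as in \S\ref{subsec:proof}.

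For \eqref{eq:hB3} I would argue at the level of generating functions through the exponential \eqref{eq:HTh}. Writing $\bH(z)=\sum_{m\ge1}H_m z^m$, the relation \eqref{eq:HTh} says $\bTH(z)=\exp\big((v-v^{-1})\bH(z)\big)$, while \eqref{Theta:z} gives $\acute\bTH(z)=\exp\big((v-v^{-1})\bBKH(z)\big)$. Combining these with \eqref{eq:Theta2} yields
\begin{align*}
(v-v^{-1})\bH(z)=(v-v^{-1})\bBKH(z)+\ln\!\Big(\tfrac{1-\K_\de z^2}{1-v^{-2}\K_\de z^2}\Big),
\end{align*}
so $\bH(z)$ and $\bBKH(z)$ differ by a central series in $\K_\de$. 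Since any power of the central $\K_\de$ commutes with $B_{1,l}$ only up to the scalar weight shifts already visible, I must be slightly careful: $\K_\de$ is central in $\tUi$, hence the central correction term contributes nothing to the commutator $[\,\cdot\,,B_{1,l}]$. Therefore $[\bH(z),B_{1,l}]=[\bBKH(z),B_{1,l}]$, and comparing $z^m$ coefficients gives $[H_m,B_{1,l}]=[\BKH_m,B_{1,l}]$, which is exactly \eqref{eq:hB} reindexed. This immediately produces \eqref{eq:hB3}.

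The main obstacle I anticipate is bookkeeping rather than conceptual: confirming that the normalizing factor is genuinely central and that taking logarithms is legitimate term-by-term as a formal power series in $z$ (the constant term of each exponent is $0$, so $\ln$ of a series with constant term $1$ is well-defined). Once centrality is used to cancel the correction factor, both identities reduce to their already-proved $\acute\Theta/\BKH$ counterparts with no new computation. The only subtlety worth double-checking is the parity of the shift in \eqref{eq:Theta2} (the $z^2$ and $\K_\de$ pairing), to ensure the coefficient extraction for \eqref{eq:hB2} lands on precisely the combinations $[\dB_{m\pm\text{stuff}},B_{1,r\pm1}]$ appearing on the two sides.
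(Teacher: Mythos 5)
Your proposal is correct and matches the paper's proof in essence: for \eqref{eq:hB2} the paper likewise substitutes \eqref{eq:Theta2} into the generating-function form of \eqref{eq:eHBe}, cancels the central factor $\frac{1-\K_\de z^2}{1-v^{-2}\K_\de z^2}$ to get the same relation with $\bTH(z)$ in place of $\acute\bTH(z)$, and re-runs the coefficient extraction of \S\ref{subsec:proof}. Your derivation of \eqref{eq:hB3} directly from \eqref{eq:hB}, via the observation that $\sum_{m\ge 1}H_m z^m-\bBKH(z)$ is a central series in $\K_\de$ so that $[H_m,B_{1,l}]=[\BKH_m,B_{1,l}]$, is a slight shortcut compared with the paper (which deduces \eqref{eq:hB3} from \eqref{eq:hB2} by repeating the equivalence argument of Proposition~\ref{prop:equiv4}), but it rests on the same centrality and exp/log manipulations and is equally valid.
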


\begin{proof}
By \eqref{eq:eHBe}, \eqref{eq:Theta2} and noting $\K_\de$ is central, we have
\begin{align*}
 %\label{eq:THB}
 {\bTH} (z)  \bB(w)
& =
  \frac{(1 -v^{-2}zw^{-1}) (1 -v^{2} zw \K_\de)}{(1 -v^{2}zw^{-1})(1 -v^{-2}zw \K_\de)}
  \bB(w) {\bTH} (z),
\end{align*}
which takes the same form as \eqref{eq:eHBe}. Now \eqref{eq:hB2} (which takes the same form as \eqref{eq:hB1}) follows by exactly the same argument for the equivalence between \eqref{eq:eHBe} and \eqref{eq:hB1} in \S\ref{subsec:proof}. Then \eqref{eq:hB3} follows from \eqref{eq:hB2} exactly as in the proof of Proposition~\ref{prop:equiv4}.
\end{proof}

\begin{proposition}
  \label{prop:BB1}
We have, for $r,s\in \Z$,
\begin{align}
\label{rel:iDr1b}
[B_{1,r},& B_{1,s+1}]_{v^{-2}}  -v^{-2} [B_{1,r+1}, B_{1,s}]_{v^{2}} \\\notag
&= v^{-2}\dB_{(s-r+1)\de} \K_{r\de +\alpha_1}-v^{\red{-4}}\dB_{(s-r-1)\de} \K_{(r+1)\de+\alpha_1}
\\
&\quad +v^{-2}\dB_{(r-s+1)\de} \K_{s\de +\alpha_1}-v^{\red{-4}}\dB_{(r-s-1)\de} \K_{(s+1)\de+\alpha_1}.\notag
\end{align}
\end{proposition}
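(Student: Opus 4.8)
The plan is to derive \eqref{rel:iDr1b} directly from the relation \eqref{rel:iDr} of Proposition~\ref{prop:iDr}, which has exactly the same left-hand side but is expressed in terms of the unnormalized imaginary root vectors $\acute{\Theta}_m$. Since the left-hand sides of \eqref{rel:iDr} and \eqref{rel:iDr1b} coincide, it suffices to show that the two right-hand sides agree, i.e. to rewrite the $\acute{\Theta}$-side of \eqref{rel:iDr} in terms of the normalized vectors $\dB_m=\Theta_m$ using the conversion identity \eqref{eqn:real1}.

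First I would group the four terms on the right-hand side of \eqref{rel:iDr} into two pairs according to whether the subscript of $\acute{\Theta}$ is $r-s\pm1$ or $s-r\pm1$. Using that every $\K_\mu$ is central together with $\K_{(s+1)\de+\alpha_1}=\K_{s\de+\alpha_1}\K_\de$ and $\K_{(r+1)\de+\alpha_1}=\K_{r\de+\alpha_1}\K_\de$, the first pair collects as $v^{-2}\K_{s\de+\alpha_1}\bigl(\acute{\Theta}_{r-s+1}-\acute{\Theta}_{r-s-1}\K_\de\bigr)$ and the second pair as $v^{-2}\K_{r\de+\alpha_1}\bigl(\acute{\Theta}_{s-r+1}-\acute{\Theta}_{s-r-1}\K_\de\bigr)$.

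Next I would invoke \eqref{eqn:real1} in the form $\acute{\Theta}_{m+1}-\acute{\Theta}_{m-1}\K_\de=\dB_{m+1}-v^{-2}\dB_{m-1}\K_\de$, applied with $m=r-s$ to the first bracket and with $m=s-r$ to the second. Re-expanding each product and absorbing the central factor $\K_\de$ back into the $\K$-subscript (so that $\K_{s\de+\alpha_1}\K_\de=\K_{(s+1)\de+\alpha_1}$, and likewise with $r$) produces precisely the four terms on the right-hand side of \eqref{rel:iDr1b}; in particular the extra factor $v^{-2}$ carried by each $\dB_{m-1}$-term is exactly what turns the coefficient $v^{-2}$ into $v^{-4}$ on the two terms whose subscript is $s-r-1$ or $r-s-1$, which accounts for the shift highlighted in the statement. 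This identification completes the proof.

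The computation is entirely routine, amounting to a single substitution of \eqref{eqn:real1} into the already-established \eqref{rel:iDr}, so I do not anticipate a genuine obstacle. The only point demanding care is the bookkeeping of the $\K$-prefactors and the induced powers of $v$ as the central element $\K_\de$ is shuffled between the $\dB$-indices and the $\K$-subscripts.
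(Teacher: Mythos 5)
Your proposal is correct and is exactly the paper's argument: the paper's proof of Proposition~\ref{prop:BB1} consists of the single line ``Follows from \eqref{rel:iDr} by using \eqref{eqn:real1}'', which is precisely the substitution you carry out. Your bookkeeping of the $\K_\de$ factors and the resulting $v^{-2}\mapsto v^{-4}$ coefficient shift is accurate.
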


\begin{proof}
Follows from \eqref{rel:iDr} by using  \eqref{eqn:real1}. (We have used red color to indicate the q-powers in \eqref{rel:iDr1b} different from those in \eqref{rel:iDr}.)
\end{proof}

\subsection{Grading and filtration}

We shall relabel the $v$-root vectors by a set of roots $\{\alpha_1 + k \delta \mid k\in \Z \} \cup \{m \delta \mid m \ge 1\}$:
\begin{equation}  \label{eq:weight}
B_{\alpha_1+k\delta} =B_{1,k},
\quad B_{m\de} =H_m, \quad \text{ for } k\in \Z, m\ge 1.
\end{equation}
Denote by $\tUi_0$  the subalgebra of $\tUi$ generated by $\Theta_m$, for $m\ge 1$. By Definition~\ref{def:Onsager},  the algebra $\tUi$ is $\Z \I (\equiv \Z\alpha_0\oplus \Z \alpha_1)$-graded by weights, with
\begin{align}  \label{eq:grade1}
\wt (B_i) =\alpha_i, \quad \wt (\K_i) =2\alpha_i, \quad \text{ for } i \in \{0,1\}.
\end{align}
It follows that $\wt (B_{1,k}) =\alpha_1+k\delta, \wt (H_m) =m\delta$, whence the notation \eqref{eq:weight}.

For any fixed positive integer $\texttt{h}$ (with a standard choice being $\texttt h=1$), the algebra $\tUi$ is endowed with a filtered algebra structure $|\cdot |_{\texttt h}$ by setting
\begin{align}
  \label{eq:hfilt}
  |B_1|_{\texttt h} =1, \quad
  |B_0|_{\texttt h} =\texttt{h}, \quad
  |\K_i|_{\texttt h} =0, \quad
  \quad \text{ for } i\in \{0,1\}.
  \end{align}
Then there is an algebra isomorphism relating the associated graded $\gr \tUi$ to half a quantum affine $\mathfrak{sl}_2$, $\U^- =\langle F_0, F_1 \rangle$ in the setting of \cite{LW20}, which goes back to \cite{Let02, Ko14}:
\begin{align}   \label{eq:filter1}
\gr \tUi \cong \U^- \otimes \Q(v)[\K_1^{\pm 1},\K_\de^{\pm 1}],
\qquad \overline{B_i} \mapsto F_i \; (i=0,1).
\end{align}

\begin{proposition} [cf. \cite{BK20}]
  \label{prop:PBW1}
  The following holds in $\tUi:$
\begin{enumerate}
\item
The subalgebra $\tUi_0$ is a polynomial algebra in $\Theta_m$, for $m\ge 1$;  it is also a polynomial algebra in $H_m$, for $m\ge 1$.
\item
Fix any total order $<$ on the roots $\{\alpha_1 + k \delta \mid k\in \Z \} \cup \{m \delta \mid m \ge 1\}$. Then $\tUi$ admits a basis
\[
\big \{\K_1^{r} \K_\de^{s} B_{\gamma_1}^{a_1} B_{\gamma_2}^{a_2} \ldots B_{\gamma_N}^{a_N} \mid
r, s\in\Z, a_1, a_2, \ldots, a_N \in \N, N\in \N, \gamma_1<\gamma_2 <\ldots < \gamma_N \big \}.
\]
\end{enumerate}
\end{proposition}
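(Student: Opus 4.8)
The plan is to deduce both statements from the filtered-algebra isomorphism \eqref{eq:filter1}, which identifies $\gr\tUi$ with $\U^-\otimes\Q(v)[\K_1^{\pm1},\K_\de^{\pm1}]$, together with the straightening relations \eqref{rel:iDr1b}, \eqref{eq:hB3}, \eqref{eq:hB2} and the commutativity supplied by Proposition~\ref{prop:HH1}. The guiding principle is that under the filtration $|\cdot|_{\texttt{h}}$ of \eqref{eq:hfilt} the $v$-root vectors $B_{1,k}$ and $H_m$ degenerate to the Drinfeld real and imaginary root vectors of the negative half $\U^-$ of quantum affine $\mathfrak{sl}_2$, up to an invertible element of the torus $\Q(v)[\K_1^{\pm1},\K_\de^{\pm1}]$, for which a PBW basis is classical (cf. \cite{Be94, Da12}).

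For part (1), Proposition~\ref{prop:HH1} shows that $\{\Theta_m\}_{m\ge1}$ and $\{H_m\}_{m\ge1}$ each consist of pairwise commuting elements, so $\tUi_0$ is commutative. The defining relation \eqref{eq:HTh} expresses the generating series of the $\Theta_m$ as the exponential of that of the $H_m$; expanding, one gets $\Theta_m = H_m + p_m(H_1,\dots,H_{m-1})$ and conversely $H_m = \Theta_m + q_m(\Theta_1,\dots,\Theta_{m-1})$ for suitable polynomials $p_m,q_m$. This unitriangular change of variables shows that $\{\Theta_m\}$ and $\{H_m\}$ generate the same subalgebra, and that it is a polynomial ring in the former iff in the latter. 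It therefore suffices to establish algebraic independence of the $H_m$, which I would obtain by passing to $\gr\tUi$: the leading symbol $\overline{H_m}$ is, up to an invertible torus factor, the degree-$m\delta$ imaginary root vector of $\U^-$, and these are algebraically independent in $\U^-$. A standard leading-term argument then rules out any polynomial relation among the $H_m$ in $\tUi$.

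For part (2) I would treat spanning and independence separately. For spanning, first note that $\tUi$ is generated over $\Q(v)[\K_1^{\pm1},\K_\de^{\pm1}]$ by the $B_{1,k}$ $(k\in\Z)$ and the $H_m$ $(m\ge1)$, since $B_1=B_{1,0}$, $B_0=B_{1,-1}\K_0$ and $\K_0=\K_\de\K_1^{-1}$ recover the original generators. Any word in these is then reordered into the prescribed shape $\K_1^r\K_\de^s B_{\gamma_1}^{a_1}\cdots B_{\gamma_N}^{a_N}$ by repeatedly applying \eqref{eq:hB3} to move each $H_m$ past a $B_{1,l}$, Proposition~\ref{prop:HH1} to order the imaginary vectors among themselves, and \eqref{rel:iDr1b} (the lift of the Drinfeld relation effecting the reordering inside $\U^-$) to reorder pairs of real root vectors, pulling the central $\K_1,\K_\de$ to the left throughout. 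Termination of this straightening I would guarantee by induction on the pair $\big(\text{total } |\cdot|_{\texttt{h}}\text{-degree},\ \text{number of out-of-order adjacent factors}\big)$, observing that each straightening either lowers the filtration degree or, at fixed top degree, strictly decreases the number of inversions, exactly mirroring the homogeneous reordering inside $\U^-\otimes(\text{torus})=\gr\tUi$. For independence, the leading symbol of an ordered monomial is, up to an invertible torus factor, equal to $\K_1^r\K_\de^s$ times the corresponding ordered monomial in the real and imaginary root vectors of $\U^-$; since the latter constitute the PBW basis of $\U^-$ they are linearly independent, and the usual leading-term argument lifts this to linear independence of the ordered monomials in $\tUi$.

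The main obstacle I anticipate is the identification, under $\gr$, of the symbols of $B_{1,k}$ and $H_m$ with the genuine Drinfeld root vectors of $\U^-$: the isomorphism \eqref{eq:filter1} only records $\overline{B_i}\mapsto F_i$, so one must verify that the constructions defining $B_{1,k}$ (through the braid automorphisms $\dag\TT_1$) and $\acute{\Theta}_m,\Theta_m,H_m$ (through \eqref{eq:dB1}, \eqref{eq:dBB}, \eqref{eq:HTh}) are compatible with the filtration and reduce modulo lower degree to the Beck--Damiani formulas for the root vectors of $\U^-$. Concretely, this amounts to checking that the $|\cdot|_{\texttt{h}}$-degrees of these elements are the expected ones, so that no unexpected degree collapse occurs and the straightening relations degenerate to the homogeneous Drinfeld relations in $\U^-$. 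Once this compatibility is in place, both the algebraic independence in (1) and the linear independence in (2) follow from the corresponding classical facts for $\U^-$.
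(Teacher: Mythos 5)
Your proposal is correct and follows essentially the same route as the paper: the paper's proof of part (2) simply defers to \cite[Theorem 4.5]{BK20}, which is itself the filtration argument you describe (degenerate via $|\cdot|_{\texttt h}$ to $\U^-\otimes\Q(v)[\K_1^{\pm1},\K_\de^{\pm1}]$ and match the symbols of the $v$-root vectors with Damiani's root vectors of $\U^-$), and for part (1) the paper likewise reduces to the algebraic independence of the $\acute\Theta_m$ via the triangular change of variables to $\Theta_m$ and $H_m$. The "main obstacle" you flag — checking that no degree collapse occurs and that the symbols really are the Beck--Damiani root vectors — is precisely the content of the cited result, so your outline fills in the details the paper leaves to the reference.
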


\begin{proof}
Part (2) follows as a variant of \cite[Theorem 4.5]{BK20} on a PBW basis for $\Ui$, which is proved by using the filtration \eqref{eq:hfilt}--\eqref{eq:filter1} and comparing with the $v$-root vectors in $\U$ given in \cite{Da93}. (A mild difference is that $\tUi$ admits the central elements $\K_\beta$, for $\beta \in \cR$, and $B_{m\de}$ for $\Ui$ used {\em loc. cit.} is understood as a version of $\acute\Theta_m$.)

Clearly, the algebraic independence among $\{\acute\Theta_m\mid m\ge 1\}$ implies the algebraic independence of
$\{\Theta_m\mid m\ge 1\}$ as well as of $\{H_m\mid m\ge 1\}$.
Part (1) follows.
\end{proof}
\subsection{A Drinfeld type presentation in rank 1}
 \label{subsec:Dr1}

\begin{definition}
  \label{def:DrOnsa}
Let $\tUiD =\tUiD (\widehat{\sll}_2)$ be the $\Q(v)$-algebra  generated by $\K_1^{\pm1}$, $C^{\pm1}$, $H_{m}$ and $B_{1,r}$, where $m\geq1$, $r\in\Z$, subject to the following defining relations, for $m,n\geq1$ and $r,s\in \Z$:
\begin{align}
\K_1\K_1^{-1} =1, \quad C C^{-1} &=1, \quad \K_1, C \text{ are central},
 \label{iDRo0}
\\
[H_m,H_n] &=0, \label{iDRo1}
\\
[H_m, B_{1,r}] &=\frac{[2m]}{m} B_{1,r+m}-\frac{[2m]}{m} B_{1,r-m}C^m,
\label{iDRo2}
\\
\label{iDRo3}
[B_{1,r}, B_{1,s+1}]_{v^{-2}}  -v^{-2} [B_{1,r+1}, B_{1,s}]_{v^{2}}
&= v^{-2}\dB_{s-r+1} C^r \K_1-v^{-4}\dB_{s-r-1} C^{r+1} \K_1 \\ \notag
&\quad  +v^{-2}\dB_{r-s+1} C^s \K_1-v^{-4}\dB_{r-s-1} C^{s+1} \K_1, \notag
\end{align}
where
$1+ \sum_{m\geq 1} (v-v^{-1})\dB_{m} z^m  =  \exp\big( (v-v^{-1}) \sum_{m\ge 1} H_m z^m \big).$
\end{definition}

\begin{theorem}
\label{thm:Dr1}
There is a $\Q(v)$-algebra isomorphism ${\Phi}: \tUiD \rightarrow\tUi$, which sends
\begin{align*}
B_{1,r}\mapsto B_{1,r}, \quad \Theta_m \mapsto \Theta_m,
\quad
\K_1\mapsto \K_1, \quad C\mapsto \K_\de,
\quad \text{ for } m\ge 1, r\in \Z.
\end{align*}
The inverse ${\Phi}^{-1} : \tUi \rightarrow \tUiD$ sends
$\K_1\mapsto  \K_1,
\K_0\mapsto  C \K_1^{-1},
B_1\mapsto   B_{1,0},
B_0\mapsto B_{1,-1}C \K_1^{-1}.$
\end{theorem}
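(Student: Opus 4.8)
The plan is to establish that $\Phi$ is a well-defined algebra homomorphism, that it is surjective, and that it is injective; the isomorphism statement and the displayed formula for $\Phi^{-1}$ then follow at once. First I would check that the images under $\Phi$ of the generators $\K_1^{\pm1}, C^{\pm1}, H_m, B_{1,r}$ satisfy the defining relations \eqref{iDRo0}--\eqref{iDRo3} inside $\tUi$. Relation \eqref{iDRo0} holds since $\K_0,\K_1$ are central in $\tUi$ by Definition~\ref{def:Onsager} and $C\mapsto \K_\de=\K_0\K_1$; relation \eqref{iDRo1} is Proposition~\ref{prop:HH1}; relation \eqref{iDRo2} is \eqref{eq:hB3} of Proposition~\ref{prop:HB1}; and relation \eqref{iDRo3}, under the substitution $C^{r}\K_1=\K_\de^{r}\K_1=\K_{r\de+\alpha_1}$, is exactly \eqref{rel:iDr1b} of Proposition~\ref{prop:BB1}. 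Since $\Theta_m\in\tUiD$ is defined by the same exponential generating-function identity in the $H_m$ as its namesake in $\tUi$, the assignment $\Phi(\Theta_m)=\Theta_m$ is automatic, so $\Phi$ is a well-defined $\Q(v)$-algebra homomorphism.

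For surjectivity, recall that $\tUi$ is generated by $B_0,B_1,\K_0^{\pm1},\K_1^{\pm1}$, each of which lies in the image: $B_1=\Phi(B_{1,0})$, $\K_1=\Phi(\K_1)$, $\K_0=\K_\de\K_1^{-1}=\Phi(C\K_1^{-1})$, and, using the identity $B_0=B_{1,-1}\K_0$ recorded after \eqref{eq:B1n}, $B_0=\Phi(B_{1,-1}C\K_1^{-1})$. Hence $\Phi$ is onto, and these same computations exhibit the preimages, reading off the claimed formula for $\Phi^{-1}$ on generators.

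For injectivity I would argue via the PBW basis. Let $M$ be the set of ordered monomials $\K_1^{r}C^{s}B_{\gamma_1}^{a_1}\cdots B_{\gamma_N}^{a_N}$ in $\tUiD$, with $\gamma_1<\cdots<\gamma_N$ taken over the roots $\{\alpha_1+k\delta\}\cup\{m\delta\}$ (identifying $B_{\alpha_1+k\delta}=B_{1,k}$ and $B_{m\de}=H_m$ as in \eqref{eq:weight}) for a fixed total order. Granting that $M$ spans $\tUiD$, the map $\Phi$ sends $M$ bijectively onto the PBW basis of $\tUi$ of Proposition~\ref{prop:PBW1}, which is $\Q(v)$-linearly independent. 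Consequently any finite relation $\sum_\mu c_\mu m_\mu=0$ in $\tUiD$ with $m_\mu\in M$ maps to a dependence among distinct basis vectors of $\tUi$, forcing every $c_\mu=0$; thus $M$ is itself a basis and $\Phi$, carrying a basis bijectively to a basis, is an isomorphism.

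The main obstacle is precisely the spanning of $\tUiD$ by $M$, a straightening argument driven by the three families of relations: \eqref{iDRo1} reorders the mutually commuting $H_m$; \eqref{iDRo2} moves each $H_m$ to the right of a $B_{1,r}$ at the cost of correction terms with strictly fewer imaginary factors; and \eqref{iDRo3} reorders the real root vectors $B_{1,r}$, its right-hand side contributing only purely imaginary terms (in $\Theta_m$, hence in the $H_m$), so that each such reordering strictly lowers the number of real factors. I would run an outer induction on the number of real factors, with an inner induction on the index gap in \eqref{iDRo3} (which drops by two at each step, reducing to the base cases $s-r\in\{0,1\}$ already treated in the proof of Proposition~\ref{prop:iDr}) and on the imaginary count in \eqref{iDRo2}; the interlocking correction terms all lie in strictly lower stages, which guarantees termination. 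Equivalently, one may pass to the associated graded for the filtration \eqref{eq:hfilt}--\eqref{eq:filter1}, under which the lower-order right-hand sides vanish and the relations degenerate to the homogeneous Drinfeld relations for half of $\U(\widehat{\sll}_2)$, where the required PBW straightening is the classical one of \cite{Da93}; this is the rank-one instance of the filtered reduction invoked in the higher-rank case.
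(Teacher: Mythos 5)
Your proposal is correct and takes essentially the same route as the paper: verify the defining relations via Propositions~\ref{prop:HH1}, \ref{prop:HB1}, and \ref{prop:BB1} to obtain the homomorphism, then observe that the ordered monomials span $\tUiD$ and are sent to the PBW basis of Proposition~\ref{prop:PBW1}, forcing $\Phi$ to be an isomorphism. The only difference is one of detail: you make explicit the surjectivity computation (yielding $\Phi^{-1}$) and sketch the straightening argument for the spanning set, both of which the paper leaves implicit.
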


\begin{proof}
The relations \eqref{iDRo1}, \eqref{iDRo2} and \eqref{iDRo3}  in ${}^{\text{Dr}}\tU$ hold for the images  of the generators of $\tUiD$ under $\Phi$, thanks to Propositions~\ref{prop:HH1}, \ref{prop:HB1}, and \ref{prop:BB1}, respectively. (The relation \eqref{iDRo0} under $\Phi$ holds trivially.) Thus $\Phi$ is a homomorphism. %Clearly $\Phi$ is surjective.

By the defining relations of $\tUiD$, one shows that $\tUiD$ has a spanning set
\[
\{\K_1^{r} C^{s} B_{\gamma_1}^{a_1} B_{\gamma_2}^{a_2} \ldots B_{\gamma_N}^{a_N} \mid
r, s\in\Z, a_1, a_2, \ldots, a_N \in \N, N\in \N, \gamma_1<\gamma_2 <\ldots < \gamma_N \}.
\]
Since this set is mapped to a basis of $\tUi$ according to Proposition~\ref{prop:PBW1}, it must be a basis for $\tUiD$ as well. Therefore, $\Phi$ is an isomorphism.
\end{proof}

%\begin{remark}
%The isomorphism \eqref{eq:filter1} also identifies our new presentation with Drinfeld's presentation of $\U^-$ (possibly up to some normalization convention): $\overline{B_{1,k}} =x_{1,-k}^-$, for $k\in \Z$.
%\end{remark}

%\begin{remark}
%The Drinfeld presentation for the $q$-Onsager algebra $\tUi$ (see Theorem~\ref{thm:Dr1}) admits a generating function reformulation via ${\bTH} (z)$, $ \bB(z)$ and $\bDel(z) = \sum_{k\in\Z}  C^k z^k$ below:
%\begin{align*}
% {\bTH} (z)  \bTH(w)
% &= {\bTH} (w)  \bTH(z), \\
% {\bTH} (z)  \bB(w)
%& =  \frac{(1 -v^{-2}zw^{-1}) (1 -v^{2} zw C)}{(1 -v^{2}zw^{-1})(1 -v^{-2}zw C)}  \bB(w) {\bTH} (z),
%  \\
% (v^2z-w) \Y(z) \Y(w) & +(v^{2}w-z) \Y(w) \Y(z) \\
%& =\frac{v^{-2} \K_1}{v-v^{-1}} \bDel(zw) \big( (v^2z-w)\bTH(w) +(v^2w-z)\bTH(z) \big).
%\end{align*}
%This can be read off from Theorem~\ref{thm:ADEgf} for general ADE.
%\end{remark}

\begin{remark}
There is another Drinfeld type presentation of the $\Q(v)$-algebra $\tUi$ with the same set of generators as in Theorem~\ref{thm:Dr1}, subject to the relations \eqref{iDRo0}--\eqref{iDRo2} and \eqref{iDRo3b} below (in place of \eqref{iDRo3}):
\begin{align}
\label{iDRo3b}
[B_{1,r}, B_{1,s+1}]_{v^{-2}}  -v^{-2} [B_{1,r+1}, B_{1,s}]_{v^{2}}
&= v^{-2}\dB_{s-r+1} C^r \K_1-v^{-2}\dB_{s-r-1} C^{r+1} \K_1 \\ \notag
&\quad  +v^{-2}\dB_{r-s+1} C^s \K_1-v^{-2}\dB_{r-s-1} C^{s+1} \K_1. \notag
\end{align}
One should (secretly) regard the $\dB_m$ in this presentation as $\acute{\Theta}_m$ defined earlier, and then the relation \eqref{iDRo3b} is simply \eqref{rel:iDr}. In this way, the equivalence between \eqref{rel:iDr} and \eqref{rel:iDr1b} (or \eqref{iDRo3}) has been established earlier. Hence the equivalence between this presentation and the presentation $\tUiD$ in Definition~\ref{def:DrOnsa} follows.
\end{remark}

%%%%%%%%
%%%%%%%%
\section{A Drinfeld type presentation of affine $\imath$quantum groups }
  \label{sec:main}

In this section, we study in depth the $\imath$quantum groups $\tUi$ of split affine ADE type. We introduce a new set of generators for $\tUi$ and use them to formulate a Drinfeld type presentation for $\tUi$, generalizing the one for $q$-Onsager algebra in \S\ref{subsec:Dr1}.

\subsection{Affine Weyl and braid groups}

Let $(c_{ij})_{i,j\in \II}$ be the Cartan matrix of the simple Lie algebra $\fg$ of type ADE. Let $\cR_0$ be the set of roots for $\fg$, and fix a set $\cR^+_0$ of positive roots  with simple roots $\alpha_i$ $(i\in \II)$. Denote by $\theta$ the highest root of $\fg$.

Let $\widehat{\fg}$ be the (untwisted) affine Lie algebra with affine Cartan matrix denoted by $(c_{ij})_{i,j\in\I}$, where $\I=\{0\} \cup \II$ with the affine node $0$. Let $\alpha_i$ $(i\in \I)$ be the simple roots of $\widehat{\fg}$, and $\alpha_0=\de -\theta$, where $\de$ denotes the basic imaginary root. The root system $\cR$  for $\widehat{\fg}$ and its positive system $\cR^+$ are defined to be
\begin{align}
\cR &=\{\pm (\beta + k \delta) \mid \beta \in \cR_0^+, k  \in \Z\}  \cup \{m \delta \mid m \in \Z\backslash \{0\} \},
% \quad \cR_{\text{im}}^+ = \{m \delta \mid m \ge 1\},
     \label{eq:roots}  \\
%\cR_> &= \{k \delta +\beta \mid \beta \in \cR_0^+, k  \ge 0\}, \quad
%\cR_< = \{k \delta -\beta \mid \beta \in \cR_0^+, k > 0\},
%  \label{eq:rootsR} \\
\cR^+ &= \{k \delta +\beta \mid \beta \in \cR_0^+, k  \ge 0\}
\cup  \{k \delta -\beta \mid \beta \in \cR_0^+, k > 0\}
\cup \{m \delta \mid m \ge 1\}.
%\cR_> \cup \cR_{\text{im}}^+ \cup \cR_<,  \quad
%\cR^+[\II] = \cR_> \cup (\cR_{\text{im}}^+ \times \II) \cup \cR_<.
  \label{eq:roots+}
 \end{align}
% The set $\cR^+[\II]$ stands for positive roots with multiplicity.
For $\gamma =\sum_{i\in \I} n_i \alpha_i \in \N \I$, the height $\text{ht} (\gamma)$ is defined as $\text{ht} (\gamma) =\sum_{i\in \I} n_i$.

Let $P$ and $Q$ denote the weight and root lattices of $\fg$, respectively. Let $\omega_i \in P$ $(i\in \II)$ be the fundamental weights of $\fg$. Note $\alpha_i =\sum_{j\in \II} c_{ij}\omega_j$. We define a bilinear pairing $\langle \cdot, \cdot \rangle : P\times Q \rightarrow \Z$ such that $\langle \omega_i, \alpha_j \rangle =\delta_{i,j}$, for $i,j \in \II$, and thus $\langle \alpha_i, \alpha_j \rangle = c_{ij}$.

%The coweight lattice $P^\vee$ of $\fg$ is a lattice over $\Z$ with a basis given by $\omega_i^\vee$, $1\leq i\leq n$. The coroot lattice $Q^\vee$ of $\fg$ is a sublattice of $P^\vee$ with a basis given by $\alpha_i^\vee$, $1\leq i\leq n$, where $\alpha_i^\vee=\sum_{1\leq j\leq n} c_{ij}\omega_j^\vee$.
%The root lattice $Q=\Hom(P^\vee,\Z)$ with a basis given by $\alpha_i$ such that $\langle \alpha_i,\omega_j^\vee \rangle=\delta_{ij}$ for any $1\leq i,j\leq n$.

The Weyl group $W_0$ of $\fg$ is generated by the simple reflections $s_i$, for $i \in \II$. It acts on $P$ by
$s_i(x)=x-\langle x, \alpha_i \rangle\alpha_i$ for $x\in P$. The extended affine Weyl group $\widetilde{W}$ is the semi-direct product $W_0 \ltimes P$, which contains the affine Weyl group $W:=W_0 \ltimes Q =\langle s_i \mid i \in \I \rangle$ as a subgroup; we denote
\[
t_\omega =(1, \omega) \in \widetilde W, \quad \text{ for } \omega \in P.
\]
We identify $P/Q$ with a finite group $\Omega$ of Dynkin diagram automorphism, and so $\widetilde{W} =\Omega . W$. There is a length function $\ell(\cdot)$ on $\widetilde{W}$ such that $\ell(s_i)=1$, for $i\in \I$, and each element in $\Omega$ has length 0.

For $i\in \II$, as in \cite{Be94}, we have
\begin{equation}  \label{eq:tomega}
\ell(t_{\omega_i}) =\ell(\omega_i')+1, \qquad
\text{ where }  \omega_i':= t_{\omega_i} s_i \in W.
\end{equation}

%Let $\Pi:=\{\alpha_1,\alpha_2,\dots,\alpha_n\}$, and $\Pi^\vee:=\{\alpha^\vee_1,\alpha^\vee_2,\dots,\alpha^\vee_n\}$. The root system is $R:=W_0\Pi$, and the coroot system is $R^\vee:=W_0\Pi^\vee$.

Let $\U =\U(\widehat{\fg})$ denote the Drinfeld-Jimbo affine quantum group, a $\Q(v)$-algebra generated by $E_i, F_i, K_i^{\pm 1}$, for $i\in \I$.
%Let $\tU$ denote the Drinfeld double of $\U$, a $\Q(v)$-algebra generated by $E_i, F_i, K_i, K_i'$, for $i\in \I$.
 Following Lusztig \cite{Lus90, Lus94}, we have the braid group action of $T_w$ on $\tU$, for $w \in \widetilde W$; for example $T_i$, for $i\in \I$, acts on $\U$ by, for $i\neq j \in \I, \mu \in \Z\I$ (our $K_i$ corresponds to $\widetilde{K}_i$ in \cite{Lus94}):
\begin{equation}\label{eq:braidgroup}
\begin{split}
&T_i (E_i) = -F_i  {K}_{i} , \quad T_i (F_i) = - {K}_{i}^{-1}E_i,  \quad T_i (K_\mu) = K_{s_i(\mu)}, \\
&T_i (E_j) =  \sum_{r+s = - c_{ij}} (-1)^r v^{-r}_{i} E^{(s)}_i E_j E^{(r)}_i,  \\
&T_i (F_j) =  \sum_{r+s = - c_{ij}} (-1)^r v^{r}_{i} F^{(r)}_i F_j F^{(s)}_i.
\end{split}
\end{equation}
The formulas above are written in terms of $v_i$ (dependent on the length of $\alpha_i$) for the convenience of future references. In the current ADE setting, we always have $v_i =v$. Similar remarks apply elsewhere, e.g., $[n]_i =[n]_{v_i}$.

The following was known to Bernstein and Lusztig.
\begin{lemma} [\text{\cite[\S2.7]{Lus89}}]
   \label{lem:Braid-Lus}
Let $x\in P$, $i, j \in \II$.
\begin{enumerate}
\item[(a)]
If $s_ix=xs_i$, then $T_iT_x=T_xT_i$.
\item[(b)]
If $s_i xs_i=t_{\alpha_i}^{-1}x=\prod_{k\in \II} t_{\omega_k}^{a_k}$, then we have
$T_i^{-1}T_xT_i=\prod_k T_{\omega_k}^{a_k}$; in particular, we have $T_i^{-1}T_{\omega_i}T_i^{-1}=T_{\omega_i}^{-1}\prod_{k\neq i}T_{\omega_k}^{-c_{ik}}$.
\item[(c)]
$T_{\omega_i} T_{\omega_j}  = T_{\omega_j} T_{\omega_i}.$
\end{enumerate}
\end{lemma}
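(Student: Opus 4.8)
The plan is to deduce all three parts from two structural facts about the operators $T_w$ ($w\in\widetilde{W}$): the \emph{master multiplicativity} $T_yT_z=T_{yz}$ whenever $\ell(yz)=\ell(y)+\ell(z)$ (each $T_w$ being invertible), and the \emph{exchange criterion} $\ell(ws_i)>\ell(w)\Leftrightarrow w(\alpha_i)\in\cR^+$ and $\ell(s_iw)>\ell(w)\Leftrightarrow w^{-1}(\alpha_i)\in\cR^+$, evaluated against the positive system \eqref{eq:roots+}. I will also use that the translation length $\ell(t_\mu)=\sum_{\beta\in\cR_0^+}|\langle \mu,\beta^\vee\rangle|$ is $W_0$-invariant, and that the sign of the $\delta$-shift in $t_x(\alpha_i)=\alpha_i-\langle x,\alpha_i^\vee\rangle\delta$ is pinned down by \eqref{eq:tomega}.

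First I would prove (c). Since $\omega_i,\omega_j$ are dominant, the translation lengths add, $\ell(t_{\omega_i}t_{\omega_j})=\ell(t_{\omega_i+\omega_j})=\ell(t_{\omega_i})+\ell(t_{\omega_j})$, so master multiplicativity gives $T_{\omega_i}T_{\omega_j}=T_{t_{\omega_i+\omega_j}}$; as $P$ is abelian the same holds with $i,j$ swapped, whence $T_{\omega_i}T_{\omega_j}=T_{\omega_j}T_{\omega_i}$. This commutativity is precisely what lets me define a group homomorphism from $P$ to the braid group of $\widetilde{W}$ by $\omega_k\mapsto T_{\omega_k}$ (well-defined exactly because the $T_{\omega_k}$ commute), which agrees with $T_{t_\lambda}$ for dominant $\lambda$ and assigns a well-defined $T_{t_x}$ to every $x\in P$. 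This Bernstein normalization reduces each translation identity below to additive linear algebra in $P$, and is where part (c) does its real work.

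For (a), the hypothesis $s_ix=xs_i$ is equivalent to $\langle x,\alpha_i^\vee\rangle=0$, whence $t_x(\alpha_i)=t_{-x}(\alpha_i)=\alpha_i\in\cR^+$, so the exchange criterion gives $\ell(s_it_x)=\ell(t_xs_i)=\ell(t_x)+1$. Since $s_it_x=t_xs_i$ as elements of $\widetilde{W}$, applying master multiplicativity to each side yields $T_iT_{t_x}=T_{s_it_x}=T_{t_xs_i}=T_{t_x}T_i$, which is the claim.

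For (b), the hypothesis forces $\langle x,\alpha_i^\vee\rangle=1$ and $s_i(x)=x-\alpha_i$. The exchange criterion then produces the mixed pattern in which one of $\ell(s_it_x),\ell(t_xs_i)$ equals $\ell(t_x)+1$ and the other $\ell(t_x)-1$, while $W_0$-invariance gives $\ell(s_it_xs_i)=\ell(t_{s_i(x)})=\ell(t_x)$. Chaining the two resulting length-additive factorizations (of $t_x$ through $t_xs_i$, and of $s_it_xs_i$ through $t_xs_i$) and eliminating the common factor $T_{t_xs_i}$ delivers the conjugation relating $T_{t_x}$ to $T_{t_{s_i(x)}}$ by $T_i^{\pm1}$. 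Writing $s_i(x)=x-\alpha_i=\sum_k a_k\omega_k$ and feeding the coefficients $a_k$ through the homomorphism of the second paragraph converts $T_{t_{s_i(x)}}$ into $\prod_k T_{\omega_k}^{a_k}$; the displayed special case $x=\omega_i$ then comes out with $a_i=-1$ and $a_k=-c_{ik}$ for $k\neq i$ once \eqref{eq:tomega} is invoked. I expect the main obstacle to be exactly the sign bookkeeping here: determining which of $\ell(s_it_x),\ell(t_xs_i)$ increases, and hence the precise side on which $T_i$ conjugates $T_{t_x}$, which must be calibrated against \eqref{eq:tomega} and the root description \eqref{eq:roots+}. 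The only other point requiring care is the well-definedness of the Bernstein homomorphism, which rests entirely on part (c).
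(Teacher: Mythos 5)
The paper offers no argument for this lemma at all (it is quoted from \cite[\S 2.7]{Lus89}), so your attempt has to stand on its own. Parts (a) and (c) do: additivity of $\ell(t_\lambda)=\sum_{\beta\in\cR_0^+}\langle\lambda,\beta\rangle$ on dominant weights gives (c), and in (a) the hypothesis forces $\langle x,\alpha_i\rangle=0$, so $t_x^{\pm1}(\alpha_i)=\alpha_i\in\cR^+$, both products $s_it_x=t_xs_i$ are length-additive, and $T_iT_{t_x}=T_{t_x}T_i$. These are the standard arguments.

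Part (b) has a genuine gap, and it is more than the ``sign bookkeeping'' you flag. Your chaining is correct as far as it goes: with $\langle x,\alpha_i\rangle=1$ one gets $t_x(\alpha_i)=\alpha_i-\delta\in-\cR^+$ and $(t_xs_i)^{-1}(\alpha_i)=\delta-\alpha_i\in\cR^+$, hence $T_{t_x}=T_{t_xs_i}T_i$ and $T_{t_{s_i(x)}}=T_iT_{t_xs_i}$, so $T_{t_{s_i(x)}}=T_iT_{t_x}T_i^{-1}$. The invalid step is the last one: converting $T_{t_{s_i(x)}}$ into $\prod_kT_{\omega_k}^{a_k}$ via your Bernstein homomorphism. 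That homomorphism agrees with $y\mapsto T_{t_y}$ only on \emph{dominant} $y$, and $s_i(x)=x-\alpha_i$ is never dominant here since $\langle s_i(x),\alpha_i\rangle=-1$. Concretely, for $x=\omega_i$ one has $t_{s_i\omega_i}=s_i\omega_i'$ with $\ell(s_i\omega_i')=\ell(\omega_i')+1$, so $T_{t_{s_i\omega_i}}=T_iT_{\omega_i'}$, whereas the product $T_{\omega_i}^{-1}\prod_{k\neq i}T_{\omega_k}^{-c_{ik}}$ in the displayed special case must equal $T_i^{-1}T_{\omega_i'}$ (since $T_{\omega_i}=T_{\omega_i'}T_i$); the two differ by $T_i^2$. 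Your route therefore lands on $T_iT_xT_i^{-1}=\prod_kT_{\omega_k}^{a_k}$, which is off by $T_i^2$ from the identity $T_i^{-1}T_{\omega_i}T_i^{-1}=T_{\omega_i}^{-1}\prod_{k\neq i}T_{\omega_k}^{-c_{ik}}$ that the paper actually invokes (e.g.\ in the proof of Lemma~\ref{lem:TiFix}). (Note also that the printed general form of (b), with $T_i^{-1}(\cdot)T_i$, is itself inconsistent with its own special case, with $T_i^{-1}(\cdot)T_i^{-1}$; the latter is the operative statement.)

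To close the gap one needs an input beyond the two factorizations you chain. After cancelling $T_{\omega_i}=T_{\omega_i'}T_i$, the special case is equivalent to $T_{\omega_i'}^2=\prod_{k\neq i}T_{\omega_k}^{-c_{ik}}$. This follows from a third length computation: the weight $2\omega_i-\alpha_i=\omega_i+s_i\omega_i=-\sum_{k\neq i}c_{ik}\omega_k$ is dominant, $t_{2\omega_i-\alpha_i}=t_{\omega_i}\,(s_it_{\omega_i}s_i)=(\omega_i')^2$, and $\ell\big((\omega_i')^2\big)=\ell(t_{2\omega_i-\alpha_i})=2\ell(t_{\omega_i})-2=2\ell(\omega_i')$, so $T_{t_{2\omega_i-\alpha_i}}=T_{\omega_i'}^2$ while dominance gives $T_{t_{2\omega_i-\alpha_i}}=\prod_{k\neq i}T_{\omega_k}^{-c_{ik}}$. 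The general case then reduces to $x=\omega_i$ by splitting off the $\alpha_i$-orthogonal part of $x$ and using (a) and (c). Without this (or an equivalent) extra identity, your argument does not produce the stated relation.
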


We record some useful formulas:
\begin{align}
T_i \big(T_{\omega_i'}(F_i) \big) %= [2]_i^{-1} [ [T_{\omega_i'}(F_i), F_i]_{v_i^2}, F_i]
&=T_{\omega_i'}(F_i) F_i^{(2)} -v_i F_i T_{\omega_i'}(F_i) F_i +v_i^2 F_i^{(2)} T_{\omega_i'}(F_i),
\label{eq:TiX}
\\
T_i^{-1} \big(T_{\omega_i'}(F_i) \big)
&= F_i^{(2)} T_{\omega_i'}(F_i) -v_i F_i T_{\omega_i'}(F_i) F_i +v_i^2 T_{\omega_i'}(F_i) F_i^{(2)},
\label{eq:TiX2}
\\
T_i \big([T_{\omega_i'}(F_i), F_i]_{v_i^{-2}} \big) &=[F_i, T_{\omega_i'}(F_i)]_{v_i^{-2}}.
  \label{T-imagine}
\end{align}
The formula \eqref{eq:TiX} follows by applying the Chevalley involution to its $E$-version, which can be found in \cite{Be94}. The formula \eqref{eq:TiX2} can be derived (and is equivalent to) \eqref{eq:TiX}. Formula \eqref{T-imagine} can also be derived directly from \eqref{eq:TiX}.

\subsection{Drinfeld presentation of an affine quantum group}

Let $C=(c_{ij})_{i,j\in\I}$ be the Cartan matrix of untwisted affine type. Let $^{\text{Dr}}\U$ be the $\Q(v)$-algebra generated by $x_{i k}^{\pm}$, $h_{i l}$, $K_i^{\pm 1}$, $C^{\pm \frac12}$, for $i\in\I$, $k\in\Z$, and $l\in\Z\backslash\{0\}$, subject to the following relations: $C^{\frac12}, C^{- \frac12}$ are central such that
\begin{align*}
[K_i,K_j] & =  [K_i,h_{j l}] =0, \quad K_i K_i^{-1} =C^{\frac12} C^{- \frac12} =1,
\\
[h_{ik},h_{jl}] &= \delta_{k, -l} \frac{[k c_{ij}]_i}{k} \frac{C^k -C^{-k}}{v_j -v_j^{-1}},
\\
K_ix_{jk}^{\pm} K_i^{-1} &=v^{\pm c_{ij}} x_{jk}^{\pm},
 \\
[h_{i k},x_{j l}^{\pm}] &=\pm\frac{[kc_{ij}]_i}{k} C^{\mp \frac{|k|}2} x_{j,k+l}^{\pm},
\\
[x_{i k}^+,x_{j l}^-] &=\delta_{ij} {(C^{\frac{k-l}2} K_i\psi_{i,k+l} - C^{\frac{l-k}2} K_i^{-1} \varphi_{i,k+l})}, %{(v_i-v_i^{-1})},
 \\
x_{i,k+1}^{\pm} x_{j,l}^{\pm}-v^{\pm c_{ij}} x_{j,l}^{\pm} x_{i,k+1}^{\pm} &=v^{\pm c_{ij}} x_{i,k}^{\pm} x_{j,l+1}^{\pm}- x_{j,l+1}^{\pm} x_{i,k}^{\pm},
 \\
\Sym_{k_1,\dots,k_r}\sum_{t=0}^{r} (-1)^t \qbinom{r}{t}_i
& x_{i,k_1}^{\pm}\cdots
 x_{i,k_t}^{\pm} x_{j,l}^{\pm}  x_{i,k_t+1}^{\pm} \cdots x_{i,k_n}^{\pm} =0, \text{ for } r= 1-c_{ij}\; (i\neq j),
\end{align*}
where
$\Sym_{k_1,\dots,k_r}$ denotes the symmetrization with respect to the indices $k_1,\dots,k_r$, $\psi_{i,k}$ and $\varphi_{i,k}$ are defined by the following functional equations:
\begin{align*}
1+ \sum_{m\geq 1} (v_i-v_i^{-1})\psi_{i,m}u^m &=  \exp\Big((v_i -v_i^{-1}) \sum_{m\ge 1}  h_{i,m}u^m\Big),
\\
1+ \sum_{m\geq1 } (v_i-v_i^{-1}) \varphi_{i, -m}u^{-m} &= \exp \Big((v_i -v_i^{-1}) \sum_{m\ge 1} h_{i,-m}u^{-m}\Big).
\end{align*}
(We omit a degree operator $D$ in the version of ${}^{\text{Dr}}\U$ above.)

It was stated by Drinfeld \cite{Dr88} (and proved by Beck \cite{Be94} and Damiani \cite{Da12, Da15})
that there exists a $\Q(v)$-algebra isomorphism
\begin{align} \label{eq:phi1}
\phi: {}^{\text{Dr}}\U \longrightarrow \U.
\end{align}
We omit the explicit formulas for the images under $\phi$ of generators of ${}^{\text{Dr}}\U$; see \cite{Be94, BCP99}.

\subsection{Affine $\imath$quantum groups of split ADE type}

Recall the Cartan matrix $(c_{ij})_{i,j\in \I}$ of affine ADE type, for $\I = \II \cup \{0\}$ with the affine node $0$. %In particular, $c_{ij}=0, -1$, for all $i\neq j$, unless it is of the affine $A_1$ type; in that case $c_{01}=c_{10}=-2$.
The notion of (quasi-split) universal $\imath$quantum groups $\tUi$ was formulated in \cite{LW19a}.
The {\em universal affine $\imath$quantum group of split ADE type} is the $\Q(v)$-algebra $\tUi =\tUi(\widehat{\fg})$ with generators $B_i$, $\K_i^{\pm 1}$ $(i\in \I)$, subject to the following relations, for $i, j\in \I$:
\begin{align}
\K_i\K_i^{-1} =\K_i^{-1}\K_i=1, & \quad \K_i  \text{ is central},
\\
B_iB_j -B_j B_i&=0, \qquad\qquad\qquad\qquad\qquad \text{ if } c_{ij}=0,
 \label{eq:S1} \\
B_i^2 B_j -[2] B_i B_j B_i +B_j B_i^2 &= - v^{-1}  B_j \K_i,  \qquad\qquad\qquad \text{ if }c_{ij}=-1,
 \label{eq:S2} \\
\sum_{r=0}^3 (-1)^r \qbinom{3}{r} B_i^{3-r} B_j B_i^{r} &= -v^{-1} [2]^2 (B_iB_j-B_jB_i) \K_i,    \text{ if }c_{ij}=-2.
 \label{eq:S3}
 \end{align}
The universal split $\imath$quantum group of rank 1 is also known as the $q$-Onsager algebra, and this is the only case where the relation \eqref{eq:S3} is needed; see Definition~\ref{def:Onsager}.

For $\mu = \mu' +\mu''  \in \Z \I := \oplus_{i\in \I} \Z \alpha_i$,  define $\K_\mu$ such that
\begin{align}
\K_{\alpha_i} =\K_i, \quad
\K_{-\alpha_i} =\K_i^{-1}, \quad
\K_{\mu} =\K_{\mu'} \K_{\mu''},
\quad  \K_\delta =\K_0 \K_\theta.
\end{align}
The algebra $\tUi$ is endowed with a filtered algebra structure
\begin{align}  \label{eq:filt1}
\widetilde{\U}^{\imath,0} \subset \widetilde{\U}^{\imath,1} \subset \cdots \subset \widetilde{\U}^{\imath,m} \subset \cdots
\end{align}
by setting %$|B_i| =1, |\K_i|=0$, for $i\in \I$, and then $|B_{i_1}\ldots B_{i_m} \K_\mu |=m$, for any $m$ and $\mu$.
\begin{align}  \label{eq:filt}
\widetilde{\U}^{\imath,m} =\Q(v)\text{-span} \{ B_{i_1} B_{i_2} \ldots B_{i_r} \K_\mu \mid \mu \in \N\I, i_1, \ldots, i_r \in \I, r\le m \}.
\end{align}
Note that
\begin{align}  \label{eq:UiCartan}
\widetilde{\U}^{\imath,0} =\bigoplus_{\mu \in \N\I} \Q(v) \K_\mu,
\end{align}
is the $\Q(v)$-subalgebra generated by $\K_i$ for $i\in \I$.
Then, according to a basic result of Letzter and Kolb on quantum symmetric pairs adapted to our setting of $\tUi$ (cf. \cite{Let02, Ko14}), the associated graded $\gr \tUi$ with respect to \eqref{eq:filt1}--\eqref{eq:filt} can be identified with
\begin{align}   \label{eq:filter}
\gr \tUi \cong \U^- \otimes \Q(v)[\K_i^\pm | i\in \I],
\qquad \overline{B_i}\mapsto F_i,  \quad
\overline{\K}_i \mapsto \K_i \; (i\in \I).
\end{align}

\begin{remark}
 \label{rem:Kk}
The generator $\K_i$ here, which corresponds to a (generalized) simple module in the $\imath$Hall algebra, is related to the generator $\tilde{k}_i$ used in \cite{LW19a, LW20} (which is natural from the viewpoint of Drinfeld doubles) by a rescaling:
$\K_i = -v^2\tilde{k}_i$.
%A Serre presentation of a quasi-split (including split) universal $\imath$quantum group $\tUi$ of Kac-Moody type can be given uniformly via $\imath$divided powers, see \cite[Theorem 4.7]{LW20a}.
The precise relation between the algebra $\tUi$ and the $\imath$quantum group $\Ui$ arising from quantum symmetric pairs \cite{Ko14} is explained {\em loc. cit.}; also see \S\ref{subsec:parameter} below.
\end{remark}

\begin{remark}
The $\Q(v)$-algebra $\tUi$ is $\Z \I$-graded by letting
\begin{align}
 \label{eq:deg}
\wt (B_i) =\alpha_i, \quad \wt (\K_i) =2\alpha_i, \quad \text{ for } i \in \I.
\end{align}
A variant of $\tUi$, in which $\K_i$ is not assumed to be invertible, is $\N\I$-graded by \eqref{eq:deg}. \end{remark}

\begin{lemma} [\text{also cf. \cite{KP11, BK20}}]
\label{lem:Ti}
For $i\in I$, there exists an automorphism $\TT_i$ of the $\Q(v)$-algebra $\tUi$ such that
$\TT_i(\K_\mu) =\K_{s_i\mu}$, and
\[
\TT_i(B_j)= \begin{cases}
 \K_i^{-1} B_i,  &\text{ if }j=i,\\
B_j,  &\text{ if } c_{ij}=0, \\
B_jB_i-vB_iB_j,  & \text{ if }c_{ij}=-1, \\
 {[}2]^{-1} \big(B_jB_i^{2} -v[2] B_i B_jB_i +v^2 B_i^{2} B_j \big) + B_j\K_i,  & \text{ if }c_{ij}=-2,
\end{cases}
\]
for $\mu\in \Z\I$ and $j\in \I$.
Moreover,  $\TT_i$ $(i\in \I)$ satisfy the braid group relations, i.e., $\TT_i \TT_j =\TT_j \TT_i$ if $c_{ij}=0$, and $\TT_i \TT_j \TT_i =\TT_j \TT_i \TT_j$ if $c_{ij}=-1$.
\end{lemma}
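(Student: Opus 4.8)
The plan is to prove the statement in three stages: first that the prescribed assignments preserve the defining relations of $\tUi$ and so extend to an algebra endomorphism $\TT_i$; second that $\TT_i$ is invertible; and third that the $\TT_i$ satisfy the braid relations. Throughout I would exploit the fact that the only relations in the presentation of $\tUi$ are the centrality of the $\K_k$, the equations $\K_k\K_k^{-1}=1$, and the three Serre families \eqref{eq:S1}--\eqref{eq:S3}. Since $\TT_i(\K_\mu)=\K_{s_i\mu}$ is again a monomial in the central generators and $s_i$ is linear, the centrality relations and $\K_k\K_k^{-1}=1$ are preserved automatically; all the content lies in the Serre relations. Because $\TT_i$ moves $B_j$ only when $j=i$ or $j$ is adjacent to $i$, and because each Serre relation involves only two nodes, every verification takes place inside the subalgebra generated by at most three nodes $\{i,j,k\}$ together with their $\K$'s, hence reduces to a finite check in a rank $\le 3$ $\imath$quantum group.

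Concretely I would organize the verification of the endomorphism property by cases. Relations among nodes none of which equals or neighbours $i$ are fixed by $\TT_i$ and need nothing. For a relation $S(i,j)$ with $c_{ij}=-1$ (and its partner with $i,j$ swapped) one substitutes $\TT_i(B_i)=\K_i^{-1}B_i$ and $\TT_i(B_j)=B_jB_i-vB_iB_j$ and reduces using the centrality of $\K_i$ together with \eqref{eq:S2}. The case $c_{ij}=-2$ is precisely the rank-one ($q$-Onsager) situation, where the needed identity is exactly the content of \eqref{T1B0} established in Section~\ref{sec:Onsager} (equivalently \cite{BK20}). The remaining, genuinely multi-node, checks are the ``cross'' relations $S(j,k)$ in which both $B_j$ and $B_k$ are moved because $j,k$ are both adjacent to $i$: one must show that $\TT_i(B_j),\TT_i(B_k)$ still satisfy \eqref{eq:S1} when $c_{jk}=0$, or \eqref{eq:S2} when $c_{jk}=-1$ (the latter occurring around the triangle in affine $A_2^{(1)}$), now invoking the full set of relations among $B_i,B_j,B_k$ and the value $\TT_i(\K_j)=\K_i\K_j$.

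For invertibility I would write down the candidate inverse $\TT_i^{-1}$ on generators by $\K_\mu\mapsto\K_{s_i\mu}$, $B_i\mapsto\K_i^{-1}B_i$, and for $c_{ij}=-1$ by $B_j\mapsto B_iB_j-vB_jB_i$ (for $c_{ij}=-2$ by \eqref{T1B0-2}), show by the same case analysis that $\TT_i^{-1}$ is an endomorphism, and then check $\TT_i\TT_i^{-1}=\TT_i^{-1}\TT_i=\Id$ on generators; for instance when $c_{ij}=-1$ one computes $\TT_i(B_iB_j-vB_jB_i)=\K_i^{-1}B_j\K_i=B_j$ after invoking \eqref{eq:S2}. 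Since both composites are algebra endomorphisms agreeing with the identity on generators, they equal the identity, so $\TT_i$ is an automorphism. For the braid relations, both sides of $\TT_i\TT_j=\TT_j\TT_i$ (when $c_{ij}=0$) and of $\TT_i\TT_j\TT_i=\TT_j\TT_i\TT_j$ (when $c_{ij}=-1$) agree on every $\K_\mu$ because $s_i,s_j$ satisfy the corresponding relations in $W$; it then remains to compare them on each $B_k$, which is again a local computation in the subalgebra attached to $\{i,j\}$ and their neighbours, i.e.\ the rank-two $A_2$ braid identity (with all three nodes participating in affine $A_2^{(1)}$). This is exactly the computation carried out in \cite{KP11} in finite type, and it extends verbatim since only the local relations are used.

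I expect the principal obstacle to be the multi-node Serre verifications of the second stage, together with the length-three braid identity. In those cases $\TT_i$ moves two generators simultaneously, the \emph{inhomogeneous} right-hand sides of \eqref{eq:S2} interact, and one must track the extra factors $\K_i$ and $\TT_i(\K_j)=\K_i\K_j$ so that the inhomogeneous terms cancel exactly; the triangle configuration in $A_2^{(1)}$ and the heavy $c_{ij}=-2$ transform of \eqref{eq:S3} are the most delicate. All of these, however, are self-contained finite computations, the $c_{ij}=-2$ one being available from Section~\ref{sec:Onsager}.
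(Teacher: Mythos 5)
Your plan is sound and would yield the lemma, but it is a genuinely different argument from the one the paper gives. The paper's proof is essentially a citation: the formulas for $c_{ij}=0,-1$ are realized as reflection functors on $\imath$Hall algebras in \cite{LW19b} (with the $c_{ij}=-2$ case deferred to a Kac--Moody sequel), and the automorphism property together with the braid relations follow \emph{conceptually} from that categorical construction; the direct relation-by-relation check is explicitly not carried out, with \cite{KP11} and \cite{BK20} cited for closely related computer-assisted verifications on $\Ui$. Your route --- verify that the prescribed images satisfy the (local, rank $\le 2$) defining relations, exhibit the inverse on generators, and check the braid identities on generators --- is the elementary, self-contained alternative. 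What it buys is independence from the Hall-algebra machinery; what it costs is a substantial amount of explicit computation (the transformed \eqref{eq:S2} and \eqref{eq:S3}, the three-node checks around the $A_2^{(1)}$ triangle, and the length-three braid identity), and in carrying it over from \cite{KP11} one must additionally track the central elements $\K_i$ and the action $\TT_i(\K_j)=\K_{s_i\alpha_j}$, since that reference works with $\Ui$ where the $\K$'s are scalars. One small correction: the $c_{ij}=-2$ identity is not ``established'' in Section~\ref{sec:Onsager} --- \eqref{T1B0} is only the defining formula there, with the automorphism property again attributed to the $\imath$Hall algebra interpretation and \cite{BK20} --- so in your write-up that case would also have to be done by hand or by explicit appeal to \cite{BK20}. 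Your spot-check $\TT_i(B_iB_j-vB_jB_i)=B_j$ via \eqref{eq:S2} is correct and is a good indication that the rest of the computations close up as claimed.
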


\begin{proof}
The formulas for $\TT_i$ when $c_{ij}=0, -1$ were obtained in \cite{LW19b} via reflection functors in an $\imath$Hall algebra approach and the braid relations are verified therein; the formulas for $\TT_i$ when $c_{ij}= -2$ can be similarly obtained (as a special case of $\imath$quantum groups of Kac-Moody type in \cite{LW21b}); also see  \cite{KP11} and \cite{BK20} for closely related versions on $\Ui$ via computer packages. It follows from the reflection functor construction that these formulas define an automorphism $\TT_i$ of $\tUi$.
\end{proof}
We also have $\TT_i^{-1} (\K_\mu) =\K_{s_i\mu}$, and
\[
\TT_i^{-1} (B_j)=
\begin{cases}
 \K_i^{-1} B_i,  &\text{ if }j=i,\\
B_j,  &\text{ if } c_{ij}=0, \\
B_iB_j -vB_jB_i,  & \text{ if }c_{ij}=-1, \\
 {[}2]^{-1} \big( B_i^{2}B_j-v[2] B_iB_jB_i+v^2 B_jB_i^{2} \big) +B_j\K_i,  & \text{ if }c_{ij}=-2.
\end{cases}
\]
Just as the quantum group setting \cite{Lus94},  $\TT_i^{-1}$ is related to $\TT_i$ by
\begin{align}
 \label{antiT}
  \TT_i^{-1} = \flat \circ \TT_i \circ \flat,
\end{align}
where $\flat$ denotes the anti-involution of the $\Q(v)$-algebra $\tUi$, which fixes the generators $B_i, \K_i$.

For $w\in \widetilde{W}$ with a reduced expression $w =\sigma s_{i_1} \ldots s_{i_r}$ and $\sigma \in \Omega$, we define $\TT_w = \sigma \TT_{i_1} \ldots \TT_{i_r}$, where $\sigma$ acts on $\tUi$ by permuting the indices of generators, $\sigma(B_i) =B_{\sigma i}, \sigma(\K_i) =\K_{\sigma i}$, for all $i\in \I$. Hence,  $\TT_w$ is well defined by Lemma~\ref{lem:Ti}. In particular, the standard results such as Lemma~\ref{lem:Braid-Lus} on braid groups associated to $\widetilde W$ apply to $\TT_w$.

%
%\subsection{Subalgebras of affine $\imath$quantum groups}

\begin{lemma}
\label{lem:fixB}
We have $\TT_w (B_i) = B_{w i}$, for $i\in \I$ and $w \in W$ such that $wi \in \I$.
\end{lemma}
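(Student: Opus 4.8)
The plan is to prove $\TT_w(B_i) = B_{wi}$ by reducing to the case where $w$ is a single simple reflection $s_j$, and then to induct on the length $\ell(w)$. The hypothesis is that $wi \in \I$, i.e.\ that $w$ sends the simple root $\alpha_i$ to another simple root $\alpha_{wi}$; this is a strong constraint that I will exploit throughout. The base case is to check that if $s_j i \in \I$ for $j \neq i$, then $\TT_j(B_i) = B_{s_j i}$. But $s_j(\alpha_i) = \alpha_i - c_{ji}\alpha_j$, so $s_j i \in \I$ forces $s_j(\alpha_i)$ to be a simple root; in the ADE (simply-laced) setting this happens precisely when $c_{ij} = c_{ji} = 0$, in which case $s_j i = i$ and indeed $\TT_j(B_i) = B_i = B_{s_j i}$ by the $c_{ij}=0$ clause of Lemma~\ref{lem:Ti}. (If $c_{ij} = -1$, then $s_j(\alpha_i) = \alpha_i + \alpha_j$ is not simple, so the hypothesis $s_j i \in \I$ is simply not met, and there is nothing to check.) Thus the base case holds.

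\smallskip

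For the inductive step, take $w \in W$ with $wi \in \I$ and $\ell(w) \ge 1$, and choose a reduced expression $w = s_{j_1}\cdots s_{j_r}$, so $\TT_w = \TT_{j_1}\cdots \TT_{j_r}$. The key idea is to find a simple reflection $s_j$ on the right so that $w' := w s_j$ is shorter and still satisfies $w' i' \in \I$ for a suitable index $i'$. Concretely, I consider whether I can peel off a simple reflection $s_j$ from the right so that $s_j i = i$ (i.e.\ $c_{ij}=0$), writing $w = w' s_j$ with $\ell(w') = \ell(w) - 1$; then $w' i = w' s_j i = w i \in \I$, and by induction $\TT_{w'}(B_i) = B_{w'i} = B_{wi}$, while $\TT_w(B_i) = \TT_{w'}\TT_j(B_i) = \TT_{w'}(B_i) = B_{wi}$ using the base case. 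The remaining possibility is that every right descent $s_j$ of $w$ satisfies $s_j i \neq i$; in that case the standard combinatorics of reduced words and the condition $wi \in \I$ will instead let me peel a reflection so that $w = s_k w''$ with $w = w''$ preceded by a simple reflection $s_k$ acting on the target side, and I reduce $\TT_w(B_i) = \TT_k\TT_{w''}(B_i) = \TT_k(B_{w''i})$ with $w'' i \in \I$, finishing again by the base case applied to $\TT_k$ acting on the simple generator $B_{w''i}$.

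\smallskip

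The cleanest way to organize this is via the following standard fact about Weyl group elements sending one simple root to another: if $w \in W$ and $wi \in \I$ with $\ell(w) \ge 1$, then there exists a simple reflection $s_j$ with either $\ell(s_j w) < \ell(w)$ and $s_j w i \in \I$, or $\ell(w s_i) < \ell(w)$. I would invoke the characterization that $w\alpha_i > 0$ (since $w\alpha_i = \alpha_{wi}$ is a positive simple root) together with the standard lemma that $\ell(w s_i) = \ell(w) + 1$ in this situation, so that $w s_i$ is longer and we must instead reduce on the left: pick $j$ with $\ell(s_j w) < \ell(w)$, and observe $s_j w \alpha_i = s_j \alpha_{wi}$, which is either $\alpha_{wi}$ (when $c_{j,wi}=0$) or a non-simple positive root (when $c_{j,wi}=-1$). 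I then argue that the non-simple case cannot occur under $\ell(s_j w) < \ell(w)$ because it would contradict $\ell(w\alpha_i)$ being preserved along the reduced word; hence $s_j(wi) = wi \in \I$ and induction applies to $s_j w$, giving $\TT_{s_j w}(B_i) = B_{wi}$, after which $\TT_w(B_i) = \TT_j \TT_{s_j w}(B_i) = \TT_j(B_{wi}) = B_{wi}$ by the base case.

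\smallskip

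The main obstacle I anticipate is not the algebra—once the word is peeled, every step reduces to the $c_{ij}=0$ clause of Lemma~\ref{lem:Ti}—but rather the bookkeeping of the Weyl group combinatorics: ensuring at each stage that the intermediate element still maps $\alpha_i$ (or its image) to a \emph{simple} root, so that the inductive hypothesis genuinely applies, and ruling out the non-simple case cleanly. This hinges on the fact that in the simply-laced case $s_j \alpha_k$ for $k \neq j$ is simple if and only if $c_{jk} = 0$, which sharply limits the possibilities and makes the reduction deterministic. I would phrase the inductive argument so that the combinatorial input is isolated into the single statement ``$w\alpha_i$ simple and $\ell(w)\ge 1$ implies a length-reducing $s_j$ fixing the relevant root,'' and verify that statement separately using the exchange condition, keeping the algebraic content entirely inside the already-established base case.
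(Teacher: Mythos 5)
There is a genuine gap, and it sits exactly where the real content of the lemma lies. Your reduction scheme is designed so that, in your words, ``every step reduces to the $c_{ij}=0$ clause of Lemma~\ref{lem:Ti}.'' But that cannot be right: if the only building block ever invoked is $\TT_j(B_i)=B_i$ for $c_{ij}=0$, then the argument could only ever prove $\TT_w(B_i)=B_i$, whereas the lemma genuinely produces $B_{wi}$ with $wi\neq i$ (e.g.\ $(s_is_j)\alpha_i=\alpha_j$ when $c_{ij}=-1$). Concretely, the combinatorial claim you lean on in the third paragraph is false. Take $w=s_is_j$ with $c_{ij}=-1$, so $wi=j$. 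Then $w\alpha_i=\alpha_j>0$, there is no right descent commuting with $i$, and the unique left descent is $s_i$ (since $\ell(s_js_is_j)=3$); but $s_i(\alpha_{wi})=s_i(\alpha_j)=\alpha_i+\alpha_j$ is \emph{not} simple. So ``the non-simple case cannot occur'' fails already in the smallest nontrivial instance, and your induction never terminates in a configuration your base case covers. Relatedly, your claim that $\ell(ws_i)=\ell(w)+1$ whenever $w\alpha_i>0$ is correct, but it only tells you $s_i$ is not a right descent; it does not produce a usable left descent preserving simplicity.

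What is actually needed, and what the paper supplies, is a rank-two verification: for $c_{ij}=-1$ and $w=s_is_j$ one checks directly that $\TT_i\TT_j(B_i)=B_j$, a computation that uses the $c_{ij}=-1$ clause of Lemma~\ref{lem:Ti} \emph{and} the inhomogeneous Serre relation \eqref{eq:S2} (the term $-v^{-1}B_j\K_i$ is exactly what makes the answer come out to $B_j$ on the nose). The paper then runs the induction in the style of \cite[Lemma~8.20]{J95}: choose $j$ with $w\alpha_j<0$, decompose $w=w'w''$ with $w''$ in the rank-two parabolic $\langle s_i,s_j\rangle$ and $\ell(w)=\ell(w')+\ell(w'')$, check $w''i\in\I$, apply the rank-two computation to $w''$, and induct on $w'$. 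If you want to salvage your approach, you must add the rank-two case $\TT_i\TT_j(B_i)=B_j$ as a second base case and replace your left-peeling step by a parabolic decomposition of this kind; peeling one simple reflection at a time cannot work, because the passage from $\alpha_i$ to $\alpha_{wi}$ necessarily transits through non-simple roots.
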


\begin{proof}
This statement is well known for quantum groups \cite{Lus94}, and the proof here is adapted from the proof in \cite[Lemma~8.20]{J95} in the usual quantum group setting.

We first prove the lemma in the rank 2 setting. Assume that $w \in \langle s_i, s_j \rangle$, for some $j\in \I$. The case when $s_i s_j =s_j s_i$ is trivial as we must have $wi=i$. Otherwise, $c_{ij}=-1$, $i\in \I$ such that $wi \in \I$ only happen when $w=s_is_j$, and in this case, a direct computation shows $wi=j$ and $\TT_i\TT_j (B_i) =B_j$.

In general, we prove by induction on $\ell(w)$.
We shall assume $\ell(w)>0$ and let $j \in \I$ such that $w \alpha_j<0$ (clearly $j \neq i$). Then, as in the proof of \cite[Lemma~8.20]{J95},  we have a decomposition $w=w'w''$ such that $w'' \in \langle s_i, s_j \rangle$, $\ell(w)=\ell(w') +\ell(w'')$, $w\alpha_i >0,$ and $w\alpha_j<0$. Thus, $w''\alpha_i>0, w''\alpha_j<0$. Following the proof {\em loc. cit.}, $wi\in \I$ implies $w''i\in \I$.

The inductive assumption applied to $w'$ (and $w'' i \in \I$) yields $\TT_{w'} (B_{w'' i}) =B_{w'w''i} =B_{wi}$.  The rank~2 result in a previous paragraph shows $B_{w'' i} =\TT_{w''} B_i$. Therefore, we obtain
$\TT_w (B_i) = \TT_{w'} \TT_{w''} (B_i)
= \TT_{w'}  (B_{w''i})  = B_{w i}.$
\end{proof}

%\begin{definition}
For $i \in \II$, let $\tUi_{[i]}$ be the subalgebra of $\tUi$ generated by
$B_i,  \TT_{\omega'_i}(B_i), \K_i, \K_{\de-\alpha_i}. %\TT_{\omega'_i}(\K_i)=\K_{\de-\alpha_i}.
$
%\end{definition}

\begin{lemma}  \label{lem:TiFix}
Let $i, j\in \II$ be such that $i\neq j$. Then  $\TT_{\omega_i}(x)=x$, for all $x\in\tUi_{[j]}$.
\end{lemma}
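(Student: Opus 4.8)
The plan is to exploit that $\TT_{\omega_i}$ is a $\Q(v)$-algebra automorphism, so that it suffices to verify it fixes the four algebra generators $B_j$, $\TT_{\omega'_j}(B_j)$, $\K_j$, and $\K_{\de-\alpha_j}$ of $\tUi_{[j]}$. The two toral generators are handled by a weight computation: using $\TT_w(\K_\mu)=\K_{w\mu}$ for $w\in\widetilde W$ (which follows from Lemma~\ref{lem:Ti} by composition) together with the fact that $t_{\omega_i}$ fixes $\de$ and acts on finite roots by $t_{\omega_i}(\alpha_j)=\alpha_j-\langle\omega_i,\alpha_j\rangle\de=\alpha_j-\delta_{i,j}\de$. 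For $i\neq j$ this gives $t_{\omega_i}(\alpha_j)=\alpha_j$ and $t_{\omega_i}(\de-\alpha_j)=\de-\alpha_j$, whence $\TT_{\omega_i}(\K_j)=\K_j$ and $\TT_{\omega_i}(\K_{\de-\alpha_j})=\K_{\de-\alpha_j}$.

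The crux, and the step I expect to be the main obstacle, is the identity $\TT_{\omega_i}(B_j)=B_j$: Lemma~\ref{lem:fixB} only covers $w$ in the affine Weyl group $W$, whereas $t_{\omega_i}\in\widetilde W$ may have a nontrivial diagram-automorphism part. To bridge this, I would write $t_{\omega_i}=\sigma\, w'$ with $\sigma\in\Omega$ and $w'\in W$, so that $\TT_{\omega_i}=\sigma\,\TT_{w'}$ by the definition of the braid operators. From $t_{\omega_i}(\alpha_j)=\alpha_j$ I get $w'(\alpha_j)=\sigma^{-1}(\alpha_j)=\alpha_{\sigma^{-1}(j)}$, a simple root, so that $w'(j)\in\I$ and Lemma~\ref{lem:fixB} yields $\TT_{w'}(B_j)=B_{\sigma^{-1}(j)}$. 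Applying $\sigma$ then gives $\TT_{\omega_i}(B_j)=\sigma\big(B_{\sigma^{-1}(j)}\big)=B_j$.

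It remains to treat $\TT_{\omega'_j}(B_j)$, which I would dispatch by commuting $\TT_{\omega_i}$ past $\TT_{\omega'_j}$ and reducing to the previous step. Since $i\neq j$, one checks $s_j\,t_{\omega_i}\,s_j=t_{s_j\omega_i}=t_{\omega_i}$, i.e.\ $s_j$ and $t_{\omega_i}$ commute; Lemma~\ref{lem:Braid-Lus}(a) then gives $\TT_{\omega_i}\TT_j=\TT_j\TT_{\omega_i}$, hence $\TT_{\omega_i}\TT_j^{-1}=\TT_j^{-1}\TT_{\omega_i}$. Writing $\TT_{\omega'_j}=\TT_{\omega_j}\TT_j^{-1}$ (from $t_{\omega_j}=\omega'_j s_j$ with additive lengths, cf.\ \eqref{eq:tomega}) and invoking $\TT_{\omega_i}\TT_{\omega_j}=\TT_{\omega_j}\TT_{\omega_i}$ from Lemma~\ref{lem:Braid-Lus}(c), I obtain
\[
\TT_{\omega_i}\TT_{\omega'_j}=\TT_{\omega_i}\TT_{\omega_j}\TT_j^{-1}=\TT_{\omega_j}\TT_{\omega_i}\TT_j^{-1}=\TT_{\omega_j}\TT_j^{-1}\TT_{\omega_i}=\TT_{\omega'_j}\TT_{\omega_i}.
\]
Consequently $\TT_{\omega_i}\big(\TT_{\omega'_j}(B_j)\big)=\TT_{\omega'_j}\big(\TT_{\omega_i}(B_j)\big)=\TT_{\omega'_j}(B_j)$ by the crux step, which verifies $\TT_{\omega_i}$ on all four generators and hence on all of $\tUi_{[j]}$.
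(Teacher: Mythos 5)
Your proof is correct, and it follows the same overall skeleton as the paper's: reduce to the four algebra generators of $\tUi_{[j]}$, handle $\K_j$ and $\K_{\de-\alpha_j}$ by the weight action of $t_{\omega_i}$, and get $\TT_{\omega_i}(B_j)=B_j$ from Lemma~\ref{lem:fixB}. The one place where you genuinely diverge is the fourth generator $\TT_{\omega_j'}(B_j)$. The paper computes $\TT_{\omega_i}^{-1}\TT_{\omega_j}(B_j)=\TT_{\omega_j}(B_j)$ through a chain of equalities using Lemma~\ref{lem:Braid-Lus}(b) together with the identity $\TT_j^{-1}(B_i)=\TT_i(B_j)$, and then converts $\TT_{\omega_j}(B_j)$ into $\TT_{\omega_j'}(B_j)\K_{\de-\alpha_j}$ to conclude. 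You instead establish the operator identity $\TT_{\omega_i}\TT_{\omega_j'}=\TT_{\omega_j'}\TT_{\omega_i}$ directly from $s_j(\omega_i)=\omega_i$ via Lemma~\ref{lem:Braid-Lus}(a) and the commutation $\TT_{\omega_i}\TT_{\omega_j}=\TT_{\omega_j}\TT_{\omega_i}$ of part (c), and then simply push $\TT_{\omega_i}$ through. This is a cleaner and arguably more robust route: it avoids the rank-two identity $\TT_j^{-1}(B_i)=\TT_i(B_j)$ (which silently assumes $c_{ij}=-1$) and the detour through the central factor $\K_{\de-\alpha_j}$. You also patch a small gap the paper glosses over, namely that Lemma~\ref{lem:fixB} is stated for $w\in W$ while $t_{\omega_i}\in\widetilde W$ may carry a nontrivial $\Omega$-component; your factorization $t_{\omega_i}=\sigma w'$ with $w'(\alpha_j)=\alpha_{\sigma^{-1}(j)}$ handles this correctly. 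Both proofs are valid; yours trades the paper's explicit computation for a structural commutation argument.
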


\begin{proof}
Clearly, $\TT_{\omega_i}(\K_j)=\K_j$ and $\TT_{\omega_i}(\K_{\de-\alpha_j})=\K_{\de-\alpha_j}$.
By Lemma \ref{lem:fixB},  $\TT_{\omega_i}(B_j)=B_j$.

By Lemma \ref{lem:Braid-Lus}(b) and noting $\TT_j^{-1}(B_i) =\TT_i(B_j)$, we have
\begin{align}
\TT_{\omega_i}^{-1} \TT_{\omega_j}(B_j)=&\TT_i^{-1}\TT_{\omega_i}\TT_i^{-1}(B_j)
  \label{eq:TTB1} \\
=&\TT_i^{-1} \TT_{\omega_j} \TT_j^{-1}(B_i)
=\TT_{\omega_j}\TT_i^{-1}\TT_j^{-1}(B_i)
=\TT_{\omega_j}(B_j).
\notag
\end{align}
On the other hand, we have
\begin{align}   \label{eq:TTB2}
\TT_{\omega_j}(B_j)=\TT_{\omega_j'}\TT_j(B_j)=\TT_{\omega_j'}(B_j\K_j^{-1})= \TT_{\omega_j'}(B_j)\K_{\de-\alpha_j}.
\end{align}
If follows by \eqref{eq:TTB1}--\eqref{eq:TTB2} that $\TT_{\omega_i}^{-1} \TT_{\omega_j'}(B_j)=\TT_{\omega_j'}(B_j).$
As $\TT_{\omega_i}$ fixes all generators of $\tUi_{[j]}$, the lemma follows.
\end{proof}

\begin{lemma}
\label{lem:T-T}
We have
$\TT_{\omega_i'}(B_i) =\TT_{\omega_i'}^{-1}(B_i),$ for $i\in \II$.
\end{lemma}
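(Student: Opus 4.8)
The plan is to trade the two operators $\TT_{\omega_i'}^{\pm 1}$ for the single translation operator $\TT_{\omega_i}$ and then quote the Bernstein--Lusztig relation in Lemma~\ref{lem:Braid-Lus}(b). The starting point is the length-additive factorization $t_{\omega_i}=\omega_i' s_i$ recorded in \eqref{eq:tomega}, which gives $\TT_{\omega_i}=\TT_{\omega_i'}\TT_i$ and hence
\[
\TT_{\omega_i'}=\TT_{\omega_i}\TT_i^{-1},\qquad \TT_{\omega_i'}^{-1}=\TT_i\TT_{\omega_i}^{-1}.
\]
Applying these to $B_i$ shows that the assertion $\TT_{\omega_i'}(B_i)=\TT_{\omega_i'}^{-1}(B_i)$ is equivalent to $\TT_{\omega_i}\TT_i^{-1}(B_i)=\TT_i\TT_{\omega_i}^{-1}(B_i)$, and, after multiplying on the left by $\TT_i^{-1}$, to the cleaner identity
\[
\TT_i^{-1}\TT_{\omega_i}\TT_i^{-1}(B_i)=\TT_{\omega_i}^{-1}(B_i).
\]

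The key observation is that the left-hand side is governed exactly by the ``in particular'' case of Lemma~\ref{lem:Braid-Lus}(b), namely $\TT_i^{-1}\TT_{\omega_i}\TT_i^{-1}=\TT_{\omega_i}^{-1}\prod_{k\neq i}\TT_{\omega_k}^{-c_{ik}}$. Thus the entire content of the lemma reduces to showing that the correction factor $\prod_{k\neq i}\TT_{\omega_k}^{-c_{ik}}$ acts as the identity on $B_i$. This is precisely the role of Lemma~\ref{lem:TiFix}: since $B_i$ is one of the defining generators of $\tUi_{[i]}$, and (reading Lemma~\ref{lem:TiFix} with the indices $i,j$ interchanged) $\TT_{\omega_k}$ fixes $\tUi_{[i]}$ pointwise for every $k\neq i$, each factor fixes $B_i$, so $\prod_{k\neq i}\TT_{\omega_k}^{-c_{ik}}(B_i)=B_i$. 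Substituting this back collapses the relation to $\TT_i^{-1}\TT_{\omega_i}\TT_i^{-1}(B_i)=\TT_{\omega_i}^{-1}(B_i)$, and applying $\TT_i$ to both sides returns the desired equality.

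I do not expect a genuine obstacle here; the argument is a short rearrangement of braid operators anchored on the two preceding lemmas. The one point requiring care is purely bookkeeping: one must be sure that the translation factor really disappears, i.e. that every $\TT_{\omega_k}$ with $k\neq i$ (and hence every power of it, noting $-c_{ik}\ge 0$) fixes $B_i$, which is legitimate because $B_i$ generates a piece of $\tUi_{[i]}$ and Lemma~\ref{lem:TiFix} applies verbatim after swapping the names of $i$ and the fixed index. A slightly longer alternative, which I would keep in reserve, runs through the anti-involution $\dag$: from $\dag\circ\TT_j\circ\dag=\TT_j^{-1}$ in \eqref{antiT} one deduces $\dag\circ\TT_w\circ\dag=\TT_{w^{-1}}^{-1}$ for reduced words, so that $\dag(\TT_{\omega_i'}(B_i))=\TT_{(\omega_i')^{-1}}^{-1}(B_i)$; combined with the factorization above this again reduces to the vanishing of the same translation correction term, but the direct route through Lemma~\ref{lem:Braid-Lus}(b) is the most economical.
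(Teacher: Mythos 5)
Your argument is correct and is essentially the paper's proof: both rest on the factorization $\TT_{\omega_i}=\TT_{\omega_i'}\TT_i$, the Bernstein--Lusztig identity $\TT_i^{-1}\TT_{\omega_i}\TT_i^{-1}=\TT_{\omega_i}^{-1}\prod_{k\neq i}\TT_{\omega_k}^{-c_{ik}}$ from Lemma~\ref{lem:Braid-Lus}(b), and Lemma~\ref{lem:TiFix} to kill the correction factor on $B_i$. The paper merely packages the same manipulation as the operator identity $\TT_{\omega_i'}=\TT_{\omega_i'}^{-1}\prod_{j\neq i}\TT_{\omega_j}^{-c_{ij}}$ before applying it to $B_i$, so there is nothing further to add.
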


\begin{proof}
Note that Lemma~\ref{lem:Braid-Lus} remains valid when the $T$'s therein are replaced by $\TT$'s. Then we have $\TT_{\omega_i'}=\TT_{\omega_i'}^{-1}\prod_{j\neq i} \TT_{\omega_j}^{-c_{ij}}$, thanks to $\TT_{\omega_i} =\TT_{\omega_i'} \TT_i.$ Therefore, it follows by Lemma~\ref{lem:TiFix} that
$\TT_{\omega_i'}(B_i) =\TT_{\omega_i'}^{-1}\prod_{j\neq i} \TT_{\omega_j}^{-c_{ij}} (B_i) =\TT_{\omega_i'}^{-1}(B_i).$
\end{proof}

Here are some additional useful formulas:
\begin{align}
\TT_i \big( \TT_{\omega_i'}(B_i) \big)
&= \TT_{\omega_i'}(B_i) B_i^{(2)} -v_i B_i \TT_{\omega_i'}(B_i) B_i +v_i^2 B_i^{(2)}  \TT_{\omega_i'}(B_i) + \TT_{\omega_i'}(B_i) \K_i,
\label{eq:TTiX}
\\
\TT_i^{-1} \big( \TT_{\omega_i'}(B_i) \big)
&= B_i^{(2)} \TT_{\omega_i'}(B_i) -v_i B_i \TT_{\omega_i'}(B_i) B_i +v_i^2 \TT_{\omega_i'}(B_i) B_i^{(2)} + \TT_{\omega_i'}(B_i) \K_i,
\label{eq:TTiX2}
\\
\TT_i \big([\TT_{\omega_i'}(B_i), B_i]_{v_i^{-2}} \big) &=[B_i, \TT_{\omega_i'}(B_i)]_{v_i^{-2}}.
  \label{TT-imagine}
\end{align}
In the rank 1 case, formulas \eqref{eq:TTiX}--\eqref{eq:TTiX2} reduce to \eqref{T1B0}--\eqref{T1B0-2} while \eqref{TT-imagine} reduces to \eqref{eq:Tm}. The formulas \eqref{eq:TTiX}--\eqref{TT-imagine} are obtained by an Ansatz with the corresponding formulas \eqref{eq:TiX}--\eqref{T-imagine} in the setting of affine quantum groups. Formulas \eqref{eq:TTiX} and \eqref{eq:TTiX2} are equivalent to each other in the presence of \eqref{TT-imagine} or in light of \eqref{antiT}; Equation~\eqref{TT-imagine} is compatible with \eqref{antiT} as well. These formulas shall have interpretations in the $\imath$Hall algebras of $\imath$quivers and $\imath$-weighted projective lines. We skip the details.

\begin{lemma}
 \label{lem:iSerre-rank1}
For $i \in \II$, we have
\begin{align}
\sum_{a=0}^3 (-1)^a \qbinom{3}{a}_i B_i^{3-a} \TT_{\omega_i'}(B_i) B_i^{a}
%=B_i^{3} \TT_{\omega_i'}(B_i)  -[3]_i B_i^{2} \TT_{\omega_i'}(B_i) B_i + [3]_i B_i \TT_{\omega_i'}(B_i) B_i^{2} -  \TT_{\omega_i'}(B_i) B_i^{3}
&= -v_i^{-1} [2]_i^2 [B_i, \TT_{\omega_i'}(B_i)] \K_i,
  \label{Se01}\\
\sum_{a=0}^3 (-1)^a \qbinom{3}{a}_i (\TT_{\omega_i'}(B_i))^{3-a} B_i (\TT_{\omega_i'}(B_i))^{a}
&= -v_i^{-1} [2]_i^2 [\TT_{\omega_i'}(B_i), B_i] \K_\de \K_i^{-1}.
  \label{Se01-2}
\end{align}
\end{lemma}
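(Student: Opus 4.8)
The plan is to prove the first identity \eqref{Se01} by a short computation and then obtain \eqref{Se01-2} from it by a symmetry argument, rather than attacking the two on an equal footing. Write $X := \TT_{\omega_i'}(B_i)$ for brevity. A standard computation with \eqref{eq:tomega} and $\langle\omega_i,\alpha_i\rangle=1$ gives $\omega_i'(\alpha_i)=\de-\alpha_i$, so $\wt(X)=\de-\alpha_i$ and the pair $\alpha_i,\de-\alpha_i$ spans an $\widehat{\sll}_2$ subsystem of $\cR$; thus \eqref{Se01} is exactly the inhomogeneous $q$-Onsager Serre relation between $B_i$ and $X$, which is the conceptual reason to expect it to hold. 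The key observation is that everything needed is already packaged into the braid-group formulas \eqref{eq:TTiX} and \eqref{TT-imagine}.

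Concretely, I would start from \eqref{TT-imagine}, namely $\TT_i\big([X,B_i]_{v^{-2}}\big)=[B_i,X]_{v^{-2}}$. Since $\TT_i$ is an algebra automorphism with $\TT_i(B_i)=\K_i^{-1}B_i$ and $\K_i$ central, the left-hand side equals $\K_i^{-1}[\TT_i(X),B_i]_{v^{-2}}$, so \eqref{TT-imagine} is equivalent to
\[
[\TT_i(X),B_i]_{v^{-2}}=\K_i\,[B_i,X]_{v^{-2}}.
\]
Now substitute the explicit cubic expression \eqref{eq:TTiX} for $\TT_i(X)$ and expand. The $B_i$-only monomials organize into the four terms $XB_i^3,\ B_iXB_i^2,\ B_i^2XB_i,\ B_i^3X$; after multiplying the resulting relation through by $-[2]$ (which clears the $[2]$ in $B_i^{(2)}=B_i^2/[2]$) their coefficients become $-1,\ [3],\ -[3],\ 1$, i.e.\ precisely $\sum_{a=0}^3(-1)^a\qbinom{3}{a}B_i^{3-a}XB_i^a$. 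The remaining $\K_i$-terms collect into $(1+v^{-2})[B_i,X]\K_i$, and one checks $-[2](1+v^{-2})=-v^{-1}[2]^2$, matching the right-hand side of \eqref{Se01}. This is the only genuine computation and it is short; the one thing to watch is the bookkeeping of $v$-powers, so that $\qbinom{3}{1}=\qbinom{3}{2}=[3]$ and the constant $-v^{-1}[2]^2$ emerge exactly.

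Finally, I would deduce \eqref{Se01-2} by applying the automorphism $\TT_{\omega_i'}$ to \eqref{Se01}. By Lemma~\ref{lem:T-T} we have $\TT_{\omega_i'}(B_i)=\TT_{\omega_i'}^{-1}(B_i)$, hence $\TT_{\omega_i'}^2(B_i)=B_i$, so $\TT_{\omega_i'}$ interchanges $B_i$ and $X$; moreover $\TT_{\omega_i'}(\K_i)=\K_{\omega_i'\alpha_i}=\K_{\de-\alpha_i}=\K_\de\K_i^{-1}$. Applying $\TT_{\omega_i'}$ therefore sends the left side of \eqref{Se01} to $\sum_{a=0}^3(-1)^a\qbinom{3}{a}X^{3-a}B_iX^a$ and its right side to $-v^{-1}[2]^2[X,B_i]\K_\de\K_i^{-1}$, which is precisely \eqref{Se01-2}. (One could alternatively derive \eqref{Se01-2} directly from \eqref{eq:TTiX2} and \eqref{TT-imagine} via $\TT_i^{-1}$, but the $\TT_{\omega_i'}$-symmetry produces the correct Cartan factor $\K_\de\K_i^{-1}$ with no extra work.) The main obstacle here is not conceptual but methodological: because $\tUi$ has no triangular decomposition, we must take the braid-group identities \eqref{eq:TTiX}–\eqref{TT-imagine} as the source of relations rather than relying on a PBW normal-form argument; once those are granted, the lemma reduces to the short expansion above plus the symmetry.
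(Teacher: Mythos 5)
Your proposal is correct and is essentially the paper's own proof: both parts rest on exactly the same inputs, namely the cubic braid-group formulas \eqref{eq:TTiX}--\eqref{eq:TTiX2} together with \eqref{TT-imagine} for \eqref{Se01}, and the application of $\TT_{\omega_i'}$ with Lemma~\ref{lem:T-T} and $\TT_{\omega_i'}(\K_i)=\K_\de\K_i^{-1}$ for \eqref{Se01-2}. The only (cosmetic) difference is the direction of the computation in the first part: you start from \eqref{TT-imagine} and expand $\TT_i(X)$ via \eqref{eq:TTiX}, whereas the paper collapses the Serre sum into the iterated bracket $\big[B_i,\big[B_i,[B_i,\TT_{\omega_i'}(B_i)]_{v_i^2}\big]\big]_{v_i^{-2}}$ and substitutes \eqref{eq:TTiX2}.
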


\begin{proof}
Recall $\K_i$ is central in $\tUi$. Using \eqref{eq:TTiX2}--\eqref{TT-imagine}, we compute
\begin{align*}
[2]_i^{-1} \sum_{a=0}^3 (-1)^a \qbinom{3}{a}_i B_i^{3-a} \TT_{\omega_i'}(B_i) B_i^{a}
& = [2]_i^{-1} \Big[ B_i, \big[ B_i, [B_i, \TT_{\omega_i'}(B_i)]_{v_i^2} \big]_1 \Big]_{v_i^{-2}}
\\
&= \big[ B_i, \TT_i^{-1} (\TT_{\omega_i'}(B_i)) - \TT_{\omega_i'}(B_i) \K_i \big]_{v_i^{-2}}
\\
&= \big[ B_i, \TT_i^{-1} (\TT_{\omega_i'}(B_i)) \big]_{v_i^{-2}} - \big[ B_i, \TT_{\omega_i'}(B_i) \K_i \big]_{v_i^{-2}}
\\
&= \K_i \TT_i^{-1} \big( [ B_i, \TT_{\omega_i'}(B_i)]_{v_i^{-2}} \big) - \big[ B_i, \TT_{\omega_i'}(B_i) \big]_{v_i^{-2}} \K_i
\\
&= \K_i [\TT_{\omega_i'}(B_i), B_i]_{v_i^{-2}}  - \big[ B_i, \TT_{\omega_i'}(B_i) \big]_{v_i^{-2}} \K_i
\\
&= -v_i^{-1} [2]_i [B_i, \TT_{\omega_i'}(B_i)] \K_i.
\end{align*}
The first formula \eqref{Se01} follows.

Applying $\TT_{\omega_i'}$ to both sides of the first formula \eqref{Se01}, we obtain the second formula \eqref{Se01-2} by  using $\TT_{\omega_i'}(\K_i) =\K_\delta \K_i^{-1}$ and Lemma~\ref{lem:T-T}.
\end{proof}

Let us denote the $q$-Onsager algebra in Definition~\ref{def:Onsager} by $\tUi(\widehat{\mathfrak{sl}}_2)$ in the proposition below, in order to distinguish from $\tUi$ of higher rank.

\begin{proposition}
 \label{prop:rank1iso}
Let $i\in \II$. There is a $\Q(v)$-algebra isomorphism  $\aleph_i: \tUi(\widehat{\mathfrak{sl}}_2) \rightarrow \tUi_{[i]}$, which sends $B_1 \mapsto B_i, B_0 \mapsto \TT_{\omega_i'} (B_i), \K_1 \mapsto \K_i, \K_0 \mapsto \K_\de \K_i^{-1}$.
\end{proposition}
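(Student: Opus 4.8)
The plan is to build $\aleph_i$ as an algebra homomorphism and then establish surjectivity (immediate) and injectivity (the crux) separately. First I would check that the assignment $B_1\mapsto B_i$, $B_0\mapsto\TT_{\omega_i'}(B_i)$, $\K_1\mapsto\K_i$, $\K_0\mapsto\K_\de\K_i^{-1}$ respects the defining relations of $\tUi(\widehat{\sll}_2)$ from Definition~\ref{def:Onsager}. Since every $\K_j$ is central in $\tUi$, the images $\K_i$ and $\K_\de\K_i^{-1}$ are central, so the relations declaring $\K_0,\K_1$ central hold. The only remaining relations are the two cubic identities \eqref{relation:s3}, namely the one for $(i,j)=(1,0)$ and the one for $(i,j)=(0,1)$. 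But applying $\aleph_i$ turns these into precisely \eqref{Se01} and \eqref{Se01-2} of Lemma~\ref{lem:iSerre-rank1} (recalling $v_i=v$ and $[2]_i=[2]$ in ADE type). Hence $\aleph_i$ is a well-defined homomorphism, and it is surjective because its image contains all four generators $B_i$, $\TT_{\omega_i'}(B_i)$, $\K_i$, and $\K_{\de-\alpha_i}=\K_\de\K_i^{-1}$ of $\tUi_{[i]}$.

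The substance of the proof is injectivity, for which I would use a filtered algebra argument. Both $\tUi(\widehat{\sll}_2)$ and $\tUi$ carry the filtration counting the number of $B$-factors, namely \eqref{eq:hfilt} and \eqref{eq:filt1}--\eqref{eq:filt}, whose associated graded algebras are identified with half a quantum affine algebra tensored with a torus through \eqref{eq:filter1} and \eqref{eq:filter}. Since each $\TT_j$ preserves the $\Z\I$-grading \eqref{eq:deg} (carrying weight $\mu$ to $s_j\mu$) and does not decrease the $B$-degree, I would verify that $\aleph_i$ is a filtered homomorphism. By the standard fact that a filtered map whose associated graded is injective is itself injective, it then suffices to prove that $\gr\aleph_i$ is injective. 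On associated graded this map is $F_1\mapsto F_i$ and $F_0\mapsto\overline{\TT_{\omega_i'}(B_i)}$, together with the evident maps $\K_1\mapsto\K_i$, $\K_\de\mapsto\K_\de$ on the torus factors; so the question reduces to showing that the subalgebra of $\U^-(\widehat{\fg})$ generated by $F_i$ and the leading symbol $\overline{\TT_{\omega_i'}(B_i)}$ is a free copy of $\U^-(\widehat{\sll}_2)$.

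I expect this identification of $\overline{\TT_{\omega_i'}(B_i)}$ to be the main obstacle. The point is that on the associated graded each $\imath$-braid symmetry $\TT_j$ degenerates to (a half of) the corresponding Lusztig braid symmetry acting on $\U^-\otimes\text{torus}$, which one must make precise; granting this, and computing $\omega_i'(\alpha_i)=\de-\alpha_i$ from $\omega_i'=t_{\omega_i}s_i$, the symbol $\overline{\TT_{\omega_i'}(B_i)}$ is, up to an invertible torus element and a scalar, the affine real root vector attached to $\de-\alpha_i$ in $\U^-(\widehat{\fg})$. Thus $F_i$ and $\overline{\TT_{\omega_i'}(B_i)}$ generate the negative half of the quantum affine $\sll_2$ attached to the rank-one affine root subsystem $\{\alpha_i+k\de\}\cup\{m\de\}$ of \eqref{eq:roots}. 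Concretely, I would run the PBW basis of $\tUi(\widehat{\sll}_2)$ from Proposition~\ref{prop:PBW1}(2) through $\aleph_i$ and check that the leading symbols of the images are distinct PBW monomials in $\U^-(\widehat{\fg})$ built from the root vectors indexed by $\alpha_i+k\de$ and $m\de$; since these form part of a PBW basis of $\U^-(\widehat{\fg})$ they are linearly independent, whence $\gr\aleph_i$, and therefore $\aleph_i$, is injective.
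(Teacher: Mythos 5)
Your proposal follows essentially the same route as the paper: the homomorphism property via Lemma~\ref{lem:iSerre-rank1}, immediate surjectivity, and injectivity by passing to associated graded algebras via \eqref{eq:filter1} and \eqref{eq:filter} and reducing to the known injectivity of $\U^-(\widehat{\sll}_2)\rightarrow\U^-$ sending $F_1\mapsto F_i$, $F_0\mapsto T_{\omega_i'}(F_i)$ (which the paper simply cites from Beck rather than rechecking through PBW monomials). The one point to make precise is the filtration on the source: since $\TT_{\omega_i'}(B_i)$ has weight $\de-\alpha_i$ and hence $B$-degree $h-1$ in $\tUi$ (with $h$ the Coxeter number), you must use the weighted filtration $|\cdot|_{h-1}$ of \eqref{eq:hfilt} with $|B_0|_{h-1}=h-1$ rather than the filtration counting $B$-factors, as otherwise $\aleph_i$ is not a filtered map and its associated graded is undefined.
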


\begin{proof}
It follows by Definition~\ref{def:Onsager} and Lemma~\ref{lem:iSerre-rank1} that $\aleph_i$ is a surjective  homomorphism.

It remains to show that $\aleph_i$ is injective. Let $h$ be the Coxeter number of $\fg$. Recall from \eqref{eq:hfilt}--\eqref{eq:filter1} a filtered algebra structure on $\tUi(\widehat{\mathfrak{sl}}_2)$ given by $|\cdot |_{h-1}$ with $|B_1 |_{h-1} =1, |B_0 |_{h} =h-1, |\K_1 |_{h} = |\K_\delta |_{h} =0$. As $\wt( \TT_{\omega_i'} (B_i) ) =\delta-\alpha_i$, we have $|\TT_{\omega_i'} (B_i)| =h-1$, cf. \eqref{eq:filter}. Hence $\aleph_i$ is a homomorphism of filtered algebras, and it induces a homomorphism of the associated graded:
\[
\aleph_i^{\gr}: \gr \tUi(\widehat{\mathfrak{sl}}_2) \longrightarrow \gr \tUi_{[i]},
\]
which can be identified with
\[
\aleph_i^{-}: \U^-(\widehat{\mathfrak{sl}}_2) \otimes \Q(v)[\K_1^\pm, \K_\delta^\pm]
\longrightarrow \U^- \otimes \Q(v)[\K_i^\pm, \K_\delta^\pm],
\]
sending $F_1 \mapsto F_i, F_0 \mapsto T_{\omega_i'} (F_i)$ and $\K_1 \mapsto \K_i, \K_\de \mapsto \K_\de$.
The homomorphism $\aleph_i^{-}|_{\U^-(\widehat{\mathfrak{sl}}_2)}: \U^-(\widehat{\mathfrak{sl}}_2)  \rightarrow \U^-$ is well known to be injective; cf. \cite{Be94}. It follows that $\aleph_i^-$ and $\aleph_i^{\gr}$ are injective, hence so is $\aleph_i$.
\end{proof}

As in \cite{Be94}, we have (cf. \eqref{eq:TTiX})
\begin{align}
\label{eq:T1Ti}
{\TT_i}_{|\tUi_{[i]}} = \aleph_i \circ \TT_1 \circ \aleph_i^{-1},
\qquad
{\TT_{\omega_i}}_{|\tUi_{[i]}} = \aleph_i \circ \TT_{\omega_1} \circ \aleph_i^{-1}, \quad \text{ for } i\in \II.
\end{align}

Define a sign function
\[
o(\cdot): \II \longrightarrow \{\pm 1\}
\]
such that $o(i) o(j)=-1$ whenever $c_{ij} <0$ (there are clearly exactly 2 such functions).
We define the following elements in $\tUi$, for $i\in \II$, $k\in \Z$ and $m\ge 1$ (compare with the rank 1 formulas \eqref{eq:B1n}--\eqref{eq:dB1} and \eqref{eq:dBB}, where $\TT_{\omega_1} =\dag \TT_1$):
\begin{align}
B_{i,k} &= o(i)^k \TT_{\omega_i}^{-k} (B_i),
  \label{Bik} \\
\acute{\Theta}_{i,m} &=  o(i)^m \Big(-B_{i,m-1} \TT_{\omega_i'} (B_i) +v^{2} \TT_{\omega_i'} (B_i) B_{i,m-1}
\label{Thim1} \\
& \qquad\qquad\qquad\qquad + (v^{2}-1)\sum_{p=0}^{m-2} B_{i,p} B_{i,m-p-2}  \K_{i}^{-1}\K_{\de} \Big),
\notag \\
\dB_{i,m} &=\acute{\Theta}_{i,m} - \sum\limits_{a=1}^{\lfloor\frac{m-1}{2}\rfloor}(v^2-1) v^{-2a} \acute{\Theta}_{i,m-2a}\K_{a\de} -\de_{m,ev}v^{1-m} \K_{\frac{m}{2}\de}.
\label{Thim}
\end{align}
In particular, $B_{i,0}=B_i$.

\subsection{A Drinfeld type presentation}
 \label{subsec:Dr2}

Let $k_1, k_2, l\in \Z$ and $i,j \in \II$. We introduce shorthand notations:
\begin{align}
\begin{split}
S(k_1,k_2|l;i,j)
&=  B_{i,k_1} B_{i,k_2} B_{j,l} -[2] B_{i,k_1} B_{j,l} B_{i,k_2} + B_{j,l} B_{i,k_1} B_{i,k_2},
%&=\Sym_{k_1,k_2}\sum_{t=0}^2(-1)^t \qbinom{2}{t}B_{i,k_1}\cdots B_{i,k_t}B_{j,l} B_{i,k_{t+1}} \cdots B_{i,k_2}\notag \\
%&=\sum_{t=0}^2(-1)^t \qbinom{2}{t}B_{i,k_1}\cdots B_{i,k_t}B_{j,l} B_{i,k_{t+1}} \cdots B_{i,k_2},
  \\
\SS(k_1,k_2|l;i,j)
&= S(k_1,k_2|l;i,j)  + \{k_1 \leftrightarrow k_2 \}.
  \label{eq:Skk}
  \end{split}
\end{align}
Here and below, $\{k_1 \leftrightarrow k_2 \}$ stands for repeating the previous summand with $k_1, k_2$ switched, so the sums over $k_1, k_2$ are symmetrized.
We also denote
\begin{align}
\begin{split}
R(k_1,k_2|l; i,j)
&=   \K_i  C^{k_1}
\Big(-\sum_{p\geq0} v^{2p}  [2] [\Theta _{i,k_2-k_1-2p-1},\y_{j,l-1}]_{v^{-2}}C^{p+1}
  \label{eq:Rkk} \\
&\qquad\qquad -\sum_{p\geq 1} v^{2p-1}  [2] [\y_{j,l},\Theta _{i,k_2-k_1-2p}]_{v^{-2}} C^{p}
 - [\y_{j,l}, \Theta _{i,k_2-k_1}]_{v^{-2}} \Big),
  \\
 \R(k_1,k_2|l; i,j) &= R(k_1,k_2|l;i,j) + \{k_1 \leftrightarrow k_2\}.
 \end{split}
\end{align}
Sometimes, it is convenient to rewrite part of the summands in \eqref{eq:Rkk} as
\begin{align*}
&-\sum_{p\geq 1} v^{2p-1}  [2] [\y_{j,l},\Theta _{i,k_2-k_1-2p}]_{v^{-2}} C^{p}
 - [\y_{j,l}, \Theta _{i,k_2-k_1}]_{v^{-2}}\\
 =&
-\sum_{p\geq 0} v^{2p-1}  [2] [\y_{j,l},\Theta _{i,k_2-k_1-2p}]_{v^{-2}} C^{p}
+v^{-2}[\y_{j,l}, \Theta _{i,k_2-k_1}]_{v^{-2}}.
\end{align*}
%(We indicate by red color that the summations on the LHS and RHS above are different.) 
We shall often omit $i,j$ and write $S(k_1,k_2|l)= S(k_1,k_2|l;i,j)$, $\SS(k_1,k_2|l)= \SS(k_1,k_2|l;i,j)$, and similarly for $R$ and $\R$, whenever $i,j$ are clear from the context.

\begin{definition}
\label{def:iDR}
Let $\tUiD$ be the $\Q(v)$-algebra  generated by $\K_{i}^{\pm1}$, $C^{\pm1}$, $H_{i,m}$ and $\y_{i,l}$, where  $i\in \II$, $m \in \Z_{\geq1}$, $l\in\Z$, subject to the following relations, for $m,n \in \Z_{\geq1}$ and $k,l, k_1, k_2 \in \Z$:
\begin{align}
%1
& \K_i, C \text{ are central, }\quad
[H_{i,m},H_{j,n}]=0, \quad \K_i\K_i^{-1}=1, \;\; C C^{-1}=1,
 \label{iDR1}
\\%2
&[H_{i,m},\y_{j,l}]=\frac{[mc_{ij}]}{m} \y_{j,l+m}-\frac{[mc_{ij}]}{m} \y_{j,l-m}C^m,
\label{iDR2}
\\%3
&[\y_{i,k} ,\y_{j,l}]=0,   \text{ if }c_{ij}=0,  \label{iDR4}
\\%4
&[\y_{i,k}, \y_{j,l+1}]_{v^{-c_{ij}}}  -v^{-c_{ij}} [\y_{i,k+1}, \y_{j,l}]_{v^{c_{ij}}}=0, \text{ if }i\neq j,
\label{iDR3a}
 \\ %5
&[\y_{i,k}, \y_{i,l+1}]_{v^{-2}}  -v^{-2} [\y_{i,k+1}, \y_{i,l}]_{v^{2}}
=v^{-2}\Theta_{i,l-k+1} C^k \K_i-v^{-4}\Theta_{i,l-k-1} C^{k+1} \K_i
\label{iDR3b} \\
&\qquad\qquad\qquad\qquad\qquad\qquad\quad\quad\quad
  +v^{-2}\Theta_{i,k-l+1} C^l \K_i-v^{-4}\Theta_{i,k-l-1} C^{l+1} \K_i, \notag
\\
  \label{iDR5}
&    \SS(k_1,k_2|l; i,j) =  \R(k_1,k_2|l; i,j), \text{ if }c_{ij}=-1.
\end{align}
Here we set
\begin{align}  \label{Hm0}
{\Theta}_{i,0} =(v-v^{-1})^{-1}, \qquad {\Theta}_{i,m} =0, \; \text{ for }m<0,
\end{align}
and $H_{i,m}$ are related to $\Theta_{i,m}$ by the following equation:
\begin{align}
\label{exp h}
1+ \sum_{m\geq 1} (v-v^{-1})\Theta_{i,m} u^m  = \exp\Big( (v-v^{-1}) \sum_{m\geq 1} H_{i,m} u^m \Big).
\end{align}
\end{definition}
Let us mention that in spite of its appearance the RHS of \eqref{iDR3b} typically has two nonzero terms, thanks to the convention \eqref{Hm0}.

The $\Q(v)$-algebra $\tUiD$ clearly exhibits certain symmetries as follows.

\begin{lemma}  \label{lem:aut}
For each $i\in \II$, we have an automorphism $\t_i$ of the algebra $\tUiD$ given by
\[
\t_i (B_{j,r}) =B_{j,r -\delta_{i,j}}, \quad
\t_i (H_{j,m}) =H_{j,m}, \quad
\t_i (\K_j) = \K_j C^{-\delta_{ij}}, \quad
\t_i(C) =C,
\]
(and hence $\t_i (\Theta_{j,m}) = \Theta_{j,m}$), for all $j\in \II, r\in \Z, m\ge 1$. Moreover, $\t_i \t_k =\t_k\t_i$ for all $i,k \in \II$.
\end{lemma}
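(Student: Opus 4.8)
The plan is to prove the statement directly from the presentation in Definition~\ref{def:iDR}. Since $\tUiD$ is the quotient of the free $\Q(v)$-algebra on the symbols $\K_j^{\pm1},C^{\pm1},H_{j,m},\y_{j,l}$ by the ideal generated by the relations \eqref{iDR1}--\eqref{iDR5}, any assignment of generators to elements that carries each defining relation to a valid identity extends uniquely to an algebra endomorphism. First I would check that the prescribed images under $\t_i$ respect all of \eqref{iDR1}--\eqref{iDR5}; then I would exhibit a two-sided inverse to upgrade $\t_i$ to an automorphism; finally I would read off $\t_i(\Theta_{j,m})=\Theta_{j,m}$ and the commutativity $\t_i\t_k=\t_k\t_i$ directly on generators.

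The uniform reason the relations are preserved is that $\t_i$ is an \emph{index shift}: it lowers the loop index of each $\y_{j,\cdot}$ by $\delta_{ij}$, twists $\K_j\mapsto\K_j C^{-\delta_{ij}}$, and fixes every $H_{j,m}$, every $\Theta_{j,m}$, and $C$. To avoid a clash with the fixed index $i$ of $\t_i$, I would write the defining relations with indices $a,b\in\II$. The claim to verify is that $\t_i$ maps each relation to the \emph{same} relation with its free loop-parameters shifted. For \eqref{iDR1}, \eqref{iDR4} and \eqref{iDR2} this is immediate, as these involve only $H$'s, $C$ and $\y$'s and depend on loop indices either through differences or through a global shift; for the commutation \eqref{iDR3a} with $a\neq b$ at most one of the two families $\y_{a,\cdot},\y_{b,\cdot}$ is shifted, and the shifted four-term expression is again an instance of \eqref{iDR3a}.

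The point requiring care is the bookkeeping in \eqref{iDR3b} and \eqref{iDR5}, whose right-hand sides carry monomials $C^{k}\K_a$ (respectively the prefactor $\K_a C^{k_1}$). Here the twist $\K_a\mapsto\K_a C^{-\delta_{ia}}$ is exactly what is needed: applying $\t_i$ turns $C^{k}\K_a$ into $C^{k-\delta_{ia}}\K_a$, matching the shift $k\mapsto k-\delta_{ia}$ on the left-hand side, while the $\Theta_{a,\cdot}$ that occur are unchanged because their indices $l-k\pm1$ (respectively $k_2-k_1-2p,\ldots$) depend only on \emph{differences} of the shifted indices. Thus $\t_i$ sends each instance of \eqref{iDR3b} to the instance with $(k,l)$ replaced by $(k-\delta_{ia},l-\delta_{ia})$, and it carries $\SS(k_1,k_2|l;a,b)\mapsto\SS(k_1-\delta_{ia},k_2-\delta_{ia}|l-\delta_{ib};a,b)$ together with $\R(k_1,k_2|l;a,b)\mapsto\R(k_1-\delta_{ia},k_2-\delta_{ia}|l-\delta_{ib};a,b)$, so the Serre relation \eqref{iDR5} is preserved. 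This check on \eqref{iDR5} is the most laborious, owing to the size of $\SS$ and $\R$, but it is the same mechanism and presents no genuine obstacle; the real content is simply that the $\K$-twist is calibrated to compensate the index shift.

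Having shown $\t_i$ is an endomorphism, I would define $\t_i^{-1}$ by $\y_{j,l}\mapsto\y_{j,l+\delta_{ij}}$, $\K_j\mapsto\K_j C^{\delta_{ij}}$, $H_{j,m}\mapsto H_{j,m}$, $C\mapsto C$; by the identical argument it is an endomorphism, and $\t_i\circ\t_i^{-1}$ and $\t_i^{-1}\circ\t_i$ fix every generator, hence equal $\Id_{\tUiD}$, so $\t_i\in\Aut(\tUiD)$. Since $\t_i$ is an algebra map fixing all $H_{j,m}$ and $C$, the defining equation \eqref{exp h}, which expresses each $\Theta_{j,m}$ as a universal polynomial in the $H_{j,m'}$, forces $\t_i(\Theta_{j,m})=\Theta_{j,m}$. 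Finally $\t_i\t_k$ and $\t_k\t_i$ agree on generators, as both send $\y_{j,l}\mapsto\y_{j,l-\delta_{ij}-\delta_{kj}}$ and $\K_j\mapsto\K_j C^{-\delta_{ij}-\delta_{kj}}$ and fix $H_{j,m},C$, so they coincide, proving $\t_i\t_k=\t_k\t_i$.
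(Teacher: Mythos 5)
Your proposal is correct and is essentially the paper's proof: the paper disposes of this lemma with "follows by inspection on the defining relations," and your argument simply carries out that inspection, with the one substantive observation — that the twist $\K_j\mapsto\K_j C^{-\delta_{ij}}$ is exactly calibrated to absorb the index shift in the $C^k\K_i$ factors on the right-hand sides of \eqref{iDR3b} and \eqref{iDR5} — being the correct crux. The construction of the inverse and the generator-level check of $\t_i\t_k=\t_k\t_i$ are likewise the intended (implicit) steps.
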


\begin{proof}
Follows by inspection on the defining relations for $\tUiD$ in Definition~\ref{def:iDR}.
\end{proof}

Define the generating functions, for $i\in \II$:
\begin{align}
  \label{eq:Genfun}
\begin{cases}
&\Y_{i}(z) = \sum\limits_{k\in\Z} \y_{i,k}z^{k}, \\
 &\bTH_{i}(z) = 1+ \sum\limits_{m>0} (v-v^{-1})\Theta_{i,m}z^{m}, % =\sum_{m\in \Z}(v-v^{-1})\Theta_{i,m}z^{m},
\\
& \bDel(z) = \sum\limits_{k\in\Z}  C^k z^k.
\end{cases}
\end{align}

The following is a generalization of Proposition~\ref{prop:equiv4}, which follows by a similar formal manipulation of generating functions as in \S\ref{subsec:proof}; we skip the detail.

\begin{proposition}
   \label{prop:equivij}
The identity \eqref{iDR2} is equivalent to either of the following identities, for $m \ge 1, l\in \Z$ and $i,j \in \II$:
\begin{align}
 \bTH_i (z)  \bB_j(w)
& =   \frac{(1 -v^{-c_{ij}}zw^{-1}) (1 -v^{c_{ij}} zw C)}{(1 -v^{c_{ij}}zw^{-1})(1 -v^{-c_{ij}}zw C)}
  \bB_j(w) \bTH_i (z),
\\
 [\Theta_{i,m},B_{j,l}]+[\Theta_{i,m-2},B_{j,l}]C &=v^{c_{ij}}[\Theta_{i,m-1},B_{j,l+1}]_{v^{-2c_{ij}}}+v^{-c_{ij}} [\Theta_{i,m-1},B_{j,l-1}]_{v^{2c_{ij}}}C.
\end{align}
\end{proposition}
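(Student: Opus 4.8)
The plan is to prove Proposition~\ref{prop:equivij} by the same formal generating-function manipulation already carried out in the rank~1 case in \S\ref{subsec:proof}, now keeping track of the Cartan integer $c_{ij}$ in place of the constant $2$ that appeared there. First I would rewrite the relation \eqref{iDR2} in terms of the generating functions $\bB_j(w)=\sum_{l} B_{j,l} w^l$ and the ``$H$-series'' $\bBKH_i(z) = \sum_{m\ge 1} H_{i,m} z^m$. Multiplying \eqref{iDR2} by $z^m$ and summing over $m\ge 1$, the left-hand side becomes $(v-v^{-1})[\bBKH_i(z),\bB_j(w)]$, while the right-hand side, after reindexing $l\mapsto l\pm m$, collapses into
\begin{align*}
\sum_{k\ge 1,\,l\in\Z} B_{j,l+k} w^{l+k}\frac{(v^{c_{ij}}zw^{-1})^k - (v^{-c_{ij}}zw^{-1})^k}{k}
- \sum_{k\ge 1,\,l\in\Z} B_{j,l-k} w^{l-k}\frac{(v^{c_{ij}}zwC)^k-(v^{-c_{ij}}zwC)^k}{k}.
\end{align*}
Recognizing each geometric-type sum as a logarithm, this equals $\ln\!\big( \frac{1-v^{-c_{ij}}zw^{-1}}{1-v^{c_{ij}}zw^{-1}}\cdot\frac{1-v^{c_{ij}}zwC}{1-v^{-c_{ij}}zwC}\big)\bB_j(w)$, exactly as in the displayed computation preceding \eqref{eq:eHBe1}, with $v^{\pm 2}$ replaced by $v^{\pm c_{ij}}$.

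Next I would exponentiate. Since the above says $(v-v^{-1})[\bBKH_i(z),\bB_j(w)]$ equals a scalar series times $\bB_j(w)$, integrating (equivalently, applying the adjoint-exponential identity used to pass from the $\ln$ form to \eqref{eq:eHBe1}) yields the conjugation formula
\[
e^{(v-v^{-1})\bBKH_i(z)}\,\bB_j(w)\,e^{-(v-v^{-1})\bBKH_i(z)}
= \bB_j(w)\,\frac{(1-v^{-c_{ij}}zw^{-1})(1-v^{c_{ij}}zwC)}{(1-v^{c_{ij}}zw^{-1})(1-v^{-c_{ij}}zwC)}.
\]
By the defining relation \eqref{exp h} between $H_{i,m}$ and $\Theta_{i,m}$, the left factor $e^{(v-v^{-1})\bBKH_i(z)}$ is precisely $\bTH_i(z)$, and this conjugation identity rearranges into the first displayed equation of the proposition. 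This establishes the equivalence of \eqref{iDR2} with the first generating-function identity, each step being reversible (the passage $\bBKH_i \leftrightarrow \bTH_i$ is a bijection of formal power series, and the logarithm/exponential steps are invertible), so no information is lost in either direction.

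For the equivalence with the second identity, I would clear denominators in the $\bTH_i$-form, writing it as
\[
(1-v^{c_{ij}}zw^{-1})(1-v^{-c_{ij}}zwC)\,\bTH_i(z)\bB_j(w)
= (1-v^{-c_{ij}}zw^{-1})(1-v^{c_{ij}}zwC)\,\bB_j(w)\bTH_i(z),
\]
then substitute $\bTH_i(z)=1+\sum_{m\ge 1}(v-v^{-1})\Theta_{i,m}z^m$ and extract the coefficient of $z^m w^l$. For $m\ge 1$ this yields exactly the second displayed relation, mirroring the derivation of \eqref{eq:hB1} from \eqref{eq:eHBe} in the ``Equivalence of \eqref{eq:eHBe} and \eqref{eq:hB1}'' portion of \S\ref{subsec:proof}, with the low-$m$ cases $m=1,2$ again coinciding with the general formula by virtue of the conventions \eqref{Hm0}. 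Since everything is a direct transcription of \S\ref{subsec:proof} with $2$ replaced by $c_{ij}$ and $\K_\de$ replaced by the central $C$, I would simply note that the argument goes through verbatim; indeed this is why the excerpt states the proof may be skipped. The only point requiring a moment of care, and the nearest thing to an obstacle, is confirming that the scalar-series identities analogous to \eqref{eq:zwzw} hold for general $c_{ij}$ (not just $c_{ij}=2$) so that the coefficient extraction produces the stated relation with the correct powers of $v$ and $C$; this is a routine check once the two partial-fraction expansions $\frac{1-v^{-c_{ij}}zw^{-1}}{1-v^{c_{ij}}zw^{-1}}$ and $\frac{1-v^{c_{ij}}zwC}{1-v^{-c_{ij}}zwC}$ are expanded as geometric series, exactly as in the rank~1 case.
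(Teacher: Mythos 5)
Your proposal is correct and follows essentially the same route the paper intends: the paper explicitly skips the proof of Proposition~\ref{prop:equivij}, stating that it "follows by a similar formal manipulation of generating functions as in \S\ref{subsec:proof}," and your argument is exactly that rank-1 computation (logarithm/exponential passage between the $H_{i,m}$ and $\Theta_{i,m}$ series, then clearing denominators and extracting the coefficient of $z^m w^l$) with $v^{\pm 2}$ replaced by $v^{\pm c_{ij}}$ and $\K_\de$ by $C$. The bookkeeping of the coefficient $\frac{[mc_{ij}]}{m}$ against the logarithmic series and the low-degree cases $m=1,2$ via the conventions \eqref{Hm0} is handled correctly, so nothing is missing.
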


Recall the elements $B_{i,k},  \Theta_{i,m}$ in $\tUi$ defined in \eqref{Bik} and \eqref{Thim}, while elements in the same notation are generators for the algebra $\tUiD$. Below is the main result of this paper.

\begin{theorem}
\label{thm:ADE}
There is a $\Q(v)$-algebra isomorphism ${\Phi}: \tUiD \rightarrow\tUi$, which sends
\begin{align}   \label{eq:map}
\y_{i,k}\mapsto B_{i,k},  \quad H_{i,m}\mapsto H_{i,m}, \quad \Theta_{i,m}\mapsto \Theta_{i,m}, \quad
\K_i\mapsto \K_i, %-v^{2}\tk_i,
\quad C\mapsto \K_\de,
\end{align}
for $i\in \II, k\in \Z$, and $m \ge 1$.
\end{theorem}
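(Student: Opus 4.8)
The plan is to establish that $\Phi$ is a well-defined algebra homomorphism and then prove it is bijective, splitting the work exactly as the introduction signals: verifying relations first, and deducing bijectivity from a PBW-type argument. For the homomorphism statement, I would check that each defining relation \eqref{iDR1}--\eqref{iDR5} of $\tUiD$ holds among the images $B_{i,k}, H_{i,m}, \K_i, \K_\de$ in $\tUi$. Several of these reduce to the rank one case already completed: fixing $i\in\II$ and using the isomorphism $\aleph_i: \tUi(\widehat{\mathfrak{sl}}_2)\to \tUi_{[i]}$ of Proposition~\ref{prop:rank1iso} together with the intertwining identity \eqref{eq:T1Ti}, the relations \eqref{iDR1} (for fixed $i$), \eqref{iDR3b}, and the $i=j$ part of \eqref{iDR2} follow by transporting Propositions~\ref{prop:HH1}, \ref{prop:HB1}, and \ref{prop:BB1} through $\aleph_i$. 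The genuinely new work is the cross-index relations: the commutativity \eqref{iDR4} when $c_{ij}=0$, the mixed $H$--$B$ relation \eqref{iDR2} for $i\neq j$, the mixed $B$--$B$ relation \eqref{iDR3a}, and above all the inhomogeneous Serre relation \eqref{iDR5} for $c_{ij}=-1$. The excerpt explicitly defers all of these verifications to Section~\ref{sec:relation1}, so for this theorem I would cite those forthcoming propositions, establishing that $\Phi$ is a homomorphism.

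Granting that $\Phi$ is a homomorphism, the bijectivity argument follows the template of Theorem~\ref{thm:Dr1}. First I would produce a spanning set for $\tUiD$ of ordered monomials
\[
\K^{\underline r}\, C^{s}\, B_{\gamma_1}^{a_1}\cdots B_{\gamma_N}^{a_N},
\]
indexed by the affine roots $\{\beta+k\delta\}\cup\{m\delta\}$ with respect to a fixed total order, where the imaginary root vectors are the $H_{i,m}$. Producing this spanning set uses only the defining relations of $\tUiD$: relation \eqref{iDR1} lets one move all $H$'s (hence all $\Theta$'s) to one side and commute them past each other; relations \eqref{iDR2}, \eqref{iDR3a}, \eqref{iDR3b} let one reorder any adjacent pair of real root vectors at the cost of lower terms involving imaginary root vectors and powers of $C$; and the Serre relation \eqref{iDR5} handles the straightening needed when $c_{ij}=-1$. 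The key point is that every rewriting strictly decreases in a suitable ordering (e.g. by total degree in the real generators, or by a lexicographic order on the multidegree), so the straightening terminates and the ordered monomials span.

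The final step identifies this spanning set with a known basis. By Proposition~\ref{prop:PBW1}, adapted to higher rank, the images under $\Phi$ of these ordered monomials form a PBW basis of $\tUi$; since a spanning set that maps to a linearly independent set must itself be linearly independent and map bijectively, $\Phi$ is an isomorphism. The higher-rank PBW basis of $\tUi$ is obtained, as in the rank one case, by invoking the filtration \eqref{eq:filt1}--\eqref{eq:filt} whose associated graded is $\U^-\otimes\Q(v)[\K_i^{\pm}]$ by \eqref{eq:filter}, so that the $v$-root vectors $B_{i,k}, H_{i,m}$ degenerate to Beck's PBW root vectors in $\U^-(\widehat{\fg})$ \cite{Be94, Da93}; their algebraic independence there lifts back to give the basis upstairs. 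This is precisely the injectivity route the introduction describes: reduce to the corresponding statement in the affine quantum group via a filtered algebra argument.

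The main obstacle, as the authors emphasize, is the verification of the Serre relation \eqref{iDR5} (and the $c_{ij}=-1$ case of \eqref{iDR2}), deferred to Section~\ref{sec:relation1}. Unlike the affine quantum group, $\tUi$ has no triangular decomposition, so one cannot simply import Damiani's \cite{Da12} derivations; instead the strategy is to reduce all cross-relations to a few base cases (such as $\SS(k_1,k_2\,|\,l)$ at small indices) and propagate them using the braid symmetries $\TT_{\omega_i}$ and the automorphisms $\t_i$ of Lemma~\ref{lem:aut}, which shift the indices $k$. For the present theorem, however, these verifications are assumed, and the remaining PBW/bijectivity argument is formal once the spanning-set straightening is carried out carefully.
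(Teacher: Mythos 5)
Your reduction of the homomorphism statement to the relation checks of Section~\ref{sec:relation1} matches the paper (Proposition~\ref{thm:iDB hom}), but your bijectivity argument has a genuine gap: the rank-one PBW template of Theorem~\ref{thm:Dr1} does not transport to higher rank. In rank one every positive root of $\fg=\sll_2$ is simple, so the generators $B_{1,k}, H_m$ already exhaust the real and imaginary $v$-root vectors and their ordered monomials map to the PBW basis of Proposition~\ref{prop:PBW1}. In higher rank a PBW basis of $\tUi$ must involve root vectors $B_{\beta+k\de}$ for \emph{all} $\beta\in\cR_0^+$, not just the simple ones (see the paper's remark after Theorem~\ref{thm:ADE} citing \cite{BCP99}); these are not among the generators of $\tUiD$, so the ordered monomials in $B_{i,k}$ ($i\in\II$), $H_{i,m}$ are not linearly independent in $\tUi$ --- already in the associated graded $\U^-$ the ordered products $x^-_{i,k}x^-_{j,l}$ with $c_{ij}=-1$ and $k+l$ fixed outnumber the dimension of the corresponding weight space. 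Your claim that ``the images under $\Phi$ of these ordered monomials form a PBW basis of $\tUi$'' therefore fails, and with it the conclusion. Moreover the straightening step is not available as stated: relation \eqref{iDR3a} relates four products with shifted indices and does not express $\y_{j,l}\y_{i,k}$ as $\y_{i,k}\y_{j,l}$ plus lower-order terms, so ordered monomials in the generators are not obviously a spanning set either. Finally, your argument quietly assumes surjectivity: the images of your monomials span only the subalgebra generated by $B_{i,k}, H_{i,m}, \K_i, \K_\de$ ($i\in\II$), and whether $B_0$ lies in that subalgebra is precisely the nontrivial point.

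The paper handles both halves differently. Surjectivity is proved by showing $B_0\in\mathrm{Im}(\Phi)$ via a Damiani-type braid-group argument: $\mathrm{Im}(\Phi)$ is $\T_{\omega_i}$-stable because $\Phi\circ\t_i=\T_{\omega_i}\circ\Phi$, and one tracks a root vector of weight $\theta$ through $\T_{\omega_{\bf i}}$ to land on $\K_0 B_0$ using Lemma~\ref{lem:fixB}. Injectivity is proved by restricting to the subalgebras $\tUiD_>$ and $\tUi_>$, introducing the height filtration \eqref{eq:filt1D}--\eqref{eq:ht}, and fitting ${}^{\gr}\Phi_>$ into the commutative square \eqref{diag:1} whose other three arrows are the surjection $\Xi$ and the isomorphisms $\phi$ (Damiani's injectivity for the Drinfeld presentation of $\U$) and $\mathbb G$ (the Letzter--Kolb identification of $\gr\tUi$); this forces ${}^{\gr}\Phi_>$ to be an isomorphism, and the shift automorphisms $\t_i$, $\T_\rho$ then propagate injectivity from the positive half to all of $\tUiD$. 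You correctly anticipated the spirit of the injectivity step (``reduce to the affine quantum group via a filtered argument'') but the mechanism is this diagram chase, not a PBW basis comparison.
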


The proof of Theorem~\ref{thm:ADE} will be given in \S\ref{subsec:inj} and Section~\ref{sec:relation1} below.

Note that the algebra $\tUiD$ is $\Z \I$-graded by letting
\begin{align}
 \label{eq:wtgrading}
\wt( B_{i,k}) =\alpha_i +k\de, \quad
\wt (\Theta_{i,m}) = m\delta, \quad
\wt (\K_i) =2\alpha_i, \quad
\wt (C) =2\delta,
\end{align}
for $i \in \II, k\in \Z, m\ge 1.$ (It follows that $\wt (\K_0) =2\alpha_0$.) Moreover, $\Phi$ preserves the $\Z \I$-gradings.

\subsection{Proof of the main isomorphism}
\label{subsec:inj}

The details of the proof of Proposition~\ref{thm:iDB hom} below will occupy Section~\ref{sec:relation1}.

\begin{proposition}
\label{thm:iDB hom}
There is a homomorphism ${\Phi}: \tUiD \rightarrow\tUi$ as prescribed in \eqref{eq:map}.
%, which sends $\y_{i,k}\mapsto B_{i,k}, \;  \Theta_{i,r}\mapsto \Theta_{i,r}, \; \K_i\mapsto \K_i,  \; C\mapsto \K_\de$, for $i\in \II, k\in \Z, r \in \Z_{\ge 1}$.
\end{proposition}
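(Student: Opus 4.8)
The plan is to verify that each defining relation \eqref{iDR1}--\eqref{iDR5} of $\tUiD$ is satisfied by the images of the generators under the assignment \eqref{eq:map}; once all relations hold, $\Phi$ extends to an algebra homomorphism. It is natural to organize the relations by how many distinct indices $i,j\in\II$ they involve. The relations supported on a single index $i$ — the centrality of $\K_i$ and $C$ and the commutativity $[H_{i,m},H_{i,n}]=0$ in \eqref{iDR1}, the relation \eqref{iDR2} for $j=i$, and the relation \eqref{iDR3b} — should be transported directly from the rank one results, while the genuinely new content lies in the two-index relations with $i\neq j$.

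For the single-index relations I would use the isomorphism $\aleph_i:\tUi(\widehat{\sll}_2)\to\tUi_{[i]}$ of Proposition~\ref{prop:rank1iso}. By \eqref{eq:T1Ti} the operator $\TT_{\omega_i}$ restricted to $\tUi_{[i]}$ is conjugate via $\aleph_i$ to $\TT_{\omega_1}=\dag\TT_1$, so $\aleph_i$ intertwines the rank one root vectors $B_{1,k},\Theta_m,H_m$ (up to the signs $o(i)^k$ built into \eqref{Bik}) with $B_{i,k},\Theta_{i,m},H_{i,m}$, and sends $\K_1\mapsto\K_i$, $\K_\de\mapsto\K_\de$. Consequently the rank one identities of Propositions~\ref{prop:HH1}, \ref{prop:HB1}, and \ref{prop:BB1} (equivalently, Theorem~\ref{thm:Dr1}) immediately yield \eqref{iDR1}, the case $j=i$ of \eqref{iDR2}, and \eqref{iDR3b} in $\tUi$; the normalization signs cancel because each of these relations is homogeneous in the index $i$.

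The new content is the two-index relations with $i\neq j$. When $c_{ij}=0$, both \eqref{iDR4} and the vanishing cases of \eqref{iDR2} (where $[mc_{ij}]/m=0$) should follow from the stronger assertion that $\tUi_{[i]}$ and $\tUi_{[j]}$ commute, which in turn reduces, via the commuting translation operators $\TT_{\omega_i},\TT_{\omega_j}$ (Lemma~\ref{lem:Braid-Lus}(c), valid for the $\TT$'s) together with Lemma~\ref{lem:TiFix}, to the commutativity $[B_i,B_j]=0$ of \eqref{eq:S1}. When $c_{ij}=-1$ one must establish \eqref{iDR3a}, the off-diagonal case of \eqref{iDR2}, and the Serre type relation \eqref{iDR5}. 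My strategy here is twofold. First, I would reduce the spectral parameters: under $\Phi$ the shift automorphisms $\t_k$ of Lemma~\ref{lem:aut} are realized in $\tUi$ by the operators $\TT_{\omega_k}$, so each relation for arbitrary $k,l$ can be brought to finitely many base cases with small indices, all living inside the rank $\le 2$ subalgebra generated by $\tUi_{[i]}$ and $\tUi_{[j]}$. Second, I would reduce the number of relations, exploiting the fact that \eqref{iDR5} and the off-diagonal case of \eqref{iDR2} are interderivable modulo the remaining relations, so that verifying one of them suffices; \eqref{iDR3a} should then follow from the defining relation \eqref{eq:S2} together with the braid formulas \eqref{eq:TTiX}--\eqref{TT-imagine}.

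The main obstacle will be \eqref{iDR5} and the off-diagonal \eqref{iDR2} for $c_{ij}=-1$. Unlike the affine quantum group case treated by Beck and Damiani, $\tUi$ admits no triangular decomposition, so one cannot split the verification into positive, negative, and Cartan parts; instead the inhomogeneous right-hand side $\R(k_1,k_2|l;i,j)$ of \eqref{iDR5}, with its intertwined $\Theta$ and $B$ terms, must be produced directly from repeated application of the braid identities \eqref{eq:TTiX}--\eqref{TT-imagine} and the rank one relations already in hand. I would carry out this rank-2 computation as the technical heart of the argument, isolating a base case such as $\SS(0,1|0;i,j)$ and propagating it by the shift symmetries above; the full details are deferred to Section~\ref{sec:relation1}.
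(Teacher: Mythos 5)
Your overall plan --- verify each of \eqref{iDR1}--\eqref{iDR5} on the images of the generators, transporting the single-index relations through $\aleph_i$ and handling $c_{ij}=0$ via the commuting translation operators --- is exactly how the paper structures Proposition~\ref{thm:iDB hom} and the easier parts of Section~\ref{sec:relation1}. But your outline for the case $c_{ij}=-1$ contains a gap that would stop the argument. The reduction ``each relation for arbitrary $k,l$ can be brought to finitely many base cases with small indices'' is false for \eqref{iDR5} and the off-diagonal \eqref{iDR2}: the shift automorphisms $\t_k$ (realized by $\TT_{\omega_k}$) translate all indices of $B_{i,\cdot}$ by the same amount and fix the $\Theta_{i,m}$, so they preserve the difference $k_2-k_1$ in $\SS(k_1,k_2|l)$ and the degree $m$ in $[\Theta_{i,m},B_{j,l}]$. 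From $\SS(0,1|0)$ you reach only $\SS(r,r+1|l)$; there is one orbit for each value of $k_2-k_1$ (resp.\ of $m$), so an induction on that difference is unavoidable. Likewise ``verifying one of \eqref{iDR2}, \eqref{iDR5} suffices'' is not how the interderivability can be exploited: neither relation can be established for all gaps before the other. The paper proves the two families \emph{simultaneously} by induction on $k=k_2-k_1$, with \eqref{iDR2-reform}$_{\le k}$ implying \eqref{iDR5-reform}$_{\le k+1}$ (Proposition~\ref{prop:2to5}) and \eqref{iDR5-reform}$_{\le k}$ implying \eqref{iDR2-reform}$_{\le k}$ (Proposition~\ref{prop:5to2}), anchored at $k=0$ where \eqref{iDR2-reform}$_0$ is vacuous and $\SS(r,r|l)=\R(r,r|l)$ is the Serre relation \eqref{eq:S2} transported by $\TT_i^{-r}\TT_j^{1-l}$. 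The technical heart is not ``repeated application of \eqref{eq:TTiX}--\eqref{TT-imagine}'' but the identities of Lemmas~\ref{lem:SSSa}, \ref{lem:RRRa}, \ref{lem:SSS}, \ref{lem:TTT}, which express suitable combinations of $\SS$'s (and of $\R$'s) at gap $k+1$ in terms of data at gap $k$ plus error terms $X$, $Y$ that vanish exactly when the lower cases of \eqref{iDR2-reform} hold.

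Two smaller points. The $i\neq j$ part of \eqref{iDR1}, namely $[H_{i,m},H_{j,n}]=0$ with $c_{ij}=-1$, is covered neither by your single-index reduction nor by a commuting-subalgebras argument; the paper derives it from \eqref{iDR2} and \eqref{iDR3b} via Lemmas~\ref{lem:comb} and~\ref{lem:comm}. And \eqref{iDR3a} does not come from \eqref{eq:S2} together with \eqref{eq:TTiX}--\eqref{TT-imagine}: it is the identity $\TT_{\omega_i}(X_{ji})=\TT_{\omega_j}(X_{ij})$ for $X_{ij}=\TT_j^{-1}(B_i)$, a pure braid-group computation (Lemma~\ref{lem:TXij}) that uses no Serre relation.
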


\begin{proof}
By Propositions~\ref{prop:iDR4}, \ref{prop:iDR3a}, \ref{prop:iDR2}, \ref{prop:iDR31}, \ref{prop:iDR1} and
\ref{prop:iDR25} in a later Section~\ref{sec:relation1}, all the defining relations \eqref{iDR1}--\eqref{iDR5} for the generators in $\tUiD$ are satisfied by their images under $\Phi$ in $\tUi$.
We shall specify precisely what relations are used when deriving a given relation, to make sure our proofs of these propositions do not run circularly.
Hence ${\Phi}: \tUiD \rightarrow\tUi$ is a homomorphism.
\end{proof}

Assuming Proposition~\ref{thm:iDB hom} (whose proof is much more difficult), we can now complete the proof of Theorem \ref{thm:ADE}.

\begin{proof}[Proof of Theorem \ref{thm:ADE}]
We first show that $\Phi: \tUiD \rightarrow\tUi$ is surjective. All generators for $\tUi$ except $B_0$ are clearly in the image of $\Phi$, and so it remains to show that $B_0\in \text{Im} (\Phi)$. We shall adapt and modify the arguments in the proof of \cite[Theorem~12.11]{Da12}.

The automorphisms $\t_i \in \Aut (\tUiD)$ (see Lemma~\ref{lem:aut}) and $\T_{\omega_i} \in \Aut (\tUi)$, for $i\in \II$, satisfy
\[
\Phi \circ \t_i =\T_{\omega_i} \circ \Phi, \qquad \text{ for } i\in \II.
\]
It follows that $\text{Im}(\Phi)$ is $\T_{\omega_i}$-stable, for each $i \in \II$.

Recall $\theta$ is the highest root in $\cR^+_0$. We can choose and fix ${\bf i} \in \II$ such that
\begin{align}  \label{theta}
\alpha_0 + \alpha_{\bf i} \in \cR^+, \quad
\theta_{\bf i} =\theta -\alpha_{\bf i} \in \cR^+_0, \quad
\text{ and  } s_{\theta_{\bf i}}(\alpha_{\bf i})=\theta.
\end{align}
Write $t_{\omega_{\bf i}} =\sigma_{\bf i} s_{i_1} \ldots s_{i_N}$ with $\sigma_{\bf i} \in \Omega$. Then $t_{\omega_{\bf i}}(\theta) =\theta-\de\in -\cR^+$, and so there exists $p$ such that $s_{i_N} \ldots s_{i_{p+1}} (\alpha_{i_p}) =\theta$. Hence, $y:=\T_{i_N}^{-1} \ldots \T_{i_{p+1}}^{-1} (B_{i_p}) \in  \text{Im} (\Phi)$; note $\wt (y) =\theta$.

Consider $\T_{\omega_{\bf i}} (y) =  \T_{\sigma_{\bf i}} \T_{i_1} \ldots \T_{i_{p-1}} (B_{i_p}) \in \text{Im} (\Phi)$, since $\text{Im}(\Phi)$ is $\T_{\omega_{\bf i}}$-stable. Note that $\sigma_{\bf i} s_{i_1} = s_0 \sigma_{\bf i}$ (see \cite[Lemma 3.1]{Be94}), and so $\T_{s_0\omega_{\bf i}} =\T_0^{-1} \T_{\omega_{\bf i}}$.
Since $\wt (\T_0^{-1} \T_{\omega_{\bf i}} (y) )=\alpha_0$ (i.e., $s_0\omega_{\bf i} s_{i_N} \ldots s_{i_{p+1}} (\alpha_{i_p}) =\alpha_0$), it follows by Lemma~\ref{lem:fixB} that $\T_0^{-1} \T_{\omega_{\bf i}} (y) =B_0$, and hence $\T_{\omega_{\bf i}} (y) =\T_0(B_0) =\K_{0} B_0$. Therefore, we have $\K_{0} B_0\in \text{Im} (\Phi)$ and thus $B_0\in \text{Im} (\Phi)$.
\vspace{2mm}

%It follows by Lemma~\ref{lem:fixB} that $B_0=\T_{\theta_{\bf i}} \T_{\omega_{\bf i}'}(B_{\bf i})$, and furthermore, $\T_{\theta_{\bf i}} \T_{\omega_{\bf i}'}(B_{\bf i}) =\T_{\theta_{\bf i}} \T_{\omega_{\bf i}}(\K_{\bf i}^{-1} B_{\bf i}) =\K_0 \T_{\theta_{\bf i}} \T_{\omega_{\bf i}}(B_{\bf i}).$ By \eqref{Bik}, we have $\T_{\omega_{\bf i}}(B_{\bf i})= \pm \Phi(B_{{\bf i},-1})$.

It remains to show that the map $\Phi: \tUiD \rightarrow\tUi$ is injective. Let us explain the simple underlying idea of the arguments for injectivity: we shall first show that $\Phi$ on the associated graded level when restricted to ``a positive half" of $\tUi$ (which correspond to ``a quarter" in the affine quantum group $\U$, for which the roots share the same sign in Kac-Moody sense and in the Drinfeld current sense) is injective; this is summarized by the commutative diagram \eqref{diag:1} below. Then we show the injectivity on ``the positive half" implies the injectivity of $\Phi: \tUiD \rightarrow\tUi$ fully via the translation automorphisms $\t_i$.

Denote by $\tUi_>$ (respectively, $\tUiD_>$) the subalgebra of $\tUi$ (respectively, $\tUiD$) generated by $B_{i,m}$, $H_{i,m}, \K_i$, for $m\ge 1$, and $i\in \II$. Then ${\Phi}: \tUiD \longrightarrow\tUi$ restricts to a surjective homomorphism ${\Phi}: \tUiD_> \longrightarrow\tUi_>$.

Define a filtration on $\tUiD_>$ by
\begin{align}  \label{eq:filt1D}
(\tUiD_>)^0 \subset (\tUiD_>)^1 \subset \cdots \subset (\tUiD_>)^m \subset \cdots
\end{align}
by setting
\begin{align}  \label{eq:filtD}
(\tUiD_>)^m &=\Q(v)\text{-span} \big\{x=B_{i_1,m_1} B_{i_2,m_2} \ldots B_{i_r,m_r} \Theta_{j_1,n_1} \Theta_{j_2,n_2} \ldots \Theta_{j_s,n_s} \K_\mu
\\
&\quad
\mid \mu \in \N\I, i_1, \ldots, i_r, j_1, \ldots j_s, \in \II, m_1,\ldots, m_r, n_1, \ldots, n_s \ge 1, \hgt^+(x)\leq m \big\}.
\notag
\end{align}
Here we have denoted
\begin{align}  \label{eq:ht}
\hgt^+(x) :=\sum_{a=1}^r \hgt(m_a\de +\alpha_{i_a}) +\sum_{b=1}^s n_b \hgt(\de),
\end{align}
where $\hgt(\beta)$ denotes the height of a positive root $\beta$; compare with the $\N\I$-grading on $\tUi$ by \eqref{eq:wtgrading}. Recalling $\widetilde{\U}^{\imath,0}$ from \eqref{eq:UiCartan}, we have
\[
(\tUiD_>)^0 = \widetilde{\U}^{\imath,0} =\Q(v) [ \K_i^{\pm 1} \mid i\in \I ].
\]
The filtration \eqref{eq:filt1D}--\eqref{eq:filtD} on $\tUiD_>$ defined via a height function is compatible with the filtration \eqref{eq:filt1}--\eqref{eq:filt} on $\tUi$ under $\Phi$, and thus the surjective homomorphism ${\Phi}: \tUiD_> \longrightarrow\tUi_>$ induces a surjective homomorphism
\begin{align}  \label{eq:gradeP}
{}^{\text{gr}} {\Phi}_>: \tUiDgr_> \longrightarrow\tUigr_>.
\end{align}

Recall from \eqref{eq:phi1} an isomorphism $\phi:  {}^{\text{Dr}}\U \rightarrow \U$ for the affine quantum group $\U$. Denote by ${}^{\text{Dr}}\U^-_<$ the $\Q(v)$-subalgebra of $\U^-$ generated by $x^-_{i,-k}$, for $i\in \II, k>0$, and denote by $\U^-_< =\phi({}^{\text{Dr}}\U^-_<)$. Then $\phi$ restricts to an isomorphism
\begin{align}  \label{eq:phi}
\phi:  {}^{\text{Dr}}\U^-_< \stackrel{\cong}{\longrightarrow} \U^-_<.
\end{align}

On the other hand, consider the associated graded $\tUigr$ with respect to the filtration on $\tUi$ given by \eqref{eq:filt1}--\eqref{eq:filt}. Recall the algebra isomorphism in \eqref{eq:filter}
\begin{align*}
\mathbb G: \U^- \otimes \widetilde{\U}^{\imath,0} \longrightarrow \tUigr,
\qquad
F_i \mapsto \overline{B}_i, \quad
\K_i \mapsto \overline{\K}_i,
\end{align*}
where $\U^-$ denotes half a Drinfeld-Jimbo quantum group generated by $F_i$, for $i\in \I$.
The homomorphism $\mathbb G$ above restricts to an isomorphism
\begin{align}  \label{eq:grade1}
\mathbb G: \U^-_< \otimes \widetilde{\U}^{\imath,0} \stackrel{\cong}{\longrightarrow} \tUigr_>.
\end{align}
Finally, by definition \eqref{eq:filtD}--\eqref{eq:ht} of the filtration on $\tUiD_>$, we have a surjective homomorphism
\begin{align}  \label{eq:Xi}
\Xi: {}^{\text{Dr}}\U^-_< \otimes \widetilde{\U}^{\imath,0}
 \longrightarrow
 \tUiDgr_>,
\end{align}
which sends $x_{i,-k}^- \mapsto \ov{B}_{i,k}$, for $k>0$ (note the opposite sign in indices).

Collecting \eqref{eq:gradeP}, \eqref{eq:grade1}, \eqref{eq:Xi} and \eqref{eq:phi}, we obtain the following commutative diagram
\begin{align}  \label{diag:1}
\xymatrix{
 {}^{\text{Dr}}\U^-_< \otimes \widetilde{\U}^{\imath,0}
 \ar[rr]^{\Xi}
\ar[d]^{\phi,\cong}
 && \tUiDgr_>
\ar[d]^{{}^{\text{gr}}\Phi_>}
\\
\U^-_< \otimes \widetilde{\U}^{\imath,0}
\ar[rr]^{\mathbb G, \cong}
&& \tUigr_>   }
\end{align}
Since the homomorphisms $\Xi$ and ${}^{\text{gr}}\Phi_>$ are surjective while $\phi$ and $\mathbb G$ are isomorphisms, we conclude that ${}^{\text{gr}}\Phi_>: \tUiDgr_> \longrightarrow\tUigr_>$ is injective (and an isomorphism).

Now we show that ${}^{\text{gr}}\Phi: \tUiDgr \longrightarrow\tUigr$ is injective (and hence an isomorphism); a similar argument was used in \cite[Proposition~5.2]{Da15}. Recall $\rho =\sum_{i\in \II} \omega_i$ is half the sum of positive roots in $\Phi$, and thus $\T_\rho =\prod_{i\in \II} \T_{\omega_i}$; the automorphism $\T_\rho$ on $\tUi$ induces an automorphism (with the same notation) on $\tUigr$.
Assume that a finite linear combination $X =\sum (*) B_{i_1,r_1} B_{i_2,r_2} \ldots B_{i_t,r_t} \Theta_{j_1,n_1} \Theta_{j_2,n_2} \ldots \Theta_{j_s,n_s} \K_\mu$ (with $r_a\in \Z, n_b \ge 1$, for various $a, b$) lies in the kernel of ${}^{\text{gr}}\Phi$. Recall the automorphisms $\t_i$ of $\tUiD$ from Lemma~\ref{lem:aut}. Applying an automorphism $\prod_{i\in \II} \t_{i}^{-N}$ to $X$ produces an element $X_N =\prod_{i\in \II} \t_{i}^{-N}(X)$, which is obtained from $X$ with all indices $r_a$ of $B$'s in each summand of $X$ shifted to $r_a+N$. Pick and fix an $N$ large enough so that all relevant $r_a+N$ are positive, that is, $X_N \in \tUiDgr_>$. Thanks to the following commutative diagram
\begin{align*}
\xymatrix{
\tUiDgr_>
 \ar[rr]^{\prod_{i\in \II} \t_i^{N}}
\ar[d]^{{}^{\text{gr}}\Phi_>}
 && \tUiDgr
\ar[d]^{{}^{\text{gr}}\Phi}
\\
\tUigr_>
\ar[rr]^{{\T}_{\rho}^N}
&& \tUigr  }
\end{align*}
We have $\T_\rho^N \circ {}^{\text{gr}}\Phi_> (X_N) = {}^{\text{gr}}\Phi \circ (\prod_{i\in \II} \t_i^{N}) (X_N) = {}^{\text{gr}}\Phi (X) =0$, and hence ${}^{\text{gr}}\Phi_> (X_N) =0$. Since ${}^{\text{gr}}\Phi_>: \tUiDgr_> \longrightarrow\tUigr_>$ is injective, we must have $X_N=0$ and hence $X=0$.

This proves the injectivity of ${}^{\text{gr}}\Phi$. It follows that $\Phi: \tUiD \longrightarrow\tUi$ is injective. This completes the proof of Theorem \ref{thm:ADE}.
\end{proof}

\begin{remark}
It follows by Lemma~\ref{lem:fixB} that $B_0=\T_{\theta_{\bf i}} \T_{\omega_{\bf i}'}(B_{\bf i})= o({\bf i}) \K_0\T_{\theta_{\bf i}} (B_{{\bf i},-1})$, where $\bf i \in \II$ is chosen as in \eqref{theta}.
The inverse ${\Phi}^{-1}: \tUi \rightarrow \tUiD$ to the isomorphism $\Phi$ in \eqref{eq:map} is given by
\begin{align*}
%\tk_i\mapsto -v^{-2}\K_i,   &\quad \tk_0\mapsto v^{-4}C \K_\theta^{-1},
\K_j \mapsto \K_j, &\quad \K_0\mapsto  C \K_\theta^{-1},
\quad
B_{j}\mapsto   \y_{j,0}, \quad B_0\mapsto  o({\bf i}) \TT_{\theta_{\bf i}} (\y_{{\bf i},-1})C \K_\theta^{-1},
\quad \text{for } j\in \II.
\end{align*}
Another formula for $\Phi^{-1}(B_0)$ can be read off from the proof of Theorem \ref{thm:ADE}.
\end{remark}

\begin{remark}
One can construct all real $v$-root vectors in $\tUi$ with the help of braid group action. Together with the imaginary $v$-root vectors which have been constructed, one can write down a natural PBW basis for $\tUi$, following \cite{BCP99}.
\end{remark}

\begin{remark}
Definition~\ref{def:iDR} for $\tUiD$ formally makes sense for a generalized Cartan matrix (GCM) $(c_{ij})_{i,j\in \I}$ of a simply-laced Kac-Moody algebra $\fg$, and hence can be regarded as a definition of {\em $\imath$quantum loop Kac-Moody algebras} for $\fg$. A most interesting subclass of these new algebras will be {\em $\imath$quantum toroidal algebras} when $\fg$ is of affine type. Once we have extended Definition~\ref{def:iDR} to a wide class of affine $\imath$quantum groups, similar relaxing of conditions on GCMs will allow us to define more general {\em $\imath$quantum loop Kac-Moody algebras}.
\end{remark}

\begin{remark}
There is a nonstandard comultiplication $\Delta^{\text{n-std}}$ (due to Drinfeld) on the affine quantum group ${}^{\text{Dr}} \U$ (or its Drinfeld double $\tU$) via its Drinfeld presentation. It will be interesting to ask if there is a natural coideal subalgebra of ${}^{\text{Dr}} \U$ with respect to $\Delta^{\text{n-std}}$ (which, if it exists, could be different from $\tUiD$).
\end{remark}

\subsection{Classical limit}
    \label{subsec:classical}

Denote the Chevalley generators of the semisimple (or even Kac-Moody) Lie algebra $\fg$ over $\Q$ by $e_i, f_i, h_i$, for $i\in \II$.
Denote $L\fg =\fg \otimes \Q[t,t^{-1}]$, and the affine Lie algebra (as a vector space) $\widehat{\fg} = L\fg \oplus \Q c$; set $x_k =x\otimes t^k$, for $x\in \fg, k\in \Z$.  Denote by $\omega$ the Chevalley involution on $\fg$ such that $\omega(c)= -c, \omega(e_i)= -f_i, \omega(f_i) = -e_i, \omega(h_i)= -h_i$, for all $i$. (More generally, one can take $\omega =\omega_a$, for any fixed nonzero scalar $a$, such that  $\omega_a(c)= -c, \omega_a(e_i)= -af_i, \omega_a(f_i) = -a^{-1} e_i, \omega_a(h_i)= -h_i$.) Then $\omega$ induces an involution $\widehat{\omega}$ on $\widehat{\fg}$ such that $\widehat{\omega} (x_k) =\omega(x)_{-k}$, for all $x\in \fg, k\in \Z$. The algebra $\tUi$ of split affine type ADE specializes at $v=1$ to the enveloping algebra of the ${\widehat{\omega}}$-fixed point subalgebra $(L\fg)^{\widehat{\omega}}$.

Let us examine in detail $(L\fg)^{\widehat{\omega}}$ in the case when $\fg =\sll_2$ with standard basis $\{e,h,f\}$. Set $b_r :=f_r + e_{-r}, t_r :=h_r - h_{-r}$, for $r\in \Z$. Note $t_{-r} = -t_r$ and $t_0=0$. Then $\{b_r, t_m\mid r\in \Z, m\ge 1\}$ forms a basis for $(L\sll_2)^{\widehat{\omega}}$. They satisfy the relations, for $r, s \in \Z, m\ge 1$,
\begin{align}
[b_r, b_s] &= t_{s-r},
\label{eq:bb1}
\\
[t_m, t_n] &=0,
 \label{eq:tt} \\
[t_m, b_r] &=-2 b_{m+r} + 2b_{-m+r},
 \label{eq:tb} \\
[b_r, b_{s+1}] - [b_{r+1}, b_s] &= t_{s-r+1} -t_{s-r-1}.
\label{eq:bb2}
\end{align}
Clearly, \eqref{eq:bb1}--\eqref{eq:tb} are defining relations for  $(L\sll_2)^{\widehat{\omega}}$.
One checks that \eqref{eq:bb1} and \eqref{eq:bb2} are equivalent. Hence \eqref{eq:tt}--\eqref{eq:bb2} are also defining relations for $(L\sll_2)^{\widehat{\omega}}$, providing a non-standard presentation for the Onsager algebra, which is compatible with our Drinfeld type presentation up to suitable changes of indices: $b_r \leftrightarrow B_{1,-r}$ and $t_r \leftrightarrow H_{r}$; this index issue originates from the identification of the associated graded of the $q$-Onsager algebra with the positive (instead of the negative) half of quantum loop $\sll_2$ in \cite{BK20} (which is followed in our Section~\ref{sec:Onsager}).

A Drinfeld type presentation of $(L\fg)^{\widehat{\omega}}$ in ADE case (generalizing \eqref{eq:tt}--\eqref{eq:bb2} for $\fg=\sll_2$) can be similarly written down, compatible with Definition~\ref{def:iDR}.

%%%%%%%%
%%%%%%%%
\section{Verification of Drinfeld type new relations}
  \label{sec:relation1}

In this section, we prove that ${\Phi}: \tUiD \rightarrow\tUi$ defined by \eqref{eq:map} is a homomorphism. We shall first establish the relations \eqref{iDR1}, \eqref{iDR4}--\eqref{iDR3b}, and an easier case of \eqref{iDR2}  in $\tUi$. We then establish the more challenging relations \eqref{iDR5} and \eqref{iDR2} (when $c_{ij}=-1$) in $\tUi$.
\subsection{Relation \eqref{iDR4}}

\begin{proposition}
 \label{prop:iDR4}
Assume $c_{ij}=0$, for $i,j \in \II$. Then
$[B_{i,k},B_{j,l}]=0,$ for all $k,l\in\Z$.
\end{proposition}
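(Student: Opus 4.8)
The plan is to reduce everything to a single commutator between the root vectors and then exploit the commuting translation automorphisms. Since $B_{i,k}=o(i)^k\TT_{\omega_i}^{-k}(B_i)$ and $B_{j,l}=o(j)^l\TT_{\omega_j}^{-l}(B_j)$ differ from $\TT_{\omega_i}^{-k}(B_i)$ and $\TT_{\omega_j}^{-l}(B_j)$ only by invertible scalars, it suffices to prove
\[
[\TT_{\omega_i}^{-k}(B_i),\ \TT_{\omega_j}^{-l}(B_j)]=0.
\]
Three ingredients are available. First, since $c_{ij}=0$ forces $i\neq j$, relation \eqref{eq:S1} gives $[B_i,B_j]=0$. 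Second, Lemma~\ref{lem:Braid-Lus}(c), which remains valid with the $T$'s replaced by $\TT$'s (as already used in the proof of Lemma~\ref{lem:T-T}), gives $\TT_{\omega_i}\TT_{\omega_j}=\TT_{\omega_j}\TT_{\omega_i}$, so all powers of $\TT_{\omega_i}$ and $\TT_{\omega_j}$ commute. Third, Lemma~\ref{lem:TiFix} (applicable since $i\neq j$) yields $\TT_{\omega_i}(B_j)=B_j$ and, with the roles of $i,j$ swapped, $\TT_{\omega_j}(B_i)=B_i$; hence $\TT_{\omega_i}^{-k}(B_j)=B_j$ and $\TT_{\omega_j}^{-l}(B_i)=B_i$.

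With these in hand, I would apply the algebra automorphism $\TT_{\omega_i}^{-k}\TT_{\omega_j}^{-l}$ to the relation $[B_i,B_j]=0$. Because an automorphism sends a vanishing commutator to a vanishing commutator, this gives
\[
[\TT_{\omega_i}^{-k}\TT_{\omega_j}^{-l}(B_i),\ \TT_{\omega_i}^{-k}\TT_{\omega_j}^{-l}(B_j)]=0.
\]
Using $\TT_{\omega_j}^{-l}(B_i)=B_i$, the first entry simplifies to $\TT_{\omega_i}^{-k}(B_i)$; using the commutativity of $\TT_{\omega_i}$ and $\TT_{\omega_j}$ together with $\TT_{\omega_i}^{-k}(B_j)=B_j$, the second entry simplifies to $\TT_{\omega_j}^{-l}(B_j)$. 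This is exactly the desired identity, and multiplying back by the scalars $o(i)^k o(j)^l$ yields $[B_{i,k},B_{j,l}]=0$.

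There is no serious obstacle here; the content lies entirely in having the two structural lemmas (commutativity of the $\TT_{\omega_\bullet}$ and the fixing property of Lemma~\ref{lem:TiFix}) already in place. The only points requiring care are checking that $c_{ij}=0$ indeed forces $i\neq j$ (so that Lemma~\ref{lem:TiFix} applies and \eqref{eq:S1} is the relevant Serre relation), and that Lemma~\ref{lem:Braid-Lus} transfers verbatim to the $\TT$-action, both of which are immediate.
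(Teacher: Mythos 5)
Your proof is correct and follows essentially the same route as the paper: both apply the automorphism $\TT_{\omega_i}^{-k}\TT_{\omega_j}^{-l}$ to the defining relation \eqref{eq:S1} and then invoke Lemma~\ref{lem:Braid-Lus}(c) together with Lemma~\ref{lem:TiFix} to identify the resulting entries with $B_{i,k}$ and $B_{j,l}$ up to signs. The paper's proof is just a terser version of exactly this argument.
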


\begin{proof}
The identity for $k=l=0$, i.e., $[B_{i},B_{j}]=0,$ is exactly the defining relation \eqref{eq:S1} for $\tUi$.
The identity for general $k,l$ follows by applying $\TT_{\omega_i}^{-k} \TT_{\omega_j}^{-l}$ to the above identity and using Lemma~\ref{lem:Braid-Lus}(c) and Lemma~\ref{lem:TiFix}.
\end{proof}
\subsection{Relation \eqref{iDR3a}}

\begin{lemma} [\text{cf. \cite[Lemma 3.3]{Be94}}]
 \label{lem:TXij}
For $i\neq j\in\II$ such that $c_{ij}=-1$, denote
\begin{align*}
X_{ij}:=\TT_j^{-1}B_i= B_jB_i-vB_iB_j.
\end{align*}
Then we have
$%\begin{align}
\TT_{\omega_i}(X_{ji})=\TT_{\omega_j}(X_{ij}).
$%\end{align}
\end{lemma}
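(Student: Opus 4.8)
The plan is to reduce the claimed identity to a statement about $\TT_{\omega_i'}(B_j)$ and then evaluate it by means of the Bernstein--Lusztig relations. Since $\ell(t_{\omega_i})=\ell(\omega_i')+1$ by \eqref{eq:tomega}, we have $\TT_{\omega_i}=\TT_{\omega_i'}\TT_i$, and because $X_{ji}=\TT_i^{-1}(B_j)$ this gives $\TT_{\omega_i}(X_{ji})=\TT_{\omega_i'}(B_j)$; symmetrically $\TT_{\omega_j}(X_{ij})=\TT_{\omega_j'}(B_i)$. Thus it suffices to prove $\TT_{\omega_i'}(B_j)=\TT_{\omega_j'}(B_i)$, and in fact I would show directly the stronger-looking statement $\TT_{\omega_i'}(B_j)=\TT_{\omega_j}(X_{ij})$, which combined with the first reduction yields the lemma.

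The first genuine step is to rewrite $\TT_{\omega_i'}$ via the identity $\TT_{\omega_i'}=\TT_{\omega_i'}^{-1}\prod_{k\neq i}\TT_{\omega_k}^{-c_{ik}}$ established inside the proof of Lemma~\ref{lem:T-T} (itself a consequence of Lemma~\ref{lem:Braid-Lus}(b)). Applying this to $B_j$, the decisive simplification is that the product collapses when evaluated at $B_j$: the only factor carrying a nonzero exponent $-c_{ik}$ that is indexed by $j$ is $\TT_{\omega_j}^{-c_{ij}}=\TT_{\omega_j}$, while every factor $\TT_{\omega_k}$ with $k\neq j$ fixes the whole subalgebra $\tUi_{[j]}$ by Lemma~\ref{lem:TiFix}. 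Since the $\TT_{\omega_k}$ commute pairwise (Lemma~\ref{lem:Braid-Lus}(c)) and $\TT_{\omega_j}(B_j)=\TT_{\omega_j'}(B_j)\K_{\de-\alpha_j}\in\tUi_{[j]}$ by \eqref{eq:TTB2}, I can move $\TT_{\omega_j}$ innermost and discard all remaining factors, obtaining $\prod_{k\neq i}\TT_{\omega_k}^{-c_{ik}}(B_j)=\TT_{\omega_j}(B_j)$, hence $\TT_{\omega_i'}(B_j)=\TT_{\omega_i'}^{-1}\TT_{\omega_j}(B_j)$.

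The remaining computation is short. Writing $\TT_{\omega_i'}^{-1}=\TT_i\TT_{\omega_i}^{-1}$ and using once more that $\TT_{\omega_i}^{-1}$ fixes $\tUi_{[j]}$ (Lemma~\ref{lem:TiFix}, $i\neq j$, applied to $\TT_{\omega_j}(B_j)$), I get $\TT_{\omega_i'}(B_j)=\TT_i\TT_{\omega_j}(B_j)$. Because $s_i(\omega_j)=\omega_j$ for $i\neq j$, Lemma~\ref{lem:Braid-Lus}(a) yields $\TT_i\TT_{\omega_j}=\TT_{\omega_j}\TT_i$, so this equals $\TT_{\omega_j}\TT_i(B_j)$. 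Finally the explicit formulas of Lemma~\ref{lem:Ti} give $\TT_i(B_j)=B_jB_i-vB_iB_j=\TT_j^{-1}(B_i)=X_{ij}$, whence $\TT_{\omega_i'}(B_j)=\TT_{\omega_j}(X_{ij})$, completing the argument.

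I expect the main obstacle to be the collapse of the Bernstein--Lusztig product in the second step: one must argue carefully, using commutativity of the $\TT_{\omega_k}$ together with the fixing property of Lemma~\ref{lem:TiFix}, that every operator indexed by a neighbour of $i$ other than $j$ can be discarded. A secondary point demanding attention is to confirm that the derivation is not circular: the expression $\TT_{\omega_j}(X_{ij})$ reappears at the end, but it is reached only through the Bernstein--Lusztig relation, the commutation $\TT_i\TT_{\omega_j}=\TT_{\omega_j}\TT_i$, and the explicit braid-operator formulas, none of which presuppose the identity being proved.
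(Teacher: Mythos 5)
Your proof is correct, and it is essentially the paper's argument run in mirror image: the paper computes $\TT_{\omega_j}(X_{ij})=\TT_{\omega_j}\TT_j^{-1}(B_i)=\TT_j\TT_{\omega_j}^{-1}\TT_{\omega_i}(B_i)=\TT_{\omega_i}\TT_j(B_i)$ by applying Lemma~\ref{lem:Braid-Lus}(b) at the index $j$ and collapsing the product with Lemma~\ref{lem:TiFix}, exactly the ingredients you use (Bernstein--Lusztig relation, the fixing property of the $\TT_{\omega_k}$, and $\TT_i(B_j)=X_{ij}$), only entered via the reduction to $\TT_{\omega_i'}(B_j)$ and applied at the index $i$. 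The collapse step you flag as the main obstacle is handled the same way in the paper, and there is no circularity.
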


\begin{proof}
This follows by a direct computation using Lemma~\ref{lem:Braid-Lus} and Lemma~\ref{lem:TiFix}:
\begin{align*}
\TT_{\omega_j}(X_{ij})= &\TT_{\omega_j} \TT_j^{-1}(B_i)= \TT_j\TT_{\omega_j}^{-1}\TT_{\omega_i}(B_i)
=\TT_{\omega_i}\TT_j(B_i)=\TT_{\omega_i}(X_{ji}).
\end{align*}
\end{proof}

Now we are ready to establish the relation \eqref{iDR3a}.
\begin{proposition}
 \label{prop:iDR3a}
We have $[\y_{i,k}, \y_{j,l+1}]_{v^{-c_{ij}}}  -v^{-c_{ij}} [\y_{i,k+1}, \y_{j,l}]_{v^{c_{ij}}}=0$, for $i\neq j \in \II$ and $k, l \in \Z.$
\end{proposition}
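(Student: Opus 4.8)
The plan is to reduce the relation to a single base case and then spread it over all indices using the translation automorphisms $\TT_{\omega_i}$. First, since for $i\neq j$ in affine ADE type one has $c_{ij}\in\{0,-1\}$, the case $c_{ij}=0$ is immediate: by Proposition~\ref{prop:iDR4} all the $B_{i,k}$ and $B_{j,l}$ commute, so both $v$-brackets vanish identically. Thus I may assume $c_{ij}=-1$, in which case $v^{-c_{ij}}=v$ and $v^{c_{ij}}=v^{-1}$, and the identity to prove becomes
\[
[B_{i,k}, B_{j,l+1}]_{v} - v\,[B_{i,k+1}, B_{j,l}]_{v^{-1}} = 0 .
\]

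Next I would establish the single base case $(k,l)=(-1,-1)$ directly from Lemma~\ref{lem:TXij}. Applying $\TT_{\omega_i}$ to $X_{ji}$ and $\TT_{\omega_j}$ to $X_{ij}$ and invoking Lemma~\ref{lem:TiFix} (so that $\TT_{\omega_i}(B_j)=B_j$ and $\TT_{\omega_i}(B_i)=o(i)B_{i,-1}$, and symmetrically in $j$) rewrites the identity $\TT_{\omega_i}(X_{ji})=\TT_{\omega_j}(X_{ij})$ as
\[
o(i)\,[B_{i,-1}, B_j]_{v} = o(j)\,[B_{j,-1}, B_i]_{v}.
\]
Then, using the elementary rewriting $[B_{j,-1}, B_i]_{v}=-v\,[B_i, B_{j,-1}]_{v^{-1}}$ together with $o(i)o(j)=-1$ (which holds precisely because $c_{ij}<0$), the signs cancel and this collapses to $[B_{i,-1}, B_j]_{v}=v\,[B_i, B_{j,-1}]_{v^{-1}}$, i.e. exactly the target relation at $(k,l)=(-1,-1)$.

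Finally I would propagate the base case to arbitrary $(k,l)$ by applying the automorphism $\TT_{\omega_i}^{-(k+1)}\TT_{\omega_j}^{-(l+1)}$. The two translations commute by Lemma~\ref{lem:Braid-Lus}(c), and by Lemma~\ref{lem:TiFix} the operator $\TT_{\omega_i}$ fixes $\tUi_{[j]}\ni B_{j,m}$ while acting on $\tUi_{[i]}$ by the index shift $\TT_{\omega_i}^{-a}(B_{i,m})=o(i)^{a}B_{i,m+a}$ (and symmetrically for $j$); that $B_{j,m}\in\tUi_{[j]}$ is supplied by the rank-one embedding $\aleph_j$ of Proposition~\ref{prop:rank1iso}. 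Hence applying $\TT_{\omega_i}^{-(k+1)}\TT_{\omega_j}^{-(l+1)}$ to the base relation multiplies it by the nonzero scalar $o(i)^{k+1}o(j)^{l+1}$ and simultaneously transports the indices to $(k,l)$, which yields the desired relation.

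I expect the only delicate points to be the bookkeeping of the sign function $o(\cdot)$ and the correct matching of the $v$-brackets when transporting Lemma~\ref{lem:TXij} into the base case; in particular, the cancellation of signs hinges precisely on $o(i)o(j)=-1$ for $c_{ij}=-1$, so this is where I would be most careful.
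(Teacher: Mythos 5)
Your proof is correct and follows essentially the same route as the paper: both arguments reduce the relation to the single identity $\TT_{\omega_i}(X_{ji})=\TT_{\omega_j}(X_{ij})$ of Lemma~\ref{lem:TXij} and then transport it to arbitrary indices with the commuting translations $\TT_{\omega_i},\TT_{\omega_j}$, using $o(i)o(j)=-1$ to absorb the signs. The only (cosmetic) difference is that you isolate the base case $(k,l)=(-1,-1)$ before translating in both variables, whereas the paper first proves the case $l=0$ for all $k$ and then applies $\TT_{\omega_j}^{-l}$.
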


\begin{proof}
If $c_{ij}=0$, then the identity in the proposition follows directly by \eqref{iDR4}.

Assume $c_{ij}=-1$.
Note that
$%\begin{align*}
 v[B_{i,k+1},B_{j}]_{v^{-1}}=  -  o(i)^{k+1} \TT_{\omega_i}^{-(k+1)}(X_{ij}).
$ %\end{align*}
Hence we have
\begin{align*}
[B_{i,k},B_{j,1}]_{v}&= B_{i,k}B_{j,1}-vB_{j,1}B_{i,k}
 \\
& = o(i)^k o(j) \TT_{\omega_i}^{-k} \TT_{\omega_j}^{-1}(X_{ji})
= - o(i)^{k+1} \TT_{\omega_i}^{-k} \TT_{\omega_i}^{-1} (X_{ij})
 \\
&= - o(i)^{k+1} \TT_{\omega_i}^{-(k+1)}(X_{ij})=v[B_{i,k+1},B_{j}]_{v^{-1}}.
\end{align*}
So we have obtained an identity
$[B_{i,k},B_{j,1}]_{v}-v[B_{i,k+1},B_{j}]_{v^{-1}}=0.$
The identity in the proposition follows by applying $\TT_{\omega_j}^{-l}$ to this identity.
\end{proof}
\subsection{Relation \eqref{iDR2} for $c_{ij}=0$}

We shall identify $C=\K_\de$ below.

\begin{lemma}
  \label{lem:comb}
For $j\in\II$, we have
\begin{align*}
\Theta_{j,n}=
\begin{cases}
v^{-2} C \Theta_{j,n-2}+v^2\K_j^{-1} \big( [B_{j,0},B_{j,n}]_{v^{-2}}+[B_{j,n-1},B_{j,1}]_{v^{-2}} \big), & \text{ if } n\ge 3,
\\
-v^{-1} C +v^2\K_j^{-1} \big( [B_{j,0},B_{j,2}]_{v^{-2}}+[B_{j,1},B_{j,1}]_{v^{-2}} \big), & \text{ if } n=2,
\\
v^2 \K_{j}^{-1} [\y_{j,0}, \y_{j,1}]_{v^{-2}}, & \text{ if } n= 1.
\end{cases}
\end{align*}
In particular, for any $n\ge 1$, the element $\Theta_{j,n}$  is a $\Q(v)[C^{\pm 1},\K_j^{\pm 1}]$-linear combination of $1$ and $[\y_{j,k}, \y_{j,l+1}]_{v^{-2}} +[\y_{j,l}, \y_{j,k+1}]_{v^{-2}}$, for $l, k \in \Z$.
\end{lemma}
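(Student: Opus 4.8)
The plan is to obtain all three closed formulas as specializations of a single commutation relation between real root vectors attached to the node $j$, and then to deduce the ``in particular'' clause by an easy induction. The relevant relation is the diagonal ($i=j$) instance of \eqref{iDR3b}; to avoid any circularity with the verification of \eqref{iDR3b} itself, I would produce it directly as the image, under the rank-one isomorphism $\aleph_j$ of Proposition~\ref{prop:rank1iso}, of the rank-one identity \eqref{rel:iDr1b} of Proposition~\ref{prop:BB1}. Transporting \eqref{rel:iDr1b} along $\aleph_j$ (which matches $B_{1,k},\Theta_m,\K_1,\K_\de$ with $B_{j,k},\Theta_{j,m},\K_j,C$, the normalizations $o(j)^{\bullet}$ in \eqref{Bik}, \eqref{Thim} being chosen exactly so that no residual signs survive) yields, in $\tUi_{[j]}\subseteq\tUi$ and for all $k,l\in\Z$,
\begin{align*}
[B_{j,k}, B_{j,l+1}]_{v^{-2}} -v^{-2}[B_{j,k+1}, B_{j,l}]_{v^{2}}
&= v^{-2}\Theta_{j,l-k+1}C^{k}\K_j - v^{-4}\Theta_{j,l-k-1}C^{k+1}\K_j \\
&\quad + v^{-2}\Theta_{j,k-l+1}C^{l}\K_j - v^{-4}\Theta_{j,k-l-1}C^{l+1}\K_j.
\end{align*}

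With this relation in hand I would specialize the indices. Taking $(k,l)=(0,n-1)$ for $n\ge 3$ kills the two ``crossed'' terms, since their subscripts $2-n$ and $-n$ are negative and $\Theta_{j,<0}=0$ by \eqref{Hm0}; taking $(k,l)=(0,1)$ for $n=2$ leaves two $\Theta_{j,0}$-terms which, using $\Theta_{j,0}=(v-v^{-1})^{-1}$ together with $v^{-2}-v^{-4}=v^{-3}(v-v^{-1})$, coalesce into $v^{-3}C\K_j$; and taking $(k,l)=(0,0)$ for $n=1$ collapses the right-hand side to $2v^{-2}\Theta_{j,1}\K_j$. In every case the left-hand side is rewritten by the elementary flip $-v^{-2}[A,B]_{v^{2}}=[B,A]_{v^{-2}}$ into the symmetric combination $[B_{j,0},B_{j,n}]_{v^{-2}}+[B_{j,n-1},B_{j,1}]_{v^{-2}}$ (degenerating appropriately for $n=1,2$). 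Multiplying through by the central element $v^{2}\K_j^{-1}$ and solving for $\Theta_{j,n}$ then reproduces the three displayed identities verbatim.

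For the concluding assertion I would induct on $n$. The base cases $n=1,2$ are read off directly, each right-hand side being manifestly a $\Q(v)[C^{\pm1},\K_j^{\pm1}]$-combination of $1$ and of one symmetric bracket $[\y_{j,k},\y_{j,l+1}]_{v^{-2}}+[\y_{j,l},\y_{j,k+1}]_{v^{-2}}$ (for $n=1$ take $k=l=0$ and divide by $2$). For $n\ge3$ the first formula writes $\Theta_{j,n}$ as $v^{-2}C\,\Theta_{j,n-2}$ plus one such symmetric bracket (namely the $k=0,l=n-1$ one), so the inductive hypothesis applied to $\Theta_{j,n-2}$ finishes the proof.

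Conceptually this lemma is a routine specialization, so I do not anticipate a serious obstacle. The one genuinely delicate point is the sign bookkeeping: one must check that transporting \eqref{rel:iDr1b} through $\aleph_j$ and re-expressing everything in the signed root vectors $B_{j,k}$ and normalized imaginary vectors $\Theta_{j,m}$ really does produce the sign-free diagonal form of \eqref{iDR3b} above, rather than a version decorated with factors of $o(j)$. I would also take care to cite only the rank-one input \eqref{rel:iDr1b} (and Proposition~\ref{prop:rank1iso}), so that this lemma, which feeds into the proof of \eqref{iDR2} for $c_{ij}=0$, does not depend on the later verification of \eqref{iDR3b}.
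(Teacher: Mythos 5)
Your proposal is correct and matches the paper's own argument: the paper likewise obtains all three formulas by specializing the diagonal relation \eqref{iDR3b} at $k=0$, $l=n-1$ (which it, too, derives in $\tUi$ by transporting the rank-one relation through $\aleph_i$, in Proposition~\ref{prop:iDR31}) and then proves the ``in particular'' clause by induction on $n$. Your extra attention to avoiding circularity with the later verification of \eqref{iDR3b} is sound and is exactly how the paper's dependency chain is organized.
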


\begin{proof}
The recursion formulas in the lemma are reformulations of \eqref{iDR3b} with $k=0$ and $l=n-1$. The second statement follows by an induction on $n$ using the recursion formulas. (A precise linear combination can be written down, but will not be needed.)
%
%By using \eqref{iDR3b}, we can establish the following formulas by induction on $n \ge 1$:
%\begin{align*}
%\Theta_{j,n} = \left\{
%\begin{array}{ll}
%\sum_{a=0}^{\frac{n-3}2}v^{2-2a}
% C^a \K_{j}^{-1}  \big([\y_{j,0}, \y_{j, n-2a}]_{v^{-2}} +  [\y_{j,n-2a-1}, \y_{j,1}]_{v^{-2}} \big) &
% \\
%\qquad\qquad +v^{3-n} C^{\frac{n-1}2 }\K_{j}^{-1}   [\y_{j,0}, \y_{j,1}]_{v^{-2}}, & \text{ for } n \text{ odd},
%\\
%\sum_{a=0}^{\frac{n-2}2}v^{2-2a}
%C^{a} \K_j^{-1}  \big( [\y_{j,0}, \y_{j, n-2a}]_{v^{-2}} +  [\y_{j,n-2a-1}, \y_{j,1}]_{v^{-2}} \big)
%-v^{1-n} C^{\frac{n}2},  & \text{ for } n \text{ even}.
%  \end{array}
%  \right.
%\end{align*}
\end{proof}

\begin{proposition}
  \label{prop:iDR2}
Assume $c_{ij}=0$, for $i,j\in \II$. Then, for $m\geq 1$ and $r \in\Z$, we have
\begin{align*}
[\Theta_{i,m},B_{j,r}] &=0 =[H_{i,m},B_{j,r}].
\end{align*}
\end{proposition}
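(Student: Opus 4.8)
The plan is to exploit two facts already in hand: when $c_{ij}=0$ the real root vectors $B_{i,k}$ all commute with $B_{j,r}$ (Proposition~\ref{prop:iDR4}), and $\Theta_{i,m}$ can be written entirely in terms of the $B_{i,k}$ and central elements. Combining these reduces the proposition to the trivial observation that a subalgebra built from things commuting with $B_{j,r}$ again commutes with $B_{j,r}$.

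First I would record that the $\K_i$ are central in $\tUi$ by definition, and hence so is $C=\K_\de=\K_0\K_\theta$; in particular every element of $\Q(v)[C^{\pm1},\K_i^{\pm1}]$ commutes with $B_{j,r}$. Next, applying Lemma~\ref{lem:comb} with the index $j$ replaced by $i$ and inducting on $m$ via the recursion stated there, I would conclude that $\Theta_{i,m}$ belongs to the subalgebra $A_i\subseteq\tUi$ generated by $\{B_{i,k}\mid k\in\Z\}$ together with $C^{\pm1}$ and $\K_i^{\pm1}$: the base cases $m=1,2$ are visibly of this form, and the inductive step feeds $\Theta_{i,m-2}\in A_i$ together with brackets of the $B_{i,\bullet}$ into the formula, all with coefficients in the central ring $\Q(v)[C^{\pm1},\K_i^{\pm1}]$.

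Then comes the key step: since $c_{ij}=0$, Proposition~\ref{prop:iDR4} gives $[B_{i,k},B_{j,r}]=0$ for all $k\in\Z$, so every generator of $A_i$ commutes with $B_{j,r}$ (the $B_{i,k}$ by Proposition~\ref{prop:iDR4}, the central factors trivially). Because the centralizer of a fixed $B_{j,r}$ is a subalgebra, by the Leibniz identity $[ab,x]=a[b,x]+[a,x]b$, the whole of $A_i$ commutes with $B_{j,r}$; in particular $[\Theta_{i,m},B_{j,r}]=0$.

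Finally, for $H_{i,m}$ I would invert the defining relation \eqref{exp h} by taking formal logarithms, which expresses $H_{i,m}$ as a $\Q(v)$-polynomial in $\Theta_{i,1},\dots,\Theta_{i,m}$, hence as an element of $A_i$; thus $[H_{i,m},B_{j,r}]=0$ follows from the previous paragraph. I do not expect a genuine obstacle here: the analytic content has been absorbed into Proposition~\ref{prop:iDR4} and Lemma~\ref{lem:comb}, and the orthogonality hypothesis $c_{ij}=0$ enters only through the former. The one point requiring care is the bookkeeping in the induction of the second step, namely that Lemma~\ref{lem:comb} is available at this stage (it rests on \eqref{iDR3b}, established earlier) so that no circularity is introduced.
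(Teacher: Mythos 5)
Your proposal is correct and follows essentially the same route as the paper: the paper likewise invokes Lemma~\ref{lem:comb} (with $j$ replaced by $i$) to reduce $[\Theta_{i,m},B_{j,r}]=0$ to the commutativity $[B_{i,k},B_{j,r}]=0$ of Proposition~\ref{prop:iDR4}, and disposes of $H_{i,m}$ by expressing it in terms of the $\Theta_{i,a}$. Your explicit remarks about the centralizer being a subalgebra and about non-circularity are just spelled-out versions of what the paper leaves implicit.
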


\begin{proof}
We shall prove the first equality only (as $H_{i,n}$ can be expressed in terms of $\Theta_{i,m}$ for various $m$).
By Lemma~\ref{lem:comb} (with index $j$ replaced by $i$), it suffices to check that $[B_{i,k}, B_{i,l}]_{v^{-2}}$ commutes with $B_{j,r}$ for all $k,l,r$. But this clearly follows by the commutativity between $B_{i,k}$ and $B_{j,r}$ \eqref{iDR4}.
%Indeed,
%\begin{align*}
%\big[[B_{i,k}, B_{i,l}]_{v^{-2}}, B_{j,r} \big]
%=& [B_{i,k} B_{i,l} -v^{-2} B_{i,l}B_{i,k} ,B_{j,r}]
%\\
%=&B_{i,k}[B_{i,l},B_{j,r}]  - [B_{i,k}, B_{j,r}]  B_{i,l}
%\\&- v^{-2} B_{i,l}[B_{i,k},B_{j,r}]    +v^{-2} [B_{i,l}, B_{j,r}]  B_{i,k}
%=0,
%\end{align*}
%where the last equality uses repeatedly \eqref{iDR4} (which has been established).
\end{proof}
\subsection{Relations \eqref{iDR3b} and \eqref{iDR1}--\eqref{iDR2} for $i=j$}
 \label{subsec:rank1}

\begin{proposition}
 \label{prop:iDR31}
Relation \eqref{iDR3b} and relations  \eqref{iDR1}--\eqref{iDR2} for $i=j \in \II$ hold in $\tUi$.
\end{proposition}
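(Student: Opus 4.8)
The plan is to reduce the whole statement to the rank-one ($q$-Onsager) computations of Section~\ref{sec:Onsager} through the isomorphism $\aleph_i\colon \tUi(\widehat{\mathfrak{sl}}_2)\to\tUi_{[i]}$ of Proposition~\ref{prop:rank1iso}. Every element occurring in \eqref{iDR1}, \eqref{iDR2} and \eqref{iDR3b} for $i=j$ — namely $B_{i,k}$, $\Theta_{i,m}$, $H_{i,m}$ and the central $\K_i$, $C=\K_\de$ — lies in the subalgebra $\tUi_{[i]}$: indeed $B_{i,k}=o(i)^k\TT_{\omega_i}^{-k}(B_i)$ by \eqref{Bik} and $\TT_{\omega_i}$ preserves $\tUi_{[i]}$ by \eqref{eq:T1Ti}, while $\acute{\Theta}_{i,m}$ is, by \eqref{Thim1}, a polynomial in $B_{i,\bullet}$, $\TT_{\omega_i'}(B_i)$ and the central generators, and $\Theta_{i,m}$, $H_{i,m}$ are obtained from the $\acute{\Theta}_{i,m}$ via \eqref{Thim} and exponentiation. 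Hence it suffices to transport the rank-one identities along $\aleph_i$.

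First I would set up the dictionary between the two families of root vectors. From the intertwining relation \eqref{eq:T1Ti}, $\TT_{\omega_i}|_{\tUi_{[i]}}=\aleph_i\circ\TT_{\omega_1}\circ\aleph_i^{-1}$, together with $\aleph_i(B_1)=B_i$ and $\aleph_i(B_0)=\TT_{\omega_i'}(B_i)$, one gets $\aleph_i\big(\TT_{\omega_1}^{-k}(B_1)\big)=\TT_{\omega_i}^{-k}(B_i)$, so \eqref{Bik} differs from its rank-one prototype \eqref{eq:B1n} exactly by the normalization $o(i)^k$; comparing \eqref{eq:dB1}, \eqref{eq:dBB} with \eqref{Thim1}, \eqref{Thim} then realizes $\acute{\Theta}_{i,m}$, $\Theta_{i,m}$, $H_{i,m}$ as the higher-rank counterparts of $\acute{\Theta}_{m}$, $\Theta_{m}$, $H_{m}$, twisted by signs $o(i)^{\bullet}$. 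Granting this, the three relations follow: $[H_{i,m},H_{i,n}]=0$ in \eqref{iDR1} is the $\aleph_i$-image of Proposition~\ref{prop:HH1}; relation \eqref{iDR2} for $i=j$ (where $c_{ii}=2$, so $[mc_{ii}]/m=[2m]/m$) is the image of \eqref{eq:hB3} in Proposition~\ref{prop:HB1}; and \eqref{iDR3b} is the image of \eqref{rel:iDr1b} in Proposition~\ref{prop:BB1}. Centrality of $\K_i$ and $C=\K_\de$ is immediate from the definition of $\tUi$.

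The main obstacle, and the only real content beyond the rank-one results, is the sign bookkeeping forced by the Beck-type normalization $B_{i,k}=o(i)^k\TT_{\omega_i}^{-k}(B_i)$, which is absent in rank one. Each identity must be checked to be homogeneous in these signs so that the factors $o(i)^{\bullet}$ cancel between its two sides. For example, in \eqref{iDR3b} the left-hand side is bihomogeneous of total sign $o(i)^{k+l+1}$, and one must confirm that the right-hand side carries the same total sign; this rests on the elementary identity $o(i)^{l-k+1}=o(i)^{k+l+1}$ (valid since $o(i)^2=1$) and on the sign-neutrality of the central factors $C^{k}\K_i=\K_{k\de+\alpha_1}$. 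I would organize this uniformly by noting that rescaling a $\Z\I$-homogeneous element of weight $\mu=\sum_{j}n_j\alpha_j$ by $o(i)^{n_0}$ defines a grading automorphism of $\tUi$, so that passing between normalized and unnormalized vectors merely multiplies each weight-homogeneous relation by an overall sign. Verifying that these overall signs match across \eqref{iDR1}, \eqref{iDR2} and \eqref{iDR3b} is the delicate step, after which the proposition collapses onto Propositions~\ref{prop:HH1}, \ref{prop:HB1} and \ref{prop:BB1}.
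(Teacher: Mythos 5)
Your proposal is correct and follows essentially the same route as the paper: transport the rank-one relations of Section~\ref{sec:Onsager} (Propositions~\ref{prop:HH1}, \ref{prop:HB1}, \ref{prop:BB1}, i.e.\ the content of Theorem~\ref{thm:Dr1}) through the embedding $\aleph_i$ of Proposition~\ref{prop:rank1iso}, using \eqref{eq:T1Ti} to match the root vectors, and then check that the overall sign $o(i)^{k+l+1}$ coming from the normalization \eqref{Bik}--\eqref{Thim} cancels. The paper's proof is a condensed version of exactly this argument, including the same remark about the sign in \eqref{iDR3b}.
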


\begin{proof}
These rank one relations (for a fixed $i\in \II$)  follow by transporting the corresponding relations in $q$-Onsager algebra (see Theorem~\ref{thm:Dr1}) to $\tUi$ using Proposition~\ref{prop:rank1iso}. Note that for \eqref{iDR3b}, the overall sign $o(i)^{k+l+1}$ originated from \eqref{Bik}--\eqref{Thim} cancels out.
\end{proof}
\subsection{Relation \eqref{iDR1} for $i\neq j$}

We shall derive the identity $[H_{i,m},H_{j,n}]=0$ in \eqref{iDR1}, for $i\neq j \in \II$, from the relations \eqref{iDR3b} (proved above) and \eqref{iDR2}. The proof of \eqref{iDR2} in the following subsections will not use the relation \eqref{iDR1}.

\begin{lemma}
  \label{lem:comm}
For $i,j \in \II$, $l, k \in \Z$ and  $m\ge 1$, we have
\[
\Big[ H_{i,m}, [\y_{j,k}, \y_{j,l+1}]_{v^{-2}} +[\y_{j,l}, \y_{j,k+1}]_{v^{-2}} \Big] =0.
\]
\end{lemma}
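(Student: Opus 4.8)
The plan is to prove the lemma by a single derivation computation that uses only the previously established relations \eqref{iDR2} and \eqref{iDR3b}; crucially it does \emph{not} invoke \eqref{iDR1}, so the subsequent derivation of $[H_{i,m},H_{j,n}]=0$ stays non-circular. Abbreviate $\lambda:=\frac{[mc_{ij}]}{m}$ and
\[
P(k,l):=[\y_{j,k}, \y_{j,l+1}]_{v^{-2}} +[\y_{j,l}, \y_{j,k+1}]_{v^{-2}},
\]
so that the claim reads $[H_{i,m},P(k,l)]=0$. Since $C=\K_\de$ is central, the operator $D:=[H_{i,m},-]$ is a derivation of $\tUi$, and \eqref{iDR2} supplies the basic formula $D(\y_{j,a})=\lambda\big(\y_{j,a+m}-C^m\y_{j,a-m}\big)$ for all $a\in\Z$. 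For $c_{ij}=0$ we have $\lambda=0$ and the statement is immediate, so the content is in the case $c_{ij}=-1$, which the same computation covers.

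First I would apply $D$ to each $v^{-2}$-bracket making up $P(k,l)$, using the derivation identity $D\big([A,B]_{v^{-2}}\big)=[DA,B]_{v^{-2}}+[A,DB]_{v^{-2}}$ together with the fact that the central power $C^m$ pulls out of any $v^{-2}$-bracket. Substituting the formula for $D(\y_{j,a})$ produces eight $v^{-2}$-brackets, which regroup in pairs, thanks to the built-in symmetry of the two brackets defining $P$, into
\[
D\big(P(k,l)\big)=\lambda\Big(P(k+m,l)+P(k,l+m)-C^mP(k-m,l)-C^mP(k,l-m)\Big).
\]

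Second I would annihilate the right-hand side using \eqref{iDR3b}. Writing $P(a,b)$ through \eqref{iDR3b} as a combination of terms of the form $\Theta_{j,b-a\pm1}C^{\bullet}\K_j$ and $\Theta_{j,a-b\pm1}C^{\bullet}\K_j$, one sees that all $\Theta$-subscripts depend only on $b-a$, while the attached powers of $C$ are exactly $C^{a},C^{a+1},C^{b},C^{b+1}$; hence $P(a,b)=C^mP(a-m,b-m)$ for every $m$. This homogeneity is a purely formal consequence of the \emph{shape} of \eqref{iDR3b} and is independent of the conventions \eqref{Hm0}. Taking $(a,b)=(k+m,l)$ gives $P(k+m,l)=C^mP(k,l-m)$, and $(a,b)=(k,l+m)$ gives $P(k,l+m)=C^mP(k-m,l)$; thus the four terms cancel in pairs and $D\big(P(k,l)\big)=0$, which is the assertion.

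This lemma is not conceptually hard; the only obstacle is the careful bookkeeping of the eight Leibniz terms and the recognition of the $C$-homogeneity $P(a,b)=C^mP(a-m,b-m)$ hidden in \eqref{iDR3b}. Once it is in hand, the relation \eqref{iDR1} for $i\neq j$ follows quickly: by Lemma~\ref{lem:comb} each $\Theta_{j,n}$ is a $\Q(v)[C^{\pm1},\K_j^{\pm1}]$-linear combination of $1$ and the symmetric brackets $P(k,l)$, and $H_{i,m}$ commutes with these central coefficients; hence $[H_{i,m},\Theta_{j,n}]=0$ for all $n$, whereupon the defining equation \eqref{exp h} yields $[H_{i,m},H_{j,n}]=0$.
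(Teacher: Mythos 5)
Your proof is correct and follows essentially the same route as the paper: expand $[H_{i,m},-]$ as a derivation via \eqref{iDR2}, regroup the eight resulting $v^{-2}$-brackets into the four symmetrized expressions $P(k+m,l)$, $P(k,l+m)$, $-C^mP(k-m,l)$, $-C^mP(k,l-m)$, and cancel them in pairs using \eqref{iDR3b}. The one small improvement is that you justify the pairwise cancellation by the clean homogeneity $P(a,b)=C^mP(a-m,b-m)$ read off from the shape of \eqref{iDR3b}, whereas the paper verifies the same two cancellations by evaluating each column and splitting into the cases $l\gtrless k+m$ and $l=k+m$.
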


\begin{proof}
Using \eqref{iDR2} we have
\begin{align*}
%\label{eq:hBB}
 \Big[ H_{i,m},  &  [\y_{j,k}, \y_{j,l+1}]_{v^{-2}} +[\y_{j,l}, \y_{j,k+1}]_{v^{-2}} \Big]
\\\notag
= & [H_{i,m}, \y_{j,k}] \y_{j,l+1} +v^{-2} \y_{j,l+1} [\y_{j,k}, H_{i,m}]
\\\notag
& + [H_{i,m}, \y_{j,l}] \y_{j,k+1} +v^{-2} \y_{j,k+1} [\y_{j,l}, H_{i,m}]
\\\notag
& +\y_{j,l} [H_{i,m}, \y_{j,k+1}]  +v^{-2}  [\y_{j,k+1}, H_{i,m}] \y_{j,l}
\\\notag
& + \y_{j,k} [H_{i,m}, \y_{j,l+1}]  +v^{-2}  [\y_{j,l+1}, H_{i,m}] \y_{j,k}\\\notag
= &\frac{[mc_{ij}]}{m}\Big( (\y_{j, k+m} -\y_{j,k-m} C^m) \y_{j, l+1} -v^{-2} \y_{j,l+1} (\y_{j,k+m} -\y_{j,k-m} C^m)
\\\notag
& + (\y_{j, l+m} -\y_{j,l-m} C^m) \y_{j, k+1} -v^{-2} \y_{j,k+1} (\y_{j,l+m} -\y_{j,l-m} C^m)
\\\notag
& + \y_{j, l}(\y_{j, k+m+1} -\y_{j,k-m+1} C^m)  -v^{-2} (\y_{j,k+m+1} -\y_{j,k-m+1} C^m) \y_{j, l}
\\\notag
& + \y_{j, k}(\y_{j, l+m+1} -\y_{j,l-m+1} C^m)  -v^{-2} (\y_{j,l+m+1} -\y_{j,l-m+1} C^m) \y_{j, k}\Big).
\end{align*}
The above equality can be further rewritten as
\begin{align*}
 &\Big[ H_{i,m}, [\y_{j,k},  \y_{j,l+1}]_{v^{-2}} +[\y_{j,l}, \y_{j,k+1}]_{v^{-2}} \Big]= \frac{[mc_{ij}]}{m}\times
\\
&\Big([\y_{j,k+m}, \y_{j,l+1}]_{v^{-2}} - [\y_{j,k-m}, \y_{j,l+1}]_{v^{-2}}  C^m
   + [\y_{j,l+m}, \y_{j,k+1}]_{v^{-2}} - [\y_{j,l-m}, \y_{j,k+1}]_{v^{-2}}  C^m
\\
&+ [\y_{j,l}, \y_{j,k+m+1}]_{v^{-2}} - [\y_{j,l}, \y_{j,k-m+1}]_{v^{-2}}  C^m
   + [\y_{j,k}, \y_{j,l+m+1}]_{v^{-2}} - [\y_{j,k}, \y_{j,l-m+1}]_{v^{-2}}  C^m\Big).
\end{align*}
There are 4 terms in each of the above 2 lines, and we add up column by column using \eqref{iDR3b}. Note that the sum of column 1 cancels with the sum of column 4 since both are equal to (modulo an opposite sign)
\begin{align*}
\begin{cases}
v^{-2} \Theta_{j, l-k-m+1} C^{k+m} \K_j -v^{-2} \Theta_{j, l-k-m-1} C^{k+m+1} \K_j, & \text{ if } l>k+m,
\\
v^{-2} \Theta_{j, k+m-l+1} C^l \K_j -v^{-2} \Theta_{j, k+m-l-1} C^{l+1} \K_j, & \text{ if } l<k+m,
\\
2 v^{-2} \Theta_{j, 1} C^l \K_j, & \text{ if } l =k+m.
\end{cases}
\end{align*}
Similarly, the sum of column 2 cancels with the sum of column 3 since both are equal to (modulo an opposite sign)
\begin{align*}
\begin{cases}
v^{-2} \Theta_{j, k-l-m+1} C^{l+m} \K_j  -v^{-2} \Theta_{j, k-l-m-1} C^{l+m+1} \K_j, & \text{ if } k>l+m,
\\
v^{-2} \Theta_{j, l+m-k+1} C^k \K_j -v^{-2} \Theta_{j, l+m-k-1} C^{k+1} \K_j , & \text{ if } k<l+m,
\\
2 v^{-2} \Theta_{j, 1} C^k \K_j, & \text{ if } k =l+m.
\end{cases}
\end{align*}
This proves the lemma.
\end{proof}

\begin{proposition}
 \label{prop:iDR1}
Relation \eqref{iDR1} for $i\neq j  \in \II$ in $\tUi$ follows from the relations \eqref{iDR2} and \eqref{iDR3b} in $\tUi$.
\end{proposition}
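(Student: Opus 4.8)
The plan is to reduce the claimed vanishing $[H_{i,m},H_{j,n}]=0$ (for $i\neq j$) to the single assertion that $H_{i,m}$ commutes with every $\Theta_{j,n}$, and then to dispatch the latter using the two lemmas that immediately precede the proposition, \emph{without} ever invoking \eqref{iDR1} itself (so as to avoid circularity, as flagged in the paper).

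First I would pass from the $H$'s to the $\Theta$'s. By the defining exponential identity \eqref{exp h}, taking logarithms of the generating series expresses each $H_{j,n}$ as a $\Q(v)$-polynomial in $\Theta_{j,1},\dots,\Theta_{j,n}$, and conversely each $\Theta_{j,n}$ as a $\Q(v)$-polynomial in $H_{j,1},\dots,H_{j,n}$. This invertible change of variables shows that the commutative subalgebra generated by the $H_{j,n}$ coincides with the one generated by the $\Theta_{j,n}$; hence it suffices to prove that $[H_{i,m},\Theta_{j,n}]=0$ for all $n\ge 1$. Next I would invoke Lemma~\ref{lem:comb}, which writes each $\Theta_{j,n}$ as a $\Q(v)[C^{\pm1},\K_j^{\pm1}]$-linear combination of the scalar $1$ and the symmetrized brackets $[\y_{j,k},\y_{j,l+1}]_{v^{-2}}+[\y_{j,l},\y_{j,k+1}]_{v^{-2}}$. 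Since $C=\K_\de$ and $\K_j$ are central in $\tUi$, the element $H_{i,m}$ commutes with all of these coefficients and with $1$; and Lemma~\ref{lem:comm} shows precisely that $H_{i,m}$ commutes with each symmetrized bracket. Combining these observations gives $[H_{i,m},\Theta_{j,n}]=0$, and therefore $[H_{i,m},H_{j,n}]=0$, as desired.

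I expect no serious obstacle in assembling the argument, since the genuine computational weight has already been absorbed into Lemma~\ref{lem:comm}, whose proof rests only on \eqref{iDR2} and \eqref{iDR3b}. The one point deserving explicit care is the reduction in the first step: one must confirm that commuting with all $\Theta_{j,n}$ is genuinely equivalent to commuting with all $H_{j,n}$, which is immediate from the mutual polynomial expressibility of the two families dictated by \eqref{exp h}. It is also worth recording, in order to certify that the overall proof of Proposition~\ref{thm:iDB hom} is not circular, that the only relations fed into this argument are \eqref{iDR2} and \eqref{iDR3b}, both of which are established independently of \eqref{iDR1} for $i\neq j$.
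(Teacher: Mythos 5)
Your proposal is correct and follows essentially the same route as the paper's proof: both use Lemma~\ref{lem:comb} together with Lemma~\ref{lem:comm} to get $[H_{i,m},\Theta_{j,n}]=0$, and then pass from the $\Theta_{j,n}$ to the $H_{j,n}$ via \eqref{exp h}. Your write-up merely makes explicit the centrality of the coefficients in $\Q(v)[C^{\pm1},\K_j^{\pm1}]$ and the non-circularity bookkeeping, both of which the paper leaves implicit.
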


\begin{proof}
It follows by Lemma~\ref{lem:comb} and Lemma~\ref{lem:comm} that $[ H_{i,m}, \Theta_{j,a}] =0$, for all $m, a\ge 1.$
Since $H_{j,n}$ for any $n \ge 1$ is a linear combination of monomials in $\Theta_{j,a}$, for various $a\ge 1$ by \eqref{exp h}, we conclude that $[ H_{i,m}, H_{j,n}] =0$, whence \eqref{iDR1}.
\end{proof}

%%%%%%%
%%%%%%%\section{Relations among new generators, II} \label{sec:relation2}

%
%
\subsection{Two more relations rephrased}

It remains to establish relations \eqref{iDR5} and \eqref{iDR2} for $c_{ij}=-1$ in $\tUi$, with the help of the finite type Serre relation \eqref{eq:S2} and \eqref{iDR3a}--\eqref{iDR3b}.

By Proposition~\ref{prop:equivij}, the relation \eqref{iDR2} (for $c_{ij}=-1$) is equivalent to the following relation
\begin{align}
\label{iDR2-reform}
 [ & \Theta_{i,k},B_{j,r}]+[\Theta_{i,k-2},B_{j,r}]\K_\de
 \\
&= v^{-1}[\Theta_{i,k-1},B_{j,r+1}]_{v^2}+v[\Theta_{i,k-1},B_{j,r-1}]_{v^{-2}}\K_\de, \quad \text{ for } k \ge 0 \text{ and  } r \in \Z.
\notag
\end{align}
Clearly \eqref{iDR2-reform} also holds for $k< 0$.

The relation \eqref{iDR5} in $\tUi$ (where we can assume $k_2\ge k_1$ without loss of generality) reads:
\begin{align}
\label{iDR5-reform}
%\Sym(0,k| l) &=-\sum_{p=0}^{\lfloor\frac{k-1}{2}\rfloor} v^{2p}  [2] [\Theta_{i,k-2p-1},B_{j,l-1}]_{v^{-2}} \K_{\alpha_i+(p+1)\de} \\
%&-\sum_{p=1}^{\lfloor\frac{k}{2}\rfloor}v^{2p-1}[2] [B_{j,l},\Theta_{i,k-2p}]_{v^{-2}}\K_{\alpha_i+p\de} -[B_{j,l}, \Theta_{i,k}]_{v^{-2}}   \K_{\alpha_i},
\SS(k_1,k_2|l) =\R(k_1,k_2|l), \quad \text{ for } k= k_2 -k_1 \ge 0 \text{ and } l \in \Z.
\end{align}

We shall prove \eqref{iDR2-reform} and \eqref{iDR5-reform} simultaneously and inductively on $k$ in \S\ref{subsec:iDR2=>iDR5}--\ref{subsec:iDR5=>iDR2} below. To that end, we shall refer to \eqref{iDR2-reform} and \eqref{iDR5-reform} as \eqref{iDR2-reform}$_k$ and \eqref{iDR5-reform}$_k$, respectively. We then denote by \eqref{iDR2-reform}$_{\le k}$ (respectively, \eqref{iDR2-reform}$_{< k}$) the identities \eqref{iDR2-reform}$_\ell$ for all $\ell \le k$ (respectively, for all $\ell <k$); and similarly for \eqref{iDR5-reform}$_{\le k}$.

\subsection{Implication from $\eqref{iDR2-reform}_{< k}$ to $\eqref{iDR5-reform}_{\leq k}$ }
\label{subsec:iDR2=>iDR5}

We shall fix $i,j \in \II$ such that $c_{ij}=-1$ throughout this subsection.
We identify $C=\K_\de$ below.

\begin{lemma}  \label{lem:SSSa}
For $k_1, k_2, l \in \Z$, we have
\begin{align*}
& \SS(k_1,k_2+1 |l) + \SS(k_1+1,k_2|l) -[2] \SS(k_1+1,k_2+1 |l-1)
 \\
&= \Big( -[B_{jl}, \Theta_{i, k_2-k_1+1}]_{v^{-2}} C^{k_1} +v^{-2} [B_{jl}, \Theta_{i, k_2-k_1-1}]_{v^{-2}} C^{k_1+1} \Big) \K_i + \{k_1 \leftrightarrow k_2\}.
\end{align*}
\end{lemma}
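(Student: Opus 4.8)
The plan is to reduce the identity entirely to the two current relations \eqref{iDR3a} and \eqref{iDR3b}, which are already established (Propositions~\ref{prop:iDR3a} and \ref{prop:iDR31}), and to organize the computation so that it mirrors the corresponding manipulation in the affine quantum group $\U(\widehat\fg)$, where the combination of Serre expressions on the left-hand side vanishes. The nonvanishing right-hand side will then appear precisely as the $\imath$-correction arising from the fact that adjacent $B_i$'s no longer commute freely but are governed by \eqref{iDR3b}. First I would rewrite the symmetrized Serre expression \eqref{eq:Skk} as a symmetrized nested commutator: for $c_{ij}=-1$,
\[
\SS(k_1,k_2|l;i,j) = \big[ B_{i,k_1}, [B_{i,k_2}, B_{j,l}]_{v} \big]_{v^{-1}} + \{k_1 \leftrightarrow k_2\},
\]
which one checks by expanding (the two middle monomials combine into the single $-[2]\,B_{i,k_1}B_{j,l}B_{i,k_2}$ term after symmetrization). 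The virtue of this form is that the inner bracket $[B_{i,k_2},B_{j,l}]_v$ is exactly the object controlled by \eqref{iDR3a}, which for $c_{ij}=-1$ reads $[B_{i,a},B_{j,l}]_v = v\,[B_{i,a+1},B_{j,l-1}]_{v^{-1}}$.

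Next I would apply this shift relation to the inner bracket in each of the three terms $\SS(k_1,k_2+1|l)$, $\SS(k_1+1,k_2|l)$ and $\SS(k_1+1,k_2+1|l-1)$ so as to align all inner indices, then expand the outer $v^{-1}$-bracket. Adding the three contributions with the weights $1,1,-[2]$, the purely cubic Serre monomials (one $B_{j,l}$ flanked by two $B_i$'s, in every order) cancel; this cancellation uses only \eqref{iDR3a} and is formally the same as the vanishing of the analogous combination in $\U(\widehat\fg)$. What survives are terms in which two $B_i$'s are adjacent, i.e. expressions of the shape $[B_{i,a},B_{i,b}]_{v^{\pm2}}\,B_{j,l}$ and $B_{j,l}\,[B_{i,a},B_{i,b}]_{v^{\pm2}}$, together with the lower terms produced when converting between $v$- and $v^{-1}$-brackets, and their $\{k_1\leftrightarrow k_2\}$ symmetrizations.

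Finally I would substitute \eqref{iDR3b} for each surviving two-$B_i$ commutator, replacing it by the corresponding combination of the form $v^{-2}\Theta_{i,\cdot}\,C^{\,\bullet}\K_i - v^{-4}\Theta_{i,\cdot}\,C^{\,\bullet+1}\K_i$. Since $\K_i$ and $C$ are central, the resulting $\Theta$-terms flanked on one side by $B_{j,l}$ recombine into the $v^{-2}$-brackets $[B_{j,l},\Theta_{i,k_2-k_1+1}]_{v^{-2}}$ and $[B_{j,l},\Theta_{i,k_2-k_1-1}]_{v^{-2}}$, carrying the powers $C^{k_1}$, $C^{k_1+1}$ and the overall factor $\K_i$ exactly as in the target; symmetrizing reproduces the full right-hand side of the lemma.

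I expect the main obstacle to be purely organizational rather than conceptual: keeping exact track of the lower terms generated by the $[\,\cdot\,]_{v}$ versus $[\,\cdot\,]_{v^{-1}}$ mismatch when invoking \eqref{iDR3a}, confirming that the cubic Serre terms cancel precisely (with no residual), and matching the coefficients $v^{-2}$, $v^{-4}$ and the $C$-exponents of \eqref{iDR3b} against the target so that exactly two $\Theta$-terms survive in each half of the symmetrization. Adopting a fixed normal-ordering convention throughout — for instance, always moving $B_{j,l}$ to a single prescribed side before applying \eqref{iDR3b} — should make the cubic cancellation transparent and keep the sign and coefficient bookkeeping under control.
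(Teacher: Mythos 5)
Your proposal is correct and follows essentially the same route as the paper's proof: rewrite $\SS$ as a symmetrized nested $v^{\pm1}$-bracket, use \eqref{iDR3a} to shift the indices of $\SS(k_1+1,k_2+1|l-1)$ so that all three terms involve the same inner bracket $[B_{i,\cdot},B_{j,l}]_{v^{\pm1}}$, observe that after taking the weighted sum only terms with the two $B_i$'s adjacent survive in the form $B_{j,l}[B_{i,k_2+1},B_{i,k_1}]_{v^2}-v^{-2}[B_{i,k_2+1},B_{i,k_1}]_{v^2}B_{j,l}$ plus the symmetrization, and then substitute the symmetrized form of \eqref{iDR3b} to produce the $\Theta$-terms. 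The bookkeeping you flag as the main obstacle does work out exactly as you anticipate, so no gap remains.
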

(The proof of the lemma uses only the relations \eqref{iDR3a}--\eqref{iDR3b}.)

\begin{proof}
We rewrite \eqref{eq:Skk} as
  \begin{align*}
\SS(k_1,k_2|l)
&= B_{i,k_2} [B_{i,k_1}, B_{j,l}]_{v^{-1}} - v [B_{i,k_1}, B_{j,l}]_{v^{-1}} B_{i,k_2}
+\{k_1 \leftrightarrow k_2\}.
\end{align*}
This together with \eqref{iDR3a} implies that
\begin{align}
 &\SS(k_1+1,k_2+1|l-1)
 \label{eq:Skk1a} \\
&=  (B_{i,k_2+1} [B_{i,k_1+1}, B_{j,l-1}]_{v^{-1}} - v [B_{i,k_1+1}, B_{j,l-1}]_{v^{-1}} B_{i,k_2+1})
+\{k_1 \leftrightarrow k_2\}
\notag \\
&= (v^{-1} B_{i,k_2+1} [B_{i,k_1}, B_{j,l}]_{v} -  [B_{i,k_1}, B_{j,l}]_{v} B_{i,k_2+1})
+\{k_1 \leftrightarrow k_2\}.  \notag
\end{align}

Using \eqref{eq:Skk1a}, we compute
\begin{align}
  \label{eq:Skk2a}
& \SS(k_1,k_2+1 |l) + \SS(k_1+1,k_2|l) -[2] \SS(k_1+1,k_2+1|l-1)
\\
&= \Big( \SS(k_1,k_2+1 |l)
-[2] \big(v^{-1} B_{i,k_2+1} [B_{i,k_1}, B_{j,l}]_{v} -  [B_{i,k_1}, B_{j,l}]_{v} B_{i,k_2+1} \big) \Big)
+\{k_1 \leftrightarrow k_2\}
 \notag \\
&=
\Big( \big( B_{i,k_1} [B_{i,k_2+1}, B_{j,l}]_{v} - v^{-1} [B_{i,k_2+1}, B_{j,l}]_{v} B_{i,k_1}
+B_{i,k_2+1} [B_{i,k_1}, B_{j,l}]_{v} - v^{-1} [B_{i,k_1}, B_{j,l}]_{v} B_{i,k_2+1} \big)
\notag \\
& \quad -[2] \big(v^{-1} B_{i,k_2+1} [B_{i,k_1}, B_{j,l}]_{v} -  [B_{i,k_1}, B_{j,l}]_{v} B_{i,k_2+1} \big) \Big)
 + \{k_1 \leftrightarrow k_2\}
 \notag \\
&= \big(B_{jl} [B_{i, k_2+1}, B_{i, k_1}]_{v^2} -v^{-2} [B_{i, k_2+1}, B_{i, k_1}]_{v^2} B_{jl} \big)
 + \{k_1 \leftrightarrow k_2\},
 \notag
\end{align}
where the last identity is obtained by first combining the third and fifth terms (and respectively, the fourth and sixth terms) and then further adding the first and second terms.

The relation \eqref{iDR3b} can be rewritten as
\begin{align}
  \label{eq:BBBB}
 [\y_{i,l+1}, \y_{i,k}]_{v^{2}}  + [\y_{i,k+1}, \y_{i,l}]_{v^{2}}
&= -(\Theta_{i,l-k+1} C^k -v^{-2}\Theta_{i,l-k-1} C^{k+1}) \K_i
  + \{k \leftrightarrow l \}.
\end{align}
Using \eqref{eq:BBBB},  we rewrite the RHS of the  identity \eqref{eq:Skk2a} as
\begin{align*}
& \big(B_{jl} [B_{i, k_2+1}, B_{i, k_1}]_{v^2} -v^{-2} [B_{i, k_2+1}, B_{i, k_1}]_{v^2} B_{jl} \big)
 + \{k_1 \leftrightarrow k_2\}
 \\
 &=B_{jl} \big( [B_{i, k_2+1}, B_{i, k_1}]_{v^2} + [B_{i, k_1+1}, B_{i, k_2}]_{v^2} \big)
 -v^{-2} \big( [B_{i, k_2+1}, B_{i, k_1}]_{v^2}  + [B_{i, k_1+1}, B_{i, k_2}]_{v^2} \big) B_{jl}
 \\
 &= -B_{jl} \big( \Theta_{i, k_2-k_1+1} C^{k_1} -v^{-2}\Theta_{i,k_2-k_1-1} C^{k_1+1}  \big) \K_i
 \\
 &\quad + v^{-2} \big(\Theta_{i, k_2-k_1+1} C^{k_1} -v^{-2}\Theta_{i,k_2-k_1-1} C^{k_1+1}  \big) B_{jl} \K_i
 + \{k_1 \leftrightarrow k_2\}
\\
&= \Big( -[B_{jl}, \Theta_{i, k_2-k_1+1}]_{v^{-2}} C^{k_1} +v^{-2} [B_{jl}, \Theta_{i, k_2-k_1-1}]_{v^{-2}} C^{k_1+1} \Big) \K_i + \{k_1 \leftrightarrow k_2\}.
\end{align*}
The lemma is proved.
\end{proof}

Denote, for $n\in \Z$,
\begin{align}
  \label{X}
X_{n |l}=&\sum_{p\ge 0} v^{2p+1}[B_{j,l+1},\Theta_{i,n-2p-1}]_{v^{-2}} C^{p+1}
 +\sum_{p\ge 1} v^{2p-1} [2] [\Theta_{i,n-2p},B_{j,l}] C^{p+1}
\\
&\quad -\sum_{p\ge 0} v^{2p+1}[\Theta_{i,n-2p-1},B_{j,l-1}]_{v^{-2}} C^{p+2}
+ [\Theta_{i,n},B_{j,l}] C.
\notag
\end{align}
Clearly, $X_{n |l}=0$, for $n\leq 0$.

\begin{lemma}
\label{lem:vanish1}
If \eqref{iDR2-reform}$_{\le k}$ holds, then  $X_{n |l-1}=0$, for all $n \leq k$ and $l\in \Z$. (The converse is also true.)
%\begin{align}
%&\sum_{p=0}^{\lfloor\frac{m-2}{2}\rfloor} v^{2p+1}[B_{j,l},\Theta_{i,m-2p-2}]_{v^{-2}} C^{p+1}+\sum_{p=1}^{\lfloor\frac{m-1}{2}\rfloor} v^{2p-1} [2] [\Theta_{i,m-2p-1},B_{j,l-1}] C^{p+1}
%\\\notag
%&-\sum_{p=0}^{\lfloor\frac{m-2}{2}\rfloor} v^{2p+1}[\Theta_{i,m-2p-2},B_{j,l-2}]_{v^{-2}} C^{p+2} + [\Theta_{i,m-1},B_{j,l-1}] C =0.
%\end{align}
\end{lemma}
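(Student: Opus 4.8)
The plan is to establish an \emph{unconditional} recursion in $n$ for $X_{n|l-1}$ whose only error term is exactly the defect of $\eqref{iDR2-reform}_n$, and then to run a downward induction with base case $X_{n|l-1}=0$ for $n\le 0$ (which holds by the definition in \eqref{X}). Concretely, writing
\[
\mathcal E_n(r):= [\Theta_{i,n},B_{j,r}]+[\Theta_{i,n-2},B_{j,r}]C -v^{-1}[\Theta_{i,n-1},B_{j,r+1}]_{v^2}-v[\Theta_{i,n-1},B_{j,r-1}]_{v^{-2}}C
\]
for the difference of the two sides of $\eqref{iDR2-reform}_n$ at index $r$ (so that $\eqref{iDR2-reform}_n$ is the statement $\mathcal E_n(r)=0$ for all $r$), I would prove the identity
\[
X_{n|l-1} = v^2 C\, X_{n-2|l-1} + C\,\mathcal E_n(l-1),
\]
valid in $\tUi$ \emph{without} assuming any relation. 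Granting this, the Lemma is immediate: if $\eqref{iDR2-reform}_{\le k}$ holds then $\mathcal E_n(l-1)=0$ for all $n\le k$, hence $X_{n|l-1}=v^2C\,X_{n-2|l-1}$, and induction in steps of two down to the vanishing base case yields $X_{n|l-1}=0$ for all $n\le k$. The converse also drops out: since $C$ is invertible, $X_{n|l-1}=0$ for all $n\le k$ forces $\mathcal E_n(l-1)=0$ for $n\le k$ and all $l$, that is, $\eqref{iDR2-reform}_{\le k}$.

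To prove the displayed identity I would treat the four summands of \eqref{X} separately. In each of the three infinite sums I shift the summation index $p\mapsto p-1$; this factors out $v^2C$ and reproduces the matching sum inside $X_{n-2|l-1}$, leaving a single $p=0$ boundary term from the first and third sums and a single new lowest term from the shifted second sum, while the top term $[\Theta_{i,n},B_{j,l-1}]C$ is kept as is. Reassembling, the three shifted sums build up the first three summands of $X_{n-2|l-1}$, which differ from $X_{n-2|l-1}$ itself only by its top term $[\Theta_{i,n-2},B_{j,l-1}]C$ that must be added back. The $p=0$ boundary term of the first sum combines with the $-v^{-1}[\Theta_{i,n-1},B_{j,l}]_{v^2}C$ of $\mathcal E_n(l-1)$ via the elementary commutator identity $[B_{j,l},\Theta_{i,n-1}]_{v^{-2}}=-v^{-2}[\Theta_{i,n-1},B_{j,l}]_{v^2}$; the $p=0$ boundary term of the third sum supplies the $-v[\Theta_{i,n-1},B_{j,l-2}]_{v^{-2}}C^2$ term; and the remaining $\Theta_{i,n-2}$ contributions collapse to the single term $[\Theta_{i,n-2},B_{j,l-1}]C^2$ precisely because $v[2]=v^2+1$. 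What survives is exactly $v^2C\,X_{n-2|l-1}+C\,\mathcal E_n(l-1)$.

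It is worth stressing that this recursion is purely formal: it uses only the definition \eqref{X}, the reindexing of the sums, associativity, and the scalar identity $v[2]-1-v^2=0$; no algebra relation is invoked. This is exactly what keeps the induction non-circular, since $\eqref{iDR2-reform}_n$ enters only through $\mathcal E_n$ and the implication for a given $n$ draws on $\eqref{iDR2-reform}_n$ alone, not on lower instances. The hard part is therefore not conceptual but the careful bookkeeping of the index shifts, the $v$-power coefficients, and the boundary terms; the one genuinely delicate point to watch is the coefficient matching $v[2]=v^2+1$, which is what makes the three stray $\Theta_{i,n-2}$-terms recombine cleanly into the single defect $\mathcal E_n(l-1)$ rather than leaving an uncontrolled remainder.
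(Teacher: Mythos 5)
Your proof is correct and is essentially the paper's own argument: both establish the unconditional recursion $X_{n|l-1}-v^2C\,X_{n-2|l-1}=C\cdot\big(\text{defect of }\eqref{iDR2-reform}_n\text{ at }r=l-1\big)$ by reindexing the three sums and matching the boundary terms via $v[2]=v^2+1$, then induct in steps of two from the base case $X_{-1|l-1}=X_{0|l-1}=0$. (You even track the overall central factor $C$ on the defect more carefully than the paper's displayed computation, which drops it; since $C$ is invertible this is immaterial.)
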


\begin{proof}
%For any $2\leq l<k$ and $r\in \Z$, we have
%\begin{align}
%\label{eq:iDR2equivodd}
%&[\Theta_{i,l},B_{j,r}]+[\Theta_{i,l-2},B_{j,r}] C=v^{-1}[\Theta_{i,l-1},B_{j,r+1}]_{v^2}+v[\Theta_{i,l-1},B_{j,r-1}]_{v^{-2}}C.
%\end{align}

%Denote by $X_{m-1 |l-1}=\xi(i,j,m)$ the left hand side of \eqref{iserre-id-odd}. We prove \eqref{iserre-id-odd} by induction on $m$.

%For $m=3$, we have
%\begin{align*}
%\xi(i,j,3)=&\big( [B_{j,l},\Theta_{i,1}]_{v^{-2}} -[\Theta_{i,1},B_{j,l-2}]_{v^{-2}} \K_\de+ v^{-1} [\Theta_{i,2},B_{j,l-1}]  \big)\K_{\alpha_i+\de}
%\\
%=&v^{-1}\big( [\Theta_{i,2},B_{j,l-1}]-v^{-1}[B_{j,l},\Theta_{i,1}]_{v^{2}} +v[\Theta_{i,1},B_{j,l-2}]_{v^{-2}} \K_\de   \big)\K_{\alpha_i+\de}=0,
%\end{align*}
%by our assumption and $\Theta_{i,0}=\frac{1}{v-v^{-1}}$, and $k\geq3$.

%Assume \eqref{iserre-id-odd} holds for $m$, that is, $X_{m-1 |l-1}=\xi(i,j,m)=0$. Then for $m+2\leq k$, w
Recalling \eqref{X}, we compute
\begin{align*}
& X_{n |l-1} - v^2 C X_{n-2 |l-1}
\\
%= \xi(i,j,m+2)-v^2\K_\de \xi(i,j,m)\\
%
%=&\sum_{p=0}^{\lfloor\frac{m-1}{2}\rfloor} v^{2p}[B_{j,l},\Theta_{i,m-2p}]_{v^{-2}} \K_{\alpha_i+(p+1)\de}+\sum_{p=1}^{\lfloor\frac{m+1}{2}\rfloor} v^{2p-2} [2] [\Theta_{i,m-2p+1},B_{j,l-1}] \K_{\alpha_i+(p+1)\de}
%\\
%&+v^{-1}[\Theta_{i,m+1},B_{j,l-1}] \K_{\alpha_i+\de}-\sum_{p=0}^{\lfloor\frac{m-1}{2}\rfloor} v^{2p}[\Theta_{i,m-2p},B_{j,l-2}]_{v^{-2}}\K_{\alpha_i+(p+2)\de}\\
%&-v^2\K_\de\big( \sum_{p=0}^{\lfloor\frac{m-3}{2}\rfloor} v^{2p}[B_{j,l},\Theta_{i,m-2p-2}]_{v^{-2}} \K_{\alpha_i+(p+1)\de}+\sum_{p=1}^{\lfloor\frac{m-1}{2}\rfloor} v^{2p-2} [2] [\Theta_{i,m-2p-1},B_{j,l-1}] \K_{\alpha_i+(p+1)\de}
%\\
%&+v^{-1}[\Theta_{i,m-1},B_{j,l-1}] \K_{\alpha_i+\de}-\sum_{p=0}^{\lfloor\frac{m-3}{2}\rfloor} v^{2p}[\Theta_{i,m-2p-2},B_{j,l-2}]_{v^{-2}}\K_{\alpha_i+(p+2)\de}\big)
%\\
&= v [B_{j,l},\Theta_{i,n-1}]_{v^{-2}} + v[2][\Theta_{i,n-2},B_{j,l-1}]C - v[\Theta_{i,n-1},B_{j,l-2}]_{v^{-2}}C
\\
&\quad + [\Theta_{i,n},B_{j,l-1}] -v^2[\Theta_{i,n-2},B_{j,l-1}]C
\\
&= -v^{-1}[\Theta_{i,n-1},B_{j,l}]_{v^{2}}+ [\Theta_{i,n-2},B_{j,l-1}]C + [\Theta_{i,n},B_{j,l-1}] -v [\Theta_{i,n-1},B_{j,l-2}]_{v^{-2}}C.
\end{align*}
For $0\le n\le k$, the RHS is $0$, which is equivalent to the assumption that \eqref{iDR2-reform}$_{n}$ holds.
The lemma follows by an induction on $n$ and noting that $X_{-1 |l-1} = X_{0 |l-1}=0$.
\end{proof}

Recall $R(k_1,k_2 |l)$ from \eqref{eq:Rkk}. Let us establish an $R$-counterpart of Lemma~\ref{lem:SSSa}.

\begin{lemma}   \label{lem:RRRa}
For $k_1, k_2, l \in \Z$ with $k_2\ge k_1$, we have
\begin{align*}
& R(k_1,k_2+1 |l) + R(k_1+1,k_2|l) -[2] R(k_1+1,k_2+1|l-1)
 \\
 &= - [2]^2 X_{k_2-k_1 |l-1} C^{k_1} \K_i
+  \big( -[B_{j,l}, \Theta_{i, k_2-k_1+1}]_{v^{-2}} C^{k_1} +v^{-2} [B_{j,l}, \Theta_{i, k_2-k_1-1}]_{v^{-2}} C^{k_1+1} \big) \K_i. % + \{k_1 \leftrightarrow k_2\}.
 \end{align*}
\end{lemma}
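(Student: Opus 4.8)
The plan is to prove the identity by a direct expansion that runs parallel to the proof of Lemma~\ref{lem:SSSa}. First I would strip off the central factor: since $\K_i$ and $C=\K_\de$ are central, I write $R(k_1,k_2\,|\,l)=\K_i C^{k_1}R_0(k_2-k_1\,|\,l)$, where $R_0(n\,|\,l)$ denotes the bracketed expression in \eqref{eq:Rkk}, which depends only on the gap $n=k_2-k_1$ and on $l$. The three summands on the left-hand side then become $\K_iC^{k_1}$ times $R_0(n+1\,|\,l)$, $C\,R_0(n-1\,|\,l)$ and $[2]C\,R_0(n\,|\,l-1)$ respectively, so it suffices to establish the normalized identity
\begin{align*}
R_0(n+1\,|\,l)+C\,R_0(n-1\,|\,l)-[2]C\,R_0(n\,|\,l-1)
&=-[2]^2X_{n\,|\,l-1}\\
&\quad -[B_{j,l},\Theta_{i,n+1}]_{v^{-2}}+v^{-2}[B_{j,l},\Theta_{i,n-1}]_{v^{-2}}C
\end{align*}
for $n=k_2-k_1\ge 0$.

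Next I would substitute \eqref{eq:Rkk} and \eqref{X}, reindex the three $p$-sums so that equal powers of $C$ line up, and sort every resulting term according to which real root vector $B_{j,l}$, $B_{j,l-1}$, or $B_{j,l-2}$ it carries (the $l$-shift in the third summand is precisely what produces the $B_{j,l-2}$ column). In each column the generic interior terms are mixed commutators $[\Theta_{i,m},B_{j,\bullet}]_{v^{-2}}$; I would reorganize these using relation \eqref{iDR2-reform} (that is, relation \eqref{iDR2} for $c_{ij}=-1$, available in the standing inductive setting of this subsection) to pass between adjacent $(\Theta\text{-index},\,B\text{-position})$ pairs, so that the three columns become compatible and collapse onto exactly the combination that the definition \eqref{X} packages as $-[2]^2X_{n\,|\,l-1}$. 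Wherever a $B_i$--$B_i$ commutator has to be traded for a $\Theta_i$ I would invoke \eqref{iDR3b} in its rewritten form \eqref{eq:BBBB}, just as in Lemma~\ref{lem:SSSa}.

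The pieces left over are the contributions of the extreme indices: the $p=0$ terms, the isolated summand of \eqref{eq:Rkk}, and the scalar value $\Theta_{i,0}=(v-v^{-1})^{-1}$ from \eqref{Hm0}. These do not enter the bulk and must be shown to assemble into precisely $-[B_{j,l},\Theta_{i,n+1}]_{v^{-2}}+v^{-2}[B_{j,l},\Theta_{i,n-1}]_{v^{-2}}C$; restoring the factor $\K_iC^{k_1}$ then yields the claim. A perhaps cleaner alternative is induction on the gap $n$: verify the base cases $n=0,1$ by hand and check that both sides of the normalized identity obey the same two-step recursion $F_n-v^2C\,F_{n-2}=(\cdots)$, the right-hand side being the one already computed for $X_{n\,|\,l-1}$ in the proof of Lemma~\ref{lem:vanish1}.

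The hard part is entirely the bookkeeping. There are three nested $p$-sums carrying different $v$- and $C$-weights, together with three interacting columns, and the $[2]$ versus $[2]^2$ factors must be tracked with care; the scalar convention for $\Theta_{i,0}$ and the sign function $o(\cdot)$ behind \eqref{Bik}--\eqref{Thim} make the extreme terms especially delicate. The crux is to verify that after the reorganization nothing beyond the two displayed boundary commutators survives outside $X_{n\,|\,l-1}$ --- in particular that their coefficients come out as exactly $-1$ and $v^{-2}$, with no stray $\Theta_{i,n\pm1}$ pieces leaking out of the packaged defect.
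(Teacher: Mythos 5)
There is a genuine problem with your main route, and it concerns the logical role of this lemma rather than the bookkeeping. The paper's proof of Lemma~\ref{lem:RRRa} is a \emph{purely formal} rearrangement of the expressions \eqref{eq:Rkk} and \eqref{X} --- it explicitly uses no nontrivial relations of $\tUi$ --- and this formality is essential: the term $-[2]^2 X_{k_2-k_1|l-1}$ is kept as an explicit formal defect precisely because its vanishing is \emph{equivalent} to the relation \eqref{iDR2-reform} (that is Lemma~\ref{lem:vanish1}), which at this stage of the simultaneous induction is only a hypothesis at lower levels. Your plan to ``reorganize using relation \eqref{iDR2-reform}'' inside the proof therefore undercuts the point of the statement: under \eqref{iDR2-reform}$_{\le k_2-k_1}$ the quantity $X_{k_2-k_1|l-1}$ is already zero, so you would at best be proving a conditional, degenerate version (LHS equals the two boundary commutators), not the unconditional identity that the inductive scheme of Proposition~\ref{prop:2to5} is built on. Using the relation to ``collapse terms onto $X$'' while the same relation forces $X=0$ is not a coherent goal.

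The second concrete error is the proposed use of \eqref{eq:BBBB} (i.e.\ \eqref{iDR3b}) to ``trade a $B_i$--$B_i$ commutator for a $\Theta_i$.'' No such commutators occur here: every summand of $R(k_1,k_2|l)$ in \eqref{eq:Rkk} is already a mixed bracket of some $\Theta_{i,\bullet}$ with some $B_{j,\bullet}$, so there is nothing to trade. That step belongs to Lemma~\ref{lem:SSSa} (the $\SS$ side), not here. Likewise the sign function $o(\cdot)$ is irrelevant to this lemma, which lives entirely at the level of the formal symbols $\Theta_{i,m}$, $B_{j,l}$. What is actually needed is exactly what the paper does: split each $R$ into its three summands, reindex the $p$-sums, observe that the interior sums match $-v^{-1}[2]^2$ times the three sums defining $X_{k_2-k_1|l-1}$ in \eqref{X}, and check that the leftover boundary terms combine into $-[B_{j,l},\Theta_{i,k_2-k_1+1}]_{v^{-2}} + v^{-2}[B_{j,l},\Theta_{i,k_2-k_1-1}]_{v^{-2}}C$ --- with only centrality of $\K_i$ and $C$ used. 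Your fallback suggestion (induction on the gap $n$ via the two-step recursion $F_n - v^2 C F_{n-2}$, with formal base cases) is a viable alternative \emph{provided} it is carried out without invoking \eqref{iDR2-reform} or \eqref{eq:BBBB}; as written, though, the primary plan would not produce the lemma as stated.
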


\begin{proof}
This proof is based on only formal algebraic manipulations, and does not use any nontrivial relations in $\tUi$.

Following the format of \eqref{eq:Rkk} for $R$, we write
\begin{align*}
R(k_1,k_2+1 |l) &=\big( R_1' +R_2' - [\y_{j,l}, \Theta _{i,k_2-k_1+1}]_{v^{-2}} \big) C^{k_1} \K_i,
\\
R(k_1+1,k_2|l) &= \big( R_1+R_2 - [\y_{j,l}, \Theta _{i,k_2-k_1-1}]_{v^{-2}} C \big) C^{k_1} \K_i,
\\
R(k_1+1,k_2+1 |l-1) &= \big( R_3 +R_4 - [\y_{j,l-1}, \Theta _{i,k_2-k_1}]_{v^{-2}} C \big) C^{k_1} \K_i.
\end{align*}
where $R_1'$ and $R_2'$ denote the first and second summands of $R(k_1,k_2+1 |l)$ as in \eqref{eq:Rkk}; the notations in the other two identities are understood similarly.

By a direct computation, we have
\begin{align*}
R_2' + R_2 &= -\sum_{p\geq0} v^{2p}  [2]^2 [B_{j,l}, \Theta _{i,k_2-k_1-2p-1}]_{v^{-2}} C^{p+1}
+v^{-1}[2]  [B_{j,l}, \Theta _{i,k_2-k_1-1}]_{v^{-2}}C.
\end{align*}

By first summing up $R_1' + R_1$, we also obtain by a direct computation that
\begin{align*}
R_1' + R_1 -[2] R_4
&= -\sum_{p\geq 1} v^{2p-2}  [2]^3 [\Theta _{i,k_2-k_1-2p},\y_{j,l-1}] C^{p+1}
 - [2] [\Theta _{i,k_2-k_1}, B_{j,l-1}]_{v^{-2}} C.
\end{align*}
Note the main summands in $(R_2' + R_2)$, $(R_1' + R_1 -[2] R_4)$ and $-[2] R_3$ above are precisely $- v^{-1}[2]^2$ times the three main summands in $X_{k_2-k_1 |l-1}$ defined in \eqref{X}. Hence we have
\begin{align*}
& R(k_1,k_2+1 |l) + R(k_1+1,k_2|l) -[2] R(k_1+1,k_2+1 |l-1)
% \label{eq:RRR2a}
\\
 &= (R_2' + R_2) + (R_1' + R_1 -[2] R_4) -[2] R_3
   \notag \\
&\quad -  [B_{j,l}, \Theta _{i,k_2-k_1+1}]_{v^{-2}} - [B_{j,l}, \Theta _{i,k_2-k_1-1}]_{v^{-2}}C
 + [2] [B_{j,l-1}, \Theta_{i,k_2-k_1}]_{v^{-2}} C
  \notag  \\
 & =(- v^{-1} [2]^2 X_{k_2-k_1 |l-1} + v^{-1} [2]^2 [\Theta_{i,k_2-k_1}, B_{j,l-1}] C )
  \notag \\
&\quad +v^{-1}[2]  [B_{j,l}, \Theta _{i,k_2-k_1-1}]_{v^{-2}}C
 - [2] [\Theta _{i,k_2-k_1}, B_{j,l-1}]_{v^{-2}} C
 \notag \\
&\quad -  [B_{j,l}, \Theta _{i,k_2-k_1+1}]_{v^{-2}} - [B_{j,l}, \Theta _{i,k_2-k_1-1}]_{v^{-2}}C
+[2] [B_{j,l-1}, \Theta _{i,k_2-k_1}]_{v^{-2}} C
  \notag \\
 &= - v^{-1} [2]^2 X_{k_2-k_1 |l-1}
 -  [B_{j,l}, \Theta _{i,k_2-k_1+1}]_{v^{-2}} +v^{-2} [B_{j,l}, \Theta _{i,k_2-k_1-1}]_{v^{-2}}C.
 \notag
\end{align*}
The lemma is proved.
\end{proof}

Now we can show that \eqref{iDR2-reform}$_{< k}$ (together with \eqref{eq:S2} and \eqref{iDR3a}--\eqref{iDR3b}) implies \eqref{iDR5-reform}$_{\le k}$, for $k\ge 1$.
\begin{proposition}
 \label{prop:2to5}
Let $k\ge 0$. If \eqref{iDR2-reform}$_{\le k}$ holds, then \eqref{iDR5-reform}$_{\leq k+1}$ holds.
\end{proposition}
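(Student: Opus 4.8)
The plan is to establish \eqref{iDR5-reform}$_{\le k+1}$ by induction on the index difference $d=k_2-k_1\ge 0$, working entirely inside $\tUi$. The key object is the \emph{defect}
\[
D(k_1,k_2|l):=\SS(k_1,k_2|l)-\R(k_1,k_2|l),
\]
which is symmetric in $k_1\leftrightarrow k_2$, and the goal is to show $D(k_1,k_2|l)=0$ whenever $k_2-k_1\le k+1$. Lemma~\ref{lem:SSSa} gives a recursion for $\SS$ (valid unconditionally, as it uses only \eqref{iDR3a}--\eqref{iDR3b}), while Lemma~\ref{lem:RRRa} gives a purely formal recursion for $R$; both relate the differences $d+1$, $d-1$ and $d$, and their commutator terms are designed to match. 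Subtracting one from the other will isolate a single inhomogeneous term governed by $X_{k_2-k_1|l-1}$.

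First I would symmetrize Lemma~\ref{lem:RRRa} in $k_1\leftrightarrow k_2$ to obtain the corresponding recursion for $\R$. In the swapped copy the term $X_{k_1-k_2|l-1}$ vanishes outright, since $X_{n|l-1}=0$ for $n\le 0$, while the swapped commutator terms reassemble precisely into the $\{k_1\leftrightarrow k_2\}$ part already present on the right side of Lemma~\ref{lem:SSSa}. Subtracting the symmetrized Lemma~\ref{lem:RRRa} from Lemma~\ref{lem:SSSa} then cancels all commutator terms and leaves the defect recursion
\[
D(k_1,k_2+1|l)+D(k_1+1,k_2|l)-[2]\,D(k_1+1,k_2+1|l-1)=[2]^2\,X_{k_2-k_1|l-1}\,C^{k_1}\K_i.
\]
At this point the hypothesis \eqref{iDR2-reform}$_{\le k}$ enters through Lemma~\ref{lem:vanish1}, which forces $X_{n|l-1}=0$ for every $n\le k$. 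Hence, as long as $d=k_2-k_1\le k$, the right-hand side is zero and the recursion becomes homogeneous:
\[
D(k_1,k_2+1|l)+D(k_1+1,k_2|l)-[2]\,D(k_1+1,k_2+1|l-1)=0.
\]

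With the homogeneous recursion in hand, the induction on $d$ proceeds as follows. The base case is \eqref{iDR5-reform}$_0$, that is, $D(k,k|l)=0$ for all $k,l$: for $(k,l)=(0,0)$ one checks directly that both $\SS(0,0|0)$ and $\R(0,0|0)$ equal $-2v^{-1}B_j\K_i$, so this instance is exactly twice the finite-type Serre relation \eqref{eq:S2}; the general $(k,l)$ then follows by applying the commuting translation automorphisms $\T_{\omega_i}^{-k}\T_{\omega_j}^{-l}$, under which $\SS(0,0|0)$ and $\R(0,0|0)$ transform identically into $\SS(k,k|l)$ and $\R(k,k|l)$ (the sign factors $o(i)^{2k}$, $o(j)^l$ either cancel or appear equally on both sides, $\Theta_{i,m}$ is $\T_{\omega_i}$-invariant by \eqref{eq:Tm}, the vectors $B_{j,\bullet},\Theta_{j,\bullet}$ are $\T_{\omega_i}$-fixed by Lemma~\ref{lem:TiFix}, and $\K_i\mapsto\K_i C^{k}$). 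For the inductive step, supposing $D$ vanishes at all differences $\le d$ with $d\le k$, the homogeneous recursion expresses $D$ at difference $d+1$ through $D$ at differences $d$ and $d-1$, both zero; at $d=0$ it collapses via $D(k_1,k_1+1|l)=D(k_1+1,k_1|l)$ to $2D(k_1,k_1+1|l)=0$, so the single base case suffices to launch the induction. Iterating $d=0,1,\dots,k$ then yields $D=0$ at every difference up to $k+1$, which is \eqref{iDR5-reform}$_{\le k+1}$.

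The step I expect to be the main obstacle is the bookkeeping behind the cancellation of commutator terms: one must verify carefully that symmetrizing Lemma~\ref{lem:RRRa} reproduces \emph{exactly} the $\{k_1\leftrightarrow k_2\}$ symmetrization of Lemma~\ref{lem:SSSa}, and that the swapped $X$-term drops out for every $d\ge 0$ (including the boundary $d=0$, where the recursion degenerates), so that precisely one $X$-term survives on the right of the defect recursion. A secondary point demanding care, in order to avoid circularity, is to verify the transformation rule $\T_{\omega_i}^{-k}(\K_i)=\K_i C^{k}$ and the $\T_{\omega_i}$-invariance of the imaginary root vectors $\Theta_{i,m}$ directly in $\tUi$, rather than through the not-yet-established isomorphism $\Phi$, so that the base case \eqref{iDR5-reform}$_0$ is secured on its own.
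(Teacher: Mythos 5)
Your proof is correct and follows essentially the same route as the paper: the same comparison of Lemma~\ref{lem:SSSa} with the ($k_1\leftrightarrow k_2$)-symmetrization of Lemma~\ref{lem:RRRa}, the same use of Lemma~\ref{lem:vanish1} to kill the $X$-term, the same induction on $d=k_2-k_1$ seeded by the finite-type Serre relation transported by $\TT_{\omega_i}^{-r}\TT_{\omega_j}^{-l}$. Recasting the argument via the defect $D=\SS-\R$ is only a cosmetic repackaging, though your explicit attention to the swapped $X_{k_1-k_2|l-1}$ vanishing and to the base-case transformation rules fills in details the paper leaves implicit.
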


\begin{proof}
Assume \eqref{iDR2-reform}$_{\le k}$ holds. Then by Lemma~\ref{lem:vanish1}, $X_{k_2-k_1 |l-1}=0$. Hence by comparing Lemma~\ref{lem:SSSa} and Lemma~\ref{lem:RRRa} we obtain, for $k=k_2-k_1\ge 0$,
\begin{align}
&\SS(k_1,k_2+1 |l) + \SS(k_1+1,k_2|l) -[2] \SS(k_1+1,k_2+1 |l-1)
 \label{eq:SR} \\
=& \R(k_1,k_2+1 |l) + \R(k_1+1,k_2|l) -[2] \R(k_1+1,k_2+1|l-1).
\notag
\end{align}

We induct on $k$. Consider the base case when $k=0$. We have $\SS(r,r |l-1)= \R(r,r |l-1)$, for all $r, l$, by applying $\TT_i^{-r} \TT_j^{1-l}$ to the finite type Serre relation \eqref{eq:S2}. Hence \eqref{iDR5-reform}$_{\le 1}$ holds by \eqref{eq:SR} for $k_1=k_2$ and noting $\SS(k_1,k_1+1 |l) =\SS(k_1+1,k_1 |l)$ and $\R(k_1,k_1+1 |l-1)
=\R(k_1+1,k_1|l-1)$ by the symmetrization definition of $\SS$ and $\R$.

%By assumption, \eqref{iDR2-reform}$_{\le k}$ holds, and in particular  \eqref{iDR2-reform}$_{\le k-1}$ holds.
By the inductive assumption, \eqref{iDR5-reform}$_{\le k}$ holds; in particular, we have, for $k=k_2-k_1 >0$,
\[ \SS(k_1+1,k_2|l) = \R(k_1+1,k_2 |l),\qquad \SS(k_1+1,k_2+1 |l-1) =\R(k_1+1,k_2+1|l-1).
\]
We conclude from this and \eqref{eq:SR} that $\SS(k_1,k_2+1 |l) = \R(k_1,k_2+1 |l)$, whence \eqref{iDR5-reform}$_{k+1}$.
\end{proof}

\subsection{Implication from $\eqref{iDR5-reform}_{\le k}$ to  $\eqref{iDR2-reform}_{\leq k}$ }
\label{subsec:iDR5=>iDR2}

We shall fix $i,j \in \II$ such that $c_{ij}=-1$ throughout this subsection.
The strategy here is similar to the strategy used in the previous subsection on the opposite implication. We start with a variant of Lemma~\ref{lem:SSSa}.

\begin{lemma}  \label{lem:SSS}
For $k_1, k_2, l \in \Z$, we have
\begin{align*}
& \SS(k_1,k_2+1 |l) + \SS(k_1+1,k_2|l) -[2] \SS(k_1,k_2|l+1)
 \\
&= \Big( -[\Theta_{i, k_2-k_1+1}, B_{jl}]_{v^{-2}} C^{k_1} +v^{-2} [\Theta_{i, k_2-k_1-1}, B_{jl}]_{v^{-2}} C^{k_1+1} \Big) \K_i + \{k_1 \leftrightarrow k_2\}.
\end{align*}
\end{lemma}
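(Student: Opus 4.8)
The plan is to mirror the proof of Lemma~\ref{lem:SSSa}, using only the relations \eqref{iDR3a} and \eqref{iDR3b}. As there, the computation rests on two equivalent ways of rewriting the symmetrized Serre expression: the ``$v^{-1}$-bracket form'' used in \eqref{eq:Skk1a}, namely $\SS(k_1,k_2|l) = B_{i,k_2}[B_{i,k_1},B_{j,l}]_{v^{-1}} - v[B_{i,k_1},B_{j,l}]_{v^{-1}}B_{i,k_2} + \{k_1\leftrightarrow k_2\}$, and the companion ``$v$-bracket form'' $\SS(k_1,k_2|l) = B_{i,k_2}[B_{i,k_1},B_{j,l}]_{v} - v^{-1}[B_{i,k_1},B_{j,l}]_{v}B_{i,k_2} + \{k_1\leftrightarrow k_2\}$, both of which are verified by a direct expansion of \eqref{eq:Skk}. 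The only structural difference with Lemma~\ref{lem:SSSa} is that the ``$-[2]$'' term is now $\SS(k_1,k_2|l+1)$, carrying the index $l+1$, rather than $\SS(k_1+1,k_2+1|l-1)$ with $l-1$.

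First I would expand $\SS(k_1,k_2|l+1)$ through the $v$-bracket form and then apply \eqref{iDR3a} in the guise $[B_{i,k_1},B_{j,l+1}]_v = v[B_{i,k_1+1},B_{j,l}]_{v^{-1}}$, which lowers the $B_j$-index back to $l$ at the cost of raising an $i$-index by one; this yields $\SS(k_1,k_2|l+1) = (vB_{i,k_2}[B_{i,k_1+1},B_{j,l}]_{v^{-1}} - [B_{i,k_1+1},B_{j,l}]_{v^{-1}}B_{i,k_2}) + \{k_1\leftrightarrow k_2\}$, the exact mirror of \eqref{eq:Skk1a}. Using that $\SS$ is symmetric in its two $i$-indices, I would then write the whole left-hand side as $G(k_1,k_2) + \{k_1\leftrightarrow k_2\}$ with $G(k_1,k_2) = \SS(k_1+1,k_2|l) - [2](vB_{i,k_2}[B_{i,k_1+1},B_{j,l}]_{v^{-1}} - [B_{i,k_1+1},B_{j,l}]_{v^{-1}}B_{i,k_2})$, absorbing $\SS(k_1,k_2+1|l)$ into the symmetrization. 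Expanding the remaining $\SS(k_1+1,k_2|l)$ through the $v^{-1}$-bracket form, the coefficients collapse (using $1 - [2]v = -v^2$ and $[2] - v = v^{-1}$) and, crucially, all the ``middle'' monomials $B_{i,\ast}B_{j,l}B_{i,\ast}$ cancel, leaving $G(k_1,k_2) = [B_{i,k_1+1},B_{i,k_2}]_{v^2}B_{j,l} - v^{-2}B_{j,l}[B_{i,k_1+1},B_{i,k_2}]_{v^2}$.

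It then remains to symmetrize and invoke \eqref{iDR3b} in its rewritten form \eqref{eq:BBBB}: with $k=k_2$, $l=k_1$ there, the combination $[B_{i,k_1+1},B_{i,k_2}]_{v^2} + [B_{i,k_2+1},B_{i,k_1}]_{v^2}$ equals $-(\Theta_{i,k_2-k_1+1}C^{k_1} - v^{-2}\Theta_{i,k_2-k_1-1}C^{k_1+1})\K_i + \{k_1\leftrightarrow k_2\}$. Since $C$ and $\K_i$ are central, sliding $B_{j,l}$ through on the appropriate side turns the expression $DB_{j,l} - v^{-2}B_{j,l}D$ (for the relevant $\Theta$-combination $D$) into the commutators $[\Theta_{i,\ast}, B_{j,l}]_{v^{-2}}$; note that here $B_{j,l}$ ends up on the right inside the final commutator, which is precisely what flips the order to $[\Theta_{i,\ast},B_{j,l}]_{v^{-2}}$ (as opposed to $[B_{j,l},\Theta_{i,\ast}]_{v^{-2}}$ in Lemma~\ref{lem:SSSa}). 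This produces the stated right-hand side.

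The main obstacle is the index bookkeeping in the middle step: one must choose the two bracket forms and the direction of \eqref{iDR3a} so that all three symmetrized Serre expressions contribute $v^{-1}$-brackets at the common $B_j$-index $l$ and at matching $i$-indices, and then track the powers of $v$ and of $C = \K_\de$ faithfully through the symmetrization. Once the forms are aligned, the cancellation of the middle terms and the collapse of coefficients is mechanical, but getting that alignment right (and thereby the correct order of the final commutator) is the delicate point.
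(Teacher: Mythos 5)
Your proof is correct and follows essentially the same route as the paper's: rewrite $\SS$ via the $v$-bracket form, use \eqref{iDR3a} to shift the $B_j$-index from $l+1$ back to $l$, cancel the middle monomials to reach $[B_{i,k_1+1},B_{i,k_2}]_{v^2}B_{j,l}-v^{-2}B_{j,l}[B_{i,k_1+1},B_{i,k_2}]_{v^2}$ plus its symmetrization, and finish with \eqref{eq:BBBB}. The only differences from the paper are which term is displayed versus absorbed into the $\{k_1\leftrightarrow k_2\}$ symmetrization, and your coefficient identities $1-[2]v=-v^2$, $[2]-v=v^{-1}$ check out.
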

(The proof of the lemma uses only the relations \eqref{iDR3a}--\eqref{iDR3b}.)

\begin{proof}
We rewrite \eqref{eq:Skk} as
  \begin{align*}
\SS(k_1,k_2|l)
&= B_{i,k_1} [B_{i,k_2}, B_{j,l}]_{v} - v^{-1} [B_{i,k_2}, B_{j,l}]_{v} B_{i,k_1}
+\{k_1 \leftrightarrow k_2\}.
\end{align*}
This together with \eqref{iDR3a} implies that
\begin{align}
 \SS(k_1,k_2|l+1)
&=  (B_{i,k_1} [B_{i,k_2}, B_{j,l+1}]_{v} - v^{-1} [B_{i,k_2}, B_{j,l+1}]_{v} B_{i,k_1})
+\{k_1 \leftrightarrow k_2\}
\label{eq:Skk1} \\
&= (v B_{i,k_1} [B_{i,k_2+1}, B_{j,l}]_{v^{-1}} -  [B_{i,k_2+1}, B_{j,l}]_{v^{-1}} B_{i,k_1})
+\{k_1 \leftrightarrow k_2\}.  \notag
\end{align}

Using \eqref{eq:Skk1}, we compute
\begin{align}
  \label{eq:Skk2}
& \SS(k_1,k_2+1 |l) + \SS(k_1+1,k_2|l) -[2] \SS(k_1,k_2|l+1)
\\
&= \Big( \SS(k_1,k_2+1 |l)
-[2] \big(v B_{i,k_1} [B_{i,k_2+1}, B_{j,l}]_{v^{-1}} -  [B_{i,k_2+1}, B_{j,l}]_{v^{-1}} B_{i,k_1} \big) \Big)
+\{k_1 \leftrightarrow k_2\}
 \notag \\
&=
%k2+1part
\Big( \big( B_{i,k_1} [B_{i,k_2+1}, B_{j,l}]_{v} - v^{-1} [B_{i,k_2+1}, B_{j,l}]_{v} B_{i,k_1}
+B_{i,k_2+1} [B_{i,k_1}, B_{j,l}]_{v} - v^{-1} [B_{i,k_1}, B_{j,l}]_{v} B_{i,k_2+1} \big)
\notag \\
%%l+1partI
& \quad -[2] \big( v B_{i,k_1} [B_{i,k_2+1}, B_{j,l}]_{v^{-1}} -  [B_{i,k_2+1}, B_{j,l}]_{v^{-1}} B_{i,k_1} \big) \Big)
 + \{k_1 \leftrightarrow k_2\}
 \notag \\
&= \big( [B_{i, k_2+1}, B_{i, k_1}]_{v^2} B_{jl} -v^{-2} B_{jl} [B_{i, k_2+1}, B_{i, k_1}]_{v^2}  \big)
 + \{k_1 \leftrightarrow k_2\},
 \notag
\end{align}
where the last identity is obtained by first adding the first and fifth terms (and respectively, the second and sixth terms) and then further simplifying when adding with the third and fourth terms.

Using \eqref{eq:BBBB}, we rewrite the RHS of the  identity \eqref{eq:Skk2} as
\begin{align*}
& \big( [B_{i, k_2+1}, B_{i, k_1}]_{v^2} B_{jl} -v^{-2} B_{jl} [B_{i, k_2+1}, B_{i, k_1}]_{v^2} \big)
 + \{k_1 \leftrightarrow k_2\}
 \\
 &=\big( [B_{i, k_2+1}, B_{i, k_1}]_{v^2} + [B_{i, k_1+1}, B_{i, k_2}]_{v^2} \big) B_{jl}
 -v^{-2} B_{jl} \big( [B_{i, k_2+1}, B_{i, k_1}]_{v^2}  +[B_{i, k_1+1}, B_{i, k_2}]_{v^2}  \big)
 \\
 &= -\big( \Theta_{i, k_2-k_1+1} C^{k_1} -v^{-2}\Theta_{i,k_2-k_1-1} C^{k_1+1}  \big) \K_i B_{jl}
 \\
 &\quad + v^{-2} B_{jl} \big(\Theta_{i, k_2-k_1+1} C^{k_1} -v^{-2}\Theta_{i,k_2-k_1-1} C^{k_1+1}  \big) \K_i
 + \{k_1 \leftrightarrow k_2\}
\\
&= \Big( -[\Theta_{i, k_2-k_1+1}, B_{jl}]_{v^{-2}} C^{k_1} +v^{-2} [\Theta_{i, k_2-k_1-1}, B_{jl}]_{v^{-2}} C^{k_1+1} \Big) \K_i + \{k_1 \leftrightarrow k_2\}.
\end{align*}
The lemma is proved.
\end{proof}

Denote
\begin{align}  \label{Y}
Y_{k+1} &=[\y_{j,l+1}, \Theta _{i,k}]_{v^{-2}} - [\Theta _{i,k},\y_{j,l-1}]_{v^{-2}}C
+v^{-1} [\Theta_{i,k+1},B_{j,l}] + v^{-1} [\Theta_{i,k-1},B_{j,l}] C.
%\notag
\end{align}
(The identity \eqref{iDR2-reform} can be stated equivalently as $Y_{k+1} =0$.)
%Recall $R(k_1,k_2 |l)$ from \eqref{eq:Rkk}.
Here is a variant of Lemma~\ref{lem:RRRa}.

\begin{lemma}   \label{lem:TTT}
For $k_1, k_2, l \in \Z$ with $k_2\ge k_1$, we have
\begin{align*}
& R(k_1,k_2+1 |l) + R(k_1+1,k_2|l) -[2] R(k_1,k_2|l+1)
 \\
 &= [2]^2 X_{k_2-k_1-1 |l} C^{k_1}\K_i + [2] Y_{k_2-k_1+1} C^{k_1} \K_i
 \\
&\quad +  \big( -[\Theta_{i, k_2-k_1+1}, B_{j,l}]_{v^{-2}} C^{k_1} +v^{-2} [\Theta_{i, k_2-k_1-1}, B_{j,l}]_{v^{-2}} C^{k_1+1} \big) \K_i. % + \{k_1 \leftrightarrow k_2\}.
 \end{align*}
\end{lemma}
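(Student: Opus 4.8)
The plan is to prove this by a purely formal computation, exactly parallel to the proof of Lemma~\ref{lem:RRRa}, invoking none of the nontrivial relations of $\tUi$ but only the definitions \eqref{eq:Rkk}, \eqref{X}, \eqref{Y} and the conventions \eqref{Hm0}. Set $n=k_2-k_1\ge 0$. First I would expand each of the three summands $R(k_1,k_2+1|l)$, $R(k_1+1,k_2|l)$ and $R(k_1,k_2|l+1)$ via \eqref{eq:Rkk}, pulling out the common left factor $C^{k_1}\K_i$ (so that $R(k_1+1,k_2|l)$ retains an extra factor of $C$). Each of the three then splits into three pieces: a $\sum_{p\ge 0}$ piece, a $\sum_{p\ge 1}$ piece, and a single commutator of the form $[B_{j,\ast},\Theta_{i,\ast}]_{v^{-2}}$. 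I would label these nine pieces in the style of the $R_1',R_2',R_1,R_2,R_3,R_4$ notation used for Lemma~\ref{lem:RRRa}.

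The heart of the argument is to regroup these nine pieces so as to recover $X_{n-1|l}$. As in Lemma~\ref{lem:RRRa}, the two $\sum_{p\ge 1}$ pieces of $R(k_1,k_2+1|l)$ and $R(k_1+1,k_2|l)$ add up, after an index shift, to one of the three summation families of $X_{n-1|l}$ in \eqref{X}, up to the overall scalar recorded in the statement. The remaining two families are produced by combining the two $\sum_{p\ge 0}$ pieces of these terms with the two pieces of $-[2]R(k_1,k_2|l+1)$, again after reindexing; the precise pairing of summands (which piece of the third term pairs with which) is dictated by matching powers of $C$ and is the analogue of the combinations $R_1'+R_1-[2]R_4$ and $-[2]R_3$ appearing in the proof of Lemma~\ref{lem:RRRa}. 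At this point the three families of $X_{n-1|l}$ have all been assembled, with the coefficient $[2]^2$ displayed in the statement.

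The new feature, and the main obstacle, is that the third term here is $R(k_1,k_2|l+1)$ (both root indices fixed, $l\mapsto l+1$) rather than $R(k_1+1,k_2+1|l-1)$ as in Lemma~\ref{lem:RRRa} (both indices raised, $l\mapsto l-1$). Consequently the three summation families are misaligned at their lower boundaries, so that, unlike in Lemma~\ref{lem:RRRa}, the pieces do not telescope into a single $X$: a definite residue of boundary summands survives, together with the three single-commutator pieces. I would show that this residue reorganizes precisely into $[2]\,Y_{n+1}$, matching term-for-term the four summands of \eqref{Y}, plus the two residual commutators $\big(-[\Theta_{i,n+1},B_{j,l}]_{v^{-2}}C^{k_1}+v^{-2}[\Theta_{i,n-1},B_{j,l}]_{v^{-2}}C^{k_1+1}\big)\K_i$ displayed on the right-hand side. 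The delicate point is entirely bookkeeping: one must track the shifted summation bounds carefully and verify that exactly the combination \eqref{Y}, with no stray leftover, emerges. The small-$n$ boundary cases require no separate treatment, since the conventions \eqref{Hm0} force $\Theta_{i,m}=0$ for $m<0$ and hence $X_{m|l}=0$ for $m\le 0$, so that the formal regrouping specializes correctly (in particular $X_{n-1|l}$ drops out when $n=0,1$, while the $Y_{n+1}$ and commutator terms persist).
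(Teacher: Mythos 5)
Your plan coincides with the paper's proof: both are purely formal computations that expand the three $R$-terms via \eqref{eq:Rkk} into their $\sum_{p\ge 0}$, $\sum_{p\ge 1}$ and single-commutator pieces, regroup them to produce $[2]^2 X_{k_2-k_1-1|l}$, and identify the surviving boundary terms as $[2]Y_{k_2-k_1+1}$ plus the two displayed $v^{-2}$-commutators, with the conventions \eqref{Hm0} handling small $k_2-k_1$. The one point where your description inverts the paper's bookkeeping is the pairing: in the paper the two $\sum_{p\ge 0}$ pieces of $R(k_1,k_2+1|l)$ and $R(k_1+1,k_2|l)$ combine on their own into one family of $X$, while it is the two $\sum_{p\ge 1}$ pieces that must absorb the $\sum_{p\ge 0}$ piece of $-[2]R(k_1,k_2|l+1)$ (this is forced, since $X$'s middle family consists of plain commutators $[\Theta_{i,\ast},B_{j,l}]$ rather than $v^{-2}$-brackets); since you explicitly defer the exact pairing to matching powers of $C$, this is a detail of the same argument rather than a different route.
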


\begin{proof}
This proof is based on only formal algebraic manipulations, and does not use any nontrivial relations in $\tUi$.

Following the format of \eqref{eq:Rkk} for $R$, we write
\begin{align*}
R(k_1,k_2+1 |l) &=\big( R_1' +R_2' - [\y_{j,l}, \Theta _{i,k_2-k_1+1}]_{v^{-2}} \big) C^{k_1} \K_i,
\\
R(k_1+1,k_2|l) &= \big( R_1+R_2 - [\y_{j,l}, \Theta _{i,k_2-k_1-1}]_{v^{-2}} C \big) C^{k_1} \K_i,
\\
R(k_1,k_2|l+1) &= \big( R_5 +R_6 - [\y_{j,l+1}, \Theta _{i,k_2-k_1}]_{v^{-2}} \big) C^{k_1} \K_i.
\end{align*}
where $R_1'$ and $R_2'$ denote the first and second summands of $R(k_1,k_2+1 |l)$ as in \eqref{eq:Rkk}; the notations in the other two identities are understood similarly.

By a direct computation, we have
\begin{align*}
R_1' + R_1 =-\sum_{p\geq0} v^{2p+1}  [2]^2 [\Theta _{i,k_2-k_1-2p-2},\y_{j,l-1}]_{v^{-2}}C^{p+2}
- [2] [\Theta _{i,k_2-k_1},\y_{j,l-1}]_{v^{-2}}C.
\end{align*}

By first summing up $R_2' + R_2$, we also obtain by a direct computation that
\begin{align*}
R_2' + R_2 -[2] R_5
&= -\sum_{p\geq 1} v^{2p-1}  [2]^3 [\Theta _{i,k_2-k_1-2p-1},\y_{j,l}] C^{p}
\\
&\quad - v[2] [B_{j,l}, \Theta _{i,k_2-k_1-1}]_{v^{-2}}C
+[2]^2 [\Theta _{i,k_2-k_1-1},\y_{j,l}]_{v^{-2}}C
\end{align*}
Note the main summands in $(R_1' + R_1)$, $(R_2' + R_2 -[2] R_5)$ and $-[2] R_6$ above are precisely $[2]^2$ times the three main summands (in reversed order) in $X_{k_2-k_1}$ defined in \eqref{X}. Hence we have
\begin{align}
& R(k_1,k_2+1 |l) + R(k_1+1,k_2|l) -[2] R(k_1,k_2|l+1)
 \label{eq:TTT2} \\
 &= (R_1' + R_1) + (R_2' + R_2 -[2] R_5) -[2] R_6
   \notag \\
&\quad - [\y_{j,l}, \Theta _{i,k_2-k_1+1}]_{v^{-2}}
 - [\y_{j,l}, \Theta _{i,k_2-k_1-1}]_{v^{-2}} C
 +[2] [\y_{j,l+1}, \Theta _{i,k_2-k_1}]_{v^{-2}}
 \notag \\
 &= [2]^2 (X_{k_2-k_1-1 |l} - [\Theta_{i,k_2-k_1-1},B_{j,l}] C)
 - [2] [\Theta _{i,k_2-k_1},\y_{j,l-1}]_{v^{-2}}C
 \notag \\
 &\quad - v[2] [B_{j,l}, \Theta _{i,k_2-k_1-1}]_{v^{-2}}C
+[2]^2 [\Theta _{i,k_2-k_1-1},\y_{j,l}]_{v^{-2}}C
 \notag \\
&\quad - [\y_{j,l}, \Theta _{i,k_2-k_1+1}]_{v^{-2}}
 - [\y_{j,l}, \Theta _{i,k_2-k_1-1}]_{v^{-2}} C
 +[2] [\y_{j,l+1}, \Theta _{i,k_2-k_1}]_{v^{-2}}.
  \notag
\end{align}
On the RHS of \eqref{eq:TTT2} above, there is exactly one term involving $B _{i,l+1}$ and one term involving $B_{i,l-1}$, with opposite coefficients, just as those appearing in $Y_{k_2-k_1+1}$ \eqref{Y}.  This allows us to rewrite the RHS of \eqref{eq:TTT2} in terms of $X_{k_2-k_1-1 |l}, Y_{k_2-k_1+1}$ and the terms involving $B_{j,l}$ only. The terms involving $B_{j,l}$ can then be simplified by some direct computation (without using any relations) to the formula stated in the lemma.
%The lemma is proved.
\end{proof}

Now we can  show that \eqref{iDR5-reform}$_{\le k}$ (together with \eqref{eq:S2} and \eqref{iDR3a}--\eqref{iDR3b}) implies \eqref{iDR2-reform}$_{\le k}$.
\begin{proposition}
 \label{prop:5to2}
Let $k\ge 0$. If \eqref{iDR5-reform}$_{\le k}$ holds, then \eqref{iDR2-reform}$_{\le k}$ holds.
\end{proposition}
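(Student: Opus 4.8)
My plan is to run the mirror image of Proposition~\ref{prop:2to5}. There the vanishing of the quantities $X$ (which is equivalent to \eqref{iDR2-reform} by Lemma~\ref{lem:vanish1}) was fed into the cubic identities to produce \eqref{iDR5-reform}; here I would instead feed \eqref{iDR5-reform}$_{\le k}$ into the analogous pair Lemma~\ref{lem:SSS} and Lemma~\ref{lem:TTT} in order to extract the vanishing of the quantities $Y$. Since \eqref{iDR2-reform}$_n$ is exactly the statement $Y_n=0$ (see the remark after \eqref{Y}), the goal is to prove $Y_n=0$ for all $n\le k$ and all $l$, and the natural way to do this is strong induction on $n$.

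First I would assemble the key identity. Fix $k'=k_2-k_1\ge0$ and symmetrize the statement of Lemma~\ref{lem:TTT} under $k_1\leftrightarrow k_2$. Its left-hand side then becomes the full $\R$-combination $\R(k_1,k_2+1|l)+\R(k_1+1,k_2|l)-[2]\R(k_1,k_2|l+1)$; on the right, the swapped $X$-term drops out because $X_{-k'-1|l}=0$ (as $-k'-1\le0$), leaving the $X$-contribution $[2]^2 X_{k'-1|l}C^{k_1}\K_i$, while the $\Theta$-$B$ tail becomes precisely the right-hand side of Lemma~\ref{lem:SSS}. On the other hand, the three terms $\SS(k_1,k_2+1|l)$, $\SS(k_1+1,k_2|l)$, $\SS(k_1,k_2|l+1)$ occurring in Lemma~\ref{lem:SSS} have orders $k'+1$, $|k'-1|$, $k'$, all $\le k$ as soon as $k'\le k-1$; so \eqref{iDR5-reform}$_{\le k}$ lets me replace each $\SS$ by the matching $\R$. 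Equating the two combinations and cancelling the common tail then leaves
\[
0=[2]^2 X_{k'-1|l}\,C^{k_1}\K_i+[2]\big(Y_{k'+1}\,C^{k_1}+Y_{-k'+1}\,C^{k_2}\big)\K_i ,
\]
valid for all admissible $k_1$ and all $l$. I would stress that Lemmas~\ref{lem:SSS} and \ref{lem:TTT} are pure reformulations (using only \eqref{iDR3a}--\eqref{iDR3b} and formal manipulation), so this identity is unconditional apart from the single substitution licensed by \eqref{iDR5-reform}$_{\le k}$.

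With the key identity in hand the induction is short. For $n\le0$ one has $Y_n=0$ directly from \eqref{Y} and the conventions \eqref{Hm0} (in particular $Y_0=v^{-1}[\Theta_{i,0},B_{j,l}]=0$, since $\Theta_{i,0}=(v-v^{-1})^{-1}$ is a scalar). Now assume \eqref{iDR2-reform}$_{<n}$ for some $1\le n\le k$. By Lemma~\ref{lem:vanish1} this yields $X_{m|l}=0$ for all $m\le n-1$ and all $l$, in particular $X_{n-2|l}=0$. I then apply the key identity with $k'=n-1$, which is legitimate since $k'+1=n\le k$. If $n\ge2$ then $-k'+1=2-n\le0$, so $Y_{-k'+1}=0$ and the identity collapses to $0=[2]\,Y_n\,C^{k_1}\K_i$; if $n=1$ then $k_1=k_2$ and the two $Y$-terms coincide, giving $0=2[2]\,Y_1\,C^{k_1}\K_i$. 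In either case invertibility of $[2]$, $C^{k_1}$ and $\K_i$ forces $Y_n=0$ for all $l$, i.e. \eqref{iDR2-reform}$_n$, which closes the induction and establishes \eqref{iDR2-reform}$_{\le k}$.

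The only genuine difficulty here, as in Proposition~\ref{prop:2to5}, is bookkeeping rather than conceptual: aligning the symmetrization of Lemma~\ref{lem:TTT} with the already-symmetric combination of Lemma~\ref{lem:SSS}, checking the vanishing of the boundary terms $X_{-k'-1|l}$ and (for $n\ge2$) $Y_{-k'+1}$, and treating correctly the degenerate doubling of the $Y_1$ term at $n=1$. Once the key identity is assembled, Lemma~\ref{lem:vanish1} carries the rest of the load, converting the inductive hypothesis on \eqref{iDR2-reform} into exactly the vanishing of the $X$'s needed to isolate $Y_n$.
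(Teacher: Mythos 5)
Your proof is correct and follows essentially the same route as the paper's: compare Lemma~\ref{lem:SSS} with the symmetrized form of Lemma~\ref{lem:TTT}, cancel the common $\Theta$--$B$ tail, and use Lemma~\ref{lem:vanish1} together with the inductive hypothesis and \eqref{iDR5-reform}$_{\le k}$ to isolate and kill $Y_n$. The paper's own proof is a two-line sketch deferring to Proposition~\ref{prop:2to5}; your write-up supplies precisely the details it omits (the vanishing of the swapped $X$-term and of $Y_{2-n}$, and the doubling of the $Y_1$-term when $k_1=k_2$).
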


\begin{proof}
We prove by induction on $k$. The identity \eqref{iDR2-reform}$_{k}$ with $k=0$ is trivial.  %finite type Serre relation \eqref{eq:S2}.
The case \eqref{iDR2-reform}$_{k+1}$ (i.e., $Y_{k+1}=0$) follows by comparing the identities in Lemma~\ref{lem:SSS}
and Lemma~\ref{lem:TTT} (where $k=k_2 -k_1 \ge 0$), with the help of Lemma~\ref{lem:vanish1} and
the inductive assumption \eqref{iDR5-reform}$_{\le k+1}$. (See the proof of Proposition~\ref{prop:2to5} for more details of the same type of arguments.)
\end{proof}

%
%\subsection{Relations \eqref{iDR2} and \eqref{iDR5} for $c_{ij}=-1$}

\begin{proposition}
 \label{prop:iDR25}
The relations \eqref{iDR2} and \eqref{iDR5} for $c_{ij}=-1$  hold in $\tUi$.
\end{proposition}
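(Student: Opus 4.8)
The plan is to obtain both relations at once by feeding the two implications already in hand---Propositions~\ref{prop:2to5} and \ref{prop:5to2}---into a single interleaved induction on $k=k_2-k_1\ge 0$. First I would invoke Proposition~\ref{prop:equivij} to replace \eqref{iDR2} (for $c_{ij}=-1$) by its reformulation \eqref{iDR2-reform}, and recall that \eqref{iDR5} is literally \eqref{iDR5-reform}; thus it suffices to prove that \eqref{iDR2-reform}$_k$ and \eqref{iDR5-reform}$_k$ hold in $\tUi$ for every $k\ge 0$ and every $l\in\Z$. All the hard inputs---the finite type Serre relation \eqref{eq:S2} and the already-verified relations \eqref{iDR3a}--\eqref{iDR3b}---are by now folded into those two propositions, so nothing new about $\tUi$ needs to be computed here.

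For the base I would observe that \eqref{iDR2-reform}$_0$ holds trivially: by the convention \eqref{Hm0}, $\Theta_{i,0}=(v-v^{-1})^{-1}$ is a central scalar and $\Theta_{i,-1}=\Theta_{i,-2}=0$, so both sides vanish. Applying Proposition~\ref{prop:2to5} with $k=0$ then gives \eqref{iDR5-reform}$_{\le 1}$, settling the base level of the second relation. For the inductive step, assuming \eqref{iDR2-reform}$_{\le k}$ and \eqref{iDR5-reform}$_{\le k}$, I would run Proposition~\ref{prop:2to5} to promote \eqref{iDR2-reform}$_{\le k}$ to \eqref{iDR5-reform}$_{\le k+1}$, and then Proposition~\ref{prop:5to2} (with $k+1$ in place of $k$) to promote \eqref{iDR5-reform}$_{\le k+1}$ to \eqref{iDR2-reform}$_{\le k+1}$. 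This closes the induction, giving \eqref{iDR2-reform} and \eqref{iDR5-reform} at all levels, hence \eqref{iDR2} (for $c_{ij}=-1$) and \eqref{iDR5}.

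Since the serious computations---matching the symmetrized cubics $\SS$ against the $\R$-side (Lemmas~\ref{lem:SSSa}, \ref{lem:RRRa}, \ref{lem:SSS}, \ref{lem:TTT}) and the vanishing of the auxiliary $X_{n|l}$ and $Y_{k+1}$ under the inductive hypotheses (Lemma~\ref{lem:vanish1})---are already done, the only thing I expect to require care is the \emph{leap-frog} ordering of the two implications: Proposition~\ref{prop:2to5} raises the $\SS=\R$ identity from level $\le k$ to level $\le k+1$, and Proposition~\ref{prop:5to2} must then consume it at that \emph{same} higher level to produce \eqref{iDR2-reform} there. The two must therefore be chained in this order inside one induction, not run as two separate inductions, precisely so as to avoid the circular dependence between \eqref{iDR2} and \eqref{iDR5} that was flagged when these propositions were set up.
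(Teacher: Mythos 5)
Your proposal is correct and is essentially the paper's own argument: the paper's proof of Proposition~\ref{prop:iDR25} simply cites Propositions~\ref{prop:2to5} and \ref{prop:5to2} as establishing \eqref{iDR2-reform} and \eqref{iDR5-reform} ``inductively and simultaneously,'' and your interleaved induction (base \eqref{iDR2-reform}$_0$ trivial, then alternating the two implications to climb from level $k$ to level $k+1$) is exactly the intended bootstrapping, made explicit.
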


\begin{proof}
These relations follow by the equivalent identities \eqref{iDR2-reform}--\eqref{iDR5-reform}, which have been established inductively and simultaneously in Proposition~\ref{prop:2to5} and Proposition~ \ref{prop:5to2}.
\end{proof}

%%%%%%%
%%%%%%%
\section{Variants of Drinfeld type presentations}
  \label{sec:variants}

In this section, we formulate several variants of the Drinfeld type presentation for $\tUi$, one in generating function formalism, one in a more symmetrized form, and another via different imaginary root vectors. We also deduce a Drinfeld type presentation for the $\imath$quantum group $\Ui_\bvs$ with parameters $\bvs =(\vs_i)_{i\in \I}$.

\subsection{Presentation via generating functions}

Recall the generating functions $\Y_{i}(z), \bTH_i(z)$ and $\bDel(z)$ from \eqref{eq:Genfun}.

\begin{theorem}
  \label{thm:ADEgf}
$\tUiD$ is generated by $\K_{i}^{\pm1}$, $C^{\pm1}$, $\Theta_{i,k}$ and $\y_{i,l}$ $(i\in \II$, $k\geq1$, $l\in\Z)$, subject to the following  relations, for $i, j \in \II$:
\begin{align}
&\K_i \text{ are central, }\quad  \bTH_i(z) \bTH_j(w) =\bTH_j(w) \bTH_i(z),
\label{iDRG1}
\\
& \bTH_i (z)  \bB_j(w)
=   \frac{(1 -v^{-c_{ij}}zw^{-1}) (1 -v^{c_{ij}} zw C)}{(1 -v^{c_{ij}}zw^{-1})(1 -v^{-c_{ij}}zw C)}
  \bB_j(w) \bTH_i (z),
 \label{iDRG2}
\\
&\Y_i(w)\Y_j(z)=\Y_j(z)\Y_i(w), \qquad\qquad\qquad\text{ if }c_{ij}=0,  \label{iDRG4}
\\
&(v^{c_{ij}}z -w) \Y_i(z) \Y_j(w) +(v^{c_{ij}}w-z) \Y_j(w) \Y_i(z)=0, \qquad \text{ if }i\neq j, \label{iDRG3a}
\\
&(v^2z-w) \Y_i(z) \Y_i(w) +(v^{2}w-z) \Y_i(w) \Y_i(z)   \label{iDRG3b}
\\\notag
&\quad =\frac{v^{-2}}{v-v^{-1}} \K_{i} \bDel(zw) \big( (v^2z-w)\bTH_i(w) +(v^2w-z)\bTH_i(z) \big),
\\
&\Y_i(w_1)\Y_i(w_2)\Y_{j}(z) -[2]\Y_i(w_1)\Y_{j}(z)\Y_i(w_2)+\Y_{j}(z)\Y_i(w_1)\Y_i(w_2)+ \{w_1\leftrightarrow w_2\}
 \label{iDRG5} \\
=&  -\K_{i} \frac{\bDel(w_1w_2)}{v-v^{-1}} \Big( [\bTH_i(w_2),\bB_j(z)]_{v^{-2}} \frac{[2] z w_1^{-1} }{1 -v^{2}w_2w_1^{-1}}
+  [\bB_j(z),\bTH_i(w_2)]_{v^{-2}}  \frac{1 +w_2w_1^{-1}}{1 -v^{2}w_2w_1^{-1}} \Big)
\notag \\
& \qquad  + \{w_1\leftrightarrow w_2\},
\qquad\qquad
\text{ if }c_{ij}=-1.
\notag
\end{align}
\end{theorem}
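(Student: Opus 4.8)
The plan is to prove Theorem~\ref{thm:ADEgf} by showing that the generating-function relations \eqref{iDRG1}--\eqref{iDRG5} are \emph{equivalent}, relation by relation, to the defining relations \eqref{iDR1}--\eqref{iDR5} of $\tUiD$ in Definition~\ref{def:iDR}. Since the two presentations share the same generating set $\{\K_i^{\pm1}, C^{\pm1}, \Theta_{i,k}, \y_{i,l}\}$ (the two families $\{H_{i,m}\}$ and $\{\Theta_{i,m}\}$ being mutually polynomially expressible via \eqref{exp h}), it suffices to verify that the two-sided ideals generated by the two sets of relations coincide. Throughout, each rational factor in $z,w,w_1,w_2$ is to be read as its geometric-series expansion (e.g. $(1-v^2 w_2 w_1^{-1})^{-1} = \sum_{p\ge 0} v^{2p} w_2^p w_1^{-p}$), so that every generating-function identity is shorthand for the family of relations obtained by equating coefficients of monomials.

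The equivalences for \eqref{iDRG1}--\eqref{iDRG3b} are routine coefficient extractions. For \eqref{iDRG1}, comparing coefficients of $z^m w^n$ gives $[\Theta_{i,m},\Theta_{j,n}]=0$, which is equivalent to $[H_{i,m},H_{j,n}]=0$ of \eqref{iDR1} exactly as in Proposition~\ref{prop:HH1}. The equivalence \eqref{iDRG2}$\Leftrightarrow$\eqref{iDR2} is precisely Proposition~\ref{prop:equivij}. For \eqref{iDRG4}, the coefficient of $w^k z^l$ is $[B_{i,k},B_{j,l}]=0$, which is \eqref{iDR4}. For \eqref{iDRG3a}, expanding and reading off the coefficient of $z^{k+1}w^{l+1}$ produces exactly $v^{c_{ij}}$ times \eqref{iDR3a}. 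For \eqref{iDRG3b}, I would expand $\bDel(zw)\bTH_i(w)$ and $\bDel(zw)\bTH_i(z)$ on the right (whose coefficients are products $\Theta_{i,\bullet}C^{\bullet}$) and match the coefficient of $z^r w^s$ with the four-term right-hand side of \eqref{iDR3b}; the convention \eqref{Hm0} makes the boundary terms come out correctly.

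The substantive step, and the one I expect to be the main obstacle, is the equivalence \eqref{iDRG5}$\Leftrightarrow$\eqref{iDR5}. The left-hand side of \eqref{iDRG5} is manifestly a generating function whose coefficient of $w_1^{k_1}w_2^{k_2}z^{l}$ is the symmetrized cubic $\SS(k_1,k_2|l)$ of \eqref{eq:Skk}, so the real work is to show the right-hand side reproduces $\R(k_1,k_2|l)$ of \eqref{eq:Rkk}. The key mechanism is that $(1-v^2 w_2 w_1^{-1})^{-1}=\sum_{p\ge0}v^{2p}w_2^p w_1^{-p}$ generates precisely the $p$-summations in \eqref{eq:Rkk}. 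Concretely, writing $\bDel(w_1w_2)=\sum_a C^a w_1^a w_2^a$, using $\bTH_i(w_2)$ to supply the index of $\Theta_{i,\bullet}$, and extracting the coefficient of $w_1^{k_1}w_2^{k_2}z^l$: in the first term the explicit factor $z$ in $[2]zw_1^{-1}(1-v^2w_2w_1^{-1})^{-1}$ shifts the $\bB_j(z)$-index, producing the $B_{j,l-1}$ of the first sum of \eqref{eq:Rkk}, while the constraints $a-p-1=k_1$ and $a+m+p=k_2$ force $C^a=C^{k_1}C^{p+1}$ and $\Theta_{i,k_2-k_1-2p-1}$, matching $v^{2p}[2][\Theta_{i,k_2-k_1-2p-1},B_{j,l-1}]_{v^{-2}}C^{p+1}$ (against the overall $\K_i C^{k_1}$); the factor $(1+w_2w_1^{-1})(1-v^2w_2w_1^{-1})^{-1}$, carrying no explicit $z$, similarly yields the remaining sums with $B_{j,l}$ and the isolated term $[B_{j,l},\Theta_{i,k_2-k_1}]_{v^{-2}}$.

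Finally, the asymmetry of the geometric-series expansion in $w_1,w_2$ is exactly compensated by the outer symmetrization $\{w_1\leftrightarrow w_2\}$, which corresponds to the $\{k_1\leftrightarrow k_2\}$ symmetrization in the definition of $\R$. The bulk of the difficulty is bookkeeping: tracking which ordered commutator ($[\bTH_i(w_2),\bB_j(z)]_{v^{-2}}$ versus $[\bB_j(z),\bTH_i(w_2)]_{v^{-2}}$) feeds which sum, confirming that the ranges of $p$ (e.g. $p\ge0$ versus $p\ge1$) and the role of $\Theta_{i,0}=(v-v^{-1})^{-1}$ align with \eqref{eq:Rkk}, and checking that the factors of $(v-v^{-1})$ from $\bTH_i$ and from the prefactor cancel correctly. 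Once these coefficient identities are matched in both directions, the two ideals of relations coincide and the theorem follows.
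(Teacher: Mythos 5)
Your proposal is correct and follows essentially the same route as the paper: the paper's proof also establishes the theorem by rewriting each of the relations \eqref{iDR1}--\eqref{iDR5} in generating-function form (citing Proposition~\ref{prop:equivij} for \eqref{iDRG2}, multiplying \eqref{iDR3b} by $v^2z^{r+1}w^{s+1}$ and \eqref{iDR5} by $w_1^{k_1}w_2^{k_2}z^{r}$ and summing). You simply spell out more of the coefficient-extraction bookkeeping, especially for \eqref{iDRG5}, which the paper leaves implicit.
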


\begin{proof}
We simply rewrite the relations \eqref{iDR1}--\eqref{iDR5} in Theorem \ref{def:iDR} by using the generating functions \eqref{eq:Genfun}.
The relation \eqref{iDRG1} is clear. As formulated in Proposition~\ref{prop:equivij}, the relation \eqref{iDRG2} is equivalent to \eqref{iDR2}.
The relation \eqref{iDRG3b} is obtained from \eqref{iDR3b} by multiplying both side of the relation \eqref{iDR3b} by $v^2z^{r+1} w^{s+1}$ and summing over $r,s\in\Z$.
Similarly, the relations \eqref{iDRG4} and \eqref{iDRG3a} are equivalent to the relations \eqref{iDR4} and \eqref{iDR3a}, respectively.
Finally, the relation \eqref{iDRG5} is obtained by multiplying both sides of the relation \eqref{iDR5} by $w_1^{k_1}w_2^{k_2}z^{r}$ and summing over $k_1,k_2,r\in\Z$.
\end{proof}

\subsection{A symmetrized presentation}

We add two central generators $C^{\pm\frac{1}{2}}$ such that $C^{\frac{1}{2}} C^{-\frac{1}{2}}=1=C^{-\frac{1}{2}}C^{\frac{1}{2}}$ and $(C^{\frac{1}{2}})^2=C$.

\begin{definition}
\label{def:i-DR-ref}
Let ${}^{\text{DR}}\tUi$ be the $\Q(v)$-algebra generated by $\K_{i}^{\pm1}$, $C^{\pm\frac{1}{2}}$, $\tH_{i,m}$ and $\y_{i,l}$, where  $i\in \II$, $m\geq1$, $l\in\Z$, subject to the following  relations, for $m,n\ge 1$ and $k,l\in \Z$:
\begin{align}
&\K_i, C^{\pm\frac{1}{2}} \text{ are central, } \quad  [\tH_{i,m},\tH_{j,n}]=0, \label{iDRC1}
\\
&[\tH_{i,m},\y_{jl}]=\frac{[mc_{ij}]}{m} \y_{j,l+m}C^{-\frac{m}{2}}-\frac{[mc_{ij}]}{m} \y_{j,l-m}C^{\frac{m}{2}}, \label{iDRC2}
\\
&[\y_{i,k} ,\y_{j,l}]=0, \quad \text{ if }c_{ij}=0, \label{iDRC4}
\\
\label{iDRC3a}
&[\y_{i,k},\y_{j,l+1}]_{v^{-c_{ij}}}-v^{-c_{ij}}[\y_{i,k+1},\y_{j,l}]_{v^{c_{ij}}}=0, \text{ if }i\neq j,
\\
&[\y_{i,k},\y_{i,l+1}]_{v^{-2}}-v^{-2}[\y_{i,k+1},\y_{i,l}]_{v^{2}} \label{iDRC3}
\\\notag
&= \K_i  C^{\frac{k+l+1}{2}}
\Big( v^{-2}\tTH_{i,l-k+1} -v^{-4} \tTH_{i,l-k-1} +v^{-2} \tTH_{i,k-l+1} -v^{-4} \tTH_{i,k-l-1}\Big),
\\
& B_{i,k_1} B_{i,k_2} B_{j,l} -[2] B_{i,k_1} B_{j,l} B_{i,k_2} + B_{j,l} B_{i,k_1} B_{i,k_2} +\{k_1 \leftrightarrow k_2\}
  \label{iDRC5} \\
&= \K_i C^{ \frac{k_1+k_2}{2}}
%\Sym_{k_1,k_2}
\Big(-\sum_{p\geq0}v^{2p}[2] [\tTH_{i,k_2-k_1-2p-1},\y_{j,l-1}]_{v^{-2}} C^{\frac{1}{2}}\notag
\\\notag
&-\sum_{p\geq0}v^{2p-1}[2][\y_{j,l},\tTH_{i,k_2-k_1-2p}]_{v^{-2}}
+v^2[\y_{j,l}, \tTH_{i,k_2-k_1}]_{v^{-2}}  \Big) +\{k_1 \leftrightarrow k_2\},
\text{ if }c_{ij}=-1.
\notag
\end{align}
Here %$\Sym_{k_1,k_2}$ denotes symmetrization with respect to the indices $k_1, k_2$,
 $\tTH_{i,m}$, for $i \in \II$ and $m\ge 1$,  are defined by the following equation:
\begin{align}
\label{exp hC}
1+ \sum_{m\geq 1} &(v-v^{-1})\tTH_{i,m} u^m  = \exp\big( (v-v^{-1}) \sum_{m=1}^\infty \tH_{i,m} u^m \big);
\end{align}
we also set $\tTH_{i,0}=\frac{1}{v-v^{-1}}$, $\tTH_{i,m}=0$ for $m<0$.
\end{definition}

We enlarge $\tUiD$ to a $\Q(v)$-algebra $\tUiD [C^{\pm\frac12}] := {}^{\text{Dr}}\tUi \otimes_{\Q(v)[C^{\pm 1}]} \Q(v)[C^{\pm \frac12}]$; recall it contains the generators $H_m$.
For $i \in \II$ and $m\ge 1$, we identify
\begin{align}  \label{eq:HH2}
\tH_{i,m}=H_{i,m} C^{-\frac{m}{2}},\qquad \tTH_{i,m}=\Theta_{i,m} C^{-\frac{m}{2}}.
\end{align}
Then \eqref{exp hC} holds.
Now such an identification \eqref{eq:HH2} leads to an isomorphism ${}^{\text{DR}}\tUi \cong {}^{\text{Dr}}\tUi [C^{\pm\frac12}]$ in the proposition below, which also identifies elements in the same notation. We skip the detail as the verification is straightforward.
\begin{proposition}
  \label{prop:symm}
We have a natural $\Q(v)$-algebra isomorphism
${}^{\text{DR}}\tUi \cong {}^{\text{Dr}}\tUi [C^{\pm\frac12}]$.
\end{proposition}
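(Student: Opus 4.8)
The plan is to realize the claimed isomorphism as the \emph{identity} on the common generators $\K_i^{\pm1}$, $C^{\pm\frac12}$, $\y_{i,l}$, combined with the rescaling \eqref{eq:HH2} on the imaginary root vectors, and then to establish bijectivity by exhibiting an explicit two-sided inverse. Concretely, I would first define a $\Q(v)$-algebra map $\Psi\colon {}^{\text{DR}}\tUi \to \tUiD[C^{\pm\frac12}]$ on generators by
\[
\Psi(\K_i)=\K_i,\quad \Psi(C^{\pm\frac12})=C^{\pm\frac12},\quad \Psi(\y_{i,l})=\y_{i,l},\quad \Psi(\tH_{i,m})=H_{i,m}C^{-\frac{m}{2}},
\]
together with a candidate inverse $\Psi'\colon \tUiD[C^{\pm\frac12}] \to {}^{\text{DR}}\tUi$ sending $H_{i,m}\mapsto \tH_{i,m}C^{\frac{m}{2}}$ and fixing the remaining generators. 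Since $C^{\frac12}$ is central, the substitution $u\mapsto uC^{-\frac12}$ turns the functional equation \eqref{exp h} into \eqref{exp hC}; in particular $\Psi(\tTH_{i,m})=\Theta_{i,m}C^{-\frac{m}{2}}$ in accordance with \eqref{eq:HH2}, so that $\Psi$ and $\Psi'$ are manifestly mutually inverse at the level of generators. Thus they will be mutually inverse algebra isomorphisms as soon as each is shown to be a well-defined homomorphism.

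The heart of the argument is therefore to check that $\Psi$ carries each defining relation \eqref{iDRC1}--\eqref{iDRC5} of ${}^{\text{DR}}\tUi$ to a relation that already holds in $\tUiD[C^{\pm\frac12}]$ (i.e.\ follows from \eqref{iDR1}--\eqref{iDR5}), and symmetrically for $\Psi'$; in every case this reduces to a bookkeeping of half-integer powers of $C$. The relations \eqref{iDRC1}, \eqref{iDRC3a} and \eqref{iDRC4} involve the imaginary root vectors only through a central factor and map verbatim to \eqref{iDR1}, \eqref{iDR3a} and \eqref{iDR4}. For \eqref{iDRC2} one uses centrality to pull $C^{-\frac{m}{2}}$ out of $\tH_{i,m}=H_{i,m}C^{-\frac{m}{2}}$ and applies \eqref{iDR2}; the factor distributes over the two summands as $C^{-\frac{m}{2}}$ and $C^{-\frac{m}{2}}\!\cdot C^{m}=C^{\frac{m}{2}}$, reproducing \eqref{iDRC2}. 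The decisive collapse occurs in \eqref{iDRC3}: substituting $\tTH_{i,m}=\Theta_{i,m}C^{-\frac{m}{2}}$, the prefactor $C^{\frac{k+l+1}{2}}$ combines with $C^{-\frac{l-k+1}{2}}$, $C^{-\frac{l-k-1}{2}}$, $C^{-\frac{k-l+1}{2}}$, $C^{-\frac{k-l-1}{2}}$ to yield exactly the integral powers $C^{k}$, $C^{k+1}$, $C^{l}$, $C^{l+1}$ of \eqref{iDR3b}. The same mechanism handles the symmetrized Serre relation \eqref{iDRC5}: the left-hand side $\SS(k_1,k_2|l)$ from \eqref{eq:Skk} carries no imaginary root vectors and is left untouched, while on the right-hand side, pulling $C^{-\frac{\bullet}{2}}$ out of each $\tTH_{i,\bullet}$ and combining with the overall prefactor $C^{\frac{k_1+k_2}{2}}$ and the explicit $C^{\frac12}$, $C^{p}$ factors assembles the groupings $-\sum_{p\ge0}v^{2p}[2][\,\cdot\,]C^{p+1}$, etc.\ of $\R(k_1,k_2|l)$ in \eqref{eq:Rkk} term by term.

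Granting these verifications, $\Psi$ and $\Psi'$ are homomorphisms and, being mutually inverse on generators, are mutually inverse isomorphisms, which proves ${}^{\text{DR}}\tUi \cong \tUiD[C^{\pm\frac12}]$ with the asserted naturality (the map is canonical, being the identity up to the normalization \eqref{eq:HH2}). The only genuinely laborious step is the $C$-power accounting inside \eqref{iDRC5}, where one must match the half-integer exponents carried by the $\tTH_{i,\bullet}$ against the $C^{p+1}$, $C^{p}$ groupings of \eqref{eq:Rkk}; I expect this to be the main obstacle, though it is purely mechanical, and it is precisely where the symmetrized normalization \eqref{eq:HH2} is engineered so that all fractional powers cancel. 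As an alternative route to bijectivity that bypasses constructing $\Psi'$ by hand, one may invoke the ordered-monomial basis underlying Theorem~\ref{thm:ADE}: the rescaled generators give bases of both algebras and $\Psi$ sends one onto the other, forcing it to be an isomorphism.
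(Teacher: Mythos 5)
Your proposal is correct and follows exactly the route the paper takes: the isomorphism is the identification \eqref{eq:HH2} (identity on $\K_i$, $C^{\pm\frac12}$, $\y_{i,l}$, with $\tH_{i,m}=H_{i,m}C^{-\frac m2}$, $\tTH_{i,m}=\Theta_{i,m}C^{-\frac m2}$), and the content is the half-integer $C$-power bookkeeping matching \eqref{iDRC1}--\eqref{iDRC5} against \eqref{iDR1}--\eqref{iDR5}. The paper explicitly declares this verification straightforward and skips it, so your write-up simply supplies the details the authors omit.
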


\subsection{Presentation via different root vectors}

Recall starting from the rank 1 case treated in Section~\ref{sec:Onsager}, we have preferred the imaginary root vectors $\Theta_m$ over $\acute{\Theta}_m$, because of a consideration from Hall algebra \cite{LRW20}. Choosing $\acute{\Theta}_m$ will lead to the following presentation for $\tUi$.

\begin{theorem}
  \label{thm:ADE1}
The algebra $\tUi$ admits a presentation with the same set of generators of ${}^{\text{Dr}}\tUi$ as in Definition~\ref{def:iDR}, subject to the relations~\eqref{iDR1}--\eqref{iDR3a} for ${}^{\text{Dr}}\tUi$ and the following 2 relations \eqref{DrBB}--\eqref{DrSerre} (in place of \eqref{iDR3b}--\eqref{iDR5} for ${}^{\text{Dr}}\tUi$):
\begin{align}
&[\y_{i,r}, \y_{i,s+1}]_{v^{-2}}  -v^{-2} [\y_{i,r}, \y_{i,s}]_{v^{2}}
   \label{DrBB}
=v^{-2} {\Theta}_{i,s-r+1} C^r \K_i-v^{-2} {\Theta}_{i,s-r-1} C^{r+1} \K_i \\
 \notag
&+v^{-2} {\Theta}_{i,r-s+1} C^s \K_i-v^{-2} {\Theta}_{i,r-s-1} C^{s+1} \K_i,
\\
& B_{i,k_1} B_{i,k_2} B_{j,l} -[2] B_{i,k_1} B_{j,l} B_{i,k_2} + B_{j,l} B_{i,k_1} B_{i,k_2} +\{k_1 \leftrightarrow k_2\}
%\Sym_{k_1,k_2}\sum_{r=0}^{2}(-1)^r\qbinom{2}{r} \y_{i,k_1}\cdots \y_{i,k_r} \y_{j,l} \y_{i,k_{r+1}}\cdots \y_{i,k_2}
 \label{DrSerre}
\\
&=
- \textstyle  C^{k_1} \K_i \Big(\sum_{p\geq0} (v^{-2p-1} +v^{2p+1}) [ {\Theta}_{i,k_2-k_1-2p-1},\y_{j,l-1}]_{v^{-2}}C^{p+1} \notag
\\\notag
& \textstyle
\quad +\sum_{p\geq 1} (v^{-2p} +v^{2p} )  [\y_{j,l}, {\Theta} _{i,k_2-k_1-2p}]_{v^{-2}} C^{p}
+[\y_{j,l}, {\Theta} _{i,k_2-k_1}]_{v^{-2}}    \Big)
  \notag \\
 &\quad +\{k_1 \leftrightarrow k_2\},
\qquad \text{ if }c_{ij}=-1.
\notag
\end{align}
\end{theorem}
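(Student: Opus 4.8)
The plan is to produce the isomorphism directly, mirroring the proof of Theorem~\ref{thm:ADE} but carried out with the \emph{un-normalized} imaginary root vectors $\acute{\Theta}_{i,m}$. Write ${}^{\text{Dr}'}\tUi$ for the algebra defined by the relations \eqref{iDR1}--\eqref{iDR3a} together with \eqref{DrBB} and \eqref{DrSerre}, and consider the map $\Phi'\colon {}^{\text{Dr}'}\tUi \to \tUi$ sending $\y_{i,l}\mapsto B_{i,l}$, $\Theta_{i,m}\mapsto \acute{\Theta}_{i,m}$, $\K_i\mapsto \K_i$, $C\mapsto \K_\de$, with the images of the $H_{i,m}$ then forced by the exponential relation \eqref{exp h}. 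The guiding observation is that the two presentations differ only in the choice of generators for the imaginary (Cartan-type) part: by \eqref{eq:Theta2} (equivalently \eqref{Thim}), $\acute{\Theta}_{i,m}$ and $\Theta_{i,m}$ are related by a unitriangular change of variables whose coefficients lie in the \emph{central} subalgebra $\Q(v)[C^{\pm 1}]$.

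First I would check that $\Phi'$ is a homomorphism. The relations \eqref{iDR1}, \eqref{iDR2}, \eqref{iDR3a}, \eqref{iDR4} are \emph{normalization-independent}: since $\acute{\bTH}(z)$ and $\bTH(z)$ differ by the central scalar factor of \eqref{eq:Theta2}, that factor commutes through $\bB_j(w)$, so $\acute{\bTH}(z)$ satisfies the \emph{same} commutation relation with $\bB_j(w)$ as $\bTH(z)$ does in Proposition~\ref{prop:equivij}; hence the image of \eqref{iDR2} holds, and then the image of \eqref{iDR1} follows by the argument of Proposition~\ref{prop:iDR1} applied to the $\acute{\Theta}$-analogues of \eqref{iDR2} and \eqref{DrBB} (for $i=j$ the commutativity is Proposition~\ref{prop:ThTh1} transported via $\aleph_i$), while \eqref{iDR3a} and \eqref{iDR4} involve no imaginary vectors and were already verified. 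The relation \eqref{DrBB}, for $i=j$, is, after the substitution $\Theta\mapsto\acute{\Theta}$, exactly the rank-one identity \eqref{rel:iDr} established in Proposition~\ref{prop:iDr}, transported to $\tUi_{[i]}$ through the isomorphism $\aleph_i$ of Proposition~\ref{prop:rank1iso}.

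The only genuinely new verification is the Serre-type relation \eqref{DrSerre}, and this is where the main obstacle lies. I would exploit that its left-hand side is the \emph{same} symmetrized cubic expression $\SS(k_1,k_2|l)$ appearing in \eqref{iDR5}, and that $\SS(k_1,k_2|l)=\R(k_1,k_2|l)$ has already been proved in Proposition~\ref{prop:iDR25}. Thus it suffices to show that the right-hand side $\R(k_1,k_2|l)$ of \eqref{eq:Rkk}, written in the normalized $\Theta_{i,m}$, agrees with the right-hand side of \eqref{DrSerre}, written in the $\acute{\Theta}_{i,m}$, once the $\Theta_{i,m}$ are expanded via \eqref{Thim}. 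This is a purely formal manipulation: substituting $\bTH(z)=\frac{1-Cz^2}{1-v^{-2}Cz^2}\acute{\bTH}(z)$ and collecting the resulting geometric series in $C$ reorganizes the single family of coefficients $v^{2p}$ in $\R$ into the symmetric pairs $v^{-2p-1}+v^{2p+1}$ and $v^{-2p}+v^{2p}$ of \eqref{DrSerre}. The delicate part is the bookkeeping: matching the two triple sums coefficient-by-coefficient while tracking the shifted indices $k_2-k_1-2p$ and the accompanying powers of $C$. It is routine but exacting, and is precisely the Serre-level analogue of the rank-one equivalence \eqref{rel:iDr}$\leftrightarrow$\eqref{rel:iDr1b} recorded in the remark after Theorem~\ref{thm:Dr1}.

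Finally, bijectivity of $\Phi'$ follows with essentially no new work. Surjectivity is proved by the identical braid- and translation-automorphism argument as in \S\ref{subsec:inj}, since reaching $B_0\in\text{Im}(\Phi')$ uses only the real root vectors and the automorphisms $\T_{\omega_i}$, not the choice of imaginary normalization. For injectivity, the filtration of \S\ref{subsec:inj} applies verbatim, because $\acute{\Theta}_{i,m}$ and $\Theta_{i,m}$ span the same filtered pieces over the central subalgebra, so ${}^{\text{Dr}'}\tUi$ carries the same PBW-type spanning set as $\tUiD$; this set maps to a basis of $\tUi$, forcing $\Phi'$ to be an isomorphism. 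Equivalently, one may package the computations above into an isomorphism $\Lambda\colon\tUiD\to{}^{\text{Dr}'}\tUi$ fixing $\y_{i,l},\K_i,C$ and implementing the change of variables \eqref{Thim}, satisfying $\Phi'\circ\Lambda=\Phi$, whence $\Phi'$ is an isomorphism by Theorem~\ref{thm:ADE}.
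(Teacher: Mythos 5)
Your proposal is correct and follows essentially the same route as the paper: reduce to Theorem~\ref{thm:ADE} via the central change of variables $\bTH_i(z)=\frac{1-Cz^2}{1-v^{-2}Cz^2}\acute{\bTH}_i(z)$ (so that \eqref{iDR1}--\eqref{iDR3a} are unaffected, \eqref{DrBB} matches \eqref{iDR3b} by the rank-one equivalence of \eqref{rel:iDr} and \eqref{rel:iDr1b}, and \eqref{DrSerre} matches \eqref{iDR5} by a direct formal computation that both you and the paper leave unexpanded). The closing paragraph, packaging everything into an isomorphism $\Lambda$ with $\Phi'\circ\Lambda=\Phi$, is precisely the paper's argument.
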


\begin{proof}
In this proof, we shall denote the generators ${\Theta}_{i,k}$ in the proposition by $\acute{\Theta}_{i,k}$, in order to distinguish the ${\Theta}_{i,k}$ used for $\tUiD$ in Definition~\ref{def:iDR}.

By Theorem~\ref{thm:ADE}, it suffices to show the algebra with presentation given by the proposition is isomorphic to $\tUiD$. The isomorphism is given by matching generators in the same notation and in addition imposing the relation between ${\Theta}_{i,k}$ and $\acute{\Theta}_{i,k}$ as follows (cf. \eqref{eq:Theta2}):
\begin{align*}
\bTH_i(z) =  \frac{1- Cz^2}{1-v^{-2}C z^2}  \acute{\bTH}_i(z).
\end{align*}
Then the equivalence between the relations \eqref{iDR3b} and \eqref{DrBB} follows from the equivalence between \eqref{rel:iDr1} and \eqref{rel:iDr1b}. Finally, the equivalence between the relations \eqref{iDR5} and \eqref{DrSerre} follows by a direct computation, which we omit here.
\end{proof}

\subsection{Presentation of affine $\imath$quantum groups with parameters}
  \label{subsec:parameter}

Fix $\bvs =(\vs_i)_{i\in \I}$, where $\vs_i \in \Q(v)^\times$, for each $i$. A quantum symmetric pair $(\U, \Ui_\bvs)$ was first formulated by G. Letzter in finite type and then generalized by Kolb \cite{Ko14}.
 The $\imath$quantum group $\Ui_\bvs$ is the $\Q(v)$-algebra with generators $B_i (i\in \I)$, and it can be obtained from $\tUi$ by a central reduction \cite{LW19a} (recall $\tilde{k}_i$ was used {\em loc. cit.} and it is related to $\K_i$ in this paper by $\K_i =-v^2\tilde{k}_i$):
\begin{align}  \label{Ui}
 \Ui_\bvs =\tUi / (\K_i +v^2 \vs_i \mid i \in \I).
 \end{align}
For $\de=\sum_{i\in \I}a_i\alpha_i$, we define
\[
\delta_\bvs=\prod_{i\in \I} (-v^2\vs_i)^{a_i} \in \Q(v).
\]

From \eqref{Ui} we obtain a natural surjective homomorphism of $\Q(v)$-algebra
 \[
 \pi: \tUi \longrightarrow \Ui_\bvs, \qquad B_i \mapsto B_i \; (i\in \I).
 \]
 By abuse of notations, we shall keep using the same notations for the images under $\pi$ of various elements such as $B_{i,k}, \Theta_{i,m}, H_{i,m}$, for $i\in \II, k\in \Z, m\ge 1$. The algebra $\Ui_\bvs$ has a Serre-type presentation with generators $B_i$, for $i\in \I$, and defining relations \eqref{eq:S1}--\eqref{eq:S3}, where $\K_i$ is replaced by $-v^2 \vs_i$, cf. \cite{BB10, Ko14}. Below we present a Drinfeld type presentation for $\Ui_\bvs$.

\begin{theorem}  \label{thm:ADE2}
Let $\bvs=(\vs_i)_{i\in \I}\in \Q(v)^{\times, \I}$. Then the $\Q(v)$-algebra $\Ui_\bvs$ has a presentation with generators
$H_{i,m}$ and $\y_{i,l}$, where  $i\in \II$, $m\geq1$, $l\in\Z$ and the following relations, for $r,s \in \Z$, $m,n \ge 1$, $i,j\in \II$:
\begin{align}
&[H_{i,m},H_{j,n}]=0,  %\label{iDR1}
\\
&[H_{i,m},\y_{j,l}]=\frac{[mc_{ij}]}{m} \y_{j,l+m}-\frac{[mc_{ij}]}{m} \delta_{\bvs}^m \y_{j,l-m},
%\label{iDR2}
\\
&[\y_{i,k} ,\y_{j,l}]=0,  \text{ if }c_{ij}=0, %\label{iDR4}
\\
%\label{iDR3a}
&[\y_{i,k}, \y_{j,l+1}]_{v^{-c_{ij}}}  -v^{-c_{ij}} [\y_{i,k}, \y_{j,l}]_{v^{c_{ij}}}=0, \text{ if }i\neq j,
\\
%\label{iDR3b}
&[\y_{i,k}, \y_{i,l+1}]_{v^{-2}}  -v^{-2} [\y_{i,k}, \y_{i,l}]_{v^{2}}
=- \delta_{\bvs}^k\vs_i \Theta_{i,l-k+1}  +v^{-2} \delta_{\bvs}^{k+1} \vs_i \Theta_{i,l-k-1}   \\\notag
&\qquad\qquad\quad\qquad\qquad\qquad\qquad\qquad
- \delta_{\bvs}^l \vs_i \Theta_{i,k-l+1}  +v^{-2} \delta_{\bvs}^{l+1} \vs_i\Theta_{i,k-l-1},
\notag
\\
& B_{i,k_1} B_{i,k_2} B_{j,l} -[2] B_{i,k_1} B_{j,l} B_{i,k_2} + B_{j,l} B_{i,k_1} B_{i,k_2} +\{k_1 \leftrightarrow k_2\}
%\Sym_{k_1,k_2}\sum_{r=0}^{2}(-1)^r\qbinom{2}{r} \y_{i,k_1}\cdots \y_{i,k_r} \y_{j,l} \y_{i,k_{r+1}}\cdots \y_{i,k_2}
\\
=& v^2 \vs_i \delta_{\bvs}^{k_1}
\left(\sum_{p\geq0} v^{2p}[2] \delta_{\bvs}^{p+1} [\Theta _{i,k_2-k_1-2p-1},\y_{j,l-1}]_{v^{-2}} \right. \notag
\\
& \left. +\sum_{p\geq 1} v^{2p-1}[2] \delta_{\bvs}^{p} [\y_{j,l},\Theta _{i,k_2-k_1-2p}]_{v^{-2}}
+ [\y_{j,l}, \Theta _{i,k_2-k_1}]_{v^{-2}}     \right)
+\{k_1 \leftrightarrow k_2\},
\text{ if }c_{ij}=-1.
\notag
\end{align}
%Here as before, $1+ \sum_{m\geq 1} (v-v^{-1})\Theta_{i,m} u^m  = \exp\big( (v-v^{-1}) \sum_{m\ge 1} H_{i,m} u^m \big)$, $\Theta_{i,0}=\frac{1}{v-v^{-1}}$, and $\Theta_{i,m} =0$ for $m<0$.
\end{theorem}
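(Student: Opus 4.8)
The plan is to obtain the presentation of $\Ui_\bvs$ by transporting the Drinfeld type presentation of $\tUi$ from Theorem~\ref{thm:ADE} through the central reduction $\pi\colon \tUi \to \Ui_\bvs$ of \eqref{Ui}. By Theorem~\ref{thm:ADE} (and its inverse), $\tUi$ is presented, via $\Phi^{-1}$, by the generators $\K_i^{\pm1}$ $(i\in\II)$, $C^{\pm1}$, $H_{i,m}$, $\y_{i,l}$ subject to \eqref{iDR1}--\eqref{iDR5}, where $C=\Phi^{-1}(\K_\de)$ and $\K_0=C\K_\theta^{-1}$. Since $\Ui_\bvs=\tUi/(\K_i+v^2\vs_i\mid i\in\I)$ and all of $\K_i,C$ are central and invertible, the quotient amounts to specializing these central generators to scalars; the core of the proof is thus to identify those scalars and to read off the specialized relations.

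First I would record the specialization values. Under $\pi$ one has $\K_i\mapsto -v^2\vs_i$ for $i\in\II$, and the identity $\K_0=C\K_\theta^{-1}$ forces $C\mapsto \K_\de\mapsto \prod_{i\in\I}(-v^2\vs_i)^{a_i}=\delta_\bvs$, where $\de=\sum_{i\in\I}a_i\alpha_i$ with $a_0=1$. One checks this is consistent with $\pi(\K_0)=-v^2\vs_0$, since $C\K_\theta^{-1}\mapsto \delta_\bvs\cdot\prod_{i\in\II}(-v^2\vs_i)^{-a_i}=-v^2\vs_0$. Consequently, under $\Phi^{-1}$ the reduction ideal $(\K_i+v^2\vs_i\mid i\in\I)$ corresponds to the ideal $J_\chi\subset\tUiD$ generated by $\K_i+v^2\vs_i$ $(i\in\II)$ together with $C-\delta_\bvs$; that is, to the central character $\chi$ with $\chi(\K_i)=-v^2\vs_i$ and $\chi(C)=\delta_\bvs$. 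Hence $\Ui_\bvs\cong\tUiD/J_\chi$.

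Next I would substitute these scalar values into \eqref{iDR1}--\eqref{iDR5} and verify that the result is exactly the relation list in Theorem~\ref{thm:ADE2}. This is routine bookkeeping: \eqref{iDR1}, \eqref{iDR4} and \eqref{iDR3a} involve no $\K_i$ or $C$ and carry over verbatim; in \eqref{iDR2} the factor $C^m$ becomes $\delta_\bvs^m$; in \eqref{iDR3b} the products $C^k\K_i$ and $C^{k+1}\K_i$ become $-\delta_\bvs^k\vs_i$ and $-v^2\delta_\bvs^{k+1}\vs_i$, yielding the stated fifth relation after absorbing the explicit powers of $v$; and substituting $\K_iC^{k_1}\mapsto -v^2\vs_i\delta_\bvs^{k_1}$ and $C^p\mapsto\delta_\bvs^p$ into the right-hand side $\R(k_1,k_2|l;i,j)$ of \eqref{iDR5}, as given by \eqref{eq:Rkk}, reproduces the sixth relation (the overall $-v^2\vs_i$ flipping the three leading minus signs). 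The exponential definition \eqref{exp h} of $\Theta_{i,m}$ in terms of $H_{i,m}$ is unaffected by the specialization.

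Finally I would upgrade this matching of relations into an isomorphism of presented algebras. Let $A$ denote the $\Q(v)$-algebra defined by the presentation of Theorem~\ref{thm:ADE2}, with $\Theta_{i,m}$ determined from $H_{i,m}$ via \eqref{exp h}. The images $\pi(\y_{i,l}),\pi(H_{i,m})$ generate $\Ui_\bvs$ (as $B_i=\y_{i,0}$ and the $\K_i$ are now scalars), so sending generators to generators gives a surjection $A\twoheadrightarrow\Ui_\bvs$, well defined because the defining relations of $A$ are precisely the $\chi$-specializations of \eqref{iDR1}--\eqref{iDR5}. Conversely, since $-v^2\vs_i$ and $\delta_\bvs$ are invertible in $\Q(v)$, the assignment $\K_i\mapsto -v^2\vs_i$, $C\mapsto\delta_\bvs$, $H_{i,m}\mapsto H_{i,m}$, $\y_{i,l}\mapsto\y_{i,l}$ defines an algebra map $\tUiD\to A$ (each relation of $\tUiD$ maps to a defining relation of $A$), which kills $J_\chi$ and so factors through $\tUiD/J_\chi\cong\Ui_\bvs\to A$. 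These two maps are mutually inverse on generators, hence isomorphisms. I expect the only genuinely delicate step to be the bookkeeping identification $C\mapsto\delta_\bvs$ through $\K_0=C\K_\theta^{-1}$ and the verification of the specialized Serre relation from \eqref{iDR5}; the conceptual principle that specializing central invertible generators of a presentation yields a presentation of the central quotient is standard and requires no new input.
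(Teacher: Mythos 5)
Your proposal is correct and is essentially the paper's own argument: the paper proves Theorem~\ref{thm:ADE2} by exactly this central reduction, noting only that it ``follows directly from Theorem~\ref{thm:ADE} and \eqref{Ui}''. Your fleshing out of the specialization $\K_i\mapsto -v^2\vs_i$, $C\mapsto\delta_\bvs$ (via $\K_0=C\K_\theta^{-1}$ and $a_0=1$) and the resulting bookkeeping in \eqref{iDR2}, \eqref{iDR3b}, \eqref{iDR5} is accurate.
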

The theorem above, for which we continue to adopt the convention \eqref{Hm0}--\eqref{exp h}, follows directly from Theorem~\ref{thm:ADE} and \eqref{Ui}.

%%%%%%%
%%%%%%%

\end{document}